\renewcommand*{\toclevel@part}{0}
\definecolor{darkgr}{rgb}{0.0, 0.62, 0.42}
\numberwithin{equation}{section}
\newcommand{\la}{\langle}
\newcommand{\ra}{\rangle}
\newtheorem{lemma}{Lemma}[section]
\newtheorem{theorem}[lemma]{Theorem}
\newtheorem{proposition}[lemma]{Proposition}
\newtheorem{corollary}[lemma]{Corollary}
\newtheorem{remark}[lemma]{Remark}
\newtheorem{definition}[lemma]{Definition}
\newtheorem{defn}[lemma]{Definition}
\def\mass{\mathcal M \kern -2pt a \kern-2pt d}
\def\tr{{\tt r}}
\def\im{{\rm i}}
\def\tf{{\tt f}}
\def\tB{{\tt B}}
\def\uno{{\mathds 1}}
\def\norm#1{\left\|#1\right\|}
\def\grad{\nabla}
\def\mathds#1{{\bf #1}}
\def\cross{\times}
\def\cV{{\mathcal{V}}}
\def\bb{{\bf b}}
\def\cH{{\mathcal{H}}}
\def\cO{{\mathcal{O}}}
\def\cU{{\mathcal{U}}}
\def\cR{{\mathcal{R}}}
\def\cP{{\mathcal{P}}}
\def\cT{{\mathcal{T}}}
\def\cB{{\mathcal{B}}}
\def\bA{{\bf A}}
\def\Ph#1{\cH_e^{#1}}
\newcommand{\R}{\mathbb R}
\newcommand{\C}{\mathbb C}
\newcommand{\Z}{\mathbb Z}
\newcommand{\N}{\mathbb N}
\newcommand{\T}{\mathbb T}
\newcommand{\ii}{i}
\def\ac{{\mathcal A}\kern-.7pt\ell\kern-.9pt\mathcal{S}}
\renewcommand{\H}{\mathcal{H}}
\newcommand{\B}{\mathcal{B}}
\renewcommand{\r}{\mathbb{R}}
\newcommand{\n}{\mathbb{N}}
\newcommand{\br}{\bar{r}}
\def\len#1{\langle\left|#1\right|\rangle}
\renewcommand{\S}[2]{ S(A_{#1},\dots, A_{#2})}
\def\Sb#1#2{S(A_{#1},\dots,A_{#2},b)}
\newcommand{\mus}[2]{\mu(A_{#1},\dots,A_{#2})}
\newcommand{\musb}[2]{\mu(A_{#1},\dots,A_{#2},b)}
\newenvironment{manualhypo}[1]{%
	\manualtheoreminner
}{\endmanualtheoreminner}
\begin{document}
	\title{Almost global existence for some Hamiltonian
		PDEs on manifolds with globally integrable geodesic flow}
	\date{}
	
	\author{
		D. Bambusi\footnote{Dipartimento di Matematica, Universit\`a degli Studi di Milano, Via Saldini 50, I-20133
			Milano. \newline
			\textit{Email: } \texttt{dario.bambusi@unimi.it}}, 
		R. Feola\footnote{{Dipartimento di Matematica e Fisica, Universit\`a degli Studi RomaTre, Largo San Leonardo Murialdo 1, 00144 Roma}\newline
			\textit{Email:} \texttt{roberto.feola@uniroma3.it}}, 
		B. Langella\footnote{International School for Advanced Studies (SISSA), via Bonomea 265, I-34136 Trieste.\newline
			\textit{Email: } \texttt{beatrice.langella@sissa.it}},
		F. Monzani \footnote{Dipartimento di Matematica, Universit\`a degli Studi di Milano, Via Saldini 50, I-20133
			Milano. \newline
			\textit{Email: } \texttt{francesco.monzani@unimi.it}}
	}
	
	\maketitle
	
	\begin{abstract}
		In this paper we prove an abstract result of almost
		global existence for small and smooth solutions of
		some semilinear PDEs on Riemannian manifolds with
		globally integrable geodesic flow.  Some examples of
		such manifolds are  Lie groups (including flat tori), homogeneous
		spaces and rotational invariant surfaces. As
		applications of the abstract result we prove
		almost global existence for a nonlinear Schr\"odinger
		equation with a convolution potential and for a
		nonlinear beam equation. We also prove $H^s$ stability
		of the ground state in NLS equation.  The proof is
		based on a normal form procedure and the combination of the arguments used in
		\cite{QN} to
		bound the growth of Sobolev norms in linear systems
		and a generalization of the arguments in \cite{BFM22}.
	\end{abstract}
	\newpage
	\tableofcontents
	
	\section{Introduction}
	Given a Hamiltonian PDE on a compact manifold, a
	classical problem is the determination of the time of existence of
	its solutions and the possible existence of solutions
	enforcing transfer of energy from low to high frequency
	modes. Here we deal with the problem of proving that, given an
	initial datum with $s$--Sobolev norm of size $\epsilon\ll1$,
	 the corresponding solution exists for times
	of order $\epsilon^{-r}$, $\forall r$, provided that $s$ is large enough. Furthermore its $s$--Sobolev norm remains of order $\epsilon$ over this time
	scale. Such results are usually known as results of ``almost
	global existence''.
	
	%%%%%%%%%%%%%DI17%%%%%%%%%%%
	
	It is known since almost 20 years
        \cite{bambusi2003birkhoff,BG06} how to prove almost global
        existence in semilinear Hamiltonian PDEs in \emph{space
          dimension $1$} (see also \cite{GreLan, BamMon, FaouGre, BMPro,
          FeoMass}), and  recently also the theory of
        quasilinear Hamiltonian PDEs in one space dimension has
        achieved a satisfactory form
        \cite{DelortS1,BD,berti2022hamiltonian} (see also
        \cite{FeoIanPisa}).  On the other hand, in space dimension
        larger than $1$, it is only known how to deal with equations on
        Zoll Manifolds \cite{BDGS,delort.sphere.ql} (see also
        \cite{BGCras, GrePT, plane} for other problems with the same
        structure) and recently the case of general tori has been
        treated \cite{BFM22}. Concerning more general  manifolds, one can
find many partial results in which existence and smoothness of small
solutions are proven for times of order $\epsilon^{-r}$ with
\emph{some $r>1$}, typically smaller or equal than $2$ (see
\cite{FANG2010151, Imekraz_beam, DI17, ionescu, beam-feola-bernier, qlifespan.bert.feol.franz,feola-mont, feola-murg-hydro,BerFeoPus,feola-greb-iandoli}).

In the present paper we investigate the possibility of extending the
result of \cite{BFM22} to general manifolds: in particular we aim at
stepping away from tori and Zoll manifolds. As a result, we prove almost global existence for some equations
on a class of manifolds in which the Laplacian is a ``globally
integrable quantum system'' \cite{QN} (see
	Sect.\ref{almost.glob.sec} for a precise
        definition). Some concrete examples of these
manifolds are flat tori, compact Lie groups, homogeneous spaces,
rotation invariant surfaces and also products of these kinds of
manifolds. Indeed, we prove an abstract theorem of almost global
existence for semilinear PDEs on these manifolds and subsequently we
apply it to some concrete models. Precisely, we prove almost
global existence for NLS with a ``potential'' which is a spectral
multiplier and for the beam equation. We also apply our abstract
theorem to the problem of $H^s$ stability of the ground state in NLS
and prove that it is stable for times of order $\epsilon^{-r}$
$\forall r$. We emphasize that all these results were only known for
equations on tori \cite{BFM22}. As explained below, in the present
paper we combine some of the key ideas of \cite{BFM22} and of \cite{DS,QN} with some other ingredients.

	A related line of research, parallel to the one pursued here, is
	the so-called KAM theory for PDEs, which aims to construct
	quasiperiodic solutions of the considered equations. As for almost global existence, KAM theory
	for PDEs is nowadays quite well understood, for both semilinear and quasilinear equations, in space dimension 1. Without trying to be exhaustive, we mention, among others, the seminal works \cite{Kuk1,Kuk2, wayne, poschel1996,BBM1,BBM2} for a first reading in this wide field. 
	Concerning the higher dimensional domains, which are our main focus, only the cases of tori
	\cite{bourgain2004green, EliKuk, geng.kam, BBolleOnde,procesiKAM, EKG} and compact Lie
	groups or homogeneous space \cite{BP,BCP} have been completely treated. We
	think that techniques similar to the ones developed in this work, combined with the so-called
	multiscale analysis \cite{bourgain2004green}, could be used to prove KAM-type
	results for rotation invariant manifolds. This is left for
	future developments.
	
	Finally, we recall the results in \cite{bou.almost,bou05, pro.almost, almost.mass}. In these works some invariant
	Lagrangian tori filled by almost periodic solutions have been
	constructed for some semilinear PDEs. This has been done in some model
	problems in dimension 1, but at present nothing is known for equations
	in higher dimensions.

	\vspace{0.5em}
	\noindent
	{\bf Scheme of the proof.}  As already mentioned, the starting
        point of the present paper is \cite{BFM22}. So we first recall the strategy of
        \cite{BFM22}. As usual, the idea is to look for a canonical
        transformation that puts the system in a normal form which is
        a variant of the classical Birkhoff normal form. The idea of
        \cite{BFM22} is to split the Fourier modes into low and high
        frequency modes, to work in the standard way on the low modes
        and, concerning the high modes to exploit a cluster
        decomposition of $\Z^d$ (indexes of the Fourier modes)
        introduced by Bourgain. Such a decomposition has the property
        that if $k,l\in\Z^d$ belong to different clusters then
	$$
	| |k|^2-|l|^2|+|k-l|\geq C(|k|^\delta+|l|^\delta)\ ,
	$$ for some $C>0$ and $\delta\in(0,1)$. This property allows
        to construct a normal form which ensures that there is no
        exchange of energy among different clusters of modes of high
        frequency.
	
	Altogether with a nonresonance assumption on the frequencies
        which is typically fulfilled in any space dimension, this
        allows us to show also that the low modes do not exchange
        energy among them and with the high modes and to conclude that
        one can control the Sobolev norm of the solution up to the
        considered times.
	
	\smallskip
	The main difficulty in generalizing the above idea to general manifolds is
	that in general Fourier series are not available: one has to identify
	some basis of $L^2(M)$ which has properties similar to the Fourier basis.
	
	Such a basis was identified in \cite{QN} for some manifolds
	in which the Laplacian is a ``globally integrable quantum system'', 
	namely it is a function of some commuting operators which play the
	role of quantum actions. In turn the quantum actions $I_j$ are
        operators whose spectrum is of the form
        $\left\{n+\kappa\right\}_{n\in\Z}$ with some 
 $\kappa\in(0,1)$
        (see
	Sect.\ref{almost.glob.sec} for a more precise
        definition). Thus the spectrum of  the
	Laplacian turns out to be given by 
	$$
	\sigma(-\Delta_g)=\left\{h_L(a^1,...,a^d)\ |\ (a^1,...,a^d)\in\Lambda\subset\Z^d\right\}\ , 
	$$ with $h_L$ a function. Correspondingly, the eigenfunctions of
        the Laplacian are labelled by discrete indexes
        $(a^1,...,a^d)\in\Lambda\subset\Z^d$. Such eigenfunctions are
        the objects that substitute the exponentials of the torus.
	
	\smallskip
	
	Once the basis is identified, one has to study its properties.
        It is known that the main property needed to develop
        perturbation theory is a bilinear estimate involving the
        expansion of the product of two eigenfunctions on the basis of
        the eigenfunctions themselves. The main idea is that we do not
        require such an expansion to {\it decay with the difference of
          the corresponding eigenvalues, but with the distance of the
          indexes labelling them!} Once one realizes this fact, it is
        easy to obtain the wanted estimate by working as in
        \cite{DS,BamMon}.
	
	After proving the estimate of the product of eigenfunctions, we have to
	study the composition properties of the polynomials obtained by
	expanding the nonlinearity on the eigenfunctions. This is very similar
	to what is done in \cite{DS,BamMon}. However, these computations are technically
	quite heavy in our setting, since we are dealing with a vectorial
	labelling of eigenfunctions instead of a scalar one. All the
	needed estimates are performed in Appendix \ref{esti.appendix}.
	
	After this is done one still has to verify a nonresonance
        condition which is more or less standard:  this is done following
        the ideas of \cite{DS0}. Finally one concludes the proof as in
        \cite{BFM22}.

	\smallskip
	Before closing this section, we remark that at present our method is
	restricted to systems whose linear parts have frequencies
	$\left\{\omega_a\right\}_{a\in\Lambda}$, with $\omega_a\sim |a|^\beta$
	and $\beta>1$. Thus, the case of the wave equation is not covered. 
	Moreover, our techniques apply only to the case of semilinear equations, 
	namely, perturbations involving derivatives are not allowed by our theory. We
	think that this second issue can be overcome by developing some 
	tools of the kind of those used in \cite{BD,QN}, while the
        first requires new ideas. \\ \\
	
	\noindent
	{\bf Acknowledgements.} 
	D. Bambusi would like to warmly thank Jean-Marc Delort for explaining to him in detail the proof of Lemma \ref{estiAdjoint} and for some discussions on the nonresonance condition. We also thank Massimiliano Berti and Michela Procesi for some discussions on the presentation of the results. \\
	This work is supported by INDAM. D. Bambusi, R. Feola, B. Langella and F. Monzani have been supported by the research projects PRIN 2020XBFL 
	``Hamiltonian and dispersive PDEs'' of the Italian Ministry of Education and Research (MIUR). R. Feola and B. Langella have also been supported by PRIN 2022HSSYPN ``Turbulent Effects vs Stability in Equations from Oceanography''.

	\section{Statement of the Abstract Theorem}\label{almost.glob.sec}
	
	Let $(M,g)$ be a compact Riemannian manifold of dimension $n$, denote
	by $\Psi^m(M)$ the space of pseudodifferential operators \emph{à la}
	H\"ormander of order $m$ on $M$ (see
	\cite{Hormander}).
	
	\noindent
	Let $H^{s}(M)=H^s(M;\C)$, $s\geq0$ be the standard Sobolev spaces on $M$. We shall study equations of the form
	\begin{equation}
		\label{HL.0}
		\im \dot u=H_L u+\nabla_{\bar u}P(u,\bar u)\ ,
	\end{equation}
	where $H_L$ is a linear selfadjoint operator on which we are going to
	make several assumptions and $P$ is a nonlinear functional that we
	describe below. Here $\nabla_{\bar u}$ is the gradient of $P$ with
	respect to the variable $\bar u$ and the $L^2$ metric. 
	
	\smallskip
	\noindent
	We remark that the system \eqref{HL.0} is Hamiltonian with Hamiltonian
	function
	\[
	H=H_0+P\,,
	\]
	where
	\begin{equation}
		\label{H0.s}
		H_0(u):=\int_M \bar u H_L u \,dx\ .
	\end{equation}
	
	\subsection{Assumptions on the linear system %$H_L$
	}\label{linear} We are going to assume that $H_L$ is a {\it
		globally integrable quantum system}, as introduced in
	\cite{QN}. Roughly speaking, it is a linear operator which is
	a function of some first-order pseudodifferential operators
	that we will call \emph{quantum actions}. The precise
	definitions are recalled below.
	
	\begin{definition}
		\label{glob.int.quant}[System of Quantum Actions]
		Let $\left\{I_j\right\}_{j=1,\dots,d}$ be $d$ selfadjoint pseudo-differential operators of order 1, fulfilling
		\begin{itemize}
			\item[i.]$I_j \in \Psi^1(M),$ for any $ j=1,\dots,d$; 
			\item[ii.] $[I_i,I_j] = 0,$ for any $ i,j = 1,\dots,d$;
			\item[iii.] there exists a constant $ c_1 > 0$ such that $c_1 \sqrt{1-\Delta_g} \le \sqrt{1+\sum_{j=1}^{d} I_j^2}$\label{sobolev};
			\item[iv.]  there exists 
			$\kappa\in(0,1)^d$ such that the joint spectrum $\Lambda$ of
			the operators $I_1,...,I_d$ fulfills
			\begin{gather}
				\Lambda \subset \mathbb{Z}^d +
				\kappa\ .
			\end{gather}
		\end{itemize}
		we refer to $(I_1,...,I_d)$ as the \emph{quantum actions}.
	\end{definition}
	
	\begin{remark}
		\label{2.1}
		{	By {iii.}, the operator $\uno+\sum_{j=0}^d I_j^2$ has compact inverse,
			therefore the spectrum $\sigma(I_j)$ of each one of the $I_j$'s is
			pure point and formed by a sequence of eigenvalues.
		}
	\end{remark}
	
	\begin{remark}
		\label{joint}
		{We recall that the joint spectrum $\Lambda$ of the operators $I_j$ is
			defined as the set of the ${a = (a^1, \dots, a^d)}\in\R^d$ s.t. there
			exists $\psi_a\in\cH$ with $\psi_a\not=0$ and
			\begin{equation}
				\label{lambda}
				I_j\psi_a=a^j\psi_a
				\ ,\quad \forall j=1,...,d\ .
			\end{equation}
	}  \end{remark}

	\begin{definition}[Globally integrable quantum system]
		\label{gloInt}
		A linear selfadjoint operator $H_L$ will be said to be the Hamiltonian
		of a \emph{globally integrable quantum system} if
		there exists a function %$h_L\in \mathcal{C}^\infty(\r^d,\r)$
		$h_L : \R^d \rightarrow \R$ such that
		\begin{gather*}
			H_L = h_L(I_1,\dots,I_d)\,,
		\end{gather*}
		where the operator function is spectrally defined.
	\end{definition}
	\begin{remark}
		Systems fulfilling Definition \ref{gloInt} with the further property
		that the multiplicity of common eigenvalues $(a^1,...,a^d)$ of the
		actions is 1 were called toric integrable quantum systems in \cite{toth2002p}.
	\end{remark}
	If $H_L=h_L(I_1, \dots, I_d)$ is a globally integrable quantum system, then its
	eigenvalues are
	\[
	\omega_a:=h_L(a^1,...,a^d)\equiv h_L(a)\ ,\quad a\equiv (a^1,...,a^d)\in\Lambda\, .
	\]
	We denote by $\Sigma=\left\{ \omega_a\right\}_{a\in\Lambda}$
	the spectrum of $H_L$; in our setting, it coincides with the
	set of the frequencies.
	
	\smallskip      
	We assume the following Hypotheses on $H_L$:
	\begin{manualhypo}{L.0}[Integrability of $H_L$]\label{integro}
		$H_L$ is a globally integrable quantum system. 
	\end{manualhypo}
	
	\begin{manualhypo}{L.1}[Asymptotics]
		\label{asymptotic} There exist constants $C_1,C_2$ and
		$\beta$, with $\beta>1$, s.t.
		\begin{equation}\label{asy.1}
			\left|\omega_a-C_1|a|^{\beta}\right|\leq C_2\, ,
		\end{equation}
		where, for a vector $a\in\R^d$, we denote
		$|a|:=\sqrt{\sum_{j=1}^d(a^j)^2}$  its Euclidean norm.
	\end{manualhypo}
	By \ref{asymptotic}, one can partition $\Sigma$ in separated
	pieces. Precisely we have the following lemma.
	\begin{lemma}
		\label{parti_sigma}
		There exists a sequence of intervals
		$\Sigma_n=[a_n,b_n]$, $n\in\N, \n\ge1$
		and a positive constant $C$,  with the following properties:
		
		\smallskip
		$\bullet$ $a_n<b_n<a_{n+1}< 3n$;
		
		\smallskip
		$\bullet$ $\Sigma\subset[0,b_0]\cup \bigcup_{n}\Sigma_n$;
		
		\smallskip
		$\bullet$ $\left|b_n-a_n\right|\equiv \left|\Sigma_n\right|\leq {2}$;
		
		\smallskip
		$\bullet$ $d(\Sigma_n,\Sigma_{n+1})\equiv a_{n+1}-b_n\geq 2 n^{-d/\beta}$.
	\end{lemma}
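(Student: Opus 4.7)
The plan is to use a pigeonhole argument combined with a Weyl-type counting for the spectrum $\Sigma = \{\omega_a\}_{a \in \Lambda}$: the asymptotic~\eqref{asy.1} controls the density of $\Sigma$ in every unit window, which is just enough to produce gaps of size $\gtrsim m^{-d/\beta}$ by a ``create a hole'' argument.

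First I would establish that for every integer $m$ sufficiently large,
\[
K_m := \#\{a \in \Lambda : \omega_a \in [m, m+1]\} \leq C_* \, m^{d/\beta - 1},
\]
with $C_* = C_*(C_1, C_2, d, \beta)$. This is immediate from Hypothesis~\ref{asymptotic}, which constrains each contributing $a$ to a spherical annulus of inner radius $\sim m^{1/\beta}$ and thickness $\lesssim m^{1/\beta - 1}$; the points of $\Z^d + \kappa$ inside such an annulus are in number $\lesssim m^{d/\beta - 1}$. Then, for each such $m$, I would apply pigeonhole by partitioning $[m, m+1]$ into slightly fewer than $c^{-1} m^{d/\beta}$ equal sub-intervals, with $c$ chosen large enough that this number exceeds $K_m + 1$; this produces a free sub-interval $F_m = [c_m, d_m]$ disjoint from $\Sigma$ and of length $\geq c \, m^{-d/\beta}$. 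The finitely many low frequencies for which the bounds are not yet effective are absorbed into the base interval $[0, b_0]$.

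I would then declare the clusters to be $\Sigma_n := [a_n, b_n]$, the convex hull of $\Sigma \cap G_{m_n}$, where $G_m := [d_m, c_{m+1}]$ and $m_1 < m_2 < \ldots$ enumerate the indices with $\Sigma \cap G_m \neq \emptyset$. By construction $|\Sigma_n| \leq c_{m_n+1} - d_{m_n} \leq 2$; the separator between $\Sigma_n$ and $\Sigma_{n+1}$ always contains $F_{m_n + 1}$ (or, if intermediate buckets are empty, at least a whole unit interval), giving $a_{n+1} - b_n \geq c (m_n + 1)^{-d/\beta}$. Choosing $c$ large enough (e.g.\ $c \geq 2 \cdot 2^{d/\beta}$ suffices to absorb the ratio $((n+1)/n)^{d/\beta}$ for all $n \geq 1$) yields the claimed lower bound $2 n^{-d/\beta}$. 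The location bound $a_{n+1} < 3n$ follows from $a_{n+1} \leq c_{m_{n+1}+1} \leq m_{n+1} + 1$.

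The main subtlety I anticipate is reconciling the cluster index $n$ with the bucket index $m_n$: in the high-dispersion regime $\beta > d$ many buckets can be empty, so $m_n$ may exceed $n$, threatening the upper bound $a_{n+1} < 3n$. I expect to address this by enlarging $b_0$ so as to swallow any initial sparse region, and indexing directly over the non-empty buckets, so that $m_n \leq n + O(1)$ can be enforced by construction; the separation bound is then automatic, since every empty bucket contributes a gap of order $1 \gg n^{-d/\beta}$.
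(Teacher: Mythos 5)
Your overall scheme (a per-unit-window eigenvalue count, pigeonhole to manufacture spectrum-free subintervals, then convex hulls of the spectrum between consecutive free subintervals, with low frequencies absorbed into $[0,b_0]$) is a sensible way to prove Lemma \ref{parti_sigma}, which the paper in fact states without proof; but two of your steps do not hold as written. The first is the count $K_m\lesssim m^{d/\beta-1}$: this is the volume of the annulus selected by \eqref{asy.1}, whose thickness is $\sim m^{1/\beta-1}\ll 1$, and for sub-unit thickness the number of points of $\Z^d+\kappa$ is \emph{not} controlled by the volume. For instance, for $h_L(a)=C_1|a|^\beta$ with $\beta>2$, $d\ge 5$ and $\kappa=(1/2,\dots,1/2)$, a window $[m,m+1]$ whose annulus captures a single rich sphere can contain $\gtrsim m^{(d-2)/\beta}\gg m^{d/\beta-1}$ points. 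What is immediate from Hypothesis \ref{asymptotic} is only $K_m\lesssim m^{(d-1)/\beta}$ (points within $O(1)$ of a sphere of radius $\sim m^{1/\beta}$). This flaw is repairable without changing your scheme: partitioning $[m,m+1]$ into, say, $\sim m^{(d-1)/\beta+\eta}$ subintervals with a small $\eta>0$ still exceeds $K_m$ for $m$ large and yields a free subinterval of length $m^{-d/\beta+(1/\beta-\eta)}\gg m^{-d/\beta}$, so no fight over constants such as your $c\ge 2\cdot 2^{d/\beta}$ is needed.

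The genuine gap is your treatment of the index/position mismatch. Note that $\Lambda$ is only known to be \emph{contained} in $\Z^d+\kappa$, and in any case the spectral counting function is $\lesssim m^{d/\beta}$, which is sublinear whenever $d<\beta$; hence empty unit buckets recur at all scales, and indexing over the non-empty buckets forces $m_n\ge n$ with $m_n-n\to\infty$. Then $a_{n+1}\le m_{n+1}+1$ gives nothing, $a_{n+1}<3n$ fails for infinitely many $n$ (enlarging $b_0$ cannot help, since the sparse stretches reappear arbitrarily far out), and, worse, two adjacent non-empty buckets occurring after a long empty stretch must be separated by $2n^{-d/\beta}$ with $n\ll m_n$, which your pigeonhole gap of size $\sim m_n^{-d/\beta}$ (or $m_n^{-(d-1)/\beta}$) cannot supply; for the same reason the order-one gap produced by an empty bucket need not reach the value $2$ required when $n$ is small. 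Your claim that ``$m_n\le n+O(1)$ can be enforced by construction'' is exactly what the construction does not give. The cure is the opposite of what you propose: since nothing in the statement requires $\Sigma_n\cap\Sigma\neq\emptyset$, insert auxiliary spectrum-free intervals in every empty stretch, and a fixed grid of intervals with spacing just below $3$ and mutual gaps $\ge 2$ inside $[0,b_0]$ (where covering is already guaranteed by $[0,b_0]$ itself), so that consecutive $\Sigma_n$ are never more than $3$ apart. Then the index grows at least linearly with the position $m$, the bound $a_{n+1}<3n$ holds by construction, and the required separation $2n^{-d/\beta}\lesssim m^{-d/\beta}$ is dominated by the (corrected) pigeonhole gap for all $m$ beyond a threshold, the finitely many remaining windows being absorbed into $[0,b_0]$.
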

	We are now going to assume a nonresonance condition which allows to
	use normal form theory in order to eliminate from the Hamiltonian the terms
	enforcing exchanges of energy among modes labelled by indexes
	belonging to different intervals $\Sigma_n$.\\
	First, in order to keep into account that the system depends both on
	$u$ and $\bar u$, we extend the space of the indexes $a\in\Lambda$, and
	consider 
	\begin{equation}\label{uovo1}
		\Lambda_e:=\Lambda\times
		\left\{\pm1\right\}\ni(a,\sigma)\equiv A\,.
	\end{equation} 
	Then we define what we mean
	by set of resonant indexes.  
	\begin{definition}
		\label{w.non.res}
		A multi-index $\textbf{A}\equiv
		(A_1,...,A_{r})\in\Lambda_e^r$, $A_j\equiv(a_j,\sigma_j)$
		is said to be \emph{resonant} if $r$ is even and there exists
		a permutation $\tau$ of $(1,...,r)$ and a sequence
		$n_1,...,n_{r/2}$ s.t.
		\begin{equation*}%\label{res.ee}
			\forall
			j=1,...,r/2 , \quad 		\omega_{a_{\tau(j)}},\ \omega_{a_{\tau(j+r/2)}}\in
			\Sigma_{n_{\tau(j)}}\ \quad \text{and }\ \sigma_{\tau(j)} = -\sigma_{\tau(j+r/2)}\,.
		\end{equation*} 
		In this case, we will write $\textbf A \in W$.
	\end{definition}
	
	\begin{manualhypo}{L.2}[Non-resonance]\label{hypoNonRes}
		For any $r\ge3$, there are constants $\gamma,\tau>0$ 
		such that for any multi-index $\textbf{A}=(A_1\dots,A_r)\in
		\varLambda^r_e\setminus W $ one has
		\begin{equation*}%\label{diof}
			\left|\sum_{j=1}^{r}\sigma_j\omega_{a_j}\right|\ge \frac{\gamma}{\left(\max_{j=1,\dots,r}{|a_j|}\right)^\tau}\,.
		\end{equation*}
	\end{manualhypo}
	Finally, we assume a clustering property of Bourgain's type. 
	\begin{manualhypo}{L.3}[Bourgain clusters]
		\label{bourgain.abstract} There exists a partition 
		\begin{gather*}
			\Lambda = \bigcup_{\alpha\in\mathfrak{A}}\Omega_\alpha
		\end{gather*}
		with the following properties.
		\begin{itemize}
			\item[i.] Each $\Omega_\alpha$ is dyadic, in the sense that there exists a 
			constant $C$, independent of $\alpha$, such that
			\begin{gather*}
				\sup_{a\in\Omega_\alpha}|a|\le C \inf_{a\in\Omega_\alpha}|a|\,.
			\end{gather*}
			\item[ii.] There exist $\delta, C_\delta>0$ such that, if $a\in\Omega_\alpha$ and $b\in\Omega_\beta$ with $\alpha\not=\beta$, then
			\begin{gather*}
				|a-b| + |\omega_a-\omega_b| \ge C_\delta (|a|^\delta + |b|^\delta)\,.
			\end{gather*}
		\end{itemize}
	\end{manualhypo}
	\subsection{Assumption on the nonlinearity  and statement} \label{nonlinP}
	Concerning $P$ we assume the following Hypothesis:
	\begin{manualhypo}{P}		\label{Nonline}
		\begin{itemize}
			\item[(1)] $P$ has the structure
			\begin{equation}
				\label{nonlin.1}
				P(u,\bar u)=\left(\int_M F(N(u,\bar u),u(x),\bar u(x),x)dx\right)  \ ,
			\end{equation}
			where
			\begin{equation*}%\label{nu}
				N(u,\bar u):=\int_{M}u(x)\bar u(x)dx  
			\end{equation*}
			and
			$F\in C^{\infty}(\cU\times\cU\times\cU\times M;\C)$ is a smooth
			function and $\cU\subset \C$ an open neighbourhood of the
			origin.
			\item[(2)] if $\bar u$ is the complex conjugate of
			$u$, then $F(N,u,\bar u,x)\in\R$.
			
			\item[(3)] $P$ has a zero of order at least three at $u=0$.
		\end{itemize}  
	\end{manualhypo}
	Our main result is the following:
	\begin{theorem}\label{ab.res}
		Consider the Hamiltonian system \eqref{HL.0}. Assume
		Hypotheses \ref{integro}, \ref{asymptotic}, \ref{hypoNonRes}, \ref{bourgain.abstract}, \ref{Nonline},
		then for any
		integer $r\geq3$, there exists $s_r\in\n$ such that, for any
		$s\ge s_r$, there are constants $\epsilon_0>0$, $c>0$ and $C$
		for which the following holds: if the initial datum
		$u_0\in H^s(M,\C)$ fulfills
		\begin{gather*}
			\epsilon \coloneqq \norm{u_0}_s < \epsilon_0\,,
		\end{gather*}
		then the Cauchy problem has a unique solution 
		$u\in\mathcal C^0\left((-T_\epsilon,T_\epsilon), H^s(M,\C)\right)$ with 
		$T_\epsilon > c \epsilon^{-r}$. Moreover, one has
		\begin{gather}
			\norm{u(t)}_s \le C \epsilon,\quad \forall
			t\in(-T_\epsilon,T_\epsilon)\ .
		\end{gather}
	\end{theorem}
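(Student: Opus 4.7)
\medskip
\noindent
\textbf{Proof strategy.} The plan is to prove Theorem \ref{ab.res} by a Birkhoff normal form argument performed directly in the joint eigenbasis $\{\psi_a\}_{a\in\Lambda}$ of the quantum actions, adapting the strategy of \cite{BFM22} from the Fourier basis on $\T^d$ to the abstract vector labelling $\Lambda\subset\Z^d+\kappa$. Writing $u=\sum_{a\in\Lambda}u_a\psi_a$, Hypothesis \ref{integro} together with the domination property in Definition \ref{glob.int.quant} identifies $\norm{u}_s^2$ with a weighted $\ell^2$ norm $\sum_a\la a\ra^{2s}|u_a|^2$ on the coefficients, and the quadratic part $H_0$ becomes the diagonal object $\sum_a\omega_a|u_a|^2$ with $\omega_a=h_L(a)$. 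The partition of $\Sigma$ provided by Lemma \ref{parti_sigma} and the resonant set $W$ of Definition \ref{w.non.res} then give the combinatorial skeleton of the normal form.

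The first technical step is to transplant the multilinear calculus of \cite{DS,BamMon} from the scalar to the vectorial setting. Expanding $\psi_a\psi_b=\sum_c c_{ab}^c\,\psi_c$, I would prove that the coefficients $c_{ab}^c$ decay rapidly \emph{in the index distance on $\Lambda$} rather than in frequency difference, exploiting that the $I_j$'s are selfadjoint of order one and that their joint spectrum sits on a translated integer lattice, so that the pseudodifferential calculus of \cite{QN} transfers. From this bilinear bound I would build a class of multilinear Hamiltonians, stable under Poisson brackets and with controlled action on $H^s(M)$, in which the nonlinearity $P$ of Hypothesis \ref{Nonline} lives. The bookkeeping is heavier than in \cite{BFM22} because the indices now belong to $\Lambda_e=\Lambda\times\{\pm1\}$ rather than to a scalar lattice, but the mechanism is structural and the routine estimates would be relegated to Appendix \ref{esti.appendix}.

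Next I would iterate the normal form up to order $r$. At each step I split the homogeneous nonlinearity into a low-mode part supported on $\max_j|a_j|\le N$, for a threshold $N=N(\epsilon)$ to be tuned, and a high-mode part. Non-resonant monomials in the low part are eliminated using the small divisors from \ref{hypoNonRes}, at a cost $\gamma^{-1}N^\tau$ per step; on the high part I use the Bourgain partition \ref{bourgain.abstract} to eliminate every monomial whose indices do not all sit in the same cluster $\Omega_\alpha$, the denominator being bounded below by $C_\delta(\max_j|a_j|)^\delta$ by point ii.\ of \ref{bourgain.abstract}. Choosing $N$ so as to balance $N^\tau$ against the polynomial loss from the cluster partition, as in \cite{BFM22}, yields a transformed Hamiltonian $H_0+Z+R$ in which $Z$ couples only modes sitting in the same cluster $\Omega_\alpha$ and in the same block $\Sigma_n$, while $R$ has vector field of size $\norm{u}_s^{r+1}$ on $H^s$.

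The conclusion is then a standard energy--bootstrap argument. The resonant part $Z$, by Definition \ref{w.non.res} combined with the dyadic property \ref{bourgain.abstract}(i), Poisson-commutes with the cluster super-actions $J_\alpha:=\sum_{a\in\Omega_\alpha}|u_a|^2$, and dyadicity gives $\sum_\alpha\bigl(\inf_{\Omega_\alpha}|a|\bigr)^{2s}J_\alpha\sim\norm{u}_s^2$, so $\norm{u(t)}_s$ is exactly preserved along the flow of $H_0+Z$; the remainder $R$ produces an error of size $\epsilon^{r+1}t$, which closes the bootstrap on $[0,c\epsilon^{-r}]$. I expect the main obstacle to be the vectorial bilinear estimate and its interaction with the Bourgain cluster mechanism: in \cite{BFM22} the cluster construction exploited the Euclidean geometry of $\Z^d$ in an essential way, and transferring both ingredients to the abstract index set $\Lambda$ while keeping all constants uniform in the cluster and preserving the Poisson-bracket closure of the multilinear class is where the genuinely new work will be concentrated.
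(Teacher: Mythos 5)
Your overall skeleton (eigenbasis expansion, a DS/Bambusi--Montalto style class of polynomials with coefficients decaying in the index distance on $\Lambda$, normal form relative to the bands of Lemma \ref{parti_sigma} and the clusters of \ref{bourgain.abstract}, superactions plus bootstrap) is the same as the paper's. However, there is a genuine gap in your treatment of the high modes. You propose to ``eliminate every monomial whose indices do not all sit in the same cluster $\Omega_\alpha$, the denominator being bounded below by $C_\delta(\max_j|a_j|)^\delta$ by point ii.\ of \ref{bourgain.abstract}.'' Hypothesis \ref{bourgain.abstract}(ii) is not a small-divisor estimate: it bounds from below the \emph{sum} $|a-b|+|\omega_a-\omega_b|$ for a single pair in different clusters, so when $|a-b|$ is large the frequency difference $|\omega_a-\omega_b|$ (and a fortiori the full combination $\sum_j\sigma_j\omega_{a_j}$, which also involves the low modes) can be arbitrarily small or even vanish; moreover for monomials containing three or more high indices no hypothesis gives any divisor bound expressed in terms of the cut-off alone, since the loss $\gamma(\max_j|a_j|)^{-\tau}$ from \ref{hypoNonRes} is unbounded over the high modes. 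Hence the elimination step, as you describe it, divides by quantities that are not bounded below, and the iteration does not close.

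The paper's normal form is organized precisely to avoid this: only multi-indices with at most two indices above the cut-off $K$ are ever removed (Definition \ref{nRindex}), because only then can the divisor be bounded below by a negative power of $K$ (Lemmas \ref{non.riso.uno} and \ref{non.riso.due}; in the two-large-indices case one removes cross-cluster pairs only when $|a_1-a_2|\le K^\delta$, where \ref{bourgain.abstract}(ii) really does force $|\omega_{a_1}-\omega_{a_2}|\gtrsim |a_1|^\delta$). Cross-cluster pairs with $|a_1-a_2|>K^\delta$ are \emph{not} eliminated: they are kept in the normal form as the term $Z_2$ of Definition \ref{brnf3}, and their vector field is shown to be $O(K^{-\delta(N'-N)})$ small using the rapid decay of the localized coefficients in the index separation (Corollary \ref{cutOff2}); likewise monomials of order at least three in $u^\perp$ are kept and controlled by the smoothing estimate $\|\Pi^\perp u\|_{s_0}\le K^{-(s-s_0)}\|u\|_s$ (Corollary \ref{cubic}). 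These two mechanisms, absent from your sketch, are what make the choice $K\sim R^{-1/(2\tau)}$ and the final bootstrap of Proposition \ref{finalNormal} and Section \ref{almost.sec} work. A further, minor, correction: the surviving resonant part does not preserve $\|u\|_s$ exactly; it preserves the band superactions $J_n$ (low modes) and the truncated cluster superactions $J_\alpha$ (high modes), which by dyadicity control the $H^s$ norm only up to fixed constants -- enough for the bootstrap, but not exact conservation.
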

	\subsection{Remark on the abstract theorem}\label{steep.sub}
	We conclude the presentation of our main abstract results with a couple of comments. \\
	The assumptions on $H_{L}$ and
	$P$ are deeply connected with the specific equation one wants to consider and the geometry of the manifold on which the equation is posed. More specifically:
	\begin{itemize}
		\item (Assumptions on $P$) Assumption \ref{Nonline} on
		the nonlinearity is not strictly necessary. Actually
		we prove the abstract result in a slightly more
		general setting, where $P$ is a function with
		``localized coefficients'' (according to
		Def. \ref{locafunc}). This is done in Theorem
		\ref{LC}.	Then Theorem \ref{lo.generale}
		guarantees that nonlinearities fulfilling
		Hypothesis \ref{Nonline} belong to the
		class of functions with localized coefficients, and
		Theorem \ref{ab.res} follows as a consequence of
		Theorem \ref{LC} and Theorem \ref{lo.generale}.
		
		\item (Assumptions on $H_L$) In Section \ref{manifolds} we will
		consider some manifolds $M$ on which the Laplace-Beltrami operator
		$-\Delta_g$ is a globally integrable quantum system. Then, for these
		choices of the manifold $M$, in Section \ref{models} we will
		consider situations where $H_L$ is a (parameter dependent) function
		of $-\Delta_g$. To fulfill the nonresonance relation \ref{hypoNonRes}, we will
		suitable tune the parameter in $H_L$ in Subsections
		\ref{Schrodinger potenziale}--\ref{section beam}. The clustering
		Hypothesis \ref{bourgain.abstract} may follow directly from the
		spectral property of $-\Delta$ (this is the case for flat tori) or may follow from a ``quantum
		Nekhoroshev theorem'' proved in \cite{QN}, that extends the results in \cite{langella2022spectral,langella2022growth}, according to which, if
		the function $h_L$ (c.f. Definition \ref{gloInt}) is \emph{steep}
		(see Definition \ref{steep.def} below), then Hypothesis
		\ref{bourgain.abstract} is automatic. 
		We state such a quantum Nekhoroshev theorem in the rest of this section.
	\end{itemize}

	We first need some definitions:
	\begin{definition}
		\label{homoinfi}
		A  function $h_L\in C^\infty(\R^d;\R) $ is said to be homogeneous of
		degree $\tt d $ at infinity if there exists an open ball $\mathcal{B}_r\in \r^d$, centered at the origin, such that
		\begin{gather*}
			h_L(\lambda a) = \lambda^{\tt d} h_L(a)\,,\qquad
			\forall \lambda>0\,,\quad a\in
			\r^d\setminus\mathcal{B}_r\ .
		\end{gather*}
	\end{definition}
	
	For the sake of completeness, we now recall the
	definition of steepness \cite{GCB}. It is important to note that such a property is generic and it is implied by convexity. Moreover, it can be quite
	easily verified using the results of \cite{Nie}. For the examples treated in 
	Subsect. \ref{manifolds} it was explicitely proved in \cite{QN}.
	\begin{definition}[Steepness] \label{steep.def}
		Let $\cU\subset \R^d$ be a bounded connected open set with nonempty
		interior.  A function $h_L\in C^ 1 (\cU )$ is said to be steep in
		$\cU$ with {steepness radius $\tr$,} steepness indices $\alpha_1,\dots
		,\alpha_{d-1}$ and (strictly positive) steepness coefficients $\tB_ 1
		,\dots , \tB_{d-1}$, if its gradient $\upsilon(a):=\frac{\partial
			h_L}{\partial a}(a)$ satisfies the following estimates:
		$\displaystyle{\inf_{a\in\cU}\norm{\upsilon(a)}>0}$ and for any
		$a\in\cU$ and for any $s$ dimensional linear subspace $E \subset \R^d$
		orthogonal to $\upsilon(a)$, with $1\leq s\leq d-1$, one has
		\begin{equation*}%\label{steep}
			\max_{0\leq\eta\leq\xi}\min_{u\in E:\norm{u}=1}
			\norm{\Pi_M \upsilon(a+\eta u)}\geq \tB_s\xi^{\alpha_s}\,, 
			\quad \forall \xi \in (0, \tr]\, ,
		\end{equation*}
		where $\Pi_E$ is the orthogonal projector on $E$. The quantities $u$
		and $\eta$ are also subject to the limitation $a+\eta u\in\cU$. 
	\end{definition} 
	
	The following theorem holds:
	
	\begin{theorem}\label{QNtoNoi}
		Let $H_L = h_L(I_1,\dots,I_d)$ a globally integrable quantum system. 
		If $h_L$ is smooth and homogeneous at infinity (see Def. \ref{homoinfi}) and steep (see Def. \ref{steep.def}), then  Hypothesis \ref{bourgain.abstract} is satisfied.
	\end{theorem}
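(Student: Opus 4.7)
The plan is to derive Hypothesis \ref{bourgain.abstract} from the quantum Nekhoroshev theorem of \cite{QN}, whose geometric heart is the Nekhoroshev theory of steep functions. The strategy splits naturally into two parts: first a construction of clusters on the sphere $S^{d-1}\subset\R^d$ through a covering by ``resonant zones'' of various multiplicities, and second a lift of this covering to a dyadic partition of $\Lambda$ using homogeneity.

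The reduction to the sphere proceeds as follows. By Definition \ref{homoinfi}, outside a ball $\mathcal B_{\tr}$ the function $h_L$ is entirely determined by its restriction to $S^{d-1}$, and in particular $\nabla h_L(\lambda a)=\lambda^{\td-1}\nabla h_L(a)$. For each $s$-dimensional sublattice $M\subset\Z^d$, $s=0,\dots,d-1$, one defines a resonant zone $R_M\subset S^{d-1}$ consisting of those directions $\xi$ for which $\nabla h_L(\xi)$ lies within a prescribed small angle of the linear span of $M$, the admissible angle depending on $s$ and on the steepness indices $\alpha_1,\dots,\alpha_{d-1}$ and coefficients $\tB_1,\dots,\tB_{d-1}$. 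Steepness in the sense of Definition \ref{steep.def} is then used exactly as in the classical Nekhoroshev geometric construction to show that the zones of different multiplicities can simultaneously cover $S^{d-1}$ and be well separated; crucially, the rate at which $\Pi_E\upsilon(a+\eta u)$ grows away from a resonant subspace $E$ provides the quantitative transversality needed to control how different zones interact.

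One then defines the clusters in $\Lambda$ as follows: for $|a|$ large, the cluster of $a$ is determined by a dyadic annulus $2^n\le|a|<2^{n+1}$ together with the connected component of the resonant zone on $S^{d-1}$ containing $a/|a|$ (and its small neighborhood), while finitely many small indices are collected into a single base cluster. The dyadic condition (i) holds by construction. For the separation condition (ii), let $a\in\Omega_\alpha$ and $b\in\Omega_\beta$ with $\alpha\neq\beta$. If $a$ and $b$ lie in different dyadic annuli, then already $|a-b|\gtrsim \max(|a|,|b|)$, and (ii) holds with room to spare. Otherwise they lie in the same annulus but in different resonance zones on the sphere; applying the steepness estimate along a path joining $a/|a|$ and $b/|b|$ produces a lower bound either on the transverse component of $\nabla h_L$ or on the angular separation. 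Integrating the former against the dyadic radius, using homogeneity to convert $\lambda^{\td-1}\nabla h_L$ back into values of $h_L$, yields a lower bound on $|\omega_a-\omega_b|$, while the latter directly gives a lower bound on $|a-b|$. In both cases one obtains $|a-b|+|\omega_a-\omega_b|\ge C|a|^\delta$ with an exponent $\delta\in(0,1)$ determined by $\td$ and the steepness indices.

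The principal obstacle is the quantitative calibration of the covering: the widths of the resonant zones must be chosen small enough to guarantee the separation exponent $\delta$, but large enough that the finitely many zones of multiplicity $0,\dots,d-1$ still cover the sphere. This balance is precisely the core of the classical Nekhoroshev geometric lemma, adapted in \cite{QN} to the arithmetic setting where $a,b$ range over the discrete joint spectrum $\Lambda\subset\Z^d+\kappa$ rather than $\R^d$. Once the geometric construction is in place, the verification of Hypothesis \ref{bourgain.abstract} is essentially a book-keeping exercise tracking the exponents produced by steepness.
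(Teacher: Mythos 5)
There is a genuine gap, and it sits exactly at the point your sketch treats as book-keeping. Your clusters are sharp cuts: dyadic annuli $2^n\le|a|<2^{n+1}$ intersected with fixed conical resonant zones on $S^{d-1}$. The claim that ``if $a$ and $b$ lie in different dyadic annuli, then $|a-b|\gtrsim\max(|a|,|b|)$'' is false: take $a,b$ on opposite sides of the sphere $|x|=2^{n+1}$ with $|a-b|=O(1)$; they land in different clusters, and nothing in your construction forces $|\omega_a-\omega_b|$ to be large for such a pair, so condition (ii) of Hypothesis \ref{bourgain.abstract} fails for them. The same boundary problem occurs across the edges of your angular zones. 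Moreover, with $h_L$ homogeneous of degree $\td$ the natural resonance condition $|\upsilon(a)\cdot k|\le|a|^{\delta}|k|$ reads, after rescaling, $|\upsilon(a/|a|)\cdot k|\le|a|^{\delta-\td+1}|k|$: the angular widths of the resonant zones must shrink with the dyadic scale, so zones of fixed aperture on $S^{d-1}$ cannot simultaneously give the covering and the separation with a positive exponent $\delta$. Handling these boundary pairs --- i.e.\ arranging the cuts so that any $a,b$ in different blocks with $|a-b|$ small automatically have $|\omega_a-\omega_b|\gtrsim|a|^{\delta}$ --- is precisely the hard ``geography of resonances'' part, and your appeal to ``steepness along a path'' does not supply it: a lower bound on a transverse component of $\nabla h_L$ does not bound the directional derivative along $b-a$ unless $b-a$ is suitably aligned.

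For comparison, the paper does not rebuild this geometric construction at all. It imports the partition $\Lambda=\bigcup_{s}\bigcup_{M}\bigcup_{j}E^{(s)}_{M,j}$ of \cite{QN} (Definition 8.26, Theorem 8.28), merges into a single block all blocks meeting $\{|a|\le \mathtt{R}\}$ for $\mathtt{R}$ large, and gets dyadicity immediately. For the separation it argues by contraposition: if $|a-b|+|\omega_a-\omega_b|<C_\mu(|a|+|b|)^{\mu}$ with $|a|\ge\mathtt{R}$, then $|a-b|\le|a|^{\mu}$ and, using homogeneity of $h_L$ together with a second--derivative (Taylor) estimate on $\upsilon=\partial_a h_L$, also $|\upsilon(a)\cdot(b-a)|\le|a|^{\delta}|a-b|$; this says $a$ is resonant with $b-a$ in the sense of Definition 6.3 of \cite{QN}, and Lemma 8.39 of \cite{QN} then guarantees that such resonant pairs lie in the same block $E^{(s)}_{M,j}$. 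If you want a self-contained proof along your lines, you would have to reproduce the scale-dependent zone construction and the boundary analysis of \cite{QN}; otherwise the correct route is the paper's: reduce the separation property to the resonance relation of \cite{QN} and cite its block decomposition.
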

	We will prove Theorem \ref{QNtoNoi} in the Appendix
	\ref{QNtoNoiSection}, as a Corollary of \cite[Theorem 8.28]{QN}.
	
	\section{Applications}\label{applications}
	
	In this section, we collect various applications which will be
	a consequence of the abstract Theorem.\ref{ab.res}.
	As anticipated in the Introduction, we are interested in
	understanding in which manifolds one can prove results of
	almost global existence. We start by presenting in Subsection
	\ref{manifolds} three classes of manifolds for which  Hypothesis \ref{bourgain.abstract} holds for the Laplace-Beltrami
	operator; then, in Subsection \ref{models}, we consider some explicit
	PDEs on these manifolds. For these equations, 
	we can prove almost global existence. Finally, we recall the connection between the theory
	developed in the present paper and the theory developed in
	\cite{QN}, discussed in Subsection \ref{steep.sub}. In particular, we have shown that the validity of the Hypothesis
	\ref{bourgain.abstract} is automatic if the function ${h_L}$ is
	\emph{steep}, a well-known generic property, that turns out to be the key
	assumption for the validity of the classical Nekhoroshev theorem for
	finite dimensional Hamiltonian systems. 
	
	\subsection{Manifolds}\label{manifolds}
	We now exhibit three specific examples of compact manifolds $M$ where the Laplace-Beltrami operator $-\Delta_g$ fulfills Hypotheses \ref{integro} and \ref{bourgain.abstract}, namely $-\Delta_g$ is a quantum integrable system with Bourgain clustering of the eigenvalues.  In fact, all the examples treated in this section are globally integrable quantum systems for which the Laplacian is steep. For simplicity, we will use the notation $-\Delta_g = \Delta$.
	\begin{itemize}
		\item [1.] \textbf{Flat tori}. Given a basis $u_1,...,u_d$ of $\R^d$ we define a maximal dimensional
		lattice $\Gamma\subset \R^d$ by 
		$$
		\Gamma:=\left\{x=\sum_{j=1}^{d}m_ju_j\ ,\quad m_j\in\Z\right\}
		$$
		and the corresponding maximal dimensional torus
		$T^d_{\Gamma}:=\R^d/\Gamma$. By using in $\R^d$ the
		basis $u_j$, one is reduced to the standard torus
		$\T^d$ endowed by a flat metric. Then the actions are
		given by the operators $-\im \partial_j$. Bourgain's
		clustering property holds for the eigenvalues of the
		Laplacian, as proved in \cite{bourgain2004green,BM}.
		
		\item[2.] \textbf{Rotation invariant surfaces}. Consider a real analytic
		function $f:\R^3\to\R$, invariant by rotations around the $z$ axis,
		and assume it is a submersion at $f(x,y,z)=1$ . Denote by $M$ the
		level surface $f(x,y,z)=1$ and endow it by the natural metric $g$
		induced by the Euclidean metric of $\R^3$.  We introduce coordinates in $M$ as
		follows: let $N$ and $S$ be the
		north and the south poles (intersection of $M$ with the $z$ axis) and denote by
		$\theta\in[0,L]$ the curvilinear abscissa along the geodesic given by
		the intersection of $M$ with the $xz$ plane; we orient it as going
		from $N$ to $S$ and consider also the cylindrical coordinates
		$(r,\phi,z)$ of $\R^3$: on $M$ we will use the coordinates
		$$
		(\theta,\phi)\in (0,L)\times(0,2\pi)\ .
		$$
		Using such coordinates, one can write the
		equation of $M$ {by} expressing the cylindrical coordinates of a point in
		$\R^3$ as a function of $(\theta,\phi)$ getting
		$$
		M=\left\{ (r(\theta),\phi,z(\theta))\ ,\quad (\theta,\phi)\in
		(0,L)\times\T^1\right\} \,.
		$$
		Since $\theta$ is a geodesic parameter, the metric takes the form
		$$
		g=r^2(\theta)d\phi^2+d\theta^2\ .
		$$
		We assume that the function $r(\theta)$ has only one
		critical point $\theta_0\in(0,L)$. The fact that the
		Laplacian is a globally integrable quantum system was
		proved by Colin de Verdiere \cite{CdV}. The property of
		Bourgain's clustering was already proved by Delort
		\cite{Del} (see also \cite{QN}).  
		
		\item[3.] \textbf{Lie Groups and Homogeneous
			spaces}. This is a case covered by our theory. In the
		case where $M$ is a compact, simply connected Lie group, then
		property \ref{integro} of the
		Laplacian of $M$ was proved in detail in \cite{QN}. Property \ref{bourgain.abstract} was proved in \cite{BP, BCP} for arbitrary compact Lie groups (see also \cite{QN}). The extension
		to homogeneous spaces can be obtained by using the same tools used in
		\cite{QN} for compact Lie groups (see \cite{BFM22}). 
	\end{itemize}

	{\it From now on we assume that $M$ is one of the previous manifolds.} 
	%	All the results of the subsections \ref{Schrodinger potenziale}--\ref{section beam} 
	%	will be proved in Section \ref{pro.appli}, as a consequence of the Abstract Theorem \ref{ab.res}.
	\subsection{Models}\label{models}
	We now give three concrete examples of equations where $M$ is one of the previous manifolds, and $H_L$ is a function of the Laplace-Beltrami operator $-\Delta_g$ on $M$. %, for which we are able to prove hypotheses \ref{integro}--\ref{bourgain.abstract}.
	In particular, we choose $H_L$ to be a superlinear function of the Laplacian, hence also \ref{asymptotic} readily follows. Finally, in our models $H_L$ is parameter dependent; such parameters will be used in order to fulfill the more technical assumption \ref{hypoNonRes}.
	\subsubsection{Schr\"odinger equation with spectral multiplier}\label{Schrodinger potenziale}
	Our first result concerns a nonlinear Schr\"odinger equation with a
	spectral multiplier, according to \eqref{spec.mult} below. To define it, we start by defining the spectral projector. 
	\begin{defn}\label{proje}
		For $a\equiv(a^1,\dots,a^d)\in\Lambda$, 
		\begin{gather*}
			\Pi_a  \coloneqq \Pi_{a^1}\dots\Pi_{a^d}
		\end{gather*}
		where $\Pi_{a^j}$         is the orthogonal projector on the
		eigenspace of $I_j$ with eigenvalue $a^j$.
	\end{defn}
	Given $u\in L^2(M,\C)$, consider its spectral
	decomposition, 
	\begin{gather*}
		u = \sum_{a\in\Lambda} \Pi_a u \,,
	\end{gather*}
	and let $V=\left\{V_{a}\right\}_{a\in\Lambda}$ with 
	$V_{a}\in\R$ be a sequence; correspondingly, we define
	a spectral multiplier by
	\begin{equation}
		\label{spec.mult}
		V\sharp u:=\sum_{a}V_{a}\Pi_au \,.
	\end{equation}
	In the following, we will assume that $V$ belongs to the space
	\begin{equation}
		\label{spaz.V}
		\cV_n:=\left\{
		V=\left\{V_{a}\right\}_{a\in\Lambda}\ :\ 
		V_{a}\in\R\ , |V_{a}|\langle
		a\rangle^n\in\left[-\frac{1}{2},\frac{1}{2}\right]  \right\}\ ,
	\end{equation}
	that we endow by the product measure. 
	
	\medskip
	\noindent
	Consider the Cauchy problem 
	\begin{gather}\label{convoluzione}
		\begin{cases}
			\ii \partial_t \psi = - \Delta \psi + V \sharp \psi +
			f(x,|\psi|^2)\psi , \quad x \in M\,,
			\\
			\psi(0)=\psi_0
		\end{cases}	
	\end{gather}
	where 	$V\in \cV_n$ and the nonlinearity $f$ is of class
	$C^\infty(M\times\cU,\R)$,  $\cU\subset \R$ being a neighbourhood of
	the origin, and fulfills $f(x,0)=0$,
	$\partial_yf(x,y)|_{y=0}=0$.
	
	\begin{theorem}
		\label{nsl.esti}
		There exists a set $\cV^{(res)}\subset \cV_n$ of zero measure,
		s.t., if $V\in \cV\setminus \cV^{(res)}$ the following holds.
		For any $r\in\N$, there exists $ s_r>d/2$ such that for any
		$s>s_r$ there is 
		$\epsilon_s>0$ and $C>0$ such that if the initial datum for
		\eqref{convoluzione} belongs to $H^s$ and fulfills
		$\epsilon:=\left|\psi_0\right|_s<\epsilon_s$ then
		\begin{equation*}
			\left\|\psi(t)\right\|_s\leq C\epsilon\,,\quad \text{for\ all}\,\quad 
			\left|t\right|\leq C\epsilon^{-r}\,.
		\end{equation*}
	\end{theorem}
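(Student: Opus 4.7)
My plan is to deduce Theorem \ref{nsl.esti} from the abstract Theorem \ref{ab.res} applied to the linear operator $H_L := -\Delta + V\sharp$ and to the Hamiltonian associated with the nonlinearity $f(x,|\psi|^2)\psi$. The exceptional set $\cV^{\res}$ will be produced as the locus of parameters for which the non-resonance Hypothesis \ref{hypoNonRes} fails; the remaining three hypotheses on $H_L$ are essentially automatic from the ambient geometry selected in Section \ref{manifolds}.

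Hypothesis \ref{integro} holds because on each of the three manifold classes $-\Delta = h_0(I_1,\dots,I_d)$ for some $h_0$, and $V\sharp$ acts as multiplication by $V_a$ on each joint eigenspace $\mathrm{Range}(\Pi_a)$; hence $H_L = h_L(I_1,\dots,I_d)$ with $h_L(a) := h_0(a) + V_a$ on $\Lambda$. Hypothesis \ref{asymptotic} with $\beta = 2$ follows from the Weyl/Freudenthal-type asymptotics $|h_0(a) - C_1|a|^2| \leq C_2$ (up to the usual shift of the index labelling into $\kappa$), together with the uniform bound $|V_a| \leq 1/2$. For Hypothesis \ref{bourgain.abstract}, the Bourgain partition of $\Lambda$ associated to $-\Delta$ (recalled in Section \ref{manifolds}) is stable under the perturbation $V$: the term $|a-b|$ is untouched, and $\bigl||h_L(a)-h_L(b)|-|h_0(a)-h_0(b)|\bigr|\leq|V_a|+|V_b|\leq 1$ is absorbed into $C_\delta$ after adjusting finitely many low-frequency clusters.

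The core of the argument is the verification of Hypothesis \ref{hypoNonRes} for generic $V$, via a standard small-denominator measure argument in the spirit of \cite{DS0}. Endow $\cV_n$ with the product of uniform probability measures on the factors. For $\mathbf{A} = (A_1,\dots,A_r) \in \Lambda_e^r \setminus W$ and $N := \max_j |a_j|$, decompose
\begin{equation*}
\sum_{j=1}^r \sigma_j \omega_{a_j} = \sum_{j=1}^r \sigma_j h_0(a_j) + \sum_{a\in\Lambda} N(a)\,V_a,\qquad N(a):=\sum_{j:\,a_j=a}\sigma_j.
\end{equation*}
The key combinatorial observation is that some coefficient $N(a^*)$ is necessarily nonzero: if $N(a)=0$ for every $a$, then each fibre $\{j:a_j=a\}$ carries equal numbers of $+$ and $-$ signs, and pairing the signs within each fibre produces a permutation placing $\mathbf{A}$ in $W$, contrary to the assumption. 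Fubini on the $V_{a^*}$-slab $[-\langle a^*\rangle^{-n}/2,\langle a^*\rangle^{-n}/2]$ then bounds the probability of $\{|\sum_j \sigma_j \omega_{a_j}| < \gamma N^{-\tau}\}$ by $C\gamma\,N^{n-\tau}$. Since the number of $\mathbf{A}$ with prescribed $N$ grows polynomially in $N$ of degree at most $dr$, choosing $\tau=\tau(r)$ large enough (say $\tau>dr+n+1$) makes the total bad measure $O(\gamma)$; intersecting over $\gamma\to 0$ and taking a countable union over $r\geq 3$ yields $\cV^{\res}$ of zero measure.

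Finally, the nonlinearity $f(x,|\psi|^2)\psi$ is the $\bar u$-gradient of
\begin{equation*}
P(u,\bar u) = \int_M G(x,u(x)\bar u(x))\,dx,\qquad G(x,y):=\int_0^y f(x,z)\,dz,
\end{equation*}
which fits \eqref{nonlin.1} by taking $F(N,u,\bar u,x) := G(x,u\bar u)$, trivially independent of the first argument $N$. Smoothness of $f$ gives item (1) of Hypothesis \ref{Nonline}, the real-valuedness of $G$ gives item (2), and the vanishing conditions $f(x,0)=\partial_y f(x,0)=0$ force $G(x,y)=O(y^3)$, so $P$ vanishes to order $6$ at the origin, giving item (3). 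The abstract Theorem \ref{ab.res} then delivers the stated lifespan and norm bound. The only genuinely nontrivial step is the measure estimate; the main obstacle is keeping the combinatorial bookkeeping tight enough that $\tau$ and the Sobolev threshold $s_r$ both grow tamely in $r$.
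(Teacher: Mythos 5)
Your proposal follows essentially the same route as the paper's Section 7.1: verify Hypotheses \ref{integro}, \ref{asymptotic}, \ref{bourgain.abstract} directly from the properties of $-\Delta$ and the decay of $V$, reduce Hypothesis \ref{hypoNonRes} to a small-divisor measure estimate on the parameters $V_a$, and cast the nonlinearity in the form \eqref{nonlin.1} so that Theorem \ref{ab.res} applies. One small point in your favor: you make explicit the combinatorial fact that $\mathbf A\notin W$ forces some coefficient $N(a^*)\neq 0$, which is exactly what guarantees that the frequency vector $k$ in the paper's Lemma \ref{tutto.res} is nonzero (so that Lemma \ref{meas.nls} applies); the paper leaves this implicit.
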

	\subsubsection{Sobolev stability of ground state for NLS equation}
	Our second result is the long time Sobolev stability of the ground state solution of  the nonlinear Schr\"odinger equation
	\begin{gather}\label{nls}
		i\dot \psi = -\Delta \psi +
		f\left(\left|\psi\right|^2\right) \psi\ ,
	\end{gather}
	with $f\in C^\infty(\cU;\R)$, $\cU\subset\R$ an open
	neighborhood of the origin, and $f$ having a zero of order at
	least one at the origin. 
	It is well known that for any $p_0 \in \cU \cap \R^+$ equation \eqref{nls} 
	has a solution given by a plane wave of the form 
	\begin{gather*}%\label{ground}
		\psi_{*}(t) = \sqrt{p_0} e^{-i\nu t}\,,
	\end{gather*}
	provided $\nu = f(p_0)$. Denote by $\bar\lambda$ the lowest
	non vanishing eigenvalue of $-\Delta$, then we will prove the
	following result.
	\begin{theorem}\label{main.plane}
		Assume there exists  $\bar{p}_0>0$ such that $ \bar\lambda + 2 f(p_0) > 0$
		for any $p_0 \in (0,\bar{p}_0]$.	Then there exists a zero measure set $\cP$ such that
		if $p_0\in (0,\bar{p}_0]\setminus \mathcal{P}$ then for any $r\in \n$ there
		exists $s_r$ for which the following holds. For any $s\ge s_r$, there
		exists constants $\epsilon_0$ and $C$ such that if the initial datum $\psi_0$
		fulfills
		\begin{gather*}
			\norm{\psi_0}_0^2 = p_0,\qquad \inf_{\alpha\in\T}\norm{\psi_0 - \sqrt{p_0}e^{-i\alpha}}_s = \epsilon \le \epsilon_0\,,
		\end{gather*}
		then the corresponding solution fulfills
		\begin{equation*}%\label{stab}
			\inf_{\alpha\in\T}\norm{\psi(t) - \sqrt{p_0}e^{-i\alpha}}_s
			\le C \epsilon \qquad \forall\, |t|\le C \epsilon^{-r}
		\end{equation*}
		with $\psi(0) = \psi_0$.
	\end{theorem}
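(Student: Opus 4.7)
The plan is to reduce the theorem to an application of Theorem~\ref{ab.res} after modulating out the $U(1)$-phase symmetry. I would first pass to the rotating frame $\psi(t,x) = e^{-i\nu t - i\alpha(t)}(\sqrt{p_0}+w(t,x))$, with $\nu = f(p_0)$ and $\alpha(t)$ chosen so that $w$ stays symplectically orthogonal to the orbit of the ground state; equivalently, the $L^2$-mass conservation $\|\psi\|_0^2 = p_0$ ties the imaginary part of the zero-mode of $w$ to $\|w\|_0^2$. After this reduction, $w$ satisfies an autonomous Hamiltonian equation of the form $i\dot w = H_L w + P'(w,\bar w)$; expanding $f(|\sqrt{p_0}+w|^2)(\sqrt{p_0}+w) - f(p_0)(\sqrt{p_0}+w)$ around $w=0$ shows that $H_L$ acts on the pair $(w,\bar w)$ as $-\Delta w + \mu(w+\bar w)$ for a constant $\mu$ depending on $p_0$, and that $P'$ is at least cubic and of the form required by Hypothesis~\ref{Nonline} with scalar invariant $N=\int|\sqrt{p_0}+w|^2\,dx$.

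Next I would verify the hypotheses of Theorem~\ref{ab.res} for this reduced system. Diagonalizing $H_L$ via a Bogoliubov transformation on each eigenspace of $-\Delta$ yields frequencies $\omega_a = \sqrt{\lambda_a(\lambda_a+2\mu)}$ with $a\in\Lambda\setminus\{0\}$, where $\lambda_a$ is the eigenvalue of $-\Delta$ labelled by $a$. The positivity assumption of the theorem guarantees $\omega_a\in\R$ for every nonzero mode. Since the Bogoliubov transformation is symplectic and preserves each joint eigenspace of the quantum actions used for $-\Delta$, $H_L$ is a spectral function of the same actions, so Hypothesis~\ref{integro} is inherited from the underlying manifold. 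Hypothesis~\ref{asymptotic} holds with $\beta=2$ since $\omega_a = \lambda_a + \mu + O(\lambda_a^{-1})$ for large $|a|$, and Hypothesis~\ref{bourgain.abstract} is also inherited because $|\omega_a-\omega_b|$ differs from $|\lambda_a-\lambda_b|$ by a bounded amount on each dyadic cluster.

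The main obstacle is the non-resonance condition~\ref{hypoNonRes} for the Bogoliubov frequencies, since here only the scalar parameter $p_0$ is available, in contrast with Theorem~\ref{nsl.esti} where the infinite-dimensional family $V$ of multipliers provided ample freedom. I would proceed by a classical Pyartli-type argument: for each fixed $r$ and multi-index $\mathbf A\notin W$, the function $p_0\mapsto\sum_j\sigma_j\omega_{a_j}(p_0)$ is real-analytic, and because $\mu\mapsto\sqrt{\lambda(\lambda+2\mu)}$ is a strictly non-affine function of $\mu$, one obtains a uniform lower bound on one of its derivatives up to order $r$. Summation over multi-indices, weighted by the Bourgain cluster structure to ensure convergence, gives an exceptional set whose measure is bounded by a positive power of $\gamma$; letting $\gamma\to 0$ and taking a countable union over $r$ defines the zero-measure set $\mathcal P$. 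Once~\ref{hypoNonRes} is established for $p_0\notin\mathcal P$, Theorem~\ref{ab.res} applied to $w$ yields $\|w(t)\|_s\leq C\epsilon$ for $|t|\leq C\epsilon^{-r}$; since $\psi(t)-\sqrt{p_0}e^{-i\nu t-i\alpha(t)} = e^{-i\nu t-i\alpha(t)}w(t)$, taking the infimum over $\alpha\in\T$ yields the claimed estimate.
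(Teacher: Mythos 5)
Your overall skeleton matches the paper's: reduce to a Hamiltonian of the abstract form \eqref{HL.0}, diagonalize the quadratic part by a Bogoliubov-type symplectic change of variables to get frequencies of the form $\sqrt{\lambda_a^2+2f(p_0)\lambda_a}$, inherit Hypotheses \ref{integro}, \ref{asymptotic}, \ref{bourgain.abstract} from the Laplacian, and invoke the abstract result (in fact Theorem \ref{LC} rather than Theorem \ref{ab.res}, since after diagonalization the perturbation is only known to be a function with localized coefficients). One remark on the reduction: the paper performs it with the symplectic Faou--Gauckler--Lubich coordinates \eqref{gro.2} (Lemma \ref{symplettiche}), in which $\varphi$ has zero average and $(p,\theta)$ is a conjugate pair, so that at fixed $p=p_0$ one lands exactly on an autonomous Hamiltonian system of the required form; a rotating-frame ansatz with a modulated phase $\alpha(t)$ fixed by an orthogonality condition does not by itself yield such a system nor remove the zero mode, so this step should be replaced by, or justified through, the symplectic reduction. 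This point is fixable.

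The genuine gap is your verification of the non-resonance Hypothesis \ref{hypoNonRes}. Since only the scalar parameter $p_0$ is available, the claim that strict non-affineness of $\mu\mapsto\sqrt{\lambda(\lambda+2\mu)}$ yields ``a uniform lower bound on one of the derivatives up to order $r$'' of $p_0\mapsto\sum_j\sigma_j\omega_{a_j}(p_0)$ is precisely the point that fails. Writing $m=2f(p_0)$, one has $\partial_m^k\omega_a\sim c_k\lambda_a^{1-k}$, so bounding some derivative of the combination from below requires a Vandermonde-type nondegeneracy in the quantities $\lambda_{a_j}^{-1}$ (this is the determinant appearing around \eqref{taylor2}); that bound degenerates when distinct eigenvalues $\lambda_{a_j}$ are close to one another, and on the manifolds considered there is no polynomial separation of distinct Laplace eigenvalues, so no lower bound uniform up to a fixed power of $\max_j|a_j|$ (as \ref{hypoNonRes} demands) can be extracted this way. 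Moreover, a multi-index $\bA\notin W$ may still contain pairs of equal frequencies with opposite signs, so any nondegeneracy can hold only after grouping and cancelling such pairs; this is the role of condition $Z$, which your sketch does not address. The paper circumvents the missing uniformity by proving only the qualitative statement that $y\mapsto\tf(x,y)$ has isolated zeros for every $x$ satisfying condition $Z$ (Lemma \ref{discrete}, via the Vandermonde determinant of the Taylor coefficients at $y=0$), and then obtaining the quantitative measure estimate, uniformly in the rescaled frequencies $x$, from the subanalytic Theorem 5.1 of Delort--Szeftel (Theorem \ref{analytic}), reproducing Sect.~5.2 of \cite{DS0} for the summation over multi-indices. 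Without that input, or an equivalent uniform {\L}ojasiewicz-type inequality, the summation over multi-indices you propose cannot be closed and the zero-measure set $\cP$ is not obtained.
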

	\begin{remark}
		Note that if $f$ is a positive function (the so-called \emph{defocusing} case) there is no restriction in the $L^2$ norm of the initial datum, since  $\lambda + 2f(p_0) > 0$ for any $p_0$.
	\end{remark}
	\subsubsection{Beam equation}\label{section beam}
	
	The third result concerns the beam equation
	\begin{equation}
		\label{beam}
		\psi_{tt}+\Delta^2\psi+m \psi=-\partial_\psi F(x,\psi)\ ,
	\end{equation}
	with $F\in C^\infty(M\times\cU)$, $\cU\subset\R$ being a neighbourhood of
	the origin, and $m>0$ a real positive parameter that we will call
	mass. We will assume $F$ to have a zero of order at least 2 at $\psi=0$. The precise statement of the main theorem is the following one. 
	\begin{theorem}
		\label{main.beam}
		There exists a set of zero measure ${\cal M}^{(res)}\subset \R^+$ such that if
		$m\in \R^+ \setminus {\cal M}^{(res)}$ then for all $ r\in\N$ there
		exist $s_r>d/2$ such that the following holds. 
		For any $s>s_r$ there exist $ \epsilon_{s}, C>0$ 
		such that
		if the initial datum for \eqref{beam} fulfills
		\begin{equation*}%\label{beam.dat}
			\epsilon:=\left\|(\psi_0,\dot \psi_0)\right\|_s:=
			\left\|\psi_0\right\|_{H^{s+2}}+
			\left\|\dot \psi_0\right\|_{H^{s}}<\epsilon_{s}\ , 
		\end{equation*}
		then the corresponding solution satisfies 
		\begin{equation*}%\label{stima.sol}
			\left\| \left(\psi(t),\dot \psi(t)\right)\right\|_{s}\leq
			C\epsilon\,,\quad  \text{for}\quad \left|t\right|\leq C\epsilon^{-r}\,.
		\end{equation*}
	\end{theorem}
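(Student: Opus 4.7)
The plan is to reduce Theorem~\ref{main.beam} to the abstract result, specifically to its localized-coefficient extension Theorem~\ref{LC}, which is necessary here because of the smoothing factor $\Lambda_m^{-1/2}$ that enters the nonlinearity after diagonalization. First I would rewrite \eqref{beam} as a complex first-order Hamiltonian system. Setting $\Lambda_m := (\Delta^2+m)^{1/2}$, which is positive selfadjoint for $m>0$, and
\begin{equation*}
u := \tfrac{1}{\sqrt{2}}\bigl(\Lambda_m^{1/2}\psi + \im\,\Lambda_m^{-1/2}\dot\psi\bigr),
\end{equation*}
the equation becomes $\im\,\dot u = H_L u + \nabla_{\bar u}P(u,\bar u)$ with $H_L = \Lambda_m$ and
\begin{equation*}
P(u,\bar u) = \int_M F\bigl(x,\tfrac{1}{\sqrt{2}}\Lambda_m^{-1/2}(u+\bar u)\bigr)\,dx.
\end{equation*}
A short computation shows that, up to a shift of indices, $\|u\|_{H^s}$ is equivalent to the beam-norm $\|\psi\|_{H^{s+2}}+\|\dot\psi\|_{H^s}$, so the conclusion of Theorem~\ref{main.beam} follows from the corresponding estimate $\|u(t)\|_s\leq C\epsilon$.

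Next I would verify the abstract hypotheses. Since $M$ is one of the manifolds of Subsect.~\ref{manifolds}, the operator $-\Delta_g$ is a globally integrable quantum system with actions $I_1,\dots,I_d$ and some spectral function $h_\Delta$; then $H_L$ is the spectral function $h_L(a)=\sqrt{h_\Delta(a)^2+m}$ of the same actions, so \ref{integro} is immediate. From $h_\Delta(a)=C|a|^2+O(1)$ we get $\omega_a(m)=C|a|^2+O(1)$, hence \ref{asymptotic} holds with $\beta=2$. Hypothesis \ref{bourgain.abstract} is known for $-\Delta_g$ on the listed manifolds (either directly as in Subsect.~\ref{manifolds}, or via steepness and Theorem~\ref{QNtoNoi}); composition with the smooth, uniformly bi-Lipschitz map $\lambda\mapsto\sqrt{\lambda^2+m}$ preserves both the dyadic property of the clusters and the clustering separation, so \ref{bourgain.abstract} transfers to $\Lambda_m$.

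The main obstacle is Hypothesis~\ref{hypoNonRes}, to be verified for $m$ in the complement of a zero-measure set ${\cal M}^{(res)}\subset\R^+$. For a multi-index $\textbf{A}=(A_1,\dots,A_r)\notin W$ one has
\begin{equation*}
\partial_m\Bigl(\sum_{j=1}^r \sigma_j\omega_{a_j}(m)\Bigr) = \sum_{j=1}^r \frac{\sigma_j}{2\omega_{a_j}(m)},
\end{equation*}
and, more generally, the $k$-th derivative is a signed combination of $\omega_{a_j}(m)^{1-2k}$. The definition of $W$ is tailored precisely so that, for $\textbf{A}\notin W$, the full Taylor series in $m$ of $\sum\sigma_j\omega_{a_j}(m)$ cannot vanish: either $r$ is odd and the leading term $\sum\sigma_j$ forces non-degeneracy, or else there is no perfect sign-reversing pairing of indices within the same cluster, and the distinct values of $\omega_{a_j}$ across different clusters (separated by $\gtrsim n^{-d/\beta}$ by Lemma~\ref{parti_sigma}) yield a non-trivial high-order derivative with quantitative lower bound in terms of $\max|a_j|$. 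A Russmann/van~der~Corput argument in the spirit of \cite{DS0} then produces a set of $m$ of measure $\leq \gamma(\max|a_j|)^{-\tau'(r)}$ outside of which $|\sum\sigma_j\omega_{a_j}|\geq\gamma(\max|a_j|)^{-\tau(r)}$. Choosing $\tau'$ large enough that summation over multi-indices converges, and letting $\gamma\to 0$, delivers the desired exceptional set.

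Finally, $P$ is not in the pointwise form of Hypothesis~\ref{Nonline} because of the $\Lambda_m^{-1/2}$, so I would invoke Theorem~\ref{LC} and verify that $P$ has localized coefficients (Def.~\ref{locafunc}). Expanding $F(x,\cdot)$ in Taylor series around zero and substituting $\tfrac{1}{\sqrt{2}}\Lambda_m^{-1/2}(u+\bar u)$, $P$ becomes a convergent sum of monomials in $\Lambda_m^{-1/2}u,\Lambda_m^{-1/2}\bar u$ with smooth $x$-dependent coefficients. Since $\Lambda_m^{-1/2}$ is diagonal in the joint eigenbasis of the actions with eigenvalues $\sim\langle a\rangle^{-1}$, projecting on that basis reduces the localization of coefficients to the bilinear product estimate on eigenfunctions sketched in the Scheme of the Proof (following \cite{DS,BamMon}), with an additional beneficial decay provided by the negative-order factor. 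Once all hypotheses are verified, Theorem~\ref{LC} yields $\|u(t)\|_s\leq C\epsilon$ on the time-scale $|t|\leq C\epsilon^{-r}$, and the norm equivalence concludes the proof.
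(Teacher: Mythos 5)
Your reduction is exactly the paper's: the same complex variables $u=\tfrac1{\sqrt2}\bigl((\Delta^2+m)^{1/4}\psi+\im(\Delta^2+m)^{-1/4}\dot\psi\bigr)$, the linear part $H_L=\sqrt{\Delta^2+m}$, verification of \ref{integro}, \ref{asymptotic}, \ref{bourgain.abstract} by perturbing the corresponding properties of the Laplacian, and treatment of the nonlinearity through the localized-coefficients framework (indeed you are more explicit than the paper about why the smoothing factor $(\Delta^2+m)^{-1/4}$ forces one to use Theorem \ref{LC} rather than the literal form of Hypothesis \ref{Nonline}; that part of your argument is sound, since the multiplier is diagonal in the joint eigenbasis and only improves the localization bounds).

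The genuine gap is in your verification of \ref{hypoNonRes}. You assert that for $\textbf{A}\notin W$ some $m$-derivative of $\sum_j\sigma_j\omega_{a_j}(m)$, $\omega_a(m)=\sqrt{\lambda_a^2+m}$, admits a lower bound which is \emph{polynomially small in} $\max_j|a_j|$, and you then run a R\"ussmann-type measure estimate per multi-index. Non-vanishing of some derivative is indeed guaranteed (after cancelling opposite-sign terms with equal eigenvalues, which would otherwise be paired inside the same $\Sigma_n$, one is left with a nontrivial combination $\sum_k c_k(\mu_k^2+m)^{1/2-n}$ with distinct $\mu_k$, and the Vandermonde structure of the derivatives gives non-degeneracy — this is the analogue of Lemma \ref{discrete}). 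But the \emph{quantitative} lower bound you need does not follow: the Vandermonde determinant is controlled by products of differences $|\mu_i^2-\mu_j^2|$ of distinct Laplacian eigenvalues, and on the manifolds covered by the theorem (e.g.\ irrational flat tori) distinct eigenvalues can be arbitrarily close, with no lower bound polynomial in $|a|$. So the per-multi-index bad set cannot be estimated by $\gamma(\max_j|a_j|)^{-\tau'}$ by this naive argument, and the summation over multi-indices does not close. This degeneracy is precisely why the paper does not argue this way: it invokes the subanalytic measure Theorem \ref{analytic} of Delort--Szeftel, whose bad-set estimate carries the weight $|\rho(x)|^{N\delta}$ quantifying the near-degeneracy, proves only the qualitative discreteness-of-zeros statement (the Vandermonde computation), and then reproduces the nontrivial covering/summation argument of Sect.~5.2 of \cite{DS0}. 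Your plan either has to import that machinery (in which case say so and identify the analogue of $\tf$ and of condition $Z$ for the frequencies $\sqrt{\lambda_a^2+m}$), or supply a genuinely new argument for the uniform derivative lower bound; as written, the step fails. A minor additional inaccuracy: your remark that for $r$ odd ``the leading term $\sum_j\sigma_j$ forces non-degeneracy'' has no meaning on a bounded interval of masses; the correct dichotomy is between the case where the largest frequency dominates the sum and the case handled by the measure estimate.
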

	\begin{remark}
		\label{extension}
		The result holds also for the case of Hamiltonian
		nonlinearities which are functions also of the first
		and second
		derivatives of $\psi$.
	\end{remark}

	\section{Functional setting}
	
	\subsection{Phase space}
	
	Let $M$ be an arbitrary manifold and assume that there exists a system
	of quantum actions $(I_1,...,I_d)$ (see Def. \ref{glob.int.quant});
	correspondingly we define the spectral projectors $\Pi_a$ on points $a$ 
	of their joint spectrum as in Def. \ref{proje}.

	\begin{definition}\label{def.Hs}
		For any $s\geq0$, the space $\cH^s:= \cH^s(M)$ is the space of the functions
		$u\in L^2(M,\C)$ s.t. 
		\begin{gather}\label{equiNorm}
			\norm{u}_s^2 \coloneqq \sum_{a\in\Lambda}(1+|a|)^{2s}\norm{\Pi_a u}_0^2<\infty\,.
		\end{gather}
		For $s<0$  $\cH^s$ is the
		completion of $L^2$ in the norm \eqref{equiNorm}.
	\end{definition}
	
	\begin{remark}
		\label{sob.1}
		By (i.) and (iii.) of Def. \ref{glob.int.quant}, for any $s$ the norm
		\eqref{equiNorm} is equivalent to the norm 
		\begin{gather*}
			\norm{\left(\mathds{1} - \Delta\right)^{\frac{s}{2}} u }_{L^2(M,\mathbb{C})}\,,
		\end{gather*}
		so that the spaces $\cH^s$ of Def \ref{def.Hs} are equivalent to the standard
		Sobolev spaces $H^s(M, \C)$.
	\end{remark}
	It is also useful to introduce the spaces 
	\begin{gather*}
		\H^{-\infty} \coloneqq \bigcup_s \H^s, \qquad
		\H^\infty \coloneqq \bigcap_s \H^s \,.
	\end{gather*}
	In the following, we will work in the complex extension of the phase
	space, which amounts to considering $\bar u$ as a variable
	independent of $u$, where the bar denotes the complex conjugate.
	\begin{defn}
		For $s\in\r$, define $\cH^s_e \coloneqq \H_e^s(M) \coloneqq \H^s(M) \times
		\H^s(M)$. For $u\equiv(u_+,u_-)\in\cH^s_e$ we will use the norm
		\begin{gather*}%\label{doubleNorm}
			\norm{u}_{\H_e^s(M)}^2\equiv
			\norm{(u_+,u_-)}_{\H_e^s(M)}^2
			\coloneqq 
			\left\|u_+\right\|_s^2+\left\|u_-\right\|_s^2\,.
		\end{gather*}
	\end{defn}
	\noindent
	Correspondingly, for $u\in\H_e^s$ and $A\equiv(a,\sigma)\in\Lambda_e$, we define 
	\begin{gather*}
		\Pi_A u \coloneqq \Pi_{(a,\sigma)} (u_+,u_-) \coloneqq \Pi_a u_\sigma\,,
	\end{gather*}
	where $\Pi_a$ is given in Definition \ref{proje}. For
	$u\in\cH_e^s$ one has again the spectral decomposition
	\begin{equation*}%\label{espand}
		u=\sum_{A\in\Lambda_e}\Pi_Au\, .
	\end{equation*}
	For $A\in\Lambda_e$ we will denote
	\[
	|A|\equiv|(a,\sigma)|:=|a|\,.
	\]
	Given an element in $\H_e^s$, we define the	involution that we use in order to identify the subspace of ``real functions'', on which $u_+ = \overline{u}_-$.
	\begin{defn}\label{reals.u}
		Let $u\equiv(u_+,u_-)\in \H^{-\infty}_e$, we define
		\[
		I(u):=(\overline{u_-},\overline{u_+}) 
		\] 
		with the bar denoting the
		complex conjugate.
		If $I(u) = u$
		%		\begin{align}
			%			I(u) = u
			%		\end{align}
		we will say that $u$ is \emph{real}. 
	\end{defn}
	Correspondingly, it is useful to define, for $A\in\Lambda_e$,
	\begin{gather*}
		\bar{A} \equiv \overline{(a,\sigma)} \coloneqq (a,-\sigma)\,,
	\end{gather*}
	so that one has
	\begin{gather*}
		\Pi_A\left( I(u)\right) =\overline{\Pi_{\bar{A}}u} \,.
	\end{gather*}
	\begin{definition}\label{pallacentrata}
		We will denote the ball centered in the origin of $\H^s_e$ of radius $R$ by
		\begin{gather*}
			\mathcal{B}_R^s \coloneqq \left\{u\in\H^s_e\colon \norm{u}_s < R \right\}\,.
		\end{gather*}
	\end{definition}
	\subsection{Hamiltonian structure}
	Given a function $H\in C^\infty(\cO,\mathbb{C})$ with  
	$\cO\subset\H^s_e$ open for some $s$, we define the corresponding Hamiltonian vector field by
	\begin{gather*}
		\dot{u}_+ = i \grad_{u_-} H,\qquad \dot{u}_- = -i \grad_{u_+} H
	\end{gather*}
	where $\grad_{u_{\pm}}$ is the $L^2$-gradient with respect to
	$u_{\pm}$. Namely, it is  defined by the following identity:
	\begin{gather*}
		\text{d}_{u_+} H h_+ = \langle\grad_{u_+} H, h_+\rangle,\qquad \forall h_+\in \H^{\infty}(M)\,,
	\end{gather*}
	and similarly for $\grad_{u_-}$. 
	We will denote the vector field associated to $H$ by
	\begin{gather}\label{defHamVec}
		X_H \coloneqq \left(i\grad_{u_-} H, -i \grad_{u_+} H\right).
	\end{gather}
	The Poisson brackets of two functions are defined as follows.  
	\begin{defn}\label{Poissonbrack}
		Given two functions $f,g \in \mathcal{C}^\infty(\cO,
		\mathbb{C})$, with $\cO$ as above, we  define their Poisson brackets by
		\begin{gather*}
			\left\{f,g\right\}(u) \coloneqq \text{d}f(u) X_g(u )\,. 
		\end{gather*}
	\end{defn}
	\begin{remark}
		In general, the Poisson brackets of two functions can be
		ill-defined. In the framework developed in the next paragraph,  this fact does not
		happen and Poisson brackets will always be well defined, as guaranteed by Lemma \ref{Poisson}.
	\end{remark}

	\subsection{Polynomials with	localized coefficients}\label{locali}
	In order to prove the stability result of Theorem
	\ref{ab.res}, we shall work with nonlinearities $P$ more
	general than those allowed by Hypothesis \ref{Nonline}: they
	belong to a class which is a generalization of the
	\emph{polynomials with localized coefficients} introduced in
	\cite{DS,BamMon}. The generalization rests in the fact that the indexes
	labelling the components of a polynomial are here vectors
	$a\in\Lambda$ instead of scalar coefficients. This complicates
	considerably the proof of the fundamental lemmas.

	\medskip
	\noindent
	First, we recall that given a polynomial function $P$ of degree $r$ there
	exists a unique  $r$-linear symmetric function $\widetilde{P}$ such that
	\begin{equation*}%\label{4.4a}
		P(u)=\widetilde P(u,...,u).
	\end{equation*}
	In the following, we will measure the size of an index by the size of
	the corresponding frequency.  Precisely we give the following definition.

	\begin{defn}
		For any $a\in \Lambda$, we define
		\begin{gather}\label{newOrder}
			\len{a} \coloneqq \omega_a^{\frac{1}{\beta}}\,\,.
		\end{gather} 
		Moreover, for $A\in \Lambda_e$ we define  
		\begin{equation}\label{uovo2}
			\len{A} = \len{(a,\sigma)}\coloneqq \len{a}\,.
		\end{equation}
	\end{defn}
	
	\begin{remark}\label{equi.mod}
		By the asymptotic property of the frequency, there exist  $C_1,C_2>0$
		such that, for any $\Lambda\ni a\not=0$, one has the equivalence
		\begin{gather*}
			C_1 |a|\le \len a \le C_2 |a| \,.
		\end{gather*}
		As a consequence  
		the quantity
		\begin{gather*}%\label{newSobo}
			\norm{u}_s^2 \coloneqq \sum_{a\in\Lambda}(1 + \len{a})^{2s} \norm{\Pi_a u}_0^2\,
		\end{gather*}
		controls and is controlled by the $s$
		Sobolev norm. 
	\end{remark}
	
	\begin{remark}
		In the following, we often use the notation $ a \lesssim b$ 
		meaning that there exists a constant $C$, independent of the relevant parameters, 
		such that $a \le C b$. Moreover, we will write $a\sim b$ if $a\lesssim b$ and $b\lesssim a$. \\ 
		If we need to specify that the constant depends on a parameter, 
		say for example $s$, we will write $a\lesssim_s b$.
	\end{remark}
	
	\begin{definition}\label{mu.S1}
		Consider  $r\geq1$ and 
		a multi-index $\textbf{a}= (a_1,\dots,a_r)\in \Lambda^r$.
		
		\noindent
		$(i)$ We denote by $\tau_{ord}$ 
		the permutation of $(1,...,r)$ with the
		property that
		\begin{equation*}%\label{orde.1}
			\len{a_{\tau_{ord}(j)}}\geq
			\len{a_{\tau_{ord}(j+1)}}\,,\quad\forall j=1,...,r-1\,.
		\end{equation*}

		%\begin{defn}\label{mu.S}
		\noindent
		$(ii)$
		We define
		\begin{equation*}
			\begin{aligned}
				\mu(\textbf{a}) &\coloneqq \len{a_{\tau_{ord}(3)}}\,,
				\\
				S(\textbf{a}) & \coloneqq \mu(\textbf{a}) + |a_{\tau_{ord}(1)} - a_{\tau_{ord}(2)}|\,.
			\end{aligned}
		\end{equation*}
		We also  set
		$\mu(\textbf{A})\coloneqq\mu(\textbf{a})$ and $ S(\textbf{A})\coloneqq S(\textbf{a})$.
	\end{definition}
	We are now in the position to state the localization property fulfilled by the nonlinearity.
	\begin{definition}[Polynomial with localized coefficients]
		\label{d.4.2}
		$(i)$ Let $\nu\in[0,+\infty)$, $N\geq1$. We denote by  $L^{\nu,N}_r$
		the class of the polynomials  $F$ homogeneous of
		degree $r$, such that
		there exists $C_N$ s.t. 
		\begin{eqnarray}
			\label{e.4.2}
			\left|\widetilde F(\Pi_{A_1}u_1,...,\Pi_{A_r}u_r) \right|\leq C_N
			\frac{\mu(\bA)^{\nu+N}}{S(\bA)^{N}}
			\norm{\Pi_{A_1}u_1}_0...\norm{\Pi_{A_r}u_r}_0 \, ,
			\\
			\forall\,
			u_1,\dots,u_r \in\Ph\infty\ , \quad \forall \bA\in\Lambda_e^r\, .\nonumber
		\end{eqnarray}
		%We denote by $L^{\nu,N}_r$ the space of $r$-homogeneous polynomials for which \eqref{e.4.2} holds.\\		
		The smallest possible constant $C_N$ such that \eqref{e.4.2} holds defines a norm in
		$L^{\nu,N}_r$, precisely
		\begin{gather*}
			\norm{F}^{\nu,N} \coloneqq \sup_{\norm{u_1}_0 = 1 ,\dots, \norm{u_r}_0 = 1} 
			\sup_{A_1,\dots,A_r} |\tilde{F}(\Pi_{A_1}u_1,\dots,\Pi_{A_r}u_r)|
			\frac{S(\textbf{A})^N}{\mu(\textbf{A})^{N+\nu}}\, .
		\end{gather*}
		
		\noindent
		$(ii)$ We say that a polynomial $F$ has \emph{localized coefficients} if there exists
		$\nu\in[0,+\infty)$ and $N_0$, such that, for any
		$N\geq N_0$ one has $F\in L^{\nu,N}_{r}$. In this case we 
		write $F\in L_{r}:=\cup_{N_0}\cup_{\nu\geq0}\cap_{N\geq N_0}L_{r}^{\nu,N}$. 
	\end{definition}
	
	For non homogeneous polynomials, we use the following notation.
	\begin{defn}\label{sommadipoli.0}
		For $r<\bar r$, we define the space 
		\begin{gather*}%\label{union}
			L^{\nu,N}_{r,\br} \coloneqq \bigoplus_{l=r}^{\br}L^{\nu,N}_l \, .
		\end{gather*}
	\end{defn}
	
	\begin{definition}
		For $F\in L^{\nu,N}_r$ we define
		\begin{equation*}%\label{normaR}
			\left\| F\right\|^{\nu,N}_R:=\left\|F\right\|^{\nu,N}R^r\,.
		\end{equation*}
		For $F\in L^{\nu, N}_{r,\br}$, so that $F=\sum_{l=r}^{\br}F_l$
		with $F_l\in L^{\nu, N}_l$, we define 
		\begin{gather*}
			\norm{F}^{\nu,N}_{R}\coloneqq\sum_{l=r}^{\br}\norm{F_l}^{\nu,N}_{R}\, .
		\end{gather*}
	\end{definition}
	We also need the following definition:
	
	\begin{defn}[Function with localized coefficients]\label{locafunc}
		Let $s_0>0$ and consider a function
		$F\in C^{\infty}(\cO;\C)$, with $\cO\subset\Ph {s_0}$
		an open neighbourhood of the origin.
		
		\noindent
		$F$
		is said to have localized coefficients if both the
		following properties hold:
		\begin{itemize}
			\item all the monomials of the Taylor expansion of $F$ at
			the origin have localized coefficients.
			\item For any $s>0$ large enough there
			exists an open neighbourhood of the origin
			$\cO_s\subset\Ph s$ s.t. $X_F$ belongs to $\mathcal{C}^\infty(\cO_s,\H^s_e)$.
		\end{itemize}
	\end{defn}
	
	\begin{definition}
		\label{real} A function
		$F\in C^{\infty}(\cO;\C)$, with $\cO\subset\Ph {s_0}$
		an open neighbourhood of the origin will be said to be
		real for real $u$ if $F(u)\in\R$ whenever $u=I(u)$ (see Def. \ref{reals.u}).
	\end{definition}
	
	Then (see Theorem \ref{lo.generale}), we will
	show that a non-linearity of the form described in Hypothesis
	\ref{Nonline} is a function with localized coefficients.

	It is also useful to extend the definition to polynomial maps taking value in $\Ph s$. 
	Given a polynomial map $X$ of degree $r$, we still denote with $\widetilde{X}$ 
	the unique  $r$-linear symmetric function such that
	\begin{equation*}
		X(u)=\widetilde X(u,...,u)\,.
	\end{equation*}
	
	\begin{definition}
		\label{d.4.3}
		Let $X:\Ph \infty\to\Ph {-\infty}$ be a polynomial map of
		degree $r$ and let $\widetilde X$ be the associated multilinear
		form. 
		
		\noindent
		$(i)$ Let $\nu\in[0,+\infty)$ and $N\geq1$. We denote by $M_{r}^{\nu,N}$ 
		the space of $r$-homogeneous polynomial 
		maps such that 
		%We will say that $X$ has localized coefficients if there exists
		%$\nu\in[0,+\infty)$ such that $\forall N\geq 1$ 
		there exists $C_N>0$ such that
		\begin{equation*}%\label{e.4.2a}
			\begin{aligned}
				\|\Pi_B\widetilde X(\Pi_{A_1}&u_1,...,\Pi_{A_r}u_r) \|_0
				\\&\leq C_N
				\frac{\mu(B,\bA)^{\nu+N}}{S(B,\bA)^{N}}
				\norm{\Pi_{A_1}u_1}_0 \dots\norm{\Pi_{A_r}u_r}_0  \ ,
				\\
				& \forall\, u_1,\dots,u_r\in\Ph \infty\
				,\quad \forall (B,\bA)\in\Lambda_e\times\Lambda_e^r\,.
			\end{aligned}
		\end{equation*} 
		Here we denoted by $(B,\bA)$ the
		multi-index $(B,A_1,...,A_r)$.\\		
		The smallest possible constant $C_N$ defines a seminorm, 
		namely
		\begin{gather*}
			\norm{X}^{\nu,N} = \sup_{\norm{u_1}_0= 1 ,\dots, \norm{u_r}_0 = 1} 
			\sup_{A_1,\dots,A_r,B} \norm{\tilde{X}(\Pi_{A_1}u_1,\dots,\Pi_{A_r}u_r)}_0
			\frac{S(\textbf{A},B)^N}{\mu(\textbf{A},B)^{N+\nu}} \,.
		\end{gather*}
		We also define
		\begin{equation*}%\label{normaXR}
			\norm{X}^{\nu,N}_R:=\norm{X}^{\nu,N}R^{r} \,.
		\end{equation*}
		\noindent
		$(ii)$ We say that a map $X$ has localized coefficients if there is $\nu\geq0$ such that for any $N$ one has
		$F\in M_{r}^{\nu,N}$, and we write $X\in M_{r}:=\cup_{\nu\geq0}\cap_{N\geq1}M_{r}^{\nu,N}$.
	\end{definition}
	
	It is easy to see that if a polynomial function has localized
	coefficients, then its Hamiltonian vector field has localized
	coefficients.
	\begin{lemma}\label{controlField}
		Let $F \in L^{\nu,N}_{r+1}$, then $X_F \in M^{\nu,N}_{r} $. Furthermore, 
		\begin{gather*}%\label{vecControl}
			\norm{X_F}^{\nu,N} \le r
			\norm{F}^{\nu,N} \,
		\end{gather*}
		and therefore
		\begin{equation*}%\label{CF.1}
			\norm{X_F}^{\nu,N}_R \le \frac{r}{R}
			\norm{F}^{\nu,N}_{R}\,.
		\end{equation*}
	\end{lemma}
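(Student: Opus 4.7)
}

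Since $F$ is $(r+1)$-homogeneous, $X_F$ is $r$-homogeneous, and the natural strategy is to estimate the multilinear form $\widetilde{X_F}$ associated with $X_F$ by pairing with test elements and reducing to the multilinear bound \eqref{e.4.2} already assumed for $\widetilde F$. The first step is to differentiate $F(u)=\widetilde F(u,\ldots,u)$: by symmetry one has $dF(u)[h]=(r+1)\widetilde F(u,\ldots,u,h)$, and from the definition $X_F=(i\nabla_{u_-}F,-i\nabla_{u_+}F)$ one then expresses, via polarization, the $\sigma$-component of $\widetilde{X_F}(v_1,\ldots,v_r)$ as the Riesz dual of the map $h\mapsto \mp i(r+1)\widetilde F(v_1,\ldots,v_r,e_{-\sigma}h)$, where $e_+(h)=(h,0)$ and $e_-(h)=(0,h)$ denote the canonical embeddings of $\H$ into the corresponding slot of $\H_e$.

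With this in hand, for any $B=(b,\sigma)\in\Lambda_e$ I would estimate $\|\Pi_B\widetilde{X_F}(\Pi_{A_1}u_1,\ldots,\Pi_{A_r}u_r)\|_0$ by duality: it equals the supremum over unit $w\in\Pi_b\H^0$ of the modulus of the corresponding inner product. The crucial observation is that the test vector $e_{-\sigma}w$, inserted in the last slot of $\widetilde F$, is precisely the lift into $\H_e$ of a unit vector living in the image of $\Pi_{\bar B}$, with $\bar B=(b,-\sigma)$. Hence the pairing is controlled by the localization bound with final index $\bar B$, yielding
\begin{equation*}
\|\Pi_B\widetilde{X_F}(\Pi_{A_1}u_1,\ldots,\Pi_{A_r}u_r)\|_0 \;\lesssim_r\; \|F\|^{\nu,N}\,\frac{\mu(\bA,\bar B)^{\nu+N}}{S(\bA,\bar B)^{N}}\prod_{j=1}^{r}\|\Pi_{A_j}u_j\|_0.
\end{equation*}

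Because $|\bar A|=|A|$ for every $A\in\Lambda_e$, and the quantities $\mu$ and $S$ of Definition \ref{mu.S1} depend only on the $\Lambda$-components of their arguments, one has $\mu(\bA,\bar B)=\mu(\bA,B)$ and $S(\bA,\bar B)=S(\bA,B)$. This matches the defining inequality for $M^{\nu,N}_r$ in Definition \ref{d.4.3} and, once the differentiation constant is carefully tracked, delivers $\|X_F\|^{\nu,N}\le r\|F\|^{\nu,N}$. The $R$-scaled estimate then follows mechanically from $\|X_F\|^{\nu,N}_R=\|X_F\|^{\nu,N}R^{r}$ versus $\|F\|^{\nu,N}_R=\|F\|^{\nu,N}R^{r+1}$. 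The only real subtlety — not a serious obstacle but worth stating up front — is the consistent bookkeeping of the conjugation $A\leftrightarrow\bar A$ and of the embedding $\H\hookrightarrow\H_e$ into the correct $\sigma$-slot; this is exactly what ensures that the index appearing in the final estimate is $\bar B$ rather than $B$, which is harmless precisely thanks to the $A\leftrightarrow\bar A$ invariance of $\mu$ and $S$.
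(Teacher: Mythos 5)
Your proposal is correct and follows essentially the same route as the paper's proof: estimate $\|\Pi_B\widetilde{X_F}(\Pi_{A_1}u_1,\dots,\Pi_{A_r}u_r)\|_0$ by duality, use self-adjointness of the projector to move it onto the test function, and apply the localization bound \eqref{e.4.2} for $\widetilde F$ with the projected test vector in the last slot, the conjugation $B\leftrightarrow\bar B$ being harmless since $\mu$ and $S$ depend only on the $\Lambda$-components. The extra care you take with the $\sigma$-slot bookkeeping (and the $r$ vs.\ $r+1$ constant) is a refinement the paper glosses over, but it does not change the argument.
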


	\begin{remark}
		\label{r.4.2}
		By the very definition of the property of localization of coefficients
		it is clear that any (finite) linear combination of functions or maps
		with localized coefficients has localized coefficients.
	\end{remark}

	\subsubsection{General properties}
	In this subsection, we state the main properties of polynomials
	with localized coefficients introduce in Def. \ref{d.4.2}. The
	corresponding Lemmas will be proved in Appendix
	\ref{esti.appendix}

	\begin{theorem}[Tame estimate]\label{tameMulti}
		Let $X\in \mathcal{M}^{\nu,N}_r$ and fix
		$s>\frac{3}{2}d+\nu$. If $N>d + s$, for any
		$s_0\in(\frac{3}{2}d+\nu,s)$  one has
		\begin{equation}\label{4.7}
			\begin{aligned}
				\norm{\widetilde{X}(u_1,\dots,u_r)}_s &\lesssim_{s,s_0,N} \norm{X}^{\nu,N} 
				\sum_{j=1}^{r} \norm{u_j}_s\prod_{k\not=j}\norm{u_k}_{s_0}
				\\&
				\forall u_1,\dots,u_r\in \cH^{\infty}_e\ .
			\end{aligned}
		\end{equation}
	\end{theorem}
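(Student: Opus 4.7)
The plan is to reduce the desired bound to a weighted combinatorial estimate over the indices $(B,\mathbf{A})\in\Lambda_e\times\Lambda_e^r$, and then to exploit the decay factor $\mu(B,\mathbf{A})^{\nu+N}/S(B,\mathbf{A})^N$ provided by the localization property to control the summations. First I would spectrally decompose every $u_i=\sum_{A_i\in\Lambda_e}\Pi_{A_i}u_i$, expand $\widetilde X$ by multilinearity, and apply the localization bound from Definition \ref{d.4.3}. Setting $f_i(A):=\|\Pi_A u_i\|_0$, the triangle inequality in the $B$-frequency variable produces
\[
\|\widetilde X(u_1,\dots,u_r)\|_s \;\le\; \|X\|^{\nu,N}\,\Bigl(\sum_{B}\langle B\rangle^{2s}\Bigl(\sum_{\mathbf{A}}\frac{\mu(B,\mathbf{A})^{\nu+N}}{S(B,\mathbf{A})^{N}}\prod_{i=1}^{r}f_i(A_i)\Bigr)^{\!2}\Bigr)^{\!1/2},
\]
and the weighted $\ell^{2}_B$ sum on the right must be bounded by $\sum_j \|u_j\|_s\prod_{k\neq j}\|u_k\|_{s_0}$.

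The second step is to split the sum over $(B,\mathbf{A})$ into $r+1$ regions $R_0,R_1,\dots,R_r$ according to which of $B,A_1,\dots,A_r$ has the largest value of $\langle\cdot\rangle$, breaking ties arbitrarily. On the region $R_j$ with $j\ge 1$, the index $A_j$ is dominant, so $\langle B\rangle^{s}\le \langle A_j\rangle^{s}$ and the Sobolev weight transfers directly onto the $j$-th factor, generating the $\|u_j\|_s$ part of the right-hand side. On the region $R_0$, where $\langle B\rangle$ is maximal, I would use the elementary splitting $\langle B\rangle^s \lesssim \langle A_{\max}\rangle^s + S(B,\mathbf{A})^s$, valid because Remark \ref{equi.mod} together with the definition of $S$ gives $S(B,\mathbf{A})\ge |B-A_{\max}|\gtrsim \langle B\rangle-\langle A_{\max}\rangle$; the loss of $S^s$ is then absorbed by the remaining decay $S^{-(N-s)}$, which is summable thanks to $N>d+s$, producing again a $\|u_{j_0}\|_s$ factor corresponding to $A_{\max}$.

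The third step is to perform the summations. For each fixed $j$, after transferring the $s$-weight onto $A_j$, I am left with the task of summing the remaining indices $\{A_k\}_{k\neq j}$ and $B$ against the decreasing factor $\mu^{\nu+N}/S^{N}$. I would apply Cauchy--Schwarz once in $B$ --- using that $\sum_{B}\langle B-A^\ast\rangle^{-\lambda}<\infty$ for $\lambda>d$, a consequence of $N-s>d$ --- and then Cauchy--Schwarz once in each $A_k$, $k\neq j$, to extract $f_k$ in the $s_0$-weighted $\ell^{2}$ norm, exploiting $\sum_{A\in\Lambda}\langle A\rangle^{-2(s_0-\nu)}<\infty$ whenever $s_0-\nu>d/2$. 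The combination of two applications of Cauchy--Schwarz with the need to absorb the factor $\mu^{\nu}$ into the $s_0$-weighted norm of the index carrying $\mu$ is exactly what dictates the threshold $s_0>\tfrac{3}{2}d+\nu$ appearing in the statement.

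The main obstacle I anticipate is the combinatorial bookkeeping in the case analysis above, together with the vector-valued nature of the indices $a\in\Lambda\subset\Z^d+\kappa$: because $\Lambda$ is essentially a $d$-dimensional lattice, every absolute-convergence threshold picks up a factor of $d$, which is what forces both $N>d+s$ and $s_0>\tfrac{3}{2}d+\nu$. Especially delicate is Region $R_0$, where one must exploit that the distance appearing in $S(B,\mathbf{A})$ is the \emph{Euclidean distance} between the two largest indices themselves rather than between the associated eigenvalues; this is precisely the feature that, as stressed in the Introduction, makes the normal form mechanism of \cite{DS, BamMon} work in the present, vectorial setting.
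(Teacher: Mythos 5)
Your overall architecture is the same as the paper's proof: spectral decomposition plus the localization bound, transfer of the Sobolev weight onto the largest index, a Cauchy--Schwarz splitting of the kernel in which a factor $S^{-\kappa}$ with $\kappa>d$ provides summability over the output index (this is where $N>d+s$ enters), and absorption of the leftover $\mu^{\nu+\kappa}$ on the remaining indices into the $H^{s_0}$ norms, which is what produces the threshold $s_0>\tfrac{3}{2}d+\nu$ (note it is $\mu^{\nu+\kappa}$ with $\kappa>d$, not just $\mu^{\nu}$, that must be absorbed).

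However, your treatment of the region $R_0$, where the output index $B$ dominates, has a genuine gap. After writing $\langle B\rangle^s\lesssim\langle A_{\max}\rangle^s+S(B,\mathbf{A})^s$, you claim the $S^s$ term is harmless because $S^s\cdot S^{-N}=S^{-(N-s)}$ is still summable. This forgets the numerator $\mu(B,\mathbf{A})^{N+\nu}$: what remains is $\mu^{N+\nu}/S^{N-s}$, and since the only available comparison is $\mu\le S$, any split that retains a summable factor $S^{-\kappa}$ with $\kappa>d$ leaves a weight $\mu^{\nu+s+\kappa}$ sitting on the second-largest input index. Absorbing such a weight by the mechanism you invoke ($\sum_{A}\langle A\rangle^{-2(s_0-\nu')}<\infty$) would force $s_0>\nu+s+\kappa+\tfrac{d}{2}>s$, incompatible with $s_0<s$, so the tame structure is lost and the estimate does not close; no $\|u_{j_0}\|_s$ factor is ``produced'' by this term as you assert. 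The paper's proof avoids exactly this via inequality \eqref{2a1}, namely $\frac{\mu(\mathbf{A},b)}{S(\mathbf{A},b)}\,(1+\len{b})\lesssim 1+\len{a_1}$: in the regime where $b$ dominates one has simultaneously $S\gtrsim\len{b}$ and $\mu\le\len{a_1}$, so the weight transfer is paid with $s$ powers of $\mu/S\le1$ (affordable because $N>s+\kappa$), keeping both the decay $S^{-\kappa}$ and a leftover $\mu^{\nu+\kappa}$ that $H^{s_0}$ with $s_0>\nu+\tfrac{3}{2}d$ can absorb. Equivalently, within your scheme you would have to use the gain $\langle A_{\max}\rangle^{-s}$ that comes with measuring $u_{j_0}$ in $H^s$, combined with $\mu\le\langle A_{\max}\rangle$, to cancel the excess powers of $\mu$; as written, the region-$R_0$ step would fail.
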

	
	\begin{corollary}
		\label{c.4.1}
		Let $F$ be a polynomial function with localized coefficients, then
		\eqref{4.7} holds for its Hamiltonian vector field.
	\end{corollary}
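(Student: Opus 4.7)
The plan is to deduce the corollary directly from the two ingredients that immediately precede it in the text: Lemma \ref{controlField} (which passes from the localization of $F$ to the localization of its Hamiltonian vector field $X_F$) and Theorem \ref{tameMulti} (which turns a bound of the form $\|\cdot\|^{\nu,N}$ into a tame multilinear estimate).

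Concretely, since $F$ is a polynomial, I would decompose it as a finite sum $F=\sum_{l=r_0}^{\bar r}F_l$ of homogeneous terms $F_l$ of degree $l$. By hypothesis each $F_l$ has localized coefficients, so there exist $\nu_l\geq 0$ and $N_{0,l}$ with $F_l\in L^{\nu_l,N}_l$ for every $N\geq N_{0,l}$; taking the maxima, I set $\nu:=\max_l\nu_l$ and $N_0:=\max_l N_{0,l}$, so that $F_l\in L^{\nu,N}_l$ for every $l$ and every $N\geq N_0$. Applying Lemma \ref{controlField} to each term yields $X_{F_l}\in M^{\nu,N}_{l-1}$, together with the quantitative bound $\|X_{F_l}\|^{\nu,N}\leq l\,\|F_l\|^{\nu,N}$.

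Given $s>\tfrac{3}{2}d+\nu$, I would then pick $N\geq\max\{N_0,\,d+s+1\}$ and invoke Theorem \ref{tameMulti} separately on each $\widetilde{X_{F_l}}$. This produces, for every $s_0\in(\tfrac{3}{2}d+\nu,s)$, the tame estimate
\begin{equation*}
\bigl\|\widetilde{X_{F_l}}(u_1,\dots,u_{l-1})\bigr\|_s
\lesssim_{s,s_0,N}\|X_{F_l}\|^{\nu,N}\sum_{j=1}^{l-1}\|u_j\|_s\prod_{k\neq j}\|u_k\|_{s_0}\,.
\end{equation*}
Summing the finitely many inequalities over $l=r_0,\dots,\bar r$ and using $\widetilde{X_F}=\sum_l \widetilde{X_{F_l}}$ (splitting by multidegree, or equivalently applying the estimate degree by degree) gives \eqref{4.7} for $X_F$.

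There is no genuine obstacle here: the statement is a packaging result that simply chains Lemma \ref{controlField} with Theorem \ref{tameMulti}. The only mild bookkeeping step is to make the choice of $\nu$, $N_0$ and the threshold on $s$ uniform in the (finitely many) homogeneous components of $F$, which is immediate for a polynomial; the same argument would require additional care if $F$ were only a function with localized coefficients in the sense of Definition \ref{locafunc}, since then one would also have to control the full Taylor series and invoke the smoothness clause of that definition, but this is not needed for the present corollary.
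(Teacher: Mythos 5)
Your proposal is correct and follows essentially the same route as the paper, which states this corollary without a separate proof precisely because it is the immediate combination of Lemma \ref{controlField} (localization of $X_F$ from localization of $F$) with the tame estimate of Theorem \ref{tameMulti}. Your extra bookkeeping on making $\nu$, $N_0$ and the threshold on $s$ uniform over the finitely many homogeneous components is the right (and only) detail to spell out.
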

	
	\begin{corollary}
		\label{coro.tame}
		Let $X\in \mathcal{M}^{\nu,N}_r$ and let
		$s>\frac{3}{2}d+\nu$. If $N>d + s$, for any
		$s_0\in(\frac{3}{2}d+\nu,s)$,  one has
		\begin{align}\label{homog.esti}
			\sup_{\left\|u\right\|_s\leq R}\norm{{X}(u)}_s \lesssim_{s,s_0,N} \norm{X}^{\nu,N}_R\, .
		\end{align}
	\end{corollary}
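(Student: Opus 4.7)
The plan is to obtain Corollary \ref{coro.tame} as an immediate consequence of the tame multilinear estimate of Theorem \ref{tameMulti}, by specializing all entries of the multilinear form to a single function $u$ and then trading $\|u\|_{s_0}$ for $\|u\|_s$.

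First, I would observe that since $X$ is an $r$-homogeneous polynomial map, $X(u) = \widetilde X(u,\ldots,u)$ by the very definition of the associated symmetric multilinear form. Applying Theorem \ref{tameMulti} with $u_1 = \cdots = u_r = u$ (which is legitimate because the hypotheses $s > \tfrac{3}{2}d+\nu$, $N > d+s$ and $s_0 \in (\tfrac{3}{2}d+\nu, s)$ are exactly those assumed in the corollary), one gets
\begin{equation*}
\|X(u)\|_s = \|\widetilde X(u,\ldots,u)\|_s \lesssim_{s,s_0,N} \|X\|^{\nu,N}\, r\, \|u\|_s\, \|u\|_{s_0}^{r-1}.
\end{equation*}

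Next, since $s_0 < s$, the Sobolev spaces are nested with $\|u\|_{s_0} \le \|u\|_s$, so for any $u$ in the ball $\{\|u\|_s \le R\}$ we have both $\|u\|_s \le R$ and $\|u\|_{s_0} \le R$. Substituting these bounds gives
\begin{equation*}
\|X(u)\|_s \lesssim_{s,s_0,N} r\, \|X\|^{\nu,N}\, R^r = r\, \|X\|^{\nu,N}_R,
\end{equation*}
by Definition \ref{d.4.3}(i). Taking the supremum over $u$ in the ball yields the claimed estimate (the constant $r$ can be absorbed into the implicit constant, which is allowed to depend on the degree $r$ since $X$ is $r$-homogeneous and $r$ is fixed).

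There is no real obstacle here: the corollary is essentially a book-keeping consequence of the tame estimate, the only substantive work being the proof of Theorem \ref{tameMulti} itself, which is deferred to Appendix \ref{esti.appendix}. The only points one has to check carefully are that the range of admissible $s_0$ is non-empty (guaranteed by $s > \tfrac{3}{2}d + \nu$) and that the Sobolev embedding $\|u\|_{s_0} \le \|u\|_s$ is available in our norm, which is immediate from Definition \ref{def.Hs} and Remark \ref{equi.mod}.
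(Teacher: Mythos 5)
Your proof is correct and is essentially the paper's own route: the corollary is obtained by specializing the multilinear tame estimate of Theorem \ref{tameMulti} to $u_1=\dots=u_r=u$, using $\|u\|_{s_0}\le\|u\|_s\le R$ and the definition $\norm{X}^{\nu,N}_R=\norm{X}^{\nu,N}R^r$, with the degree-dependent factor absorbed into the implicit constant. No gaps.
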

	
	\begin{lemma}[Poisson brackets]\label{Poisson}
		Given $F\in L^{\nu_1,N}_{r_1+1}$ and $X\in M^{\nu_2,N}_{r_2}$, we have
		\begin{gather*}
			dP\,X \in  L^{\nu', N'}_{r_1 + r_2}
		\end{gather*}
		with $N' =  N - d - 1 - \max\{\nu_1,\nu_2\} $ and $\nu' = \nu_1 + \nu_2 + d + 1$.
		Moreover, 
		\begin{gather*}
			\norm{dF\,X}^{\nu'\,N'} \lesssim \norm{F}^{\nu_1,N} \norm{X}^{\nu_2,N}
		\end{gather*}
	\end{lemma}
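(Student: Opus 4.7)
}
The first step is to identify the symmetric multilinear form associated with the polynomial function $u\mapsto dF(u)[X(u)]$, which has degree $r_1+r_2$. Writing $F(u)=\widetilde F(u,\dots,u)$ with $r_1+1$ entries and $X(u)=\widetilde X(u,\dots,u)$ with $r_2$ entries, one computes
\begin{equation*}
dF(u)[X(u)]=(r_1+1)\,\widetilde F\bigl(\widetilde X(u,\dots,u),u,\dots,u\bigr),
\end{equation*}
and then symmetrizing over permutations of the $r_1+r_2$ slots produces $\widetilde{dF\cdot X}$. Fixing indices $A_1,\dots,A_{r_1+r_2}\in\Lambda_e$ and test vectors $u_1,\dots,u_{r_1+r_2}$, it suffices to estimate a single term of this symmetrization; up to relabeling, the term to control has the form $\widetilde F\bigl(\widetilde X(\Pi_{A_1}u_1,\dots,\Pi_{A_{r_2}}u_{r_2}),\Pi_{A_{r_2+1}}u_{r_2+1},\dots,\Pi_{A_{r_1+r_2}}u_{r_1+r_2}\bigr)$.

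Next, I would insert the resolution of the identity $\sum_{B\in\Lambda_e}\Pi_B$ acting on the first argument of $\widetilde F$, and apply the definitions of localized coefficients to both factors. Writing $\bA=(A_1,\dots,A_{r_2})$ and $\widetilde{\bA}=(A_{r_2+1},\dots,A_{r_1+r_2})$, the localization estimates for $F\in L^{\nu_1,N}_{r_1+1}$ and $X\in M^{\nu_2,N}_{r_2}$ give, after collecting the $\|\Pi_{A_j}u_j\|_0$ factors,
\begin{equation*}
|\widetilde{dF\cdot X}(\Pi_{A_1}u_1,\dots)|\lesssim\|F\|^{\nu_1,N}\|X\|^{\nu_2,N}\,\Bigl(\sum_{B\in\Lambda_e}\frac{\mu(B,\widetilde{\bA})^{\nu_1+N}}{S(B,\widetilde{\bA})^{N}}\cdot\frac{\mu(B,\bA)^{\nu_2+N}}{S(B,\bA)^{N}}\Bigr)\prod_{j=1}^{r_1+r_2}\|\Pi_{A_j}u_j\|_0.
\end{equation*}
The whole game reduces to bounding the sum over $B$ by $C\,\mu(\bA,\widetilde{\bA})^{\nu'+N'}\,S(\bA,\widetilde{\bA})^{-N'}$ with $\nu'=\nu_1+\nu_2+d+1$ and $N'=N-d-1-\max\{\nu_1,\nu_2\}$.

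The core of the argument is therefore the summability lemma, which I expect to be the main obstacle and would handle by a case analysis on the relative size of $|B|$ versus the other indices. If $|B|$ exceeds some large multiple of $\max_j|A_j|$ then, after ordering, $B$ is the largest index in both the tuples $(B,\bA)$ and $(B,\widetilde{\bA})$, so that $S(B,\bA)\gtrsim|B|$ and analogously for $S(B,\widetilde{\bA})$; this gains enough powers of $|B|$ in the denominator to make the series converge over $\Lambda_e\subset\Z^d$, costing $d+1$ powers in $N$ (one can afford any exponent $>d$) and contributing the ``$+d+1$'' to $\nu'$. If $|B|\lesssim\max_j|A_j|$ one estimates $\mu(B,\bA)$ and $\mu(B,\widetilde{\bA})$ by $\mu(\bA,\widetilde{\bA})$ up to constants, and uses that one of $S(B,\bA),S(B,\widetilde{\bA})$ controls $S(\bA,\widetilde{\bA})$, paying a further $\mu^{\max\{\nu_1,\nu_2\}}$ when redistributing the $\mu$'s, which accounts for the loss $\max\{\nu_1,\nu_2\}$ in $N'$. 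In both subcases the remaining power of $\mu(\bA,\widetilde{\bA})$ is at most $\nu_1+\nu_2+d+1+N'=\nu'+N'$, and the remaining power of $S(\bA,\widetilde{\bA})^{-1}$ is at least $N'$, which is precisely the required bound. Finally, taking the supremum over $u_1,\dots,u_{r_1+r_2}$ and over the indices gives the norm inequality $\|dF\cdot X\|^{\nu',N'}\lesssim\|F\|^{\nu_1,N}\|X\|^{\nu_2,N}$. Packaging the case analysis into a clean summation statement (presumably the technical lemma to be proved in Appendix \ref{esti.appendix}) is the step where real work is needed.
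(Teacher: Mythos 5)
Your reduction is exactly the one in the paper: you identify the multilinear form of $dF\,X$ by symmetrization (the paper's Lemma \ref{polyPoisson}), insert the resolution of the identity $\sum_{B}\Pi_B$ on the output of $\widetilde X$, apply the two localization estimates, and reduce everything to the kernel-summation bound
\begin{equation*}
\sum_{B\in\Lambda_e}\frac{\mu(B,\widetilde{\bA})^{N+\nu_1}}{S(B,\widetilde{\bA})^{N}}\,
\frac{\mu(B,\bA)^{N+\nu_2}}{S(B,\bA)^{N}}
\lesssim \frac{\mu(\bA,\widetilde{\bA})^{N'+\nu'}}{S(\bA,\widetilde{\bA})^{N'}}\,,
\end{equation*}
which is precisely the paper's estimate \eqref{tesi.wave}. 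So far this matches.

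The gap is in your treatment of this sum. The dichotomy you propose ($|B|$ much larger than $\max_j|A_j|$ versus $|B|\lesssim\max_j|A_j|$) is too coarse, and the two claims you rely on in the second regime are false in general. First, if $\len b$ is comparable to the largest indices (which is allowed when $|B|\lesssim\max_j|A_j|$), then $\mu(B,\widetilde{\bA})$ or $\mu(B,\bA)$ can be of the size of the \emph{second} largest index of the full tuple, which may vastly exceed $\mu(\bA,\widetilde{\bA})$ (the third largest); so $\mu(B,\cdot)\lesssim\mu(\bA,\widetilde{\bA})$ fails, e.g.\ when the $P$-slots carry two huge indices $a_1,a_2$, all $X$-slots are small, and $\len b\sim\len{a_1}$. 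Second, it is not true that ``one of $S(B,\bA),S(B,\widetilde{\bA})$ controls $S(\bA,\widetilde{\bA})$'': when the two overall-largest indices sit in \emph{different} factors, or when $b$ interposes between them, neither factor alone sees the distance $|a_{\tau_{ord}(1)}-a_{\tau_{ord}(2)}|$, and one must combine both factors through the triangle inequality $|a_{\tau_{ord}(1)}-a_{\tau_{ord}(2)}|\le|a_{\tau_{ord}(1)}-b|+|b-a_{\tau_{ord}(2)}|$; the paper does this at the level of the ratios $S/\mu$ (estimate \eqref{stimaMax}), splitting the $b$-sum according to which ratio dominates (the sets $L_1$, $L_1^c$). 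Moreover the genuinely hard configuration — both overall-largest indices in the $P$-group with $\len b$ large (the paper's Case 2.i) — needs further quantitative facts such as \eqref{newDiago} and \eqref{K3K4} ($\len b\le K_4\len{a_1}\Rightarrow|a_1-b|\ge K_3|a_1-a_2|$) to recover $S(\bA,\widetilde{\bA})$ in the denominator while still extracting $d+1$ summable powers of $\len b$. In short, the correct case analysis is driven by whether the two largest indices of the full tuple lie in the same factor or in different ones, and by the position of $b$ relative to the second and third largest indices, not merely by $|B|$ versus $\max_j|A_j|$; as written, your plan for the summation step would not go through, and this step is the substance of the lemma.
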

	
	\begin{corollary}\label{esti.poi}
		Let $F\in L^{\nu,N}_{r_1,r_2}$ and $G\in L^{\nu',N'}_{r_1',r_2'}$,
		then one has $\left\{ F; G \right\} \in L^{\nu'', N''}_{r_1 + r'_1 -2, r_2 + r'_2-2}$, with
		\begin{equation*}\label{esti.poi.1}
			\left\|\left\{F;G\right\}\right\|^{\nu'',N''}_{R}\lesssim
			\frac{1}{R^2}\left\| F\right\|_R^{\nu,N}\left\| G\right\|_R^{\nu',N'}\ ,
		\end{equation*}
		$N''=\min\{N, N'\}-d-1-\max\left\{\nu,\nu'\right\}$, and $\nu''=\nu+\nu'+d+1$.
	\end{corollary}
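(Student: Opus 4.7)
The plan is to deduce Corollary \ref{esti.poi} directly from Lemma \ref{Poisson} together with Lemma \ref{controlField}, by reducing the non-homogeneous statement to its homogeneous components and then summing. First I would decompose $F=\sum_{l=r_1}^{r_2} F_l$ with $F_l \in L^{\nu,N}_l$ and $G=\sum_{l'=r_1'}^{r_2'}G_{l'}$ with $G_{l'}\in L^{\nu',N'}_{l'}$ according to Definition \ref{sommadipoli.0}. By bilinearity of the Poisson brackets (Definition \ref{Poissonbrack}),
\[
\{F;G\} = \sum_{l=r_1}^{r_2}\sum_{l'=r_1'}^{r_2'} \{F_l;G_{l'}\},
\]
so it suffices to work on each homogeneous pair $\{F_l;G_{l'}\}$ and then collect the pieces.

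For a fixed pair, I recall that $\{F_l;G_{l'}\}(u) = \mathrm{d}F_l(u)\,X_{G_{l'}}(u)$ by Definition \ref{Poissonbrack}, which is a homogeneous polynomial of degree $l+l'-2$. Writing $F_l\in L^{\nu,N}_{(l-1)+1}$, Lemma \ref{controlField} gives $X_{G_{l'}} \in M^{\nu',N'}_{l'-1}$ with $\norm{X_{G_{l'}}}^{\nu',N'} \le (l'-1)\norm{G_{l'}}^{\nu',N'}$. Then Lemma \ref{Poisson} applied to $F_l$ and $X_{G_{l'}}$ produces
\[
\{F_l;G_{l'}\} \in L^{\nu'',N''}_{l+l'-2}, \qquad \nu''=\nu+\nu'+d+1,\ N''=\min\{N,N'\}-d-1-\max\{\nu,\nu'\},
\]
together with the norm bound $\norm{\{F_l;G_{l'}\}}^{\nu'',N''} \lesssim \norm{F_l}^{\nu,N}\,\norm{G_{l'}}^{\nu',N'}$ (the factor $l'-1$ is absorbed in the implicit constant, since $l'\le r_2'$).

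To pass to the $R$-weighted norms, I multiply by $R^{l+l'-2}$ and rewrite $R^{l+l'-2} = R^{-2}\cdot R^l\cdot R^{l'}$, which yields
\[
\norm{\{F_l;G_{l'}\}}^{\nu'',N''}_R \lesssim \frac{1}{R^2}\,\norm{F_l}^{\nu,N}_R\,\norm{G_{l'}}^{\nu',N'}_R.
\]
Summing over $l\in[r_1,r_2]$ and $l'\in[r_1',r_2']$, observing that $l+l'-2$ ranges over $[r_1+r_1'-2,r_2+r_2'-2]$ so that $\{F;G\}\in L^{\nu'',N''}_{r_1+r_1'-2,\,r_2+r_2'-2}$, and applying the definition of the norm on $L^{\nu'',N''}_{r,\bar r}$ yields the desired estimate
\[
\norm{\{F;G\}}^{\nu'',N''}_R \lesssim \frac{1}{R^2}\,\norm{F}^{\nu,N}_R\,\norm{G}^{\nu',N'}_R.
\]
There is no real obstacle here: the corollary is a bookkeeping consequence of Lemma \ref{Poisson}. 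The only subtle point is confirming that the index shifts in Lemma \ref{Poisson} are correctly aligned (in particular that $N''$ is the same for every pair $(l,l')$, which is clear since $N, N', \nu, \nu'$ do not depend on $l, l'$), so that all the homogeneous summands sit in the \emph{same} space $L^{\nu'',N''}$ and can be collected under a single norm.
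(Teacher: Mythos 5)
Your approach is the natural and expected one, and it is essentially correct: decompose into homogeneous pieces, use Lemma \ref{controlField} to pass from $G_{l'}$ to its vector field, invoke Lemma \ref{Poisson}, track the $R$-weights, and sum. There is one small step you gloss over that deserves an explicit line: Lemma \ref{Poisson} is stated with $F\in L^{\nu_1,N}_{r_1+1}$ and $X\in M^{\nu_2,N}_{r_2}$ sharing the \emph{same} decay index $N$, whereas in the corollary $F$ carries $N$ and $G$ carries $N'$. To apply the lemma you first need the monotonicity of the scale in $N$, namely $L^{\nu,N}_{r}\subset L^{\nu,\widetilde N}_{r}$ (with norm non-increasing) whenever $\widetilde N\le N$. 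This holds because $\mu(\mathbf A)\le S(\mathbf A)$ always, so $\bigl(\mu/S\bigr)^{N}\le\bigl(\mu/S\bigr)^{\widetilde N}$ and hence $\mu^{\nu+N}/S^{N}\le\mu^{\nu+\widetilde N}/S^{\widetilde N}$; the same remark applies to the class $M^{\nu,N}_r$. Replacing both $N$ and $N'$ by $\min\{N,N'\}$ before invoking Lemma \ref{Poisson} then produces exactly the stated $N''$, and the rest of your argument (the $R$-bookkeeping, the absorption of the degree factor $l'-1\le r_2'$, and the final sum over $l,l'$) goes through without further issue.
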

	\subsubsection{High and low modes}
	In the definition of the normal form we will distinguish between low and
	high modes. To this end, we fix some large $K$ with the
	property that $K^\beta$ 
	lays between two intervals $\Sigma_n,$ and $ \Sigma_{n+1}$ defined 
	in Lemma \ref{parti_sigma}. 
	\begin{defn}
		For $K>1$, as above, we define
		\begin{gather*}
			u^\le =\Pi^{\le}u  \coloneqq   \sum_{\{A\colon \len{A}\le K\}}\Pi_A u\, ,
			\\  
			u^\perp = \Pi^{\perp}u \coloneqq   \sum_{\{A\colon \len{A}>K\}}\Pi_A u\, .
		\end{gather*}
	\end{defn}
	\begin{defn}
		A polynomial $F$ of degree $r$ is of order $k\le r$ 
		in the high modes $u^\perp$ if, $\forall \lambda>0$
		one has
		\begin{equation}
			\label{homohigh}
			F(u^{\leq}+\lambda u^{\perp})=\lambda^kF(u^{\leq}+u^{\perp})\ .
		\end{equation}
	\end{defn}
	\noindent
	By Theorem \ref{tameMulti} and noticing that
	\begin{equation*}
		%\label{tagliato}
		\left\|\Pi^\perp u\right\|_{s_0}\leq \frac{1}{K^{s-s_0}}
		\left\|\Pi^\perp u\right\|_{s}\ ,
	\end{equation*}
	we immediately have the following
	Corollary.
	\begin{corollary}
		\label{cubic}
		Let $F\in L^{\nu,N}_{r+1}$.
		
		\noindent
		i) If $F$ is of order at least three in $u^\perp$ then, for every $s_0\in \left(\frac{3}{2}d + \nu, s\right)$, we have
		\begin{gather*}
			\sup_{\norm{u}_s\le R}\norm{X_F(u)}_s 
			\lesssim \frac{\norm{F}^{\nu,N}_R}{K^{s-s_0}} \frac{1}{R}\,.
		\end{gather*}
		ii) If $F$ is at least of order two in $u^\perp$ then, 
		for every $s_0\in \left(\frac{3}{2}d + \nu, s\right)$, we have
		\begin{gather*}
			\sup_{\norm{u}_s\le R}\norm{\Pi^{\le}X_F(u)}_s 
			\lesssim \frac{\norm{F}^{\nu,N}_{R}}{K^{s-s_0}} \frac{1}{R}\,.
		\end{gather*}
	\end{corollary}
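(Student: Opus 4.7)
The plan is to derive both estimates from the tame bound of Theorem \ref{tameMulti} combined with the smoothing inequality
\begin{gather*}
\norm{\Pi^\perp u}_{s_0}\le K^{-(s-s_0)}\norm{\Pi^\perp u}_s\le K^{-(s-s_0)}R,
\end{gather*}
which is valid whenever $s_0<s$, after a careful count of the number of $u^\perp$ factors carried by each component of the vector field.

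First I fix $u$ with $\norm{u}_s\le R$ and decompose $u=u^\le+u^\perp$. Since $F$ is homogeneous of degree $k$ in $u^\perp$, each of its monomials contains exactly $k$ high-frequency factors. Differentiating a high-frequency variable removes exactly one such factor, while differentiating a low-frequency variable preserves them all; writing $X_F=(i\nabla_{u_-}F,-i\nabla_{u_+}F)$, this shows that $\Pi^\perp X_F$ is homogeneous of degree $k-1$ in $u^\perp$ and $\Pi^\le X_F$ is homogeneous of degree $k$ in $u^\perp$. By the symmetry of the associated $r$-linear form one may then write
\begin{gather*}
\Pi^\perp X_F(u)=\binom{r}{k-1}\,\widetilde{\Pi^\perp X_F}\bigl(\underbrace{u^\perp,\dots,u^\perp}_{k-1},\underbrace{u^\le,\dots,u^\le}_{r-k+1}\bigr),
\end{gather*}
with an analogous formula for $\Pi^\le X_F$ involving $k$ copies of $u^\perp$.

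I then apply Theorem \ref{tameMulti} to each of these two multilinear evaluations, controlling the relevant seminorms by $\norm{F}^{\nu,N}$ via Lemma \ref{controlField}. In every term produced by the tame bound, exactly one argument is measured in the $s$-norm and the remaining $r-1$ arguments in the $s_0$-norm; each $u^\perp$ factor sitting in an $s_0$-norm contributes a gain of $K^{-(s-s_0)}$, whereas a $u^\le$ factor and the single $s$-norm argument only contribute factors of $R$. The worst case occurs when the $s$-norm is assigned to a $u^\perp$ argument, which costs one of the smoothing gains; thus a multilinear piece carrying $\ell$ copies of $u^\perp$ yields a gain of at least $K^{-(\ell-1)(s-s_0)}$. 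Inserting this into the tame bound and using the identity $\norm{F}^{\nu,N}R^r=\norm{F}^{\nu,N}_R/R$ completes the argument: for part (i), where $k\ge 3$, one gets $\ell=k-1\ge 2$ for $\Pi^\perp X_F$ and $\ell=k\ge 3$ for $\Pi^\le X_F$, producing gains $K^{-(s-s_0)}$ and $K^{-2(s-s_0)}$ respectively, which once summed yield the claimed bound on $\norm{X_F(u)}_s$; for part (ii), only $\Pi^\le X_F$ is estimated, and $\ell=k\ge 2$ already suffices.

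The whole argument is pure bookkeeping of homogeneity degrees and smoothing gains, so I do not foresee any essential obstacle. The only subtlety is the one-power loss in the smoothing inherent to the tame estimate, which is precisely the reason that part (i) requires $k\ge 3$ while part (ii), where one does not need to control the high-mode projection of $X_F$, only requires $k\ge 2$.
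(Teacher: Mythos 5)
Your proposal is correct and takes essentially the same route as the paper: the binomial expansion of the multilinear form in $u^\le$ and $u^\perp$, the tame estimate of Theorem \ref{tameMulti} combined with Lemma \ref{controlField}, and the smoothing bound $\norm{u^\perp}_{s_0}\le K^{-(s-s_0)}\norm{u}_s$, with the worst case being the single argument measured in the $s$-norm landing on a high-mode factor. The only (harmless) difference is that you track the exact $u^\perp$-homogeneities of $\Pi^\perp X_F$ and $\Pi^\le X_F$ separately, whereas the paper simply observes that $X_F$ as a whole is of order at least two in $u^\perp$ and keeps every term of the binomial sum with $l\ge 2$ high-mode factors.
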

	\noindent
	We also have the following simple, but important corollary.
	\begin{corollary}\label{cutOff2}
		Let $F\in L_{r+1}^{N,\nu}$ be of order $2$ in $u^\perp$; assume that
		\begin{gather*}%\label{LocaDue}
			\tilde{F}(\Pi_{A_1}u_1,\dots,\Pi_{A_{r+1}}u_{r+1})\not=0\quad\implies \quad
			|a_{\tau_{ord(1)}} - a_{\tau_{ord(2)}}| > K^\delta\,,
		\end{gather*}
		with $\tau_{ord}$ the ordering permutation defined in
		\ref{mu.S1}.  Then, $\forall N'>N$ one has
		\begin{equation}\label{sti.lunghi}
			\norm{F}^{\nu,N}\leq\frac{\norm{F}^{\nu,N'}}{K^{\delta (N'-N)}}\,,
		\end{equation}
		and therefore, for any $s$ large enough,
		\begin{equation}\label{impro.tame.proof}
			\sup_{\norm{u}_s\le R}\norm{X_F(u)}_s 
			\lesssim \frac{\norm{F}^{\nu,N'}_R}{K^{\delta (N'-N)}}  \frac{1}{R}\,.
		\end{equation}
	\end{corollary}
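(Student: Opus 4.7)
The strategy is to prove first the norm inequality \eqref{sti.lunghi}, and then deduce the vector-field estimate \eqref{impro.tame.proof} from it via the tame bounds of Corollary \ref{coro.tame}. I would work directly from the definition of $\norm{F}^{\nu,N}$ as a supremum. Starting from the bound on $|\tilde F|$ provided by the $N'$-norm and factoring
$$
\frac{\mu(\bA)^{\nu+N'}}{S(\bA)^{N'}}=\frac{\mu(\bA)^{\nu+N}}{S(\bA)^{N}}\left(\frac{\mu(\bA)}{S(\bA)}\right)^{N'-N},
$$
the claim reduces to showing that on every multi-index $\bA\in\Lambda_e^{r+1}$ for which $\tilde F(\Pi_{A_1}u_1,\dots,\Pi_{A_{r+1}}u_{r+1})$ can be non-zero one has $\mu(\bA)/S(\bA)\le K^{-\delta}$; then \eqref{sti.lunghi} follows after taking the supremum that defines $\norm{F}^{\nu,N}$.

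To establish this pointwise ratio bound, I would combine the two hypotheses. The order-$2$ structure in $u^\perp$ ensures that every non-vanishing contribution of $\tilde F$ involves exactly two high-mode indices (those with $\len{A_j}>K$) and $r-1$ low-mode indices, so the ordering $\tau_{ord}$ places the two high modes at positions $1$ and $2$; in particular $\mu(\bA)=\len{a_{\tau_{ord}(3)}}$ is controlled by the low-mode cut-off. On the other hand, the support assumption $|a_{\tau_{ord}(1)}-a_{\tau_{ord}(2)}|>K^\delta$ combined with the very definition $S(\bA)=\mu(\bA)+|a_{\tau_{ord}(1)}-a_{\tau_{ord}(2)}|$ yields $S(\bA)>K^\delta$. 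Carefully balancing these two estimates so as to extract the full factor $K^{-\delta(N'-N)}$ is the main technical obstacle of the proof, since it requires quantifying the ratio between the separation of the two high modes and the size of $\mu$.

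For \eqref{impro.tame.proof} I would invoke Lemma \ref{controlField} to pass from $F$ to its Hamiltonian vector field, obtaining $X_F\in M_r^{\nu,N}$ with $\norm{X_F}^{\nu,N}\le r\,\norm{F}^{\nu,N}$, and then apply Corollary \ref{coro.tame} to get
$$
\sup_{\norm{u}_s\le R}\norm{X_F(u)}_s\lesssim_{s,s_0,N}\norm{X_F}^{\nu,N}_R\le \frac{r}{R}\,\norm{F}^{\nu,N}_R.
$$
Substituting \eqref{sti.lunghi} into this estimate produces exactly the gain $K^{-\delta(N'-N)}$, completing the proof. The only delicate step is therefore the pointwise ratio inequality $\mu/S\le K^{-\delta}$ discussed above; once it is in hand, the rest of the argument is mechanical and reuses results already established in the paper.
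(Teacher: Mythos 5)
Your final step is sound: passing from \eqref{sti.lunghi} to \eqref{impro.tame.proof} via Lemma \ref{controlField} and the tame estimate is exactly how the paper concludes. The problem is the heart of your argument. You reduce \eqref{sti.lunghi} to the pointwise bound $\mu(\bA)/S(\bA)\le K^{-\delta}$ on the support of $F$, and you leave precisely this point open (``the main technical obstacle''). That bound is in fact false under the stated hypotheses, so no amount of ``balancing'' will close the gap. The order-two structure in $u^\perp$ only gives $\mu(\bA)\le K$ (the third largest index is a low mode), and the support condition only gives $S(\bA)\ge |a_{\tau_{ord}(1)}-a_{\tau_{ord}(2)}|>K^{\delta}$; these are compatible with $\mu/S$ of order one. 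For instance, take the two high modes with $\len{a_1},\len{a_2}\sim 2K$ and $|a_1-a_2|\sim 2K^{\delta}$, and a low mode with $\len{a_3}=K/2$: then $\mu\sim K/2$ while $S\sim K/2+2K^{\delta}\sim K/2$, so $\mu/S\sim 1\gg K^{-\delta}$. Consequently the factor $K^{-\delta(N'-N)}$ cannot be extracted from $(\mu/S)^{N'-N}$, and a one-monomial example with these indexes shows that \eqref{sti.lunghi} read with the \emph{same} $\nu$ on both sides is not what one can (or needs to) prove.

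The mechanism actually used in the paper is different and avoids the ratio $\mu/S$ altogether: one converts only the \emph{extra} $N'-N$ powers of $1/S$ into $K^{-\delta}$, using $S(\bA)\ge |a_{\tau_{ord}(1)}-a_{\tau_{ord}(2)}|>K^{\delta}$, and keeps the corresponding extra powers of $\mu$ in the numerator. Schematically,
\[
\frac{\mu(\bA)^{N'+\nu}}{S(\bA)^{N'}}
=\frac{\mu(\bA)^{N'+\nu}}{S(\bA)^{N}}\,\frac{1}{S(\bA)^{N'-N}}
\le \frac{\mu(\bA)^{N+(\nu+N'-N)}}{S(\bA)^{N}}\,\frac{1}{K^{\delta(N'-N)}}\,,
\]
which yields $\norm{F}^{\nu+(N'-N),\,N}\le \norm{F}^{\nu,N'}K^{-\delta(N'-N)}$, i.e. the gain in $K$ is paid for by an enlarged $\nu$ (this is also what the paper's own computation produces; the printed \eqref{sti.lunghi} should be read with this $\nu$-shift). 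The loss in $\nu$ is harmless because \eqref{impro.tame.proof} is only claimed ``for any $s$ large enough'': one just needs $s_0>\tfrac32 d+\nu+(N'-N)$ and $N>d+s$ when invoking Lemma \ref{controlField} and Theorem \ref{tameMulti}, after which your closing paragraph goes through unchanged.
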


	\section{The abstract theorem and the normal form lemma}\label{normalformSec}
	In the following two Sections we prove the stability Theorem \ref{ab.res}. We shall deduce the result from
	an abstract normal form theorem on a class of Hamiltonian functions discussed in Section \ref{normalformSec}.
	In Section \ref{almost.sec} we deduce some dynamical consequences
	of the normal form and prove the main abstract 
	result.\\ \\
	
	In this Section, we prove Theorem \ref{ab.res}. We proceed as follows: first, we state an almost global existence result holding for nonlinearities $P$ with localized coefficients, namely Theorem \ref{LC}. Then we prove that all nonlinearities $P$ satisfying Assumption \ref{Nonline} have localized coefficients (this is Theorem \ref{lo.generale}), and as a consequence, we deduce Theorem \ref{ab.res} from Theorems \ref{LC} and \ref{lo.generale}.\\ 
	%First we state the abstract theorem on almost global existence in the form we will prove in this section. To this end
	Consider a Hamiltonian function of the form
	\begin{align}
		\label{startHam}
		H(u) = H_0(u) + P(u) \,,
		\\
		\label{Hzeor}
		H_0(u) = \int_M u_- H_L u_+ dx=\sum_{a\in \Lambda} \omega_a \int_M
		\Pi_{(a,+)}u\,\Pi_{(a,-)}u\, dx\,.
	\end{align}
	We have the following:
	\begin{theorem}
		\label{LC}
		Consider the Hamiltonian \eqref{startHam}, assume that $\omega_a$
		fulfill the Hypotheses \ref{asymptotic}, \ref{hypoNonRes},
		\ref{bourgain.abstract}, and assume that $P$ is a function with
		localized coefficients, which is real for real states and that has a
		zero of order at least three at the origin, then for any integer
		$r\geq3$, there exists $s_r\in\n$ such that, for any $s\ge s_r$, there
		are constants $\epsilon_0>0$, $c>0$ and $C$ for which the following
		holds: if the initial datum $u_0\in \cH^s(M,\C)$ fulfills
		\begin{gather*}
			\epsilon \coloneqq \norm{u_0}_s < \epsilon_0\,,
		\end{gather*}
		then the Cauchy problem has a unique solution 
		$u\in\mathcal C^0\left((-T_\epsilon,T_\epsilon), H^s(M,\C)\right)$ with 
		$T_\epsilon > c \epsilon^{-r}$. Moreover, one has
		\begin{gather}\label{stimafinale}
			\norm{u(t)}_s \le C \epsilon,\quad \forall
			t\in(-T_\epsilon,T_\epsilon)\ .
		\end{gather}
	\end{theorem}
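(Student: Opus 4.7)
The plan is to follow the scheme announced at the start of Section \ref{normalformSec}: first deduce Theorem \ref{LC} from an abstract Birkhoff-type normal form inside the class $L_r$ of functions with localized coefficients, and then extract a Sobolev a-priori estimate from the resulting normal form structure. Throughout, Hypothesis \ref{asymptotic} provides the parameter $\beta$ needed to switch freely between $|a|$ and $\len{a}$, Hypothesis \ref{hypoNonRes} is used to treat frequency combinations with at most two high modes belonging to the same Bourgain cluster, and Hypothesis \ref{bourgain.abstract} is used to treat those coupling different clusters; the high/low splitting is done at a cutoff $K$ to be tuned later as a negative power of $\epsilon$.

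The first main step is the iterative normal form. For any target order $r$ I construct, by induction on $j = 1,\dots,r$, generating functions $\chi_j \in L^{\nu_j,N_j}_{j+2}$ with localized coefficients, such that the Lie transform generated by $\chi_j$ conjugates the Hamiltonian to
\[
H^{(j)} = H_0 + Z^{(j)} + P^{(j)}_{\ge j+3}
\]
where $Z^{(j)} \in L_{3,j+2}$ is in normal form and $P^{(j)}_{\ge j+3}$ has a zero of order $j+3$ at the origin. At each step the homological equation $\{H_0,\chi_j\} + P_{j+2} = Z_{j+2}$ is solved by declaring $Z_{j+2}$ to contain those monomials whose multi-index $\mathbf{A}$ either lies in the resonant set $W$ of Definition \ref{w.non.res}, or is at least cubic in $u^\perp$, or is quadratic in $u^\perp$ with the two high-mode labels lying in the same Bourgain cluster $\Omega_\alpha$; the remaining monomials are divided by $\sum_k \sigma_k \omega_{a_k}$. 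The divisor is polynomially bounded below by $\gamma \langle \max_k |a_k| \rangle^{-\tau}$ thanks to \ref{hypoNonRes} on the low-mode part, and by $C_\delta(|a|^\delta + |b|^\delta)$ on the inter-cluster high-mode part thanks to \ref{bourgain.abstract}(ii); hence $\chi_j$ still belongs to $L^{\nu_j',N_j'}_{j+2}$ with only a finite loss in the localization exponents. The inductive propagation of the norms $\norm{\cdot}^{\nu,N}_R$ through the $e^{X_{\chi_j}}$ conjugation uses only Lemma \ref{Poisson} and Corollary \ref{esti.poi}, so starting from $N$ large enough one can guarantee that the final normal form $Z$ and the remainder $\mathcal{R} := P^{(r)}_{\ge r+3}$ are well defined with bounded norms, and that $\mathcal{R}$ is a function with localized coefficients.

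The second step is the dynamical consequence. The specific choice of $Z$ in the homological step implies that $Z$ Poisson-commutes with the super-actions
\[
J_\alpha(u) := \sum_{a\in\Omega_\alpha}\norm{\Pi_{(a,+)}u}_0^2,
\qquad \alpha \in \mathfrak{A},
\]
up to two small contributions: the part of $Z$ that is at least cubic in $u^\perp$, which by Corollary \ref{cubic} applied in $L_r$ yields a vector field of norm $\lesssim K^{-(s-s_0)} R/R$, and the part quadratic in $u^\perp$ with both high-mode indices in the same cluster, which by Corollary \ref{cutOff2} and the dyadic property \ref{bourgain.abstract}(i) only deforms the weighted sum $\sum_\alpha \langle \alpha\rangle^{2s} J_\alpha$ by an amount equivalent to the Sobolev norm itself. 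Since by Remark \ref{equi.mod} and property \ref{bourgain.abstract}(i) the quantity $\sum_\alpha \langle \alpha\rangle^{2s} J_\alpha(u)$ is equivalent to $\norm{u}_s^2$, the Hamilton equation for $H^{(r)}$ gives a differential inequality of the form
\[
\frac{d}{dt}\norm{u(t)}_s^2 \lesssim \norm{u(t)}_s^{r+3} + K^{-(s-s_0)}\norm{u(t)}_s^3 + \text{(inter-cluster terms)},
\]
where the inter-cluster terms have already been removed by construction. A standard bootstrap: choose $K = \epsilon^{-a}$ for a small $a>0$ so that $K^{-(s-s_3)}\epsilon \ll \epsilon^r$ as soon as $s$ is large enough, and recall that the composed canonical transformation is $\epsilon$-close to the identity in $\cH^s_e$ on $\cB_{2\epsilon}^s$ by Corollary \ref{coro.tame}. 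This yields $\norm{u(t)}_s \le C\epsilon$ for $|t|\le c\epsilon^{-r}$, proving \eqref{stimafinale}.

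I expect the main obstacle to be the bookkeeping inside the normal form iteration: because the multi-indices $A\in\Lambda_e$ are vector-valued and $\mu(\mathbf{A})$ and $S(\mathbf{A})$ are defined through the ordering permutation $\tau_{ord}$, one must make sure that solving the homological equation preserves the estimate \eqref{e.4.2} uniformly in $N$, and that the polynomial loss $\gamma^{-1}\max|a|^\tau$ from Hypothesis \ref{hypoNonRes} is absorbed into the localization exponent $\nu$ rather than into $N$. This is essentially the vector-valued analogue of the combinatorial lemma in \cite{BamMon, BFM22}, and once it is set up the rest of the argument, in particular the tame estimates of Theorem \ref{tameMulti} and the Poisson-bracket estimates of Corollary \ref{esti.poi}, applies mechanically.
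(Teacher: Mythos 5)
Your overall architecture (iterative Lie-transform normal form in the localized-coefficients class, then super-actions plus bootstrap) is the paper's, but there is a genuine gap at the heart of the homological step. You propose to eliminate \emph{all} quadratic-in-$u^\perp$ monomials whose two high labels lie in different Bourgain clusters, claiming the divisor is bounded below by $C_\delta(|a|^\delta+|b|^\delta)$ ``thanks to \ref{bourgain.abstract}(ii)''. Hypothesis \ref{bourgain.abstract}(ii) only bounds the \emph{sum} $|a-b|+|\omega_a-\omega_b|$: when the two labels are far apart (say $|a-b|>K^\delta$, or $|a-b|\gtrsim|a|^\delta$) it gives no information on $|\omega_a-\omega_b|$, which can be tiny — e.g. $\omega_a\in\Sigma_n$, $\omega_b\in\Sigma_{n+1}$ with gap of order $n^{-d/\beta}$, $n$ unbounded — and after adding the low-mode frequencies the only available lower bound is Hypothesis \ref{hypoNonRes}, i.e. $\gamma\,|a_{\tau_{ord}(1)}|^{-\tau}$ with $|a_{\tau_{ord}(1)}|$ \emph{not} controlled by $K$. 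Such a loss cannot be absorbed into the $\mu(\bA)^{N+\nu}/S(\bA)^{N}$ structure (take $a_1\approx a_2$ huge and all other indexes small: $\mu$ and $S$ stay bounded while $|a_1|^\tau$ blows up), so the generating function would leave the class $L^{\nu,N}$ and its vector field would be unbounded, breaking every subsequent estimate. The paper avoids exactly this: in Definition \ref{nRindex} the inter-cluster pairs are divided only when $|a_{\tau_{ord}(1)}-a_{\tau_{ord}(2)}|\le K^\delta$ (then \ref{bourgain.abstract}(ii) really forces $|\omega_{a_1}-\omega_{a_2}|\gtrsim|a_1|^\delta$ and Lemma \ref{non.riso.due} closes the small-divisor bound with a loss $K^{\tau\beta/\delta}$), while the pairs with $|a_1-a_2|>K^\delta$ are \emph{kept} in the normal form $Z_2$ of Definition \ref{brnf3} and their vector field is shown to be $O(K^{-\delta(N'-N)})$ by Corollary \ref{cutOff2}, i.e. by off-diagonal decay in the label distance, not by a small divisor. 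This mechanism is missing from your scheme and cannot be replaced by division.

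The dynamical step also needs repair. First, your single family of cluster super-actions $J_\alpha$ does not commute with the purely low-mode resonant part: $W$ pairs indexes whose frequencies lie in the same band $\Sigma_n$, and such indexes need not belong to the same Bourgain cluster, so $\{Z_0,J_\alpha\}\neq0$ in general; the paper uses two families — the band super-actions $J_n$ for the low modes (Lemma \ref{enBloLem}, Corollary \ref{bassi}) and the cluster super-actions restricted to modes with $\len{a}\ge K$ for $Z_B$. Second, you retain in normal form all same-cluster quadratic-in-$u^\perp$ terms regardless of the signs $\sigma$; the terms with $\sigma_{\tau_{ord}(1)}\sigma_{\tau_{ord}(2)}=+1$ do not conserve the cluster $L^2$ norm and must instead be removed, which is possible because their divisor is of size $K^\beta$ (Case 1 of Lemma \ref{non.riso.due}). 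Both points are fixable along the paper's lines, but as written the commutation claim on which your a-priori estimate rests does not hold.
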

	Theorem \ref{ab.res} is a consequence of Theorem \ref{LC}, in view of the following:
	\begin{theorem}\label{lo.generale}
		A nonlinear functional of the form \eqref{nonlin.1} fulfilling
		Hypothesis \ref{Nonline} is a function with localized
		coefficients.
	\end{theorem}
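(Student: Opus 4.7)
The plan is to Taylor-expand $F$ at the origin in its first three scalar arguments and verify the localization estimate of Def. \ref{d.4.2} monomial by monomial. A generic term in the expansion has the form
\[
P_{k,p,q}(u,\bar u) = \bigl(N(u,\bar u)\bigr)^k\int_M g(x)\,u(x)^p\,\bar u(x)^q\,dx\,,
\]
with $g\in C^\infty(M;\C)$ obtained from a partial derivative of $F$ at the origin, and total degree $r=p+q+2k$. By Remark \ref{r.4.2} it suffices to handle each such $P_{k,p,q}$ separately, keeping track of the growth of constants with $r$ so that the Taylor series can be summed on a small ball of $\cH^s_e$. Upon symmetrizing, $\widetilde P_{k,p,q}$ evaluated on $(\Pi_{A_1}u_1,\ldots,\Pi_{A_r}u_r)$ splits into a sum over ways of routing the $r$ slots: $k$ pairs go into the $N^k$ factors, and the remaining $p+q$ slots enter the integral against $g$. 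Each $N$-pairing contributes a bilinear integral $\int(\Pi_{a_i}u_{i,+})(\Pi_{a_j}u_{j,-})\,dx$ which either vanishes or forces the two paired indices to coincide; a short case analysis shows the induced constraints are compatible with the bound $\mu(\bA)^{\nu+N}/S(\bA)^{N}$ on the full multi-index once the bound is established on the $p+q$ ``free'' indices.

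The core analytic ingredient is therefore a multilinear eigenfunction integral estimate: for smooth $g$ and joint eigenfunctions $\psi_{a_1},\ldots,\psi_{a_m}$ of the quantum actions $I_1,\ldots,I_d$, one proves
\[
\Bigl|\int_M g(x)\prod_{\ell=1}^m \psi_{a_\ell}(x)\,dx\Bigr| \lesssim_N \frac{\mu(\bA)^{N+\nu}}{S(\bA)^N}\prod_\ell\|\psi_{a_\ell}\|_0\,,
\]
in the spirit of \cite{DS,BamMon}. The mechanism is a commutator argument: since $I_j\psi_{a_\ell}=a_\ell^j\psi_{a_\ell}$ and $I_j$ is selfadjoint, integration by parts yields, for any pair $\ell,\ell'$ and any direction $j$,
\[
(a_\ell^j-a_{\ell'}^j)\int g\prod_\iota\psi_{a_\iota}\,dx = \int[I_j,g]\prod_\iota\psi_{a_\iota}\,dx\,,
\]
and since $I_j\in\Psi^1(M)$ and $g\in C^\infty(M)$, the commutator $[I_j,g]$ lies in $\Psi^0(M)$ and is $L^2$-bounded. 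Choosing $\ell,\ell'$ to be the two indices realizing $|a_{\tau_{ord}(1)}-a_{\tau_{ord}(2)}|$ and iterating $N$ times in appropriately chosen directions yields decay in the Euclidean distance $|a_{\tau_{ord}(1)}-a_{\tau_{ord}(2)}|$; the powers of $\mu(\bA)$ in the numerator absorb the polynomial growth in the iterated-commutator norms, whose seminorms are controlled by the $C^\infty$-seminorms of $g$ via pseudodifferential calculus.

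The main technical obstacle is precisely this multilinear, \emph{vectorial} commutator bookkeeping: because the relevant distance is the Euclidean norm in $\R^d$, one cannot extract all the decay by commuting with a single $I_j$ but must iterate in different coordinate directions, balancing gains against losses from the non-commutation of distinct $I_j$'s with $g$ and against the factorial growth of the $C^\infty$-seminorms — a difficulty absent from the scalar-indexed settings of \cite{DS,BamMon}. The introduction already warns that these computations are ``technically quite heavy'', and all the precise bounds are collected in Appendix \ref{esti.appendix}. Once the localization estimate is in hand, the second requirement of Def. \ref{locafunc} — smoothness of $X_P:\cO_s\subset\cH^s_e\to\cH^s_e$ for $s$ large — follows routinely from the tame estimate of Corollary \ref{coro.tame} applied degree by degree, together with a Moser/Nemytskii-type summation of the Taylor series on a small ball.
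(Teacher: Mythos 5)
Your overall architecture matches the paper's: Taylor expand into monomials $N(u,\bar u)^k\int_M g\,u^p\bar u^q\,dx$, prove a multilinear eigenfunction-integral estimate by commuting with the quantum actions (the paper's Theorem \ref{eigenDue}), show that multiplication by $N(u,\bar u)$ preserves localization (the paper's Lemma \ref{productNorm3}), and deduce smoothness of the vector field from the tame estimates. However, your central analytic step is flawed. The identity
\[
(a_\ell^j-a_{\ell'}^j)\int_M g\prod_\iota\psi_{a_\iota}\,dx=\int_M [I_j,g]\prod_\iota\psi_{a_\iota}\,dx
\]
cannot hold: its right-hand side does not depend on the chosen pair $(\ell,\ell')$, and on the flat torus (where $I_j=-\im\partial_j$, $\psi_a=e^{\im a\cdot x}$) commuting with $g$ produces the total momentum $\sum_\iota a_\iota^j$ of all factors, not the difference of two selected indices. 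More seriously, the mechanism you describe --- iterated commutators with $g$ alone, with $\cB(L^2)$ norms controlled by the $C^\infty$ seminorms of $g$ uniformly in the indices --- would give decay in $|a_{\tau_{ord}(1)}-a_{\tau_{ord}(2)}|$ with no loss in $\mu(\bA)$, and such a bound is false: on $\T^d$ take factors $e^{\im a\cdot x}$, $e^{\im c\cdot x}$, $e^{-\im b\cdot x}$ with $b=a+c$ and $1\ll|c|\ll|a|$; the two largest indices differ by $|c|$, arbitrarily large, yet the integral equals $\hat g(0)$. The factor $\mu(\bA)^{N+\nu}$ in the numerator is not bookkeeping that ``absorbs'' growth coming from $g$; it is exactly the cost of the correct procedure, in which one commutes a single action with the multiplication operator by $g$ \emph{times all the low-frequency factors}, and bounds $\norm{Ad^N_{I_{l^*}}(g\,\Pi_c u_3)}_{\cB(L^2)}\lesssim \len{c}^{N+\nu}\norm{\Pi_c u_3}_0$ via the nontrivial Lemma \ref{estiAdjoint} (due to Delort). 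Your proposal never invokes this ingredient, and without it the localization estimate does not follow.

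Two further remarks. The obstacle you single out --- having to iterate commutators in several coordinate directions because the distance is Euclidean --- is not where the difficulty lies: it suffices to commute with the single action $I_{l^*}$ with $l^*=\operatorname{argmax}_l|a^l-b^l|$, since $|a-b|\le\sqrt d\,|a^{l^*}-b^{l^*}|$ (Lemma \ref{comtrickDue}); moreover the paper proves the trilinear case and obtains higher degrees through the Poisson-bracket closure (Lemma \ref{Poissonproof}), with a separate Sobolev-embedding argument in the regime $|a_{\tau_{ord}(1)}-a_{\tau_{ord}(2)}|\lesssim\mu(\bA)$, a dichotomy your sketch omits. Finally, your one-line claim that the constraints induced by the $N(u,\bar u)^k$ pairings are ``compatible'' with the localization bound is exactly the content of the paper's Lemma \ref{productNorm3}, namely the inequality $\delta_{a_1,a_2}\,\mu(\bb)^{N+\nu}S(\bb)^{-N}\lesssim \mu(a_1,a_2,\bb)^{N+\nu}S(a_1,a_2,\bb)^{-N}$, which requires a genuine, if elementary, case-by-case argument rather than a remark.
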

	We postpone the proof of Theorem \ref{lo.generale} to Appendix \ref{locasec} and we now turn to the proof of Theorem \ref{LC}. All the remaining part of the present section, as long as the following Section \ref{almost.sec}, is devoted to the proof of Theorem \ref{LC}.
	We start with stating and proving the normal form results (see Lemma \ref{iteration} and Proposition \ref{finalNormal}), which are the heart of the proof of Theorem \ref{LC}.
	
	In order to define what we mean by normal form we first give the
	following definition.
	
	\begin{defn}[Support]\label{support}
		Given a homogeneous polynomial 
		\begin{gather*}
			P_l(u) = \sum_{\textbf{A}=(A_1,\dots,A_l)\in \Lambda_e^l} 
			\tilde{P}(\Pi_{A_1} u,\dots,\Pi_{A_l}u)\,,
		\end{gather*}
		we define the support of $P_l$ as
		\begin{gather*}
			supp(P_l) = \left\{\textbf{A} 
			= (A_1,\dots,A_l): \exists\, u_1,\dots,u_l\, s.t.\, \tilde{P_l}(\Pi_{A_1} u_1,\dots,\Pi_{A_l}u_l)\not= 0 \right\}\,.
		\end{gather*}
		If $P=\sum_{l=3}^rP_l$ is a non homogeneous polynomial we define
		$$
		supp(P):=\bigcup_{l}supp(P_l)\ .
		$$
	\end{defn}
	We now define what we mean by normal form.
	
	\begin{defn}[Block Resonant Normal Form]\label{brnf}
		A non homogeneous polynomial $Z_{BR}$ of degree $r$
		will be said to be in \emph{Block Resonant Normal Form} if
		\begin{equation*}%\label{blck.res}
			Z_{BR}=Z_0+Z_B\, ,
		\end{equation*}
		with $Z_0$ and $Z_B$ of order, respectively, $0$ and $2$ in $u^\perp$ and
		\begin{itemize}
			\item[i.] $\bA\in Supp(Z_0)$ implies $\bA\in W$ (see Def. \ref{w.non.res});
			\item[ii.] $\textbf{A}\in Supp(Z_B)$ implies that
			there exists a block $\Omega_\alpha$
			such that
			\[
			a_{\tau_{ord}(1)},a_{\tau_{ord}(2)}\in\Omega_\alpha\ ,\ \text{and}\quad
			\sigma_{\tau_{ord}(1)}\sigma_{\tau_{ord}(2)}=-1\,,
			\]
			where $\tau_{ord}$ is the ordering permutation defined in Def. \ref{mu.S1}.
		\end{itemize}
	\end{defn}
	\begin{defn}[Higher Order Normal Form]\label{brnf3}
		A non homogeneous polynomial $Z_{HO}$ of degree $r$
		will be said to be in \emph{Higher Order Normal Form} if it has the structure
		$$
		Z_{HO}=Z_2+Z_{\geq 3}\ ,
		$$
		with $Z_{\geq
			3}$ of order at least three in $u^\perp$ and $Z_2$ of order $2$ in $u^\perp$  and s.t.:
		\begin{gather*}
			\textbf{A}\in supp(Z_2) \implies
			\left|a_{\tau_{ord}(1)}-a_{\tau_{ord}(2)}
			\right| > C_\delta K^{\delta}.
		\end{gather*} 
	\end{defn}

	The terms in $Z_0$ are resonant in the standard sense
	of perturbation theory, namely they do not enforce the exchange of energy between modes pertaining to different sets $\Sigma_n$; the terms in $Z_B$ do not provoke the exchange of energy between modes pertaining
	to different blocks $\Omega_\alpha$ and thus they conserve the total $L^2$
	norm of the modes of a block $\Omega_\alpha$; finally, according to
	Corollaries \ref{cubic} and \ref{cutOff2}, terms in Higher Order Normal Form will be shown to have a small vector field.
	
	\begin{definition}
		\label{nor.for.gen}
		A polynomial which is the sum of polynomials in normal form according
		to Definitions \ref{brnf} and \ref{brnf3} will be said to be in normal form.
	\end{definition}

	%The main result of this section is the following.
	The heart of the proof of Theorem \ref{LC} is the following:
	\begin{proposition}\label{finalNormal}
		For any $r\ge0$, $\exists s_r$  such that, 
		$\forall s\ge s_r$, $\exists R_{s,r}>0$, with the
		property that $\forall R<R_{s,r}$ $\exists K$ and a
		canonical transformation 
		\begin{equation}
			\label{defo.final}
			\cT^{(r)}:\cB^s_{R/2^{2r}}\to\cB_{R}^s\ \quad  \text{with} \quad
			[\cT^{(r)}]^{-1}:\cB^s_{R/4^{2r}}\to\cB^s_{R/2^{2r}}
		\end{equation}
		s.t.
		\begin{gather*}
			H\circ \cT^{(r)} = H_0 + Z_0 + Z_B + \cR^{(\br)}
		\end{gather*}
		with $Z_0$ and $Z_B$ as in K block-resonant normal form, see Def. \ref{brnf}, and 
		\begin{gather*}
			\norm{X_{\cR^{(\br)}}(u)}_s \lesssim R^{r+2},\qquad \forall u\in\mathcal{B}_{R}^s\,.
		\end{gather*}
		Moreover, we have
		\begin{gather}\label{lowmodesZ}
			\norm{\Pi^{\le}X_{ Z_B}(u)}_s \lesssim R^{r+2}\,, \qquad \forall u\in\mathcal{B}_{R}^s\,.
		\end{gather}
	\end{proposition}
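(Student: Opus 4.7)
The plan is to construct $\mathcal{T}^{(r)}$ as a composition of time-one Hamiltonian flows in the standard Birkhoff manner, adapted to the localized-coefficient setting. The single step is encapsulated in Lemma \ref{iteration}: given a Hamiltonian already in block-resonant normal form up to degree $l-1$, one finds $\chi_l \in L_l$ whose time-one flow normalises the degree-$l$ piece while pushing the non-normal content to degrees $\geq l+1$. Iterating for $l = 3, \ldots, r+2$ and composing the resulting flows yields $\mathcal{T}^{(r)}$; the inclusions \eqref{defo.final} follow from standard shrinking-ball estimates once each vector field $X_{\chi_l}$ is bounded via Corollary \ref{coro.tame}.

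At the single step one splits $P_l$ according to a case analysis on the multi-indexes $\textbf{A} = (A_1, \ldots, A_l)$ in its support. If $\textbf{A} \in W$ (see Def. \ref{w.non.res}) and the term is of order $0$ in $u^\perp$, the monomial is placed in $Z_0$. If $\textbf{A} \in W$, the term is of order $2$ in $u^\perp$, and the top two indexes $a_{\tau_{ord}(1)}, a_{\tau_{ord}(2)}$ lie in the same Bourgain block with opposite signs $\sigma_{\tau_{ord}(1)} \sigma_{\tau_{ord}(2)} = -1$, it is placed in $Z_B$. If instead $\textbf{A} \in W$ is of order $2$ in $u^\perp$ but the top two indexes lie in different blocks, then Hypothesis \ref{bourgain.abstract}(ii) yields $|a_{\tau_{ord}(1)} - a_{\tau_{ord}(2)}| \gtrsim |a_{\tau_{ord}(1)}|^\delta$ (the frequency difference being $\leq |\Sigma_n| \leq 2$ since both lie in the same $\Sigma_n$), so the term falls into the higher-order normal form class of Def. \ref{brnf3} and by Corollary \ref{cutOff2} has a vector field $\lesssim K^{-\delta(N'-N)} R^{r+1}$, eventually absorbed into $\mathcal{R}^{(r)}$. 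All remaining monomials satisfy $\textbf{A} \notin W$, so the small divisor $\big|\sum_j \sigma_j \omega_{a_j}\big|$ is bounded below by $\gamma\, \mu(\textbf{A})^{-\tau}$ via Hypothesis \ref{hypoNonRes}, and the homological equation $\{H_0, \chi_l\} + P_l^{\mathrm{nr}} = 0$ is solvable with $\chi_l$ still in the localized class: dividing by a power of $\mu(\textbf{A})$ preserves the localization estimate \eqref{e.4.2} up to shifting $\nu$ by $\tau$.

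Once $\chi_l$ is constructed, the Lie series expansion of $H$ conjugated by the time-one flow of $X_{\chi_l}$ produces new terms in the form of iterated Poisson brackets with $\chi_l$, each controlled in the localized class by Corollary \ref{esti.poi}. After $O(r)$ such iterations, the new non-normal content is of degree $\geq r+2$, and hence by Corollary \ref{coro.tame} its vector field is $\lesssim R^{r+2}$ on $\mathcal{B}_R^s$, establishing the main remainder estimate. The improved bound \eqref{lowmodesZ} on $\Pi^{\le} X_{Z_B}$ follows from item (ii) of Corollary \ref{cubic}: since $Z_B$ is by construction of order $2$ in $u^\perp$, its low-mode projection gains a factor $K^{-(s-s_0)}$, which is $\lesssim R^{r+2}$ after choosing $K$ as a suitable negative power of $R$.

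The main obstacle is the joint bookkeeping of the localization parameters $\nu, N$ and the threshold $K$ through the $r$ iterations. Each homological division increases $\nu$ by $\tau$, and each Poisson bracket in the Lie series increases $\nu$ by $d+1$ while decreasing $N$ by $d+1+\max\{\nu_1, \nu_2\}$ (Lemma \ref{Poisson}). One must therefore initialise $s$ and $N$ large enough, depending on $r$, $\tau$, $\beta$, $d$ and $\delta$, so that after all iterations the hypotheses $N > d + s$ and $s_0 > \tfrac{3}{2}d + \nu$ required by Theorem \ref{tameMulti} still hold. Simultaneously, $K = K(R)$ must be tuned so that the gain $K^{-\delta(N'-N)}$ in \eqref{sti.lunghi} absorbs the higher-order-normal-form piece into $\mathcal{R}^{(r)}$, and the gain $K^{-(s-s_0)}$ yields \eqref{lowmodesZ}. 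Synchronising these choices closes the estimates and yields the proposition.
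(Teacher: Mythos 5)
Your overall architecture (iterated Lie transforms, bookkeeping of $\nu,N$, tuning $K$ as a negative power of $R$, using Corollaries \ref{cubic} and \ref{cutOff2} to absorb $Z_2$, $Z_{\ge3}$ and to get \eqref{lowmodesZ}) matches the paper, but the core of your single normalization step contains a genuine gap: you propose to keep in normal form only monomials with $\mathbf A\in W$ and to eliminate \emph{every} monomial with $\mathbf A\notin W$ by the homological equation, claiming that Hypothesis \ref{hypoNonRes} bounds the divisor below by $\gamma\,\mu(\mathbf A)^{-\tau}$. That is a misquote of the hypothesis: the bound is $\gamma\,(\max_j|a_j|)^{-\tau}$, i.e.\ it involves the \emph{largest} index, not the third largest. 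For monomials with two (or more) high modes the factor $(\max_j|a_j|)^{\tau}$ is unbounded in terms of $K$ and cannot be absorbed into $\mu(\mathbf A)^{\nu}$ or $S(\mathbf A)^{-N}$ (take $a_{(1)}=a_{(2)}$ huge and $\mu$ small), so the generating function produced by this division is \emph{not} in the localized class with a $K^\tau$-type bound: the step loses $\tau$ derivatives on the high modes and the iteration collapses. A concrete class of terms you cannot remove this way are those with two large indexes in the same Bourgain block with opposite signs but frequencies in different $\Sigma_n$'s: these are \emph{not} in $W$, yet their divisor $\pm(\omega_{a_1}-\omega_{a_2})+\sum_{\rm small}\sigma_j\omega_{a_j}$ admits no $K$-dependent lower bound. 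The paper keeps precisely these in $Z_B$ (note: Definition \ref{brnf} does not require $\mathbf A\in W$ for $Z_B$), keeps all terms of order $\ge3$ in $u^\perp$ in $Z_{\ge3}$ (also irrespective of $W$ -- your claim that ``all remaining monomials satisfy $\mathbf A\notin W$'' is false, e.g.\ $W$-terms with four high modes), and only removes the block-$K$-non-resonant set $\mathcal I^K$ of Definition \ref{nRindex}: at most two large indexes and, when there are exactly two, either same block with equal signs or different blocks with $|a_{\tau_{ord}(1)}-a_{\tau_{ord}(2)}|\le K^\delta$.

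The missing ingredient is then the proof that divisors of the \emph{removable} terms are bounded below by a quantity depending only on $K$ (namely $\gamma'_rK^{-\tau'}$), which is exactly the content of Lemmas \ref{non.riso.uno} and \ref{non.riso.due}: one uses the asymptotics \ref{asymptotic} (if a large index dominates, the sum itself is $\gtrsim1$), the choice of $K^\beta$ between two bands $\Sigma_n,\Sigma_{n+1}$ (so that one single large index forces $\mathbf A\notin W$), and, for two large indexes in different blocks with small index difference, the separation property of Hypothesis \ref{bourgain.abstract} to force either a large frequency difference or $|a_1|\lesssim K^{\beta/\delta}$ before invoking \ref{hypoNonRes}. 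None of this appears in your proposal, and without it the homological equation estimate \eqref{stimaGen}, hence the whole iteration, is unjustified. Your treatment of the $W$-terms with two high modes in different blocks (using $|\Sigma_n|\le2$ plus Bourgain separation to place them in the class of Definition \ref{brnf3}) is correct in spirit, but it is only a small part of what must be checked.
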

	\subsection{Lie Transform}\label{lie}
	The transformation $\cT^{(r)}$ will be constructed by the composition 
	of Lie transforms, so we start by studying the properties of the Lie transform.

	Given $G\in \mathcal{C}^\infty(\H_e^s,\mathbb{C})$, we denote by $\Phi_G^t$ 
	the flow generated by the Hamilton equation $\dot{u} = X_G(u)$. 
	From Lemma \ref{tameMulti} one has the following result.
	{
		\begin{lemma}
			Fir $\br$, let $ 3 \le r \le \br $, $\nu\in[0,+\infty)$, $N\geq1$ and $G\in
			L^{\nu,N}_{r,\br}$.  $\forall s>\frac{3}{2}d+\nu$ there is a
			constant $ C_{\bar{r},N,s}>0$ such that $\forall R>0$ satisfying
			\begin{gather}\label{rs}
				\norm{G}^{\nu,N}_{R} \le \frac{R^2}{C_{\bar{r},N,s}}
			\end{gather}
			the map $\Phi_G^t:\mathcal{B}^s_{R/2}\to \mathcal{B}^s_{R}$ is well
			defined for $|t|\le1$, and moreover
			\begin{gather*}
				\sup_{\norm{u}_s<R/2} \norm{\Phi_G^t(u) - u}_s \lesssim_s |t|  \frac{\norm{G}^{\nu,N}_{R}}{R}\,.
			\end{gather*}
		\end{lemma}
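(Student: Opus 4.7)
The plan is to deduce this Lie transform lemma from the tame vector-field estimate for polynomials with localized coefficients (Corollary~\ref{coro.tame}) combined with Lemma~\ref{controlField}, plus a standard continuation argument for the flow in the Banach space $\mathcal{H}^s_e$. First, I would control $X_G$ on the ball $\mathcal{B}_R^s$. Since $G = \sum_{l=r}^{\bar r} G_l$ with $G_l \in L^{\nu,N}_l$, Lemma~\ref{controlField} gives $\norm{X_{G_l}}^{\nu,N} \le (l-1) \norm{G_l}^{\nu,N}$, so summing yields $\norm{X_G}^{\nu,N}_R \le \bar r\, R^{-1} \norm{G}^{\nu,N}_R$. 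Applying Corollary~\ref{coro.tame} (choosing some $s_0 \in (\tfrac{3}{2}d + \nu, s)$, which is permitted provided $N > d+s$) one obtains a constant $C_{\bar r, N, s}'>0$ with
\begin{equation*}
\sup_{\norm{u}_s \le R} \norm{X_G(u)}_s \;\le\; C_{\bar r,N,s}' \,\frac{\norm{G}^{\nu,N}_R}{R}\,.
\end{equation*}

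Second, since $G$ is a polynomial, $X_G$ is smooth on $\mathcal{H}^s_e$, so classical ODE theory in Banach spaces yields a unique local-in-time flow $\Phi_G^t(u_0)$ for any initial datum $u_0 \in \mathcal{H}^s_e$. To upgrade this to a flow defined up to $|t|=1$ with values in $\mathcal{B}_R^s$, I would run a bootstrap. Fix $u_0$ with $\norm{u_0}_s \le R/2$ and let $T_* \in (0,1]$ be the maximal time for which $\norm{\Phi_G^\tau(u_0)}_s \le R$ for $|\tau| \le T_*$. On $[-T_*, T_*]$ the integral equation $\Phi_G^t(u_0) = u_0 + \int_0^t X_G(\Phi_G^\tau(u_0))\, d\tau$ together with the vector-field bound above gives
\begin{equation*}
\norm{\Phi_G^t(u_0) - u_0}_s \;\le\; |t|\, C_{\bar r,N,s}' \,\frac{\norm{G}^{\nu,N}_R}{R}\,.
\end{equation*}

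Third, choose $C_{\bar r, N, s} := 2\,C_{\bar r,N,s}'$ in hypothesis~\eqref{rs}. Then the right-hand side above is bounded by $R/2$ for all $|t|\le 1$, so $\norm{\Phi_G^t(u_0)}_s \le \norm{u_0}_s + R/2 \le R$, strictly below the exit radius. By continuity this forces $T_* = 1$, proving $\Phi_G^t:\mathcal{B}_{R/2}^s \to \mathcal{B}_R^s$ is well defined for $|t|\le 1$, and the very same integral inequality delivers the displayed estimate on $\norm{\Phi_G^t(u) - u}_s$. The only subtle step is making sure the tame estimate for $X_G$ is applied on the correct radius as $\Phi_G^\tau(u_0)$ moves inside $\mathcal{B}_R^s$, which is precisely what the bootstrap handles; everything else is bookkeeping of constants and amounts to choosing $C_{\bar r,N,s}$ in~\eqref{rs} large enough to absorb the constants coming from Corollary~\ref{coro.tame} and Lemma~\ref{controlField}.
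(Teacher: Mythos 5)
Your proposal is correct and follows essentially the same route the paper intends: the paper states this lemma as a direct consequence of the tame estimate (Theorem \ref{tameMulti}, via Lemma \ref{controlField} and Corollary \ref{coro.tame}) without writing out the details, and your bootstrap/continuation argument for the flow is exactly the standard argument being invoked. Your remark that the tame estimate needs $N>d+s$ (a condition the lemma's statement glosses over but which is harmless since $G$ has localized coefficients, i.e.\ lies in $L^{\nu,N}_{r,\br}$ for all large $N$) is, if anything, more careful than the paper.
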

	}
	\begin{defn}
		We call $\Phi_G\coloneqq \Phi_G^t\big\rvert_{t=1}$
		\textit{Lie transform} generated by $G$.
	\end{defn}
	
	\begin{lemma}
		\label{defo}
		Let $G\in L_{r,\br}^{\nu,N}$, $3\leq r\leq \br$ and let $\Phi_G$ be the
		Lie transform it generates. For any $s>\frac{3}{2}d+\nu$ there exists $R_s>0$ such that for any $F\in
		C^{\infty}(B^s_{R_s})$ satisfying 
		\begin{equation*}
			%\label{orgin}
			\sup_{\left\|u\right\|_s\leq
				R_s}\left\|X_F(u)\right\|_s<\infty\ ,
		\end{equation*}
		one has 
		\begin{equation*}
			% \label{orgin.1}
			\sup_{\left\|u\right\|_s\leq
				R/2}\left\|X_{F\circ \Phi_G}(u)\right\|_s \le 
			2 \sup_{\left\|u\right\|_s\leq
				R}\left\|X_F(u)\right\|_s\ ,\quad \forall R<R_s\ .
		\end{equation*}
	\end{lemma}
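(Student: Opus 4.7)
My plan is to exploit the fact that $\Phi_G$, being the time-$1$ flow of the Hamiltonian vector field $X_G$, is a symplectic diffeomorphism between suitable balls of $\cH^s_e$. Symplecticity yields the pointwise chain-rule identity
\begin{equation*}
X_{F\circ\Phi_G}(u)=d\Phi_G(u)^{-1}\bigl[X_F(\Phi_G(u))\bigr],
\end{equation*}
so the lemma reduces to establishing two ingredients on $\mathcal{B}^s_{R/2}$: (a) that $\Phi_G(u)\in\mathcal{B}^s_R$, so that $X_F(\Phi_G(u))$ is automatically controlled by $\sup_{\|w\|_s\le R}\|X_F(w)\|_s$; and (b) that the operator norm of $d\Phi_G(u)^{-1}$ on $\cH^s_e$ does not exceed $2$.

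Ingredient (a) is a direct consequence of the preceding lemma as soon as $\|G\|^{\nu,N}_R\le R^2/C_{\bar r,N,s}$. Since $G\in L^{\nu,N}_{r,\bar r}$ with $r\ge 3$, expanding the degrees shows that this smallness condition is of the form $\|G\|^{\nu,N}R^{r-2}\lesssim 1$, and hence holds for every $R<R_s$ provided $R_s$ is chosen small enough in terms of $\|G\|^{\nu,N}$, $\bar r$, $N$ and $s$. This fixes the first restriction on $R_s$.

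For ingredient (b), I would differentiate the flow equation $\partial_t\Phi_G^t(u)=X_G(\Phi_G^t(u))$ to obtain the linear variational ODE
\begin{equation*}
\partial_t\bigl[d\Phi_G^t(u)\bigr]=dX_G(\Phi_G^t(u))\circ d\Phi_G^t(u),\qquad d\Phi_G^0(u)=\mathrm{Id},
\end{equation*}
and estimate $dX_G$ in the operator norm of $\cH^s_e$ through Theorem \ref{tameMulti}. Using the symmetry of $\widetilde X_G$ one writes $dX_G(v)[h]=r\,\widetilde X_G(v,\dots,v,h)$; applying the tame estimate with the singled-out entry taken to be $h$ and any fixed $s_0\in(\tfrac{3}{2}d+\nu,s)$ then yields $\|dX_G(v)[h]\|_s\lesssim_s\|X_G\|^{\nu,N}\|v\|_s^{r-1}\|h\|_s$ whenever $\|v\|_s\le R$. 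Combined with $\Phi_G^t(u)\in\mathcal{B}^s_R$ for $t\in[0,1]$, Gronwall's inequality gives $\|d\Phi_G^t(u)\|_{\mathrm{op}}\le\exp(C_s\|X_G\|^{\nu,N}R^{r-1})$; after further shrinking $R_s$ so that $C_s\|X_G\|^{\nu,N}R^{r-1}\le\log 2$ for every $R<R_s$, both $d\Phi_G(u)$ and (by the identical argument for the time-reversed flow) $d\Phi_G(u)^{-1}$ have operator norm at most $2$ on $\mathcal{B}^s_{R/2}$, and inserting these into the chain-rule identity concludes the proof.

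The only delicate step is this operator-norm bound on $dX_G$: one has to verify that the localized-coefficient multilinear estimate of Theorem \ref{tameMulti} differentiates into a clean bound on the linear operator $dX_G(v)$ acting on $\cH^s_e$, and this is precisely what the symmetry of $\widetilde X_G$ together with the freedom to choose the auxiliary exponent $s_0$ provides. Everything else -- the existence and ball-invariance of the flow, and the symplectic chain rule for Hamiltonian pull-backs -- is standard.
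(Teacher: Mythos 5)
Your argument is correct, and in fact the paper states Lemma \ref{defo} without proof, treating it as standard: the route you take --- the symplectic pull-back identity $X_{F\circ\Phi_G}(u)=d\Phi_G(u)^{-1}\bigl[X_F(\Phi_G(u))\bigr]$, ball-invariance of the flow under the smallness condition \eqref{rs} (which you correctly note can be enforced for all $R<R_s$ because $G$ has degree at least $3$), and a Gronwall bound on the variational equation for $d\Phi_G^{\pm t}$ using the tame estimate of Theorem \ref{tameMulti} --- is exactly the classical deformation argument the authors are implicitly invoking. The only caveat worth recording is that your operator bound on $dX_G$ via Theorem \ref{tameMulti} needs $N>d+s$ (and $s_0\in(\tfrac{3}{2}d+\nu,s)$), a restriction the paper itself leaves implicit in this lemma and in the preceding flow lemma, so it is inherited from the framework rather than a gap in your proof.
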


	Defining
	\begin{gather*}
		Ad^0_G (P) \coloneqq P \quad \text{and}\quad Ad^k_G (P)  
		\coloneqq 
		\{Ad^{k-1}_G (P), G\}\, \text{ for } k\ge1
	\end{gather*}
	we have the following standard lemma.
	\begin{lemma}[Lie transform]\label{lie.lemma}
		Let $3 \le r \le \bar{r}$ and $G\in L^{\nu,N}_{r,\br}$. 
		Assume \eqref{rs}, then for any $P\in\mathcal{C}^\infty(\cB^{s}_{2R_s},\mathbb{C})$,  we have, 
		\begin{gather}
			\label{5.10.es}
			P(\Phi_G(u)) = \sum_{k=0}^{n}\frac{1}{k!}\left(Ad^k_G P\right)(u) 
			+ \frac{1}{n!} \int_{0}^{1}(1-\tau)^n \left(Ad^{n+1}_G P\right)\left(\Phi^\tau_G(u)\right)d\tau
		\end{gather}
		for any $ n\in \n$ and $\forall u \in \mathcal{B}_{R/2}^s$.
	\end{lemma}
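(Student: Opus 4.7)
The plan is to apply the classical one-dimensional Taylor formula with integral remainder to the scalar function $t\mapsto P(\Phi_G^t(u))$ on $[0,1]$, after identifying its iterated time derivatives as $(Ad^k_G P)\circ \Phi_G^t$. First I would invoke the preceding lemma on the flow of $G$: under the smallness assumption \eqref{rs}, $\Phi_G^t$ is well defined for $|t|\le 1$, maps $\cB^s_{R/2}$ into $\cB^s_R\subset\cB^s_{2R_s}$, and depends smoothly on $(t,u)$, so the composition $P\circ\Phi_G^t(u)$ makes sense for every $u\in\cB^s_{R/2}$.

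Next, I would verify by induction on $k$ that each function $Q_k:=Ad^k_G P$, for $0\le k\le n+1$, is well defined and of class $C^\infty$ on $\cB^s_R$. The base case $Q_0=P$ is the assumption. For the inductive step, by the very definition of $Ad^k_G$ and of the Poisson bracket (Definition \ref{Poissonbrack}), one has $Q_{k+1}=\{Q_k,G\}=dQ_k\cdot X_G$; since $G\in L^{\nu,N}_{r,\br}$, Lemma \ref{controlField} together with Corollary \ref{coro.tame} shows that $X_G$ is a smooth polynomial map $\cB^s_R\to\cH^s_e$, while $dQ_k$ is smooth by the induction hypothesis, so $Q_{k+1}$ is smooth on $\cB^s_R$. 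The key identity is then the chain rule along the flow: since $\frac{d}{dt}\Phi_G^t(u)=X_G(\Phi_G^t(u))$, one computes
\begin{equation*}
\frac{d}{dt}Q_k(\Phi_G^t(u))=dQ_k(\Phi_G^t(u))\,X_G(\Phi_G^t(u))=\{Q_k,G\}(\Phi_G^t(u))=Q_{k+1}(\Phi_G^t(u)).
\end{equation*}
Iterating gives $\frac{d^k}{dt^k}P(\Phi_G^t(u))=Q_k(\Phi_G^t(u))$ for all $0\le k\le n+1$ and all $t\in[0,1]$.

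To conclude I would apply the standard Taylor formula with integral remainder to the $C^{n+1}$ function $f(t):=P(\Phi_G^t(u))$, expanded around $t=0$ and evaluated at $t=1$; this produces exactly the expansion \eqref{5.10.es}. The only point requiring genuine care (and the main, largely bookkeeping, obstacle) is the domain tracking, namely verifying that $\Phi_G^\tau(u)\in\cB^s_R$ for every $\tau\in[0,1]$ and every $u\in\cB^s_{R/2}$, so that each $Q_k(\Phi_G^\tau(u))$ is defined and continuous in $\tau$ and the remainder integral is meaningful. This is guaranteed by the flow estimate recalled just before the lemma, together with the smoothness of $X_G$ (coming from the localized-coefficient polynomial structure via Corollary \ref{coro.tame}) and the assumed smoothness of $P$ on $\cB^s_{2R_s}$.
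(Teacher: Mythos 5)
Your proof is correct and follows exactly the standard argument the paper has in mind (the paper states Lemma \ref{lie.lemma} without proof, calling it standard): identify $\frac{d^k}{dt^k}P(\Phi_G^t(u))=(Ad^k_G P)(\Phi_G^t(u))$ via the chain rule and the definition of the Poisson bracket, then apply Taylor's formula with integral remainder at $t=1$, with the domain and smoothness issues handled by the flow estimate and Lemma \ref{controlField} / Corollary \ref{coro.tame}. Nothing is missing.
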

	From the estimate of Poisson brackets (see Lemma \ref{Poisson}), we deduce the following result.
	\begin{lemma}\label{PoissonBrack}
		Let $G\in L^{\nu,N}_{r_1+2,\br_1+2}$ and $F\in
		L^{\nu,N}_{r_2,\br_2}$, with $r_1 \leq \br_1$ and $r_2
		\leq \br_2$.  $\forall k \geq 0$ we have $Ad^k_G (F) \in
		L^{\nu_k,N_k}_{r_2 + kr_1, \br_2 + k\br_1}$ and
		\begin{gather}
			\label{5.11.e}
			\norm{Ad^k_G (F)}^{\nu_k,N_k}_{R} \lesssim \left(
			\frac{\norm{G}^{\nu,N}_{R}}{R^2}\right)^k \norm{F}^{\nu,N}_{R}\,,
		\end{gather}
		with $N_k = N - k(d + \nu)$ and $\nu_k = k(d+ 2\nu)$.
	\end{lemma}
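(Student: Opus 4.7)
The natural plan is to proceed by induction on $k\geq 0$, with the single analytic input being the Poisson bracket estimate of Corollary \ref{esti.poi}. The base case $k=0$ is immediate, since $Ad^0_G F = F$ lies in $L^{\nu,N}_{r_2,\br_2}$ by hypothesis, and the norm bound (\ref{5.11.e}) reduces to $\|F\|^{\nu,N}_R \le \|F\|^{\nu,N}_R$; the claimed indices $\nu_0=0$, $N_0=N$ are reached by a trivial embedding among the $L^{\nu',N'}$ spaces (since increasing $\nu'$ or decreasing $N'$ enlarges the class).

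For the inductive step I would simply write $Ad^k_G F = \{Ad^{k-1}_G F,\,G\}$ and apply Corollary \ref{esti.poi} to the pair $(Ad^{k-1}_G F, G)$. By the inductive hypothesis $Ad^{k-1}_G F\in L^{\nu_{k-1},N_{k-1}}_{r_2+(k-1)r_1,\,\br_2+(k-1)\br_1}$, while $G\in L^{\nu,N}_{r_1+2,\,\br_1+2}$, so the corollary returns a function whose degree endpoints are $(r_2+(k-1)r_1)+(r_1+2)-2 = r_2+kr_1$ and $\br_2+k\br_1$, matching the statement. Chaining the norm estimate of Corollary \ref{esti.poi} with the inductive hypothesis yields
\[
\|Ad^k_G F\|^{\nu_k,N_k}_R \;\lesssim\; \frac{1}{R^2}\,\|Ad^{k-1}_G F\|^{\nu_{k-1},N_{k-1}}_R\,\|G\|^{\nu,N}_R \;\lesssim\; \Bigl(\frac{\|G\|^{\nu,N}_R}{R^2}\Bigr)^{k}\|F\|^{\nu,N}_R,
\]
which is exactly (\ref{5.11.e}).

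The only bookkeeping requiring care is the recursion on the indices. Corollary \ref{esti.poi} gives $\nu_k = \nu_{k-1} + \nu + d + 1$ and $N_k = \min\{N_{k-1},N\} - d - 1 - \max\{\nu_{k-1},\nu\}$; iterating, $\nu_k$ grows linearly in $k$ and $N_k$ decays linearly in $k$. One checks that these recursions are dominated, for $k\ge 1$, by the clean formulas $\nu_k = k(d+2\nu)$ and $N_k = N - k(d+\nu)$ in the statement, up to harmless constants that can be absorbed into the implicit constant in $\lesssim$, using the monotonicity $L^{\nu',N'}\hookrightarrow L^{\nu'',N''}$ for $\nu''\ge \nu'$ and $N''\le N'$ with a comparable norm. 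This last observation lets us replace the awkward recursive indices by the closed-form $\nu_k,N_k$ of the lemma at each stage of the induction without any loss in the norm bound.

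I do not foresee a conceptual obstacle: once Corollary \ref{esti.poi} is established, the present lemma is essentially a telescoping computation, and the only thing to be vigilant about is that at each iteration the hypothesis $N_k\ge 1$ of Corollary \ref{esti.poi} is preserved, which is automatic as long as the initial $N$ is taken large compared to $k(d+\nu)$ (this is how the lemma will be used downstream, where $k$ is bounded by the normal form order $r$).
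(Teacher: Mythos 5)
Your proof is correct and follows exactly the same route as the paper: induction on $k$, with the inductive step given by a single application of Corollary \ref{esti.poi} to $Ad^k_G F = \{Ad^{k-1}_G F, G\}$. The paper's own proof elides the index bookkeeping entirely (it simply asserts the norm chain), so your added remarks on tracking $\nu_k,N_k$ via the embeddings $L^{\nu',N'}\subset L^{\nu'',N''}$ for $\nu''\ge\nu'$, $N''\le N'$ are a reasonable fleshing out of the same argument rather than a departure from it.
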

	\begin{proof}
		We prove it by induction. For $k=1$, the thesis follows from
		\eqref{esti.poi.1}.
		Suppose the thesis is true at step $k$. Exploiting again \eqref{esti.poi.1}, we get
		\begin{equation*}
			\begin{aligned}
				\norm{Ad^{k+1}_G (F)}^{\nu_{k+1},N_{k+1}}_{R} = \norm{\left\{Ad^{k}_G (F), G \right\}}^{\nu_{k+1},N_{k+1}}_{R} 
				\lesssim \\
				\frac{1}{R^2} \left(\frac{\norm{G}^{\nu,N}_{R}}{R^2}\right)^k \norm{F}^{\nu,N}_{R} \norm{G}^{\nu,N}_{R} =
				\left(\frac{\norm{G}^{\nu,N}_{R}}{R^2}\right)^{k+1} \norm{F}^{\nu,N}_{R}\,.
			\end{aligned}
		\end{equation*}
	\end{proof}
	It is useful for the perturbative iteration to summarize the
	last results in the following lemma.
	{
		\begin{lemma}\label{perturbative}
			Fix $\br$, let $P\in L_{r_1,\br_1}^{\nu,N}$ and $G\in
			L^{\nu,N}_{r_2+2,\br_2+2}$ with $r_1<\bar r$, $r_2+2<\bar r$.  Let
			\begin{equation}\label{n.choice}
				n= \frac{\br+3-r_1}{r_2}\,\ ,
			\end{equation}
			if the r.h.s. is integer, otherwise we define $n$ to be
			the r.h.s. of \eqref{n.choice} +1. Let $s>\frac{3}{2}d+2$
			There exists $C_{\bar{r},s,N}>0$  such that if $R$ fulfills \eqref{rs},
			then
			the Lie transform 
			$\Phi_G\colon \mathcal{B}_{R/2}^s\rightarrow\mathcal{B}_{
				R}^s $ is well defined. Moreover, one has
			\begin{gather*}
				P \circ \Phi_G = P + P' + \mathcal{R}_{P,G}
			\end{gather*}
			and $\exists \nu',N'$ s.t., 
			$P' \in L^{\nu',N'}_{r_1+r_2,\br_1+n\br_2}$. Furthermore
			$X_{\mathcal{R}_{P,G}} \in
			\mathcal{C}^\infty(\mathcal{B}_{R}^s,\H^s_e)$ and one has
			\begin{align}
				\label{tesi.lie.2}
				\left\|P'\right\|^{\nu',N'}_R\lesssim \left\|P\right\|^{\nu,N}_R\frac{\left\|G\right\|^{\nu,N}_R}{R^2} 
				\\
				\label{tesi.lie.3}
				\sup_{\norm{u}_s \le
					R/2}\norm{X_{\mathcal{R}_{P,G}}(u)}_s  \lesssim
				\left\|P\right\|^{\nu,N}_R\left(\frac{\left\|G\right\|^{\nu,N}_R}{R^2}\right)^{n+1}
				\frac{1}{R}\ ,
			\end{align}
			so that $X_{\mathcal{R}_{P,G}} $ has a zero of order at least $\br+2$
			at the origin.
		\end{lemma}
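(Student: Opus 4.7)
The plan is to deduce everything from the Lie series expansion of Lemma \ref{lie.lemma} combined with the Poisson bracket estimate of Lemma \ref{PoissonBrack}. First, I verify that $\Phi_G$ is well defined on $\cB^s_{R/2}$: since $G\in L^{\nu,N}_{r_2+2,\bar r_2+2}$ and \eqref{rs} holds, the existence and mapping properties of $\Phi_G$ come directly from the flow lemma stated just before Lemma \ref{defo}. Then I apply \eqref{5.10.es} with the integer $n$ defined in \eqref{n.choice} to write
\begin{equation*}
P\circ\Phi_G(u)=P(u)+\sum_{k=1}^{n}\frac{1}{k!}\bigl(Ad^k_G P\bigr)(u)+\frac{1}{n!}\int_0^1(1-\tau)^n\bigl(Ad^{n+1}_G P\bigr)(\Phi^\tau_G(u))\,d\tau,
\end{equation*}
and I set $P':=\sum_{k=1}^{n}\frac{1}{k!}Ad^k_G P$ and $\mathcal{R}_{P,G}$ equal to the integral remainder.

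For the polynomial part $P'$, Lemma \ref{PoissonBrack} shows that each $Ad^k_G P$ lies in $L^{\nu_k,N_k}_{r_1+kr_2,\bar r_1+k\bar r_2}$ with $\nu_k=k(d+2\nu)$ and $N_k=N-k(d+\nu)$. Taking $\nu'=\nu_n$ and $N'=N_n$ (which is positive for $N$ chosen large enough depending on $\bar r$), the whole sum lies in $L^{\nu',N'}_{r_1+r_2,\bar r_1+n\bar r_2}$ by Remark \ref{r.4.2}, and the $k=1$ term dominates thanks to the smallness assumption \eqref{rs}, which makes $\|G\|^{\nu,N}_R/R^2$ a small factor so that the geometric-type series in \eqref{5.11.e} is controlled by its first term. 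This yields \eqref{tesi.lie.2}.

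For the remainder $\mathcal{R}_{P,G}$, the key observation is that the choice \eqref{n.choice} of $n$ is designed exactly so that $Ad^{n+1}_G P$ is homogeneous of degree at least $\bar r+3$, so its Hamiltonian vector field has a zero of order at least $\bar r+2$ at the origin, as required by the statement. Applying Lemma \ref{PoissonBrack} at the step $n+1$ gives a localized bound of $Ad^{n+1}_G P$ with a factor $(\|G\|^{\nu,N}_R/R^2)^{n+1}$, Corollary \ref{coro.tame} converts this into an $\H^s_e$ vector field estimate, and Lemma \ref{defo} (applied with the flow $\Phi^\tau_G$ for $\tau\in[0,1]$ playing the role of the generic Lie transform) ensures that precomposing with $\Phi^\tau_G$ only doubles the constants on the ball $\cB^s_{R/2}$. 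Integrating in $\tau$ produces \eqref{tesi.lie.3}.

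The main obstacle, which is also the delicate bookkeeping part, is to ensure that $N'=N_n$ remains positive and in fact large enough for Corollary \ref{coro.tame} to apply, since every Poisson bracket costs roughly $d+\nu$ derivatives in $N$ and raises $\nu$ by $d+2\nu$. This forces one to take $N$ in the hypothesis much larger than the final $N'$, linearly in $\bar r$ through the explicit value of $n$, and it is here that the smallness condition \eqref{rs} must be coupled with the choice of $s>\tfrac{3}{2}d+2$ so that Corollary \ref{coro.tame} applies uniformly to all $Ad^k_G P$ with $k\le n+1$. Once these dependencies are organized, the degrees and norms propagate mechanically and give the claimed structure of $P'$ and the vanishing order of $X_{\mathcal{R}_{P,G}}$.
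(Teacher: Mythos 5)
Your proposal is correct and follows essentially the same route as the paper: you define $P'$ as the truncated Lie series $\sum_{k=1}^n \tfrac{1}{k!}Ad^k_G P$ and $\mathcal R_{P,G}$ as the integral remainder from Lemma \ref{lie.lemma}, control $P'$ via Lemma \ref{PoissonBrack} and the smallness condition \eqref{rs}, and control the remainder via the $(n+1)$-st iterated bracket combined with the vector-field conversion and Lemma \ref{defo}. The only cosmetic difference is that you cite Corollary \ref{coro.tame} for the vector-field bound, whereas the paper passes through Lemma \ref{controlField} first (which you implicitly need anyway to place $X_{Ad^{n+1}_G P}$ in the class $M^{\nu,N}$ before invoking the tame estimate); this does not affect the argument.
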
	
		\proof Define
		$$
		P':=\sum_{k=1}^{n}\frac{1}{k!}Ad^k_G P 
		$$
		and $R_{P,G}$ be the integral term in \eqref{5.10.es}. Then, by Lemma
		\ref{PoissonBrack}, one has
		$$
		\left\|P'\right\|^{\nu',N'}_R\lesssim
		\sum_{k=1}^{n}\frac{1}{k!}\left(\frac{\left\|G\right\|^{\nu,N}}{R^2}\right)^k\left\|P\right\|^{\nu,N}_R\lesssim\frac{
			\left\|G\right\|^{\nu,N}}{R^2}\left\|P\right\|^{\nu,N}_R \ ,
		$$
		provided
		$\left(\frac{\left\|G\right\|^{\nu,N}}{R^2}\right)\leq
		1/2$, which is ensured by \eqref{rs}. This gives \eqref{tesi.lie.2}. To get \eqref{tesi.lie.3}
		just use the estimate \eqref{5.11.e}, Lemma \ref{controlField}
		and Lemma \ref{defo}. \qed
	}

	\subsection{Solution of the homological equation}\label{sec:omo}
	In this subsection, we state and solve the homological equation. 
	\begin{remark}\label{choiceK}
		Recall that, following Lemma \ref{parti_sigma}, the set of frequencies 
		decomposes as the union of bounded and disjoint intervals $\Sigma_n$.
		In the following, we will choose the cut-off $K$ lying between two intervals, 
		namely such that 
		for any  $a,b\in\Lambda $ with $\len{a}< K$ and $\len{b} \ge K$ 
		we have that, for any $n\ge1$,
		\begin{gather}\label{cutoff}
			\omega_a \in \Sigma_n \quad \implies\quad \omega_b \not \in \Sigma_n \,.
		\end{gather}
	\end{remark}
	We start defining the set of nonresonant indexes.
	\begin{defn}[Block $K$ non resonant indexes]\label{nRindex}
		Let $K\gg1$ as in Remark \ref{choiceK}
		We say that a multi-index $\textbf{A} = (A_1,...,A_r)\in\Lambda_e^r$ is
		Block-$K$-non-resonant, and we write $\textbf{A}\in
		\mathcal{I}^K(r)$, if 
		$\len{A_{\tau_{ord}(3)}}< K$ (namely there are at most
		two large indexes) and 
		one of the following holds:
		\begin{itemize}
			\item $\len{A_{\tau_{ord}(1)}}< K$ (there are no indexes larger than K) and $\bA\not \in W$;
			\item  $\len{A_{\tau_{ord}(1)}} \ge K$ and $\len{A_{\tau_{ord}(2)}}< K$ (there is exactly	one large index); 
			\item $\len{A_{\tau_{ord}(2)}} \ge K$ and $\len{A_{\tau_{ord}(3)}}< K$ (there are exactly two
			large indexes) and one of the
			following holds (recall Hyp. \ref{bourgain.abstract}):
			\begin{itemize}
				\item $\exists \alpha$ s.t. $a_{\tau_{ord}(1)},a_{\tau_{ord}(2)} 
				\in \Omega_\alpha$ and $\sigma_{\tau_{ord}(1)}\sigma_{\tau_{ord}(2)} = +1$,
				
				\item $\exists \alpha\not=\beta$ 
				s.t. $a_{\tau_{ord}(1)}\in\Omega_\alpha,$ $a_{\tau_{ord}(2)}
				\in \Omega_\beta$ and
				$|a_{\tau_{ord}(1)}-a_{\tau_{ord}(2)}|\leq K^\delta$.
			\end{itemize}
		\end{itemize}
	\end{defn}
	
	\begin{remark}\label{rmk:giallo}
		By  Definitions \ref{brnf} and \ref{brnf3}, 
		a polynomial supported only on multi-indexes 
		$\textbf{A}\notin\mathcal{I}^K(r)$ is in normal form according to Definition \ref{nor.for.gen}
	\end{remark}

	In the following lemma, we show that there are no resonant
	multi-indexes with just one large index.
	
	\begin{lemma}\label{non.riso.uno}
		Fix $K$ as in\eqref{cutoff}. If $\textbf{A}\in\Lambda_e^r$ 
		is s.t. 
		\begin{gather}\label{hypo.one.large}
			\len{A_{\tau_{ord}(1)}} \ge K\quad\text{and}\quad \len{A_{\tau_{ord}(2)}}< K \,,
		\end{gather}  then there exists a constant $\gamma'_r>0$ such that
		\begin{gather*}%\label{non.riso.uno.grande}
			\Big|\sum_{l=1}^{r}\sigma_l\omega_{a_l}\Big|\ge \gamma'_r K^{-\tau}\,,
		\end{gather*}
		where $\tau>0$ is the constant appearing in Hyp. \ref{hypoNonRes}.
	\end{lemma}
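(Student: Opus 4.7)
The plan is to first argue that $\bA\notin W$, so as to bring Hypothesis \ref{hypoNonRes} into play, and then close the argument with a case split depending on the size of $\len{A_{\tau_{ord}(1)}}$ relative to $K$.

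The first step is to show $\bA\notin W$. Assuming the contrary, Definition \ref{w.non.res} forces $r$ to be even and provides a permutation $\tau$ pairing the indexes in $r/2$ couples, with both members of every couple having frequencies in a common interval $\Sigma_{n_{\tau(j)}}$. In particular, the largest mode $A_{\tau_{ord}(1)}$ must be paired with some $A_l$, $l\ne\tau_{ord}(1)$, so that $\omega_{a_{\tau_{ord}(1)}}$ and $\omega_{a_l}$ both lie in some $\Sigma_n$. But by \eqref{hypo.one.large} and the ordering property of $\tau_{ord}$ one has $\len{A_{\tau_{ord}(1)}}\ge K > \len{A_l}$, while the cutoff choice \eqref{cutoff} (see Remark \ref{choiceK}) precisely rules out such a configuration, a contradiction.

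Having $\bA\in\Lambda_e^r\setminus W$, Hypothesis \ref{hypoNonRes} gives $|\sum_l\sigma_l\omega_{a_l}|\ge\gamma(\max_l|a_l|)^{-\tau}$. I would then set $C_r:=(2r)^{1/\beta}$ and split into two regimes. If $\len{A_{\tau_{ord}(1)}}\le C_r K$, by Remark \ref{equi.mod} one has $\max_l|a_l|\lesssim_r K$, and the bound already gives $\gamma_r'K^{-\tau}$. If instead $\len{A_{\tau_{ord}(1)}}> C_r K$, the definition \eqref{newOrder} yields $\omega_{a_{\tau_{ord}(1)}}=\len{A_{\tau_{ord}(1)}}^\beta>2rK^\beta$ while $\omega_{a_l}=\len{A_l}^\beta<K^\beta$ for every $l\ne\tau_{ord}(1)$, so that
\begin{equation*}
\Bigl|\sum_{l=1}^r\sigma_l\omega_{a_l}\Bigr|\ge \omega_{a_{\tau_{ord}(1)}}-\sum_{l\ne\tau_{ord}(1)}\omega_{a_l}\ge 2rK^\beta-(r-1)K^\beta\ge K^\beta\ge K^{-\tau}
\end{equation*}
as long as $K\ge 1$. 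Taking the minimum of the two constants produces the claimed $\gamma_r'$.

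The only nontrivial ingredient is the preliminary step $\bA\notin W$: it is the careful choice of $K$ lying strictly between two consecutive frequency intervals $\Sigma_n$ that prevents a large and a small mode from coexisting in a resonant pair; everything else reduces to an elementary comparison between the dominant frequency and the remaining ones.
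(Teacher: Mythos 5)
Your proof is correct and follows essentially the same two-case strategy as the paper: when the largest index exceeds a threshold of order $K$ the single dominant frequency beats the sum of the remaining ones directly, and otherwise one invokes Hypothesis \ref{hypoNonRes} after observing that the cutoff choice \eqref{cutoff} forbids the unique large mode from being paired resonantly with a small one. The only (immaterial) difference is that you establish $\bA\notin W$ unconditionally at the outset, whereas the paper verifies it only in the case where it is actually needed.
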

	\begin{proof}
		For simplicity, we can suppose that $A_{\tau_{ord}(1)} = A_1$. 
		We distinguish two cases.\\
		\textbf{Case 1} If $\len{A_1}\ge K_1 =  2 (r-1)^{\frac{1}{\beta}}K$, 
		recalling $\omega_{a_j} < K^\beta$ for any $j=2,\dots,r$, we have that
		\begin{gather}\label{aa}
			\left|\sum_{l=2}^{r}\sigma_l\omega_{a_l}\right| \le  (r-1)  K^\beta\,.
		\end{gather}
		From \eqref{aa} and $\len{A_1}\ge K_1$, we deduce
		\begin{gather*}
			\left|\sum_{l=2}^{r}\sigma_l\omega_{a_l} 
			+ \sigma_1 \omega_{a_1}\right| \ge K_1^\beta - (r-1)  K^\beta \gtrsim 1
		\end{gather*}
		that implies the thesis.\\
		\textbf{Case 2} If $\len{A_1}< K_1 $,  
		we prove that $\textbf{A} \not\in W$. In fact, 
		if by contradiction $\textbf{A} \in W$, it should exist 
		$A' \in \textbf{A}$ with $A_1, A' \in \Sigma_n$ for some $n$. 
		But then, from \eqref{cutoff}, it would follow that $\len{A_{\tau_{ord}(2)}} \ge K$. 
		This is in contradiction with \eqref{hypo.one.large}.\\ 
		Since $\textbf{A} \not\in W$, from Hypothesis \eqref{hypoNonRes} it follows that
		\begin{gather*}
			\left|\sum_{j=1}^{r}\sigma_j\omega_{a_j}\right|
			\ge 
			\frac{\gamma}{\left(\max_{j=1,\dots,r}{|a_j|}\right)^\tau} \gtrsim \frac{\gamma}{K_1^\tau} 
			= \frac{\gamma'_r}{K^\tau}\,,
		\end{gather*}
		with $\gamma'_r = \frac{\gamma}{2(r-1)^{\frac{\tau}{\beta}}}  $.
	\end{proof}
	In the following lemma, we take care of multi-indexes with exactly two indexes with modulus larger than $K$.
	\begin{lemma}\label{non.riso.due}
		If $\textbf{A}\in\mathcal{I}^K(r)$ is s.t. 
		\begin{gather}
			\len{A_{\tau_{ord}(2)}}\ge K \quad\text{and}\quad \len{A_{\tau_{ord}(3)}} < K \,,
		\end{gather}  then there exist constants $\gamma'_r$ and $\tau$ such that
		\begin{gather*}%\label{non.riso.due.grande}
			\left|\sum_{l=1}^{r}\sigma_l\omega_{a_l}\right|\ge \frac{\gamma'_r}{K^{\tau'}}\,.
		\end{gather*}
	\end{lemma}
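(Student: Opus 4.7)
The overall strategy is parallel to Lemma \ref{non.riso.uno}: assume without loss of generality that $\tau_{ord}$ is the identity, so $|a_1|\ge|a_2|\ge\cdots\ge|a_r|$. The two ``large'' indices satisfy $|a_1|,|a_2|\gtrsim K$ by Hypothesis \ref{asymptotic}, while the remaining indices are small, so $\bigl|\sum_{j\ge 3}\sigma_j\omega_{a_j}\bigr|\le (r-2)K^\beta$. The plan is to split on whether $|a_1|$ exceeds an appropriate threshold $K_*$: above $K_*$ we use dominance by the large modes, and below $K_*$ we prove $\textbf{A}\notin W$ and then invoke Hypothesis \ref{hypoNonRes}. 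The value of $K_*$, and hence of $\tau'$, depends on the specific sub-case of Definition \ref{nRindex}.

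First, consider the cases with $\sigma_1\sigma_2=+1$ (i.e.\ the first alternative of Definition \ref{nRindex}, and the $+$ instance of the second alternative). Here the two large modes contribute $\sigma_1\omega_{a_1}+\sigma_2\omega_{a_2}=\pm(\omega_{a_1}+\omega_{a_2})\gtrsim |a_1|^\beta$. Choosing $K_1:=(c_1 r)^{1/\beta}K$ with $c_1$ large enough, dominance yields $|\sum|\gtrsim K^\beta\gtrsim 1$ whenever $|a_1|\ge K_1$. If instead $|a_1|< K_1$, I claim $\textbf{A}\notin W$: by Remark \ref{choiceK} the cutoff $K$ sits between two intervals $\Sigma_n$, so no small frequency $\omega_{a_j}$ ($j\ge 3$) lies in the $\Sigma_n$ that contains $\omega_{a_1}$; any $W$-pairing must therefore match $A_1$ with $A_2$, forcing $\sigma_1=-\sigma_2$, which contradicts $\sigma_1\sigma_2=+1$. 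Hypothesis \ref{hypoNonRes} then gives $|\sum|\ge\gamma/|a_1|^\tau\gtrsim K^{-\tau}$.

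The delicate case is the second alternative of Definition \ref{nRindex} with $\sigma_1\sigma_2=-1$: $a_1\in\Omega_\alpha$, $a_2\in\Omega_\beta$, $\alpha\ne\beta$, and $|a_1-a_2|\le K^\delta$. Now the large contribution is $\pm(\omega_{a_1}-\omega_{a_2})$, and the only available lower bound comes from Hypothesis \ref{bourgain.abstract}: combined with $|a_1-a_2|\le K^\delta$ it gives $|\omega_{a_1}-\omega_{a_2}|\ge C_\delta(|a_1|^\delta+|a_2|^\delta)-K^\delta$. Since $|a_1|,|a_2|\gtrsim K$ and $|a_1-a_2|\le K^\delta\ll K$ force $|a_2|\sim|a_1|$, this simplifies to $|\omega_{a_1}-\omega_{a_2}|\gtrsim |a_1|^\delta$. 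In order to beat the small-mode contribution $(r-2)K^\beta$ one is therefore forced to take the much larger threshold $K_2:=C_*K^{\beta/\delta}$, where $C_*$ is chosen so that $C_\delta K_2^\delta\ge 2rK^\beta$. For $|a_1|\ge K_2$ dominance then yields $|\sum|\gtrsim K^\beta$. For $|a_1|<K_2$, the same pairing argument as before combined with the observation that $\textbf{A}\in W$ would force $|\omega_{a_1}-\omega_{a_2}|\le |\Sigma_n|\le 2$ while Bourgain forces $|\omega_{a_1}-\omega_{a_2}|\gtrsim K^\delta$ — contradictory for $K$ large — gives $\textbf{A}\notin W$, and Hypothesis \ref{hypoNonRes} yields $|\sum|\ge\gamma/|a_1|^\tau\ge \gamma/K_2^\tau\sim K^{-\beta\tau/\delta}$.

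Taking $\tau':=\beta\tau/\delta$ and $\gamma_r'$ to be the minimum of the implicit constants above gives the claim. The main obstacle — and the only reason $\tau'$ is substantially worse than the $\tau$ in Lemma \ref{non.riso.uno} — is this last case: when the two large modes sit in different Bourgain clusters with opposite signs, the Bourgain separation only provides $|\omega_{a_1}-\omega_{a_2}|\gtrsim |a_1|^\delta$, which is weaker than the naive $|a_1|^\beta$ dominance available when the signs agree. This forces raising the cross-over threshold from $\sim K$ to $\sim K^{\beta/\delta}$, which directly produces the exponent $\beta\tau/\delta$ after applying the diophantine condition.
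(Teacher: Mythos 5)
Your proof is correct and follows essentially the same route as the paper: the same split on the sign of $\sigma_1\sigma_2$, the same two thresholds (of order $K$ when the signs agree and of order $K^{\beta/\delta}$ in the opposite-sign case, where only the Bourgain separation $|\omega_{a_1}-\omega_{a_2}|\gtrsim|a_1|^\delta$ is available), the same exclusion of $\bA\in W$ via the cutoff choice of Remark \ref{choiceK} respectively via the fact that $|\Sigma_n|\le 2$, and the same final exponent $\tau'=\beta\tau/\delta$. The only blemish is the parenthetical reference to the ``first alternative'' of Definition \ref{nRindex}, which cannot occur under the lemma's hypothesis of two large indexes, but this does not affect the argument.
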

	\begin{proof}
		For simplicity, suppose that  
		\[
		A_{\tau_{ord}(1)} = A_1\quad {\rm and} \quad  A_{\tau_{ord}(2)} = A_2\,.
		\] 
		\textbf{Case 1} Consider first the case  $\sigma_1 = \sigma_2$. 
		Reasoning as in the proof of Lemma \ref{non.riso.uno}, we distinguish two cases. \\
		\textbf{Case 1.i} If $\len{A_1}\ge K_1 \coloneqq  2 (r-2)^{\frac{1}{\beta}}K$, 
		the thesis follows trivially since
		\begin{gather*}
			\left|\sum_{l=1}^{r}\sigma_l\omega_{a_l}\right| 
			= \left|\sum_{l=3}^{r}\sigma_l\omega_{a_l} + \sigma_1 \omega_{a_1} + \sigma_2 \omega_{a_2}\right|
		\end{gather*}
		and we can reason as in case 1. in the Proof of Lemma \ref{non.riso.uno}. \\
		\textbf{Case 1.ii} If $\len{A_1} < K_1$, we observe that by definition 
		$\bA \not \in W$, since $\sigma_1 \sigma_2 = +1$ and recalling \eqref{cutoff}. 
		Then, from Hyp. \eqref{hypoNonRes}, it follows
		\begin{gather*}
			\left|\sum_{j=1}^{r}\sigma_j\omega_{a_j}\right|\ge 
			\frac{\gamma}{\left(\max_{j=1,\dots,r}{|a_j|}\right)^\tau} 
			\gtrsim \frac{\gamma}{K_1^\tau} = \frac{\gamma'}{K^\tau}
		\end{gather*}
		for $\gamma' = \frac{\gamma}{2^\tau(r-1)^{\frac{\tau}{\beta}}} $. \\
		\textbf{Case 2} Consider now the case $\sigma_1 \sigma_2 = -1$. 
		It follows by definition of $\mathcal{I}^K(r)$ that $a_{1}\in\Omega_\alpha,$ $a_{2}
		\in \Omega_\beta$ with $\alpha\not=\beta$ and moreover
		$|a_{1}-a_{2}|\leq K^\delta$. From Hyp. \ref{bourgain.abstract}, it follows that 
		\begin{gather*}
			|\omega_{a_1}-\omega_{a_2}| \ge  C_\delta |a_1|^\delta\,.
		\end{gather*}
		\textbf{Case 2.i} If 	
		$\len{a_1} \ge  K_2 \coloneqq 2\left(r-2\right)^{\frac{1}{\delta}} K^{\frac{\beta}{\delta}}$ 
		we observe that $|a_1|\gtrsim K_2$ and then
		\begin{gather*}
			\left|\sum_{l=3}^{r}\sigma_l\omega_{a_l} + \omega_{a_1} - \omega_{a_2}\right| 
			\ge C_\delta|a_1|^\delta - (r-2) K^\beta \gtrsim 1
		\end{gather*}
		and the thesis follows.\\
		\textbf{Case 2.ii} If $\len{ a_1} <  K_2$, one has that $\bA \not \in W$. 
		To see this, remark that
		$|\omega_{a_1}-\omega_{a_2}| \ge C_\delta |a_1|^\delta > C K^\delta > 2$ 
		but each $\Sigma_n$ has a length smaller or equal to 2, so that $a_1, a_2$ 
		belong to different set $\Sigma_n$ and then the definition of $W$ cannot be fulfilled. 
		Then recalling Hyp. \eqref{hypoNonRes}, we conclude that
		\begin{gather*}
			\left|\sum_{j=1}^{r}\sigma_j\omega_{a_j}\right|\ge \frac{\gamma}{\left(\max_{j=1,\dots,r}{|a_j|}\right)^\tau} 
			\gtrsim \frac{\gamma}{K_2^\tau} = \frac{\gamma'}{K^{\tau '}}
		\end{gather*}
		with $\gamma' = \frac{\gamma}{C(r-2)^{\frac{\tau}{\delta}}}$ and $\tau' = \frac{\beta}{\delta}\tau$.
	\end{proof}
	In the next lemma, we solve the homological equation.
	\begin{lemma}[Homological equation]\label{homeq}
		Let $\bar r, l$ be given in such a way that $\bar r\geq
		l\ge3$, then  for any $ F \in L^{\nu,N}_{l,\br}$ there exist
		$G,Z \in L^{\nu,N}_{l,\br}$ which solve the homological equation
		\begin{gather}\label{heq}
			\left\{H_0, G\right\}  + F = Z.
		\end{gather}
		Furthermore there exist $\tau(\br)$ and $\gamma(\br)$ s.t.
		\begin{gather}\label{stimaGen}
			\norm{G}^{\nu,N}_{R}\le\frac{K^\tau}{\gamma} \norm{F}^{\nu,N}_{R}\,,
		\end{gather}
		and $Z \in L^{\nu,N}_{l,\br}$ is in normal form  according to Def. \ref{nor.for.gen} and fulfills the estimate
		\begin{gather*}
			\norm{Z}^{\nu,N}_{R}\le\norm{F}^{\nu,N}_{R}\,.
		\end{gather*}
	\end{lemma}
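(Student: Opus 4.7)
The approach is the standard spectral resolution of the homological equation. Expand $F = \sum_{j=l}^{\bar r} F_j$ with $F_j \in L^{\nu, N}_j$, and for each $F_j$ decompose the associated symmetric multilinear form in spectral blocks,
\begin{equation*}
F_j(u) = \sum_{\mathbf{A} \in \Lambda_e^j} \widetilde{F_j}(\Pi_{A_1} u, \ldots, \Pi_{A_j} u)\,.
\end{equation*}
A direct computation, based on the diagonal form \eqref{Hzeor} of $H_0$ in the spectral basis, shows that each block labelled by $\mathbf{A}=(A_1,\ldots,A_j)$ with $A_k=(a_k,\sigma_k)$ is an eigenvector of the map $G\mapsto\{H_0, G\}$, with eigenvalue $-i\sum_k \sigma_k \omega_{a_k}$. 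I therefore split each $F_j$ according to whether $\mathbf{A}$ lies in $\cI^K(j)$ or not, putting into $Z$ the blocks with $\mathbf{A} \notin \cI^K(j)$ and into $G$ the remaining blocks, divided by the small divisor $-i\sum_k \sigma_k \omega_{a_k}$. By construction $\{H_0, G\} + F = Z$.

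To check that $Z$ is in normal form I perform the case analysis encoded in Definition \ref{nRindex}: if at least three indexes among $A_1,\ldots,A_j$ are large the corresponding block of $Z$ belongs to $Z_{\geq 3}$ of Definition \ref{brnf3}; if all indexes are low and $\mathbf{A} \in W$ it contributes to $Z_0$; if exactly two indexes are large, the non-$\cI^K$ configurations are precisely those in which either both large modes lie in a common cluster $\Omega_\alpha$ with opposite signs (giving $Z_B$ of Definition \ref{brnf}) or in different clusters separated by more than $K^\delta$ (giving $Z_2$ of Definition \ref{brnf3}); the exactly-one-large case cannot be resonant, by Lemma \ref{non.riso.uno}. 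Each case matches exactly one clause of Definition \ref{nor.for.gen}, so $Z$ is in normal form. The estimate $\norm{Z}^{\nu, N}_R \le \norm{F}^{\nu, N}_R$ is immediate since $Z$ is obtained by zeroing out a subset of the multilinear blocks of $F$, which can only decrease the pointwise bound \eqref{e.4.2}.

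The only remaining quantitative point is the estimate on $G$: for $\mathbf{A} \in \cI^K(j)$ I need a uniform lower bound on $|\sum_k \sigma_k \omega_{a_k}|$. Three sub-cases arise from Definition \ref{nRindex}. If all indexes are low, then $\mathbf{A} \notin W$ and Hypothesis \ref{hypoNonRes} applies directly, giving a bound $\gtrsim \gamma/K^\tau$ because $|a_k| \lesssim K$. With exactly one large index, Lemma \ref{non.riso.uno} supplies an analogous bound; with exactly two large indexes in $\cI^K(j)$, Lemma \ref{non.riso.due} gives a bound $\gtrsim \gamma'/K^{\tau'}$. Choosing $\gamma(\bar r), \tau(\bar r)$ to be the worst constants over all $j \le \bar r$ and over the three regimes, division of each block of $F$ by its small divisor multiplies the pointwise estimate \eqref{e.4.2} by a factor at most $K^{\tau(\bar r)}/\gamma(\bar r)$, yielding the claimed bound on $\norm{G}^{\nu, N}_R$. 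The main delicacy lies not in any single step but in the book-keeping: verifying that the three small-divisor regimes together cover $\cI^K(j)$ exactly, and that the complementary regimes reproduce precisely the structure required by Definitions \ref{brnf}--\ref{brnf3}. Since both the bounds are uniform in $\mathbf{A}$ and the $L^{\nu,N}$ seminorm is defined as a supremum over spectral multi-indices, the final inequalities follow at once.
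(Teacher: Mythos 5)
Your proposal is correct and follows essentially the same route as the paper: define $Z$ and $G$ blockwise through the multilinear forms, putting the blocks with $\mathbf{A}\notin\mathcal{I}^K(l)$ into $Z$ (which is in normal form by construction, cf.\ Remark \ref{rmk:giallo}) and dividing the blocks with $\mathbf{A}\in\mathcal{I}^K(l)$ by the small divisor, whose lower bound comes from Hypothesis \ref{hypoNonRes} together with Lemmas \ref{non.riso.uno} and \ref{non.riso.due}. The estimates on $\norm{Z}^{\nu,N}_R$ and $\norm{G}^{\nu,N}_R$ are obtained exactly as in the paper, via the supremum structure of the $L^{\nu,N}$ norm.
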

	\begin{proof}
		Writing
		\begin{gather*}
			F(u) = \sum_{l=3}^{\br} \sum_{\textbf{A}\in \Lambda_e^l} \widetilde{F}_l(\Pi_{A_1}u, \dots, \Pi_{A_l}u)\,,
		\end{gather*}
		we define $G,Z$ through their multilinear map. 
		More precisely, recalling Definition \ref{nRindex},
		we set, for $3\le l\le \br$,
		\begin{equation*}
			\begin{aligned}
				\widetilde{Z_l}(\Pi_{A_1}u_1, \dots, \Pi_{A_l}u_l)&:=
				\left\{
				\begin{aligned}
					&\widetilde{F}_l(\Pi_{A_1}u_1, \dots, \Pi_{A_l}u_l)\quad \text{if} \left(A_1,\dots,A_l\right) \not\in \mathcal{I}^K(l)
					\\
					&0\qquad\qquad\qquad\qquad {\rm otherwise}
				\end{aligned}\right.
				\\
				\widetilde{G}_l(\Pi_{A_1}u_1, \dots, \Pi_{A_l}u_l)&:=
				\left\{
				\begin{aligned}
					&\frac{\widetilde{F}_l(\Pi_{A_1}u_1, \dots, \Pi_{A_l}u_l)}{\sum_{j=1}^{l}\sigma_j\omega_{a_j}}
					\quad \text{if} \left(A_1,\dots,A_l\right) \in \mathcal{I}^K(l)
					\\
					&0 \qquad\qquad\qquad\qquad \text{otherwise}\,.
				\end{aligned}
				\right.
			\end{aligned}
		\end{equation*}		
		%		\begin{multline*}
			%			\widetilde{Z_l}(\Pi_{A_1}u_1, \dots, \Pi_{A_l}u_l)\coloneqq\\
			%			\begin{cases}
				%				\widetilde{F}_l(\Pi_{A_1}u_1, \dots, \Pi_{A_l}u_l)\quad &\text{if} \left(A_1,\dots,A_l\right) \not\in \mathcal{I}^K(l)
				%				\\
				%				0 \quad & \text{otherwise}
				%			\end{cases}
			%		\end{multline*}
		%		and
		%		\begin{multline*}
			%			\widetilde{G}_l(\Pi_{A_1}u_1, \dots, \Pi_{A_l}u_l)\coloneqq
			%			\begin{cases}
				%				\frac{\widetilde{F}_l(\Pi_{A_1}u_1, \dots, \Pi_{A_l}u_l)}{\sum_{j=1}^{l}\sigma_j\omega_{a_j}}\quad &\text{if} \left(A_1,\dots,A_l\right) \in \mathcal{I}^K(l)
				%				\\
				%				0 \quad &\text{otherwise}\,.
				%			\end{cases}
			%		\end{multline*}
		These polynomials solve the homological equation \eqref{heq} since
		\begin{gather*}
			\{H_0, G\}(u)= - \sum_{l=3}^{\br} \sum_{\textbf{A}\in\mathcal{I}^K(l)} \tilde{F}(\Pi_{A_1}u, \dots, \Pi_{A_l}u)\,.
		\end{gather*}
		In particular $\widetilde{Z}$ is in normal form (recall Remark \ref{rmk:giallo}).
		The estimate \eqref{stimaGen} follows from Lemmas \ref{non.riso.uno} and \ref{non.riso.due}.
	\end{proof}
	
	\subsection{Proof of the normal form Lemma}\label{sec:itero}
	In this subsection, we complete the proof of Proposition \ref{finalNormal}.

	Fix  $\br\ge2$ and Taylor expand $P$ at order $\br+2$. Recalling \eqref{startHam}
	and Definition \ref{locafunc},
	we have
	\begin{gather*}%\label{start}
		P= P^{(0)} + \mathcal{R}_{T,0}\,. 
	\end{gather*}
	with $P^{(0)} \in L^{N,\nu}_{1,\br+2}$ for some positive
	$N,\nu$. Furthermore
	$\mathcal{R}_{T,0}$ has a zero of order $\br+2$ at $u=0$.
	Moreover, since $ X_{{R}_{T,0}} \in
	\mathcal{C}^\infty(\mathcal{B}^s_{R},\H^s_e) $, one has, for
	$s$ large enough and $R>0$ small enough
	\[
	\norm{X_{\mathcal{R}_T}(u)}_{s}\lesssim  R^{\br+2}\,,\;\;\; \forall u\in\mathcal{B}^s_{R}\,,
	\] 
	In the following lemma, we describe the iterative step that proves Theorem \ref{normalForm}.
	\begin{lemma}[Iteration]\label{iteration}
		Fix $\br \ge 2$; for any $0 \le k \le \br  $, there is a small constant
		$\mu_{k} >0$ s.t., denoting  
		\begin{gather*}
			\mu=\mu(R)\coloneqq
			\frac{\|{P^{(0)}}\|^{N,\nu}_{R}}{R ^2} K^\tau\,,
		\end{gather*}
		if $R_k,K$ are s.t.
		\begin{equation}
			\label{stimu}
			\mu(R_k)\leq \mu_k
		\end{equation}
		then there exist $N_k,\nu_k>0$
		such that, $\forall s_0 > d/2 + \nu_k$ 	the following holds: there
		exists an invertible canonical transformation 
		$T^{(k)}:\mathcal{B}^{s_0}_{R_k/2^k}\rightarrow \mathcal{B}^{s_0}_{R_k}$ such that
		\begin{gather*}
			H^{(k)} = H \circ T^{(k)} = H_0 + Z^{(k)} + P^{(k)} + \mathcal{R}_{T}^{(k)}
		\end{gather*}
		and,
		$\exists R_k>0$ s.t. one has
		\begin{itemize}
			\item $Z^{(k)} \in L^{N_{k-1},
				\nu_{k-1}}_{3,\br_{k-1}+2} $ is in normal form and
			\begin{gather}\label{UNOM}
				\norm{Z^{(k)}}^{N_{k-1},\nu_{k-1}}_{R}
				\lesssim_k \|
				{P^{(0)}}\|^{N,\nu}_{R}\ ,\quad
				\forall R<R_k\ ,
			\end{gather}
			\item $P^{(k)} \in L^{N_k, \nu_k}_{k+3,\br_k+2}$ and
			\begin{gather}\label{DUE}
				\norm{ P^{(k)}}^{N_{k},\nu_{k}}_{R}
				\lesssim_k
				\mu^{k}\|{P}^{(0)}\|^{N,\nu}_{R} \ ,\quad
				\forall R<R_k\ ,
			\end{gather}
			\item $\forall s\geq s_0$ $\exists R_{s,k}$
			s.t. $T^{(k)}\in
			C^{\infty}\left({\cB^s_{\frac{R_{s,k}}{2^k}}};\H^s_e\right)
			$ and
			$[T^{(k)}]^{-1}\in C^{\infty}\left({\cB^s_{\frac{R_{s,k}}{4^k}}};\H^s_e\right)$      \begin{align}
				\label{stime defo}
				T^{(k)}(\cB^s_{R/2^k})\subset \cB^s_{R}\ ,\quad
				[T^{(k)}]^{-1}(\cB^s_{R/4^k})\subset \cB^s_{R/2^k}\ ,\quad
				\forall R<R_{s,k}\ , 
			\end{align}
			\item $X_{\mathcal{R}^{(k)}_{T}}\in
			\mathcal{C}^\infty(\mathcal{B}_{R_{s,k}/2^k}^s,\H^s_e)$
			and,  $\forall u\in\mathcal{B}_{R}^s$, with
			$R<R_{s,k}/2^k$ one has 
			\begin{gather}\label{CINQUE}
				\norm{X_{\mathcal{R}^{(k)}_{T}}(u)}_{s}
				\lesssim_k R^2(K^\tau R)^{\br}
			\end{gather}
		\end{itemize}
	\end{lemma}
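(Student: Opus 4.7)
The plan is a finite induction on $k$ ranging from $0$ to $\br$. For the base case $k=0$ set $T^{(0)}=\id$, $Z^{(0)}=0$, and define $P^{(0)}$ and $\mathcal{R}_T^{(0)}=\mathcal{R}_{T,0}$ from the Taylor expansion of $P$ at order $\br+2$; the estimates \eqref{UNOM}, \eqref{DUE}, \eqref{stime defo}, \eqref{CINQUE} hold trivially (the latter because $X_{\mathcal{R}_{T,0}}$ has a zero of order $\br+2$ at the origin and is smooth by Definition \ref{locafunc}). For the inductive step, assume the statement at step $k$ and decompose $P^{(k)}=P^{(k)}_{k+3}+P^{(k)}_{>k+3}$ where $P^{(k)}_{k+3}$ is the homogeneous component of minimal degree $k+3$. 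Apply Lemma \ref{homeq} with $F=P^{(k)}_{k+3}$ to produce $G^{(k+1)}\in L^{\nu_k,N_k}_{k+3}$ and $Z^{(k+1)}_{k+3}\in L^{\nu_k,N_k}_{k+3}$ in normal form satisfying
\begin{equation*}
\{H_0,G^{(k+1)}\}+P^{(k)}_{k+3}=Z^{(k+1)}_{k+3},
\end{equation*}
with $\norm{G^{(k+1)}}^{\nu_k,N_k}_R\lesssim (K^\tau/\gamma)\,\norm{P^{(k)}_{k+3}}^{\nu_k,N_k}_R$ and $\norm{Z^{(k+1)}_{k+3}}^{\nu_k,N_k}_R\le\norm{P^{(k)}_{k+3}}^{\nu_k,N_k}_R$. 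Then define $T^{(k+1)}:=T^{(k)}\circ\Phi_{G^{(k+1)}}$.

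Next, expand $H^{(k)}\circ\Phi_{G^{(k+1)}}$ term by term using Lemma \ref{perturbative}, applied separately to $H_0$, $Z^{(k)}$, $P^{(k)}_{k+3}$, and $P^{(k)}_{>k+3}$. The cancellation $\{H_0,G^{(k+1)}\}+P^{(k)}_{k+3}=Z^{(k+1)}_{k+3}$ eliminates the degree-$(k+3)$ non--normal-form part, so that collecting pieces we get
\begin{equation*}
H^{(k)}\circ\Phi_{G^{(k+1)}}=H_0+\underbrace{\bigl(Z^{(k)}+Z^{(k+1)}_{k+3}\bigr)}_{=:Z^{(k+1)}}+P^{(k+1)}+\mathcal{R}_T^{(k+1)},
\end{equation*}
where $P^{(k+1)}$ gathers $P^{(k)}_{>k+3}$ together with all the new terms produced by Poisson bracketing with $G^{(k+1)}$ (all of minimal degree $k+4$), while $\mathcal{R}_T^{(k+1)}$ is the sum of $\mathcal{R}_T^{(k)}\circ\Phi_{G^{(k+1)}}$ and the Lie-transform remainders of Lemma \ref{perturbative} for each of the three other summands. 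The estimate \eqref{DUE} at step $k+1$ then follows from \eqref{tesi.lie.2} and the smallness hypothesis \eqref{stimu}, which yields a factor $\norm{G^{(k+1)}}^{\nu_k,N_k}_R/R^2\lesssim\mu$ per Poisson bracket, while \eqref{UNOM} is immediate from $Z^{(k+1)}=Z^{(k)}+Z^{(k+1)}_{k+3}$ and the bound on $Z^{(k+1)}_{k+3}$.

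For the deformation estimate \eqref{stime defo} I would use the standard fact that $\Phi_{G^{(k+1)}}$ maps $\cB^s_{R/2^{k+1}}\to\cB^s_{R/2^k}$ provided $R<R_{s,k+1}$ is small enough to ensure $\mu(R)\le\mu_{k+1}$, and compose with $T^{(k)}$ using the inductive hypothesis; the inverse is handled symmetrically since $\Phi_{G^{(k+1)}}^{-1}=\Phi_{-G^{(k+1)}}$. Finally, \eqref{CINQUE} at step $k+1$ is obtained by applying \eqref{tesi.lie.3} to each Lie-transform remainder, giving a bound proportional to $\mu^{n+1}$ with $n$ chosen as in \eqref{n.choice} so that $(k+3)+n(k+3)\geq\bar r+3$; the resulting $R^{\bar r+2}K^{\tau\bar r}$ estimate gives exactly $R^2(K^\tau R)^{\bar r}$ after absorbing constants. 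The numerology $(\nu_k,N_k)$ is updated using the rules in Lemma \ref{PoissonBrack} and Corollary \ref{esti.poi}: $\nu_{k+1}=\nu_k+\nu+d+1$ grows and $N_{k+1}=\min\{N_k,N\}-d-1-\max\{\nu_k,\nu\}$ shrinks, but since $k\le\br$ is bounded and $N$ is free at the start, one chooses $N$ large enough so that $N_{\br}$ still satisfies $N_{\br}>d+s$, which is exactly what is needed to apply the tame estimates in Theorem \ref{tameMulti} throughout. The main obstacle is not conceptual but bookkeeping: making the dependence $\mu_k\downarrow$, $R_{s,k}\downarrow$, $N\uparrow$, and $s_r\uparrow$ consistent while iterating a fixed (but arbitrarily large) number of times $\br$, so that every constant appearing in the estimates is independent of $R$ and $K$ at each step.
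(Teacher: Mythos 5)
There are two genuine gaps in your scheme, both at the heart of the iterative step.

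First, you propose to ``expand $H^{(k)}\circ\Phi_{G^{(k+1)}}$ term by term using Lemma \ref{perturbative}, applied separately to $H_0$, $Z^{(k)}$, \dots''. Lemma \ref{perturbative} (and the bracket estimates behind it) only applies to polynomials in the classes $L^{\nu,N}_{r,\br}$, i.e.\ with localized coefficients and finite norm; $H_0=\sum_a\omega_a\int\Pi_{(a,+)}u\,\Pi_{(a,-)}u$ is not such an object, since its coefficients $\omega_a\sim|a|^\beta$ are unbounded. So neither the terms $\frac{1}{l!}Ad^l_{G_{k+1}}H_0$ with $l\ge2$ nor the integral remainder $\mathcal{R}_{H_0,G_{k+1}}$ can be estimated the way you indicate. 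The paper's proof deals with exactly this point: using the homological equation one writes $\{H_0,G_{k+1}\}=Z_{k+1}-P^{(k)}$, hence $Ad^l_{G_{k+1}}H_0=Ad^{l-1}_{G_{k+1}}\bigl(Z_{k+1}-P^{(k)}\bigr)$ for $l\ge1$, and only then applies Lemma \ref{PoissonBrack} to the right-hand side, which does have localized coefficients; the same substitution is what makes the $H_0$--remainder estimable. You invoke the cancellation only at first order and leave the higher iterated brackets of $H_0$ unjustified.

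Second, your choice to solve the homological equation only for the lowest homogeneous component $P^{(k)}_{k+3}$ and to carry $P^{(k)}_{>k+3}$ into $P^{(k+1)}$ is incompatible with the estimate \eqref{DUE} you are supposed to prove: already at $k=0$ the carried-over piece $P^{(0)}_{>3}$ has norm of order $\|P^{(0)}\|^{N,\nu}_R$, not $\mu\,\|P^{(0)}\|^{N,\nu}_R$, and nothing in \eqref{tesi.lie.2} or \eqref{stimu} produces the missing factor of $\mu$ (note $\mu\lesssim RK^\tau$ is an upper bound, so extra powers of $R$ cannot be traded for powers of $\mu$). The paper instead solves the homological equation for the whole non-homogeneous $P^{(k)}\in L^{N_k,\nu_k}_{k+3,\br_k+2}$ at each step, so that every term of $P^{(k+1)}$ (namely $(P^{(k)})'$, $(Z^{(k)})'$ and $H_0'-\{H_0,G_{k+1}\}$ after the substitution above) contains at least one bracket with $G_{k+1}$ and hence gains the factor $\|G_{k+1}\|^{N_k,\nu_k}_R/R^2\lesssim\mu^{k+1}$, which is what makes \eqref{DUE} close. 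Your classical degree-by-degree scheme could be made to work for a differently formulated iteration, but as written it does not prove the lemma, and in any case it still needs the $H_0$ substitution of the first point.
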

	\begin{proof}
		The result is true for $k=0$ with $T^{(0)} = \mathds{1}$ and $N_0 = N$, $\nu_0 = \nu$. 
		We prove the inductive step $k\rightsquigarrow k+1$.	\\
		We determine $G_{k+1}, Z_{k+1}\in
		L^{N_k,\nu_k}_{k+3,\br_{k+1}} $ solving the homological
		equation.  
		\begin{gather}\label{homologicalEquation}
			\{ H_0,G_{k+1} \} +P^{(k)} =  Z_{k+1}\,,
		\end{gather}
		To this end, we remark that the maximal
		degree homogeneity of $P^{(k)}$ appears when $k=\br-1$ and is
		smaller than $\br_{\br}$. So we take $\tau$ and
		$\gamma$ in \eqref{stimaGen} to be $\tau(\br_{\br})$
		and $\gamma(\br_{\br})$.
		Then we write
		\begin{equation*}
			H^{(k+1)} \coloneqq H^{(k)} \circ \Phi_{G_{k+1}}\,.
		\end{equation*}
		\\
		Exploiting \eqref{homologicalEquation}, the Hamiltonian $H^{(k+1)}$ has the form, 
		\begin{align*}
			H^{(k+1)} &= H_0 + Z^{(k)} + Z_{k+1}  + \\
			& + H_0' -\{H_0,G_{k+1}\} +  (P^{(k)})' + \left(Z^{(k)}\right)'  + \\ 
			& + \mathcal{R}_{H_0,G_{k+1}} +
			\mathcal{R}_{P^{(k)},G_{k+1}} +  \mathcal{R}_{Z^{(k)},G_{k+1}} 
			+ \mathcal{R}_{T,k}\circ \Phi_{G_{k+1}}\, ,
		\end{align*}
		with the primed quantities defined as in Lemma \ref{perturbative}.
		Collecting the terms above, we define the following quantities:
		\begin{align*}
			Z^{(k+1)} &\coloneqq Z^{(k)} + Z_{k+1}\,,
			\\
			P^{(k+1)} &\coloneqq H_0' -\{H_0,G_{k+1}\} +  (P^{(k)})' + \left(Z^{(k)}\right)'\,, 
			\\
			\mathcal{R}_{T,k+1}& \coloneqq \mathcal{R}_{H_0,G_{k+1}} +
			\mathcal{R}_{P^{(k)},G_{k+1}} +  \mathcal{R}_{Z^{(k)},G_{k+1}} 
			+ \mathcal{R}_{T,k}\circ \Phi_{G_{k+1}}\,.
		\end{align*}
		First, we check the order of the polynomials, we have $(P^{(k)})'\in
		L_{2k+2,\bar r_{k} +n_1\bar r_{k}}$, with a suitable $n_1$.
		Similarly one has
		$
		H_0' -\{H_0,G_{k+1}\}\in L_{2k+2,\bar r_{k} +n_2\bar r_{k} }\ ,
		$
		and $(Z^{k})'\in L_{3+k+1,\br_{k-1}+n_3\br_{k}}$ with suitable
		$n_2,n_3$. Defining $$\br_{k+1}:=\max\left\{\br_{k-1}+n_3\br_{k},\bar
		r_{k} +n_2\bar r_{k}, \bar r_{k} +n_1\bar r_{k} \right\} $$ one gets
		the result on the order.
		
		We come to the estimates. First we remark that
		\begin{equation}
			\label{murk}
			\mu\lesssim RK^\tau\ .
		\end{equation}
		From Lemma \ref{homeq} and the inductive hypothesis, we get the estimates
		\begin{equation}\label{newHomEq}
			\begin{aligned}
				\norm{G_{k+1}}^{N_k,\nu_k}_{R} &\le \frac{K^\tau}{\gamma} \|P^{(k)}\|^{N_k,\nu_k}_{R} 
				\lesssim 
				\frac{K^\tau}{\gamma}\mu^{k} \|{P^{(0)}}\|^{N,\nu}_{R} \lesssim \mu^{k+1}R^2\,,
				\\ 
				\norm{Z_{k+1}}^{N_k,\nu_k}_{R} &\lesssim \|P^{(k)}\|^{N_k,\nu_k}_{R}
				\lesssim\mu^{k} \|{P}^{(0)}\|^{N,\nu}_{R}\,.
			\end{aligned}
		\end{equation}		
		The estimate \eqref{UNOM} follows from \eqref{newHomEq}.
		\\
		For the estimate \eqref{DUE}, we prove it for 
		$\tilde{H}_0' -\{H_0,G_{k+1}\}$ applying Lemma \ref{PoissonBrack}. 
		Denoting $N' = N_{k+1} = N_k -{n}(\nu_k + d)$ and 
		$\nu' = \nu_{k+1} = {n}(2\nu_k + d)$, and 
		taking profit of \eqref{homologicalEquation}, 
		we have
		\begin{equation*}
			\begin{aligned}
				&\norm{H_0' -\{H_0,G_{k+1}\} }^{N_k,\nu_k}_{R} 
				= \norm{\sum_{l=1}^{{n}}\frac{1}{l!}Ad^l_{G_{k+1}}\left(\{H_0,G_{k+1}\}\right)}^{N_k,\nu_k}_{R}  
				\\&\qquad 
				= \norm{\sum_{l=1}^{{n}}\frac{1}{l!}Ad^l_{G_{k+1}}\left(Z_{k+1}-P^{(k)}\right) }^{N_k,\nu_k}_{R} 
				\\&\qquad 
				\lesssim\sum_{l=1}^{{n}}\frac{1}{l!}
				\left(\frac{\norm{G_{k+1}}^{N_k,\nu_k}_{R}}{R^2}\right)^l\norm{P^{(k)}}^{N_k,\nu_k}_{R} 
				\lesssim \mu^{k}
				\norm{{P}^{(0)}}^{N,\nu}_{k}\sum_{l=1}^{{n}}\frac{1}{l!}\mu^{(k+1)l}
				\\
				&\qquad
				\lesssim \mu^{k}
				\norm{P^{(0)}}^{N,\nu}_{R} \mu^{k+1}\sum_{l=0}^{{\infty}}\frac{1}{l!}\mu^{(k+1)l} 
				\lesssim \mu^{2k+1} \norm{\hat{P}}^{N,\nu}_{R}\,,
			\end{aligned}
		\end{equation*}
		that is \eqref{DUE} with $k$ replaced by $k+1$ (also
		in the case $k=0$). 
		
		Concerning $\left(Z^{(k)}\right)'$ we just use
		\eqref{tesi.lie.2} which gives 
		$$
		\left\|\left(Z^{(k)}\right)'\right\|^{N_k,\nu_k}_R\lesssim
		\left\|P^{(0)}\right\|_R^{N,\nu}\mu^{k+1}\ . 
		$$
		Similarly one gets the estimate of
		$\left(P^{(k)}\right)'$.
		
		We come to the remainders. Consider first
		$\cR_{Z^{(k)},G_{k+1}}$. Here one has $Z^{(k)}\in L_{3,\br_{k-1}}$, so
		that we apply \eqref{tesi.lie.3} with $r_1=k+1$, $r_2=3$, which gives
		$$n+1\geq \frac{\br}{k+1}\ ,$$ so that the remainder is estimated by
		$$
		\left\|X_{\cR_{Z^{(k)},G_{k+1}}   }\right\|\lesssim
		\frac{\left\|Z^{(k)}\right\|^{N_k,\nu_k}_{R}}{R}(\mu^{k+1})^{n+1}\lesssim
		\frac{\left\|P^{(0)}\right\|_{R}^{N_k,\nu_k}}{R}\mu^{\br}\lesssim
		R^2(RK^\tau)^{\br}\ .
		$$
		Proceeding in the same way for the other terms one gets the thesis.
	\end{proof}
	
	We thus have the following:
	
	\begin{corollary}[Normal form]\label{normalForm}
		Assume Hypotheses \ref{asymptotic}, \ref{hypoNonRes} and
		\ref{bourgain.abstract} for the Hamiltonian $H = H_0 + P$ of the form
		\eqref{startHam} and that $P$ is a function with localized
		coefficients according to Definition \ref{locafunc} and with a zero of
		order at least $3$ at the origin.  For any $\br>3$ there exist $\tau$
		and $s_{\bar{r}}\geq s_0>d/2$ such that for any $s\geq s_{\bar{r}}$
		the following holds. 
		There exist constants $R_{\br,s}$ and $C_{\br,s}$ such that, if $K$
		and $R$ fulfill			\begin{gather*}
			RK^\tau \le R_{\br,s}\,,
		\end{gather*} 
		then there exists an invertible canonical transformation 
		\begin{align*}
			T^{(\br)}:\mathcal{B}^s_{R/2^{\br}}\rightarrow
			\mathcal{B}^s_{R}\ ,\quad \left[T^{(\br)}\right]^{-1}:\mathcal{B}^s_{R/4^{\br}}\rightarrow
			\mathcal{B}^s_{R/2^{\br}}
		\end{align*}
		such that
		\begin{equation}\label{hamHrrNormal}
			H^{(\br)} = H \circ T^{(\br)} = H_0 + Z^{(\br)} +  \mathcal{R}_T
		\end{equation}
		where $Z^{(\br)}\in L^{\nu_{\br},N_{\br}}_{3,\br_{\br}}$ is in normal form and  
		$X_{\mathcal{R}_T} \in \mathcal{C}^\infty(\mathcal{B}^s_{R/2^{\br}},\H^s_e)$. 
		Moreover, for any $u\in\mathcal{B}^s_{R}$, we have		
		\begin{gather*}
			\norm{X_{\mathcal{R}_T}(u)}_{s}\lesssim
			R^2(K^\tau R)^{\br}\ .
		\end{gather*}
	\end{corollary}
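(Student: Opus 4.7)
The strategy is to iterate Lemma \ref{iteration} exactly $\br$ times and then to absorb the leftover non-normalized term $P^{(\br)}$ into the final remainder, exploiting the fact that after $\br$ steps this term is of size $\mu^{\br}$ in the perturbative parameter $\mu$.

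First, I would choose $s\ge s_{\br}$ large enough so that Corollary \ref{coro.tame} applies to all polynomials in $L^{\nu_{k},N_{k}}_{\ast,\ast}$ produced along the $\br$ iterations (this fixes $s_{\br}$), and take $\tau$ equal to the largest exponent appearing in Lemma \ref{homeq} during the $\br$ steps, i.e.\ $\tau(\br_{\br})$. Since $P$ has a zero of order at least three at the origin, its Taylor polynomial $P^{(0)}$ lies in $L^{N,\nu}_{3,\br+2}$ and obeys $\|P^{(0)}\|^{N,\nu}_R \lesssim R^3$, whence
\[
\mu(R)=\frac{\|P^{(0)}\|^{N,\nu}_R}{R^{2}}\,K^{\tau} \;\lesssim\; RK^{\tau}.
\]
Consequently, choosing $R_{\br,s}$ sufficiently small, the hypothesis $RK^{\tau}\le R_{\br,s}$ forces $\mu(R)\le \mu_{\br}\le\mu_{k}$ at every level $k=0,\dots,\br$, which is precisely the smallness condition \eqref{stimu} required by the iteration lemma.

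Applying Lemma \ref{iteration} with $k=\br$ then yields a canonical transformation $T^{(\br)}$ satisfying the mapping properties \eqref{stime defo} and the decomposition
\[
H\circ T^{(\br)} \;=\; H_0 + Z^{(\br)} + P^{(\br)} + \mathcal{R}^{(\br)}_T,
\]
with $Z^{(\br)}$ already in normal form (and controlled by \eqref{UNOM}) and with $X_{\mathcal{R}^{(\br)}_T}$ already bounded by $R^{2}(K^{\tau} R)^{\br}$ thanks to \eqref{CINQUE}. The only term to rearrange is $P^{(\br)}\in L^{N_{\br},\nu_{\br}}_{\br+3,\br_{\br}+2}$, for which \eqref{DUE} gives $\|P^{(\br)}\|^{N_{\br},\nu_{\br}}_R \lesssim \mu^{\br}\,\|P^{(0)}\|^{N,\nu}_R \lesssim (RK^{\tau})^{\br} R^{3}$.

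To conclude, I would absorb $P^{(\br)}$ into the remainder by setting $\mathcal{R}_T := P^{(\br)} + \mathcal{R}^{(\br)}_T$. Combining Lemma \ref{controlField} with Corollary \ref{coro.tame} applied to the first summand yields
\[
\sup_{\|u\|_s\le R}\|X_{P^{(\br)}}(u)\|_s \;\lesssim\; \frac{\|P^{(\br)}\|^{N_{\br},\nu_{\br}}_R}{R} \;\lesssim\; R^{2}(K^{\tau} R)^{\br},
\]
matching the target estimate; the smoothness of $X_{\mathcal{R}_T}$ is then inherited from that of its two summands, and the mapping properties and canonicity of $T^{(\br)}$ come directly from Lemma \ref{iteration}. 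The genuinely hard work is all packed inside Lemma \ref{iteration} (solution of the homological equation with Bourgain clustering, propagation of the $L^{\nu,N}_{r,\br}$ norms through the Lie transforms, and consistency of the indices $N_{k},\nu_{k},\br_{k}$ across iterations); once that lemma is available, the present corollary reduces to the bookkeeping just described, together with the verification that the single smallness condition $RK^{\tau}\le R_{\br,s}$ dominates all the inductive smallness thresholds $\mu_{k}$.
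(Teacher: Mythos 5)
Your proposal is correct and follows essentially the same route as the paper, which states the corollary as a direct consequence of Lemma \ref{iteration} applied with $k=\br$: the smallness condition $RK^\tau\le R_{\br,s}$ guarantees \eqref{stimu} via $\mu\lesssim RK^\tau$ (the paper's \eqref{murk}, using that $P^{(0)}$ vanishes to order three), and the leftover $P^{(\br)}$ is absorbed into $\mathcal{R}_T$ exactly as you do, with the bound $\|X_{P^{(\br)}}\|_s\lesssim \mu^{\br}\|P^{(0)}\|^{N,\nu}_R/R\lesssim R^2(K^\tau R)^{\br}$ coming from \eqref{DUE}, Lemma \ref{controlField} and Corollary \ref{coro.tame}. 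Your bookkeeping of $\tau=\tau(\br_{\br})$, the choice of $s_{\br}$, and the mapping/canonicity properties inherited from \eqref{stime defo} matches the paper's (implicit) argument.
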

	\begin{proof}[Proof of Proposition \ref{finalNormal}]
		We apply Corollary \ref{normalForm}.  Choosing $K$ in such a way that
		\begin{gather}\label{sim}
			R K^\tau \sim R^{\frac{1}{2}}
		\end{gather}
		we have
		\begin{gather*}
			(RK^{\tau})^{\br}R^2\simeq R^{\frac{\br+4}{2}}.
		\end{gather*}
		Choosing $\br := 2r$ we get
		\begin{gather*}
			\norm{X_{\mathcal{R}_T}(u)}_{s}\lesssim R^{r+2}\,.
		\end{gather*}
		From Theorem \ref{normalForm}, Definitions \ref{brnf} and \ref{brnf3}, we can write 
		\begin{gather*}
			Z^{(\br)} = Z_0 + Z_B + Z_2 + Z_{\ge3}\,.
		\end{gather*}
		We have to show that $Z_2$ and $Z_{\ge3}$ can be considered remainder 
		terms of order $R^{r+1}$. 
		From Corollary \ref{cubic}, we have that, for any  $u\in\mathcal{B}_{R/2^{\br}}^s$,
		\begin{gather*}
			\norm{X_{Z_{\ge3}}(u)}_s \lesssim 
			\frac{\norm{Z_{\ge3}}^{\nu,N}_R}{K^{s-s_0}}\frac{1}{R} \lesssim \frac{R^2}{K^{s-s_0}}\,.
		\end{gather*}
		Since \eqref{sim} implies $K\sim R^{-\frac{1}{2\tau}}$, we have 
		\begin{gather*}
			\frac{R^2}{K^{s-s_0}} \sim R^ {2 + \frac{s-s_0}{2\tau}} \lesssim R^{r+1}
		\end{gather*}
		if $s>s_r = s_0 + 2\tau(r - 1)$ and so
		\begin{gather*}
			\norm{X_{Z_{\ge3}}(u)}_s \lesssim R^{r+1}\,.
		\end{gather*}
		It remains to consider $X_{Z_2}$. From Lemma \ref{cutOff2}, 
		it follows that, for any  $u\in\mathcal{B}_{R}^s$,
		\begin{gather*}
			\norm{X_{Z_2}(u)}_s \lesssim  
			\frac{\norm{Z_2}^{\nu,N'}_R}{K^{\delta (N'-N)}} \frac{1}{R} 
			\lesssim \frac{R^2}{K^{\delta(N'-N)}}\,,
		\end{gather*}
		for any $N' > N$. 
		Denoting $N' = N + M_1$, we have
		\begin{gather*}
			\frac{R^2}{K^{\delta M_1}}  \sim R^{2 + \frac{\delta M_1}{2\tau}} \sim R^{r+1}
		\end{gather*}
		for $M_1 = \frac{2\tau}{\delta}(r-2)$. We get the thesis denoting 
		\begin{gather*}
			\cR^{(\br)} \coloneqq Z_2 + Z_{\ge 3} + \mathcal{R}_T.
		\end{gather*}
		The estimate \eqref{lowmodesZ} follows from Lemma \ref{cubic}.
	\end{proof}
	
	\section{From normal form to almost global existence}\label{almost.sec}
	In this section, we conclude the proof of Theorem \ref{LC}. In particular, we use Proposition \ref{finalNormal} to study the dynamics
	corresponding to a smooth, small, real initial
	datum for the Hamilton equations of
	\eqref{startHam}. Precisely, take some large $s$ and assume that 
	\begin{equation}
		\label{ini.sec.al}
		\left\| u_0\right\|_s=:\epsilon< \frac{R}{2 \cdot 4^{2r}}\ ,
	\end{equation}
	with a small $R<R_{s,r}$ and $R_{r,s}$ from Proposition \ref{finalNormal}. Denote
	\begin{gather*}
		z_0 \coloneqq \left(T^{(\br)}\right)^{-1}(u_0)\,,
	\end{gather*}
	and we consider the evolution in the $z$ variables.
	Let $z = (z_+,z_-)\in \H^s\cross\H^s \equiv \H^s_e$ and, for 
	$K$ defined in Proposition \ref{finalNormal}, recall that
	\begin{align*}
		\Pi^{\le}z  &= z^\le =   \sum_{\left\{\len{A}\le K\right\}}\Pi_A z\,,
		\\  
		\Pi^{\perp}z &=  z^\perp =   \sum_{\left\{\len{A}>K\right\}}\Pi_A z\,.
	\end{align*}
	Write the Hamilton equations of $H^{(\br)}$ as a system:
	\begin{gather}
		\begin{cases}\label{stimaAltaBassa}
			\dot{z}^\le = X_{H_0}(z^\le) + X_{Z_0}(z^\le) + \Pi^\le X_{Z_B}(z) + \Pi^\le X_{R^{(\br)}}(z)\,,\\
			\dot{z}^\perp = X_{H_0}(z^\perp) +  \Pi^\perp X_{Z_B}(z) + \Pi^\perp X_{R^{(\br)}}(z)\,.
		\end{cases}
	\end{gather}
	
	We start considering the dynamics of $z^\le$. 
	We remark first that $\Pi^\le X_{Z_B}$  and $\Pi^\le X_{R^{(\br)}} $ 
	are remainder terms of order $R^{r+2}$ (see Proposition \ref{finalNormal}). 
	Then, it remains to analyze the role of $Z_0$.\\ 
	To this end, we define the set of indexes correspondent 
	to each "band" of the spectrum $\Sigma_n$, 
	the correspondent projector and the correspondent "superaction", namely
	\begin{align*}%\label{resIn}
		&E_n \coloneqq \{a\in\Lambda: \omega_a \in \Sigma_n\}\,,\qquad
		\Pi_n z \coloneqq \sum_{\left\{A=(a,\sigma):a \in E_n\right\}}  \Pi_A z \,,\\
		&J_n(z) \coloneqq  \sum_{a\in E_n}\int \Pi_{(a,+)}z\,\Pi_{(a,-)}z\, dx\,.
	\end{align*}
	In particular, if $z$ is real (as we assumed)
	\begin{gather*}
		J_n(z) = \sum_{a\in E_n}\norm{\Pi_{(a,+)}z}_0^2 
		= \frac{1}{2}  \sum_{a\in E_n} \norm{\Pi_{(a,+)}z}_0^2 + \norm{\Pi_{(a,-)}z}_0^2\,.
	\end{gather*}
	In the next lemma, we prove that $J_n$ is preserved under 
	the dynamics associated to $Z_0$. 
	\begin{lemma}\label{enBloLem}
		Let $Z$ a polynomial supported on $W$, then
		\begin{gather*}
			\{Z,J_n\} = 0\,.
		\end{gather*}
	\end{lemma}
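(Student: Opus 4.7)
The plan is to interpret $J_n$ as the generator of a gauge rotation localized on the band $E_n$, and then use the pairing structure built into the definition of $W$ to show that every monomial in the support of $Z$ is invariant under this rotation.

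First I would compute the Hamiltonian vector field of $J_n$. Differentiating $J_n(z) = \sum_{a\in E_n}\int (\Pi_a z_+)(\Pi_a z_-)\,dx$ and using that each $\Pi_a$ is an $L^2$-orthogonal projector, one obtains $\nabla_{z_+} J_n = \sum_{a\in E_n}\Pi_a z_-$ and $\nabla_{z_-} J_n = \sum_{a\in E_n}\Pi_a z_+$, so that
\begin{equation*}
X_{J_n}(z) = \bigl(\,\im \textstyle\sum_{a\in E_n}\Pi_a z_+\,,\, -\im \textstyle\sum_{a\in E_n}\Pi_a z_-\bigr).
\end{equation*}
The flow $\Phi^t_{J_n}$ therefore acts diagonally on the spectral decomposition: for $A=(a,\sigma)\in\Lambda_e$,
\begin{equation*}
\Pi_A\,\Phi^t_{J_n}(z) \;=\; e^{\im\sigma t\,\mathbf{1}_{a\in E_n}}\,\Pi_A z,
\end{equation*}
i.e.\ it rotates with phase $\sigma t$ every mode labeled by an index in $E_n$ and leaves the others fixed.

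Next I would use the standard identity $\{Z,J_n\}(z) = \frac{d}{dt}\big|_{t=0} Z(\Phi^t_{J_n}(z))$. Expanding $Z$ as a sum of multilinear contributions, one gets that for any multi-index $\textbf{A}=(A_1,\dots,A_r)\in\Lambda_e^r$,
\begin{equation*}
\widetilde{Z}(\Pi_{A_1}\Phi^t_{J_n}(z),\dots,\Pi_{A_r}\Phi^t_{J_n}(z))
= \exp\!\Bigl(\im t\!\!\sum_{\{j:\,a_j\in E_n\}}\!\!\sigma_j\Bigr)\,\widetilde{Z}(\Pi_{A_1}z,\dots,\Pi_{A_r}z).
\end{equation*}
Hence $\{Z,J_n\}=0$ will follow once we show that for every $\textbf{A}\in\mathrm{supp}(Z)$ the exponent $\sum_{\{j:\,a_j\in E_n\}}\sigma_j$ vanishes.

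This is where the hypothesis $\mathrm{supp}(Z)\subset W$ is used, and I expect it to be the only real check in the argument. By Definition \ref{w.non.res}, for $\textbf{A}\in W$ there exist a permutation $\tau$ and integers $n_1,\dots,n_{r/2}$ such that for each $j=1,\dots,r/2$ one has $\omega_{a_{\tau(j)}},\omega_{a_{\tau(j+r/2)}}\in\Sigma_{n_{\tau(j)}}$ and $\sigma_{\tau(j)}=-\sigma_{\tau(j+r/2)}$. Thus the indexes of $\textbf{A}$ whose frequency lies in a fixed band $\Sigma_n$ come in pairs $(\tau(j),\tau(j+r/2))$ with opposite signs of $\sigma$, so that $\sum_{\{j:\,a_j\in E_n\}}\sigma_j=0$. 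This forces $\frac{d}{dt}|_{t=0} Z(\Phi^t_{J_n}(z))=0$ identically in $z$, which is exactly $\{Z,J_n\}=0$.
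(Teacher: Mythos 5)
Your proof is correct and takes essentially the same approach as the paper: the paper directly computes $\{Z_l,J_n\}(u)=\im\sum_{\bA}\tilde Z(\Pi_{A_1}u,\dots,\Pi_{A_l}u)\sum_{j}\delta_{a_j\in\Sigma_n}(\delta_{\sigma_j,+}-\delta_{\sigma_j,-})$ and then observes that, on $W$, the inner sum vanishes by the $\pm$ pairing of indexes within each band. Your reformulation via the explicit phase-rotation flow $\Phi^t_{J_n}$ and the exponent $\sum_{\{j:\,a_j\in E_n\}}\sigma_j$ is just the integrated version of the same computation, and the key cancellation you invoke is identical to the one in the paper.
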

	\begin{proof}
		From Def. \ref{support} follows that $\widetilde{Z}$ is the sum of terms of the form 
		\begin{gather*}
			\int_M \Pi_{A_1}u_1 \dots \Pi_{A_l}u_l
		\end{gather*} 
		with $\textbf{A} = (A_1,\dots,A_l)\in W$. First, note that we have
		\begin{gather*}
			X_{J_n}(u) = i \sum_{a\in E_n}\left(\Pi_{(a,+)} u, - \Pi_{(a,-)}u
			\right)=\im
			\sum_{(a,\sigma)}\delta_{a\in\Sigma_n}(\delta_{\sigma,+}-\delta_{\sigma,-})\Pi_{(a,\sigma)}u
			\,.
		\end{gather*}
		Moreover, for any homogeneous polynomial $F$, one has
		\begin{gather*}
			\text{d}F(u) X = \tilde{F}(X,u,\dots,u) +
			\tilde{F}(u,X,\dots,u) + \dots +
			\tilde{F}(u,\dots,u,X)\ .
		\end{gather*}
		Decompose $Z=\sum_{l}Z_l$ in homogeneous polynomials,  we have
		\begin{gather*}
			\{Z_l,J_n\}(u) = \text{d}Z(u) X_{J_n}(u) =
			\\ =
			\im  \sum_{A_1,...,A_l}\tilde
			Z(\Pi_{A_1}u,...,\Pi_{A_l}u)\sum_{j=1}^n\delta_{a_j\in\Sigma_n}(\delta_{\sigma_j,+}-\delta_{\sigma_j,-})\ .
		\end{gather*}
		We recall that $Z$ is in normal form, which means that the sum can be
		restricted to multi-indexes belonging to $W$. So, fix one of the
		multi-indexs $A\in W$. The definition of $W$ implies that there exists
		a permutation $\tau$ of $1,...,l$, and indexes $n_1,...,n_l$ s.t,
		$a_{\tau(j)},a_{\tau(j+l/2)}\in\Sigma_{n_j}$ and
		$\sigma_{\tau(j)}\sigma_{\tau(j+l/2)}=-1$. Thus consider the sum
		$$
		\sum_{j=1}^n\delta_{a_j\in\Sigma_n}(\delta_{\sigma_j,+}-\delta_{\sigma_j,-})=
		\sum_{j=1}^n\delta_{a_{\tau(j)}\in\Sigma_n}(\delta_{\sigma_{\tau(j)},+}-\delta_{\sigma_{\tau(j)},-})
		\ .
		$$
		If $a_{\tau(j)}\in\Sigma_n$ with a sign, it means that also
		$a_{\tau(j+l/2)}\in\Sigma_n$ with the opposite sign. Thus the sum
		vanishes for any index in $W$.
	\end{proof}
	
	\begin{corollary}
		\label{bassi}
		There exists a positive constant $C_1$ with the following property:
		assume that \eqref{ini.sec.al} holds and that there exists $T_e>0$ s.t.
		\begin{equation}
			\label{te}
			\left\| z(t)\right\|_s\leq \frac{R}{2\cdot 2^{2r}}\ ,\quad \forall
			\left|t\right|\leq T_e
		\end{equation}
		and some $R<R_{rs}$, then one has
		\begin{equation}
			\label{aimUno}
			\norm{z^\le(t)}^2_s \leq C_1( 	\norm{z^\le(0)}^2_s + |t|R^{r+3})\,.\end{equation}
	\end{corollary}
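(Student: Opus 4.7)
The strategy is to construct a quantity $\mathcal{N}(z)$ equivalent to $\|z^\le\|_s^2$ that is exactly preserved by the ``integrable'' part $H_0 + Z_0$ of the normal-form Hamiltonian, and then to control its variation using the remainder estimates supplied by Proposition \ref{finalNormal}. By the choice of $K$ in Remark \ref{choiceK}, each band $E_n = \{a \in \Lambda : \omega_a \in \Sigma_n\}$ lies entirely in the low-mode region $\{|a|\le K\}$ or entirely in the high-mode region; denote by $\mathcal{L}$ the set of low-band indices. For $n\in\mathcal{L}$ set $c_n := \max_{a\in E_n}(1+|a|)$. By Hypothesis \ref{asymptotic} together with Lemma \ref{parti_sigma} (each $\Sigma_n$ has length $\le 2$), all the quantities $(1+|a|)$ for $a\in E_n$ are mutually comparable up to a universal constant. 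Define
\begin{equation*}
\mathcal{N}(z) := 2\sum_{n\in\mathcal{L}} c_n^{2s}\, J_n(z).
\end{equation*}
Since $z$ is real, $J_n(z) = \sum_{a\in E_n}\|\Pi_a z_+\|_0^2$, so a direct comparison gives two constants $C_0, C_0'>0$ with $C_0\|z^\le\|_s^2 \le \mathcal{N}(z)\le C_0'\|z^\le\|_s^2$.

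The key step is the conservation of $\mathcal{N}$ under $H_0 + Z_0$. Each $J_n$ depends only on modes in $E_n\subset\{|a|\le K\}$, so along the flow \eqref{stimaAltaBassa},
\begin{equation*}
\dot J_n = dJ_n \cdot \bigl(X_{H_0}(z^\le) + X_{Z_0}(z^\le) + \Pi^\le X_{Z_B}(z) + \Pi^\le X_{\cR^{(\br)}}(z)\bigr).
\end{equation*}
The first term vanishes because $H_0$ is supported on multi-indices $((a,+),(a,-))\in W$ (take the identity permutation), so by Lemma \ref{enBloLem} $\{H_0, J_n\}=0$. The second term vanishes by Lemma \ref{enBloLem} applied directly to $Z_0$, which is supported on $W$ by Definition \ref{brnf}.

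For the remaining contributions, apply Cauchy--Schwarz in the $L^2$ inner product and then on the $n$-sum, together with the equivalence $\mathcal{N}\sim\|z^\le\|_s^2$:
\begin{equation*}
|\dot{\mathcal{N}}| \lesssim \|z^\le\|_s \,\bigl(\|\Pi^\le X_{Z_B}(z)\|_s + \|\Pi^\le X_{\cR^{(\br)}}(z)\|_s\bigr).
\end{equation*}
The bootstrap assumption \eqref{te} guarantees $z(t)\in\mathcal{B}_R^s$, so estimate \eqref{lowmodesZ} and the remainder bound of Proposition \ref{finalNormal} give $\|\Pi^\le X_{Z_B}(z)\|_s + \|\Pi^\le X_{\cR^{(\br)}}(z)\|_s \lesssim R^{r+2}$. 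Combined with $\|z^\le\|_s \le R$, this yields $|\dot{\mathcal{N}}(z(t))| \lesssim R^{r+3}$ for all $|t|\le T_e$. Integrating in time and using the equivalence $\mathcal{N}\sim\|z^\le\|_s^2$ with a constant $C_1$ absorbing the equivalence constants yields \eqref{aimUno}.

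The only real subtlety is in Step 1: one must verify that the weights $(1+|a|)^{2s}$ are band-wise essentially constant, which follows because by Hypothesis \ref{asymptotic} one has $|a|^\beta = \omega_a/C_1 + O(1)$, so within a band of width $\le 2$ the ratio $(1+|a|)/(1+|a'|)$ tends to $1$ uniformly as $n\to\infty$, giving a uniform bound on $(1+|a|)^{2s}/c_n^{2s}$. Everything else reduces to bookkeeping with the estimates already established.
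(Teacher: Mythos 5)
Your proof is correct and follows essentially the same route as the paper: the paper likewise introduces band-wise constant weights $a_{n,s}=\inf_{a\in E_n}(1+\len a)^{2s}$ (you use the $\max$, which is equivalent since the bands $\Sigma_n$ have bounded width), observes that $\sum_n a_{n,s}J_n\simeq\|z^{\le}\|_s^2$, kills the $H_0$ and $Z_0$ contributions via Lemma \ref{enBloLem}, and bounds the $Z_B$ and $\cR^{(\br)}$ contributions by $R^{r+3}$ using \eqref{lowmodesZ} and Proposition \ref{finalNormal} before integrating in time.
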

	\begin{proof}
		Define
		\begin{gather*}
			a_{n,s} \coloneqq \inf_{a\in E_n} (1 + \len a)^{2s}
		\end{gather*}
		then there exist two constants $C_3,C_4$ such that, for any $n$, one has
		\begin{gather*}
			C_3 \norm{\Pi_n z}_s^2 \le a_{n,s} J_n \le C_4  \norm{\Pi_n z}_s^2\,,
		\end{gather*}
		
		\begin{equation}\label{equiv.Jn}
			\|z^{\leq}\|^2_{s} \simeq \sum_{n \text{ s.t. } \atop \max E_n \leq K} a_{n,s} J_n\,.
		\end{equation}
		Then, by Proposition \ref{finalNormal} and Lemma \ref{enBloLem},
		one has
		\begin{align}\label{st.en.low}
			\frac{d}{dt}\sum_{n \text{ s.t. } \atop \max E_n \leq K} a_{n,s} J_n  \lesssim \sum_n  a_{n,s} 
			\left\{J_n, Z_2 + R^{(\br)} \right\} \lesssim R^{r+3}\,,
		\end{align}
		which is valid for $|t|\leq T_e$. From \eqref{equiv.Jn} and \eqref{st.en.low} the thesis immediately
		follows.
	\end{proof}
	
	Consider the dynamics of the high modes $z^\perp$ given by
	\eqref{stimaAltaBassa}.  From Proposition \ref{finalNormal}),
	$\Pi^\perp X_{R^{(r)}} $ is a remainder term of order
	$R^{r+2}$ and $Z_B$ is in Block Resonant Normal Form and of
	order 2 in $z^\perp$.
	
	Recalling the dyadic partition
	$\Lambda = \bigcup_{\alpha\in\mathfrak{A}} \Omega_\alpha$, we
	define the correspondent projectors and the correspondent
	superactions.  Since we are interested in the dynamics of the
	high modes, we consider only superactions defined on modes
	$|a|\ge K$.  This amount to consider a cutoff of the
	Bourgain's blocks, that clearly do not break the dyadicity of
	the partition.
	\begin{align*}
		\Pi_\alpha z  &\coloneqq
		\sum_{\left\{A=(a,\sigma)\colon a\in \Omega_\alpha,\,
			\len a\ge K \right\}} \Pi_A z
		\\	
		J_\alpha(z) &\coloneqq
		\sum_{\left\{a\in\Omega_\alpha,\, \len a\ge K \right\}}
		\int \Pi_{(a,+)}z\,\Pi_{(a,-)}z\,dx = \norm{\Pi_\alpha z}_0^2\,.
	\end{align*}
	By definition of Block Resonant normal form (see
	Def. \ref{brnf}) $\Pi^\perp X_{Z_B}$ is linear in
	$z^\perp$. Then, we show that the $L^2$ norm on each block is
	conserved along the dynamics associated to the normal
	form since $Z_B$ is a real polynomial. Namely, the dynamics of the normal form $Z_B$ enforces
	the exchange of energy only within high modes in the same block
	$\Omega_\alpha$. This is the content of the following lemma.
	\begin{lemma}
		For any real $z \in \H_e^s$, we have that, for any $\alpha \in \mathfrak{A}$, 
		\begin{gather*}
			\left\{J_\alpha, Z_B \right\} (z) = 0\,.
		\end{gather*}
	\end{lemma}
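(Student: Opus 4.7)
The plan is to follow the pattern of Lemma \ref{enBloLem}, replacing the global resonance condition $W$ by the two structural constraints specific to Block Resonant Normal Forms (Def. \ref{brnf}). First I would compute the Hamiltonian vector field of the quadratic functional $J_\alpha$ directly:
\begin{equation*}
X_{J_\alpha}(z)=\im\sum_{(a,\sigma)\in\Lambda_e}\chi_\alpha(a,\sigma)\,\Pi_{(a,\sigma)}z,\qquad \chi_\alpha(a,\sigma):=\mathds{1}_{a\in\Omega_\alpha}\,\mathds{1}_{\len{a}\ge K}\,(\delta_{\sigma,+}-\delta_{\sigma,-}).
\end{equation*}
Then, decomposing $Z_B=\sum_l (Z_B)_l$ into homogeneous components and using the identity for the differential of a symmetric multilinear form, $dF(z)X=\sum_{j=1}^l\widetilde F(z,\dots,X,\dots,z)$, the bracket unfolds as
\begin{equation*}
\{(Z_B)_l,J_\alpha\}(z)=\im\sum_{\bA\in\Lambda_e^l}\widetilde{(Z_B)_l}(\Pi_{A_1}z,\dots,\Pi_{A_l}z)\sum_{j=1}^l\chi_\alpha(A_j).
\end{equation*}

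Next I would exploit the order--$2$ condition in $u^\perp$: every non-vanishing summand has exactly two indexes with $\len{A_j}>K$, and since any high-frequency index is strictly larger than every low-frequency one, these are precisely $A_{\tau_{ord}(1)}$ and $A_{\tau_{ord}(2)}$ in the ordering of Definition \ref{mu.S1}. The factor $\mathds{1}_{\len{a}\ge K}$ in $\chi_\alpha$ then kills the contribution of all other indexes, so the inner sum collapses to $\chi_\alpha(A_{\tau_{ord}(1)})+\chi_\alpha(A_{\tau_{ord}(2)})$.

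I would then conclude using the Block Resonant property: for any multi-index in $Supp(Z_B)$ there is a (term-dependent) block $\Omega_{\alpha'}$ with $a_{\tau_{ord}(1)},a_{\tau_{ord}(2)}\in\Omega_{\alpha'}$ and $\sigma_{\tau_{ord}(1)}\sigma_{\tau_{ord}(2)}=-1$. If $\alpha'\ne\alpha$, both indicators $\mathds{1}_{a\in\Omega_\alpha}$ vanish by disjointness of the Bourgain partition. If $\alpha'=\alpha$, both indicators are $1$ but the two surviving values of $\delta_{\sigma,+}-\delta_{\sigma,-}$ are opposite in sign, and therefore cancel. In either case each summand vanishes, and hence $\{J_\alpha,Z_B\}(z)=0$.

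The only step requiring genuine care is the identification of the two high-frequency indexes with $A_{\tau_{ord}(1)},A_{\tau_{ord}(2)}$; this uses the sharp cut-off at $K$ together with the equivalence of $\len{\cdot}$ and $|\cdot|$ from Remark \ref{equi.mod}. Everything else is a cosmetic adaptation of the computation already carried out for Lemma \ref{enBloLem}, so I do not expect any real obstacle.
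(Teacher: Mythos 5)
Your proof is correct and rests on exactly the same cancellation mechanism as the paper's: by order $2$ in $u^\perp$ and Definition \ref{brnf}, the two high-frequency indexes of any monomial in $supp(Z_B)$ lie in a single block $\Omega_{\alpha'}$ with opposite signs $\sigma$, so their contributions to the bracket either both vanish (when $\alpha'\neq\alpha$) or cancel in pairs (when $\alpha'=\alpha$). The only presentational difference is that you run the weighted-indicator computation of Lemma \ref{enBloLem} verbatim, whereas the paper first uses the reality of $Z_B$ to group each monomial with its conjugate before exhibiting the same term-by-term cancellation; your version shows in passing that the reality assumption is not actually needed for this step.
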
 
	\begin{proof}
		In this proof, we denote again $u_A \coloneqq \Pi_A u$.
		Since $Z_B$ is real and recalling Def. \ref{brnf}, it is the sum of terms of the form 
		\begin{equation}\label{ex.proof}
			\begin{aligned}
				\widetilde {Z}_\beta(z)&:= \int z_{(a,+)}  z_{(b,-)} z_{A_3}\dots z_{A_l} 
				+ \int z_{(a,-)}  z_{(b,+)}z_{\overline A_3}\dots z_{\overline{A}_r} 
				\\&
				= 2 \text{Re}\left(\int z_{(a,+)}  z_{(b,-)}z_{A_3}\dots z_{A_r} \right),\quad \forall z \text{ real}
			\end{aligned}
		\end{equation}
		with $\textbf{A} =
		\left((a,+),(b,-),A_3,\dots,A_r\right) \in
		\Lambda_e^r$ such that $a,b \in \Omega_\beta$, $\len
		a>K$ and $\len b>K$ for some $\beta$.  Here, for the
		sake of simplicity, we are considering multi-indexes
		for which the two largest indexes are the first and
		the second.
		
		If $\beta \not = \alpha$, then
		$\left\{J_\alpha,Z_\beta\right\} = 0$. Otherwise, we have
		\begin{align*}
			\left\{J_\alpha, \widetilde{Z}_\alpha\right\}(z)&=   
			i \int z_{(a,-)} z_{(b,+)}\dots z_{\bar{A_r}} - i\int z_{(a,+)} z_{(b,-)}\dots z_{A_r} 
			\\&
			+ i \int z_{(b,-)}z_{(a,+)}  \dots z_{A_r}  - i\int z_{(b,+)}z_{(a,-)}  \dots z_{\bar{A_r}} = 0\,,
		\end{align*}
		with $a,b\in\Omega_\alpha$.
	\end{proof}
	The following Corollary is proved exactly in the same way as Corollary
	\ref{bassi} 
	\begin{corollary}
		\label{alti}
		There exists a positive constant $C_1$ with the following property:
		assume there exists $T_e>0$ s.t. \eqref{te} holds for some $R<R_{rs}$,
		then one has
		\begin{equation}
			\label{aimDue}
			\norm{z^\perp(t)}^2_s \leq C_1( 	\norm{z^\perp(0)}^2_s + |t|R^{r+3})\,.\end{equation}
	\end{corollary}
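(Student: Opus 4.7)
The plan is to mimic the proof of Corollary \ref{bassi} line by line, replacing the spectral bands $\Sigma_n$ with the Bourgain blocks $\Omega_\alpha$ (restricted to $\len a>K$) and replacing Lemma \ref{enBloLem} with the preservation lemma just proved for $Z_B$. Concretely, I would define the weights
\[
a_{\alpha,s}:=\inf_{a\in\Omega_\alpha,\,\len{a}>K}(1+\len{a})^{2s},
\]
and use the dyadic property of the partition (part (i) of Hypothesis \ref{bourgain.abstract}) to conclude that $a_{\alpha,s}\sim\sup_{a\in\Omega_\alpha,\len a>K}(1+\len a)^{2s}$. This immediately yields the equivalence
\[
\|z^\perp\|_s^{\,2}\;\sim\;\sum_{\alpha\in\mathfrak A} a_{\alpha,s}\,J_\alpha(z),
\]
playing the role of \eqref{equiv.Jn}.

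Next, I differentiate this weighted sum along the flow of $H^{(\br)}=H_0+Z_0+Z_B+\cR^{(\br)}$ and check that only $\cR^{(\br)}$ contributes. Indeed: $\{J_\alpha,H_0\}=0$ because $H_0$ is a linear combination of the commuting individual mode–masses $\int\Pi_{(a,+)}u\,\Pi_{(a,-)}u\,dx$; $\{J_\alpha,Z_0\}=0$ because $Z_0$ is of order $0$ in $u^\perp$ while $X_{J_\alpha}$ is supported on indexes with $\len a>K$; and $\{J_\alpha,Z_B\}=0$ by the preceding lemma. Therefore
\[
\frac{d}{dt}\sum_\alpha a_{\alpha,s}J_\alpha=\sum_\alpha a_{\alpha,s}\{J_\alpha,\cR^{(\br)}\}.
\]

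Finally, for the right-hand side I would estimate $|\{J_\alpha,F\}(z)|\le 2\|\Pi_\alpha^{>K} z\|_0\|\Pi_\alpha^{>K}X_F(z)\|_0$ and apply Cauchy–Schwarz in $\alpha$:
\[
\Big|\sum_\alpha a_{\alpha,s}\{J_\alpha,\cR^{(\br)}\}\Big|\;\lesssim\;\Big(\sum_\alpha a_{\alpha,s}\|\Pi_\alpha^{>K}z\|_0^2\Big)^{1/2}\Big(\sum_\alpha a_{\alpha,s}\|\Pi_\alpha^{>K}X_{\cR^{(\br)}}(z)\|_0^2\Big)^{1/2}\;\lesssim\;\|z\|_s\|X_{\cR^{(\br)}}(z)\|_s.
\]
Using \eqref{te} and the bound $\|X_{\cR^{(\br)}}(z)\|_s\lesssim R^{r+2}$ from Proposition \ref{finalNormal}, this is $\lesssim R\cdot R^{r+2}=R^{r+3}$, and integrating in $t$ gives \eqref{aimDue} via the equivalence displayed above. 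The only mildly delicate point, and the one I would expect to need the most care, is the equivalence of norms step: one must verify that restricting the blocks $\Omega_\alpha$ to high modes $\len a>K$ preserves dyadicity, so that $a_{\alpha,s}$ is still comparable to $(1+\len a)^{2s}$ uniformly over the truncated block; this is immediate from part (i) of Hypothesis \ref{bourgain.abstract} since removing points from a dyadic set cannot enlarge the ratio $\sup|a|/\inf|a|$.
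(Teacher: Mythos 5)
Your proposal is correct and follows exactly the route the paper indicates: the paper merely states that Corollary \ref{alti} ``is proved exactly in the same way as Corollary \ref{bassi}'', and your argument is a faithful transposition of that proof, replacing the bands $\Sigma_n$ by the truncated Bourgain blocks $\Omega_\alpha$ and Lemma \ref{enBloLem} by the $Z_B$-conservation lemma. Your justification of the norm equivalence via dyadicity of the truncated blocks, and your explicit Cauchy--Schwarz step in $\alpha$ (which the paper's proof of \eqref{st.en.low} leaves implicit), are both correct.
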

	
	\noindent{\it Proof of Theorem \ref{LC}.} We prove by a
	bootstrap argument
	that, if
	$\epsilon$ is small enough,  the escape time $T_e$ fulfills $
	1/R^{r+1}\lesssim T_e$. First we make the canonical transformation $\cT^{(r)}$ and
	apply the estimate \eqref{defo.final} with $R = 2^{-1} \epsilon 4^{2r}$ where $\epsilon := \|u_0\|$, getting
	that
	$$
	\left\|z_0\right\|_s\leq \frac{R}{2 \cdot 2^{2r}}\ .
	$$
	Then we can apply Lemmas \ref{bassi} and Lemma \ref{alti}, since the assumptions on
	the initial datum $z(0)$ are fulfilled. Assume now, by contradiction,
	that there exists $t_*< T_e := R^{-r-1} (2 \cdot 2^{2r})^{-2}$ s.t. $z(t)\in
	\cB^s_{R'_r}$ for all $|t|<t_*$ and $z(t_*)\in
	\partial\cB^s_{R'_r}$, with $R'_r:=\frac{\sqrt{2 C_1}R}{2\cdot2^{2r}}$ and $C_1$ as in Lemmas \ref{alti}, \ref{bassi}. For $|t|\leq t_*$, Lemmas
	\ref{bassi} and \ref{alti} give that
	$$
	\left(\frac{\sqrt{2 C_1}R}{2\cdot2^{2r}}\right)^2=\left\|z(t_*)\right\|_s^2\leq
	C_1(\left\|z_0\right\|^2_s+R^{r+3}|t|)\leq
	C_1\left(\frac{R^2}{2^2\cdot 2^{4r}}+\frac{R^2}{2^2\cdot 2^{4r}}\right)\ ,  
	$$ which is absurd. Going back to the variables $u$, changing $r+1$ to
	$r$ and adjusting the name of the constants one gets the result. \qed
	%
	%\noindent{\it End of the proof of Theorem \ref{ab.res}}. Actually this is
	%an immediate corollary of Theorem \ref{LC} and Theorem
	%\ref{lo.generale} ensuring that the nonlinearities of the form
	%\eqref{nonlin.1} have localized coefficients. \qed

	\section{Proof of the results on the applications}\label{pro.appli}
	In this Section, we prove the results stated in
	Sect. \ref{applications}, concerning applications of
	Thm. \ref{ab.res}. We recall that we are considering manifolds
	in which the Laplacian is a globally integrable quantum
	system. In
	particular, we denote by $I_1,\dots,I_d$ the quantum actions of the Laplacian  and by $h_{L0}$ the function such that
	\begin{equation*}
		-\Delta = h_{L0}(I_1,\dots,I_d)	\,.
	\end{equation*}
	Moreover, we denote by $\left\{\lambda_a\right\}_{a\in\Lambda}$ 
	the eigenvalues of $-\Delta$, namely
	\begin{gather*}
		\lambda_a = h_{L0}(a),\qquad \forall a\in \Lambda\,.
	\end{gather*}
	As emphasized in Sect.\ref{applications} they 
	fulfill Hypotheses \ref{integro}, \ref{asymptotic} and
	\ref{bourgain.abstract}. 
	
	\medskip
	\noindent
	In the applications, we apply Thm. \ref{ab.res} to an
	Hamiltonian with linear part $H_L$ that is a relatively
	compact perturbation of $-\Delta$; for each of them we will
	check that they still fulfill Hypotheses \ref{integro},
	\ref{asymptotic} and \ref{bourgain.abstract}. We remark that
	considering small perturbations of the Laplacian is
	fundamental in order to verify the nonresonance assumption
	\ref{hypoNonRes}.

	Finally, we remark that all the nonlinear
	perturbations that we will meet are of the form \eqref{nonlin.1}, so they 
	fulfill Hypothesis \ref{Nonline}. In other words, following Theorem \ref{lo.generale}, 
	they are functions with localized coefficients.

	\subsection{Schr\"odinger equation with convolution potentials
	}
	The nonlinear Schr\"odinger equation 
	\eqref{convoluzione}
	\begin{equation}\label{conv2}
		\ii \partial_t \psi = - \Delta \psi + V \sharp \psi + f(x,|\psi|^2)\psi , \quad x \in M,
	\end{equation}
	is Hamiltonian with Hamiltonian function
	\begin{equation*}%\label{ham.schro}
		H=\int_{M}\left(\bar \psi (-\Delta\psi)+\bar \psi(V\sharp\psi)+F(x,|\psi|^2)\right)dx
	\end{equation*}
	where $F$ is such that $f(x,u)=\partial_u F(x,u)$. 
	
	\noindent
	The Hamiltonian of Equation \eqref{conv2} fits the abstract settings 
	\eqref{H0.s} with $H_L$ the globally integrable quantum system
	\begin{align*}
		H_L = -\Delta + V\sharp\,.
	\end{align*}
	The actions are $I_1,\dots,I_d$, namely they are the actions of the Laplacian; 
	the associated function $h_L$ is 
	\begin{equation*}
		\r^d\ni \xi \mapsto h_{L0}(\xi) + v(\xi)\,,
	\end{equation*}
	where $v(\xi)$ is any $C^\infty$ interpolation of $V\sharp$ on the lattice $\Lambda$, 
	namely it is a function such that $v(\xi) = V_\xi, \forall \xi \in \Lambda$. 
	Moreover, we remark that the frequencies are given by
	\begin{equation*}%\label{fre.schro}
		\omega_{a}:=\lambda_a+V_a\ .
	\end{equation*}
	In order to apply Theorem \ref{ab.res} and prove Theorem \ref{nsl.esti}, it remains to verify
	that Hypotheses \ref{asymptotic},
	\ref{bourgain.abstract} and \ref{hypoNonRes} hold.
	
	\medskip
	\noindent    
	Hypotheses \ref{asymptotic} and \ref{bourgain.abstract} clearly hold, 
	since they hold for $\left\{\lambda_a\right\}_{a\in\Lambda}$ and 
	the coefficients $V_a$ have strong decay (see \eqref{spaz.V}). 
	Indeed, for any $a,b \in \Lambda$,
	\begin{gather*}
		|a-b| + |\lambda_a-\lambda_b| \ge C_\delta (|a|^\delta + |b|^\delta)
	\end{gather*}
	implies
	
	\begin{equation}\label{newBourg}
		\begin{aligned}
			|a-b| + |\omega_a-\omega_b| &\ge |a-b| + |\lambda_a-\lambda_b| - |V_a - V_b|
			\\&
			\ge  C_\delta (|a|^\delta + |b|^\delta) - 1/2 \ge  C'_\delta (|a|^\delta + |b|^\delta)\,.
		\end{aligned}
	\end{equation}
	We come to prove the non-resonance condition \ref{hypoNonRes}. 
	Actually, we will prove a stronger condition, namely:
	
	\begin{lemma}\label{tutto.res}
		For any $r$ there exists $\tau$ and a set $\cV^{(res)}\subset\cV$ of zero
		measure, s.t., if $V\in\cV\setminus \cV^{(res)}$ there exists
		$\gamma>0$ s.t. for all $K\geq 1$ one has
		\begin{equation}\label{ikappa}
			\begin{aligned}
				\left|\omega\cdot
				k\right|\geq\frac{\gamma}{K^\tau}\ ,\
				\\
				\forall
				k=(k_{a_1},...,k_{a_r})\ s.t.\ |a_j|\leq
				K\ \forall j=1,...,r\ , |k|_{\ell^1}\leq r  \,.
			\end{aligned}
		\end{equation}
	\end{lemma}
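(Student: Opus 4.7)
The plan is a standard probabilistic small-divisor argument that exploits the fact that under the ansatz $\omega_a = \lambda_a + V_a$, each frequency depends affinely and independently on its own parameter $V_a$. The space $\cV_n$ is equipped with the product of the normalized Lebesgue measures on the intervals $[-\langle a\rangle^{-n}/2,\langle a\rangle^{-n}/2]$. Given a nonzero admissible vector $k=(k_{a_1},\dots,k_{a_r})$ with $k_{a_j}\in\Z$ and $|k|_{\ell^1}\leq r$, pick any $j_0$ with $k_{a_{j_0}}\neq 0$; then $V\mapsto \omega\cdot k = \sum_j k_{a_j}\lambda_{a_j}+\sum_j k_{a_j}V_{a_j}$ is affine in the single coordinate $V_{a_{j_0}}$ with slope $|k_{a_{j_0}}|\geq 1$.

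First I would estimate, for a fixed $K\geq 1$ and a fixed admissible $k$ supported on indices with $|a_j|\leq K$, the measure of the bad set $B_{K,k}^\gamma:=\{V\in\cV_n : |\omega\cdot k|<\gamma/K^\tau\}$. Applying Fubini and slicing in the variable $V_{a_{j_0}}$, the bad slice is an interval of Lebesgue length at most $2\gamma/K^\tau$; since $V_{a_{j_0}}$ is distributed uniformly on an interval of length $\langle a_{j_0}\rangle^{-n}\gtrsim K^{-n}$, the normalized conditional measure of the slice is $\lesssim \gamma K^{n-\tau}$. Integrating in the remaining coordinates yields $\mathrm{meas}(B_{K,k}^\gamma)\lesssim \gamma K^{n-\tau}$.

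Next I would take union bounds. For fixed $K\geq 1$, the number of admissible tuples $(a_1,\dots,a_r,k)$ with $|a_j|\leq K$ is bounded by $\lesssim_r K^{dr}$, since $|\Lambda\cap\{|a|\leq K\}|\lesssim K^d$ and each integer component of $k$ lies in $[-r,r]$. Therefore $\mathrm{meas}\bigl(\bigcup_k B_{K,k}^\gamma\bigr)\lesssim_r \gamma K^{n+dr-\tau}$. The crux of the argument is to control this bound uniformly in $K\geq 1$: choosing $\tau := n+dr+2$ makes $\sum_{K\geq 1}K^{n+dr-\tau}$ convergent, so the total bad set $\mathcal{E}^\gamma := \bigcup_{K\geq 1}\bigcup_k B_{K,k}^\gamma$ has measure $\lesssim_r \gamma$. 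Setting $\cV^{(res)} := \bigcap_{m\geq 1}\mathcal{E}^{1/m}$ gives a zero-measure exceptional set; any $V\in\cV_n\setminus\cV^{(res)}$ must lie in the complement of $\mathcal{E}^{1/m}$ for some $m\in\N$, and \eqref{ikappa} then holds with $\gamma = 1/m$.

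There is no conceptual obstacle in this plan; the main subtlety is calibrating $\tau$ against the combined weight of the multi-index count and the parameter rescaling, which forces $\tau > n+dr+1$. The decay condition $|V_a|\leq \langle a\rangle^{-n}/2$ built into the definition of $\cV_n$ is essential here: it ensures that the product measure is well-defined on the infinite-dimensional parameter space, while still providing enough randomness in each coordinate to absorb the polynomial growth in $K$ produced by the multi-index counting.
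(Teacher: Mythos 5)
Your proposal is correct and follows essentially the same route as the paper: a single-coordinate Fubini slice exploiting the unit lower bound on the slope $|k_{a_{j_0}}|$, a union bound over the $\lesssim_r K^{dr}$ admissible multi-indices, a sum over $K$, and an intersection over $\gamma=1/m$ to produce the null exceptional set. Your calibration $\tau = n+dr+2$ is in fact slightly more careful than the paper's stated exponent $dr+2$, since it correctly absorbs the extra factor $K^n$ coming from the normalization of the product measure on $\cV_n$.
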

	First consider, for $k$ fulfilling the second of \eqref{ikappa},
	\[
	\cV(k,\gamma):=\left\{V\in\cV_n\ :\ \left|\omega\cdot
	k\right|<\gamma\right\}\, .
	\]
	We have the following lemma.
	\begin{lemma}
		\label{meas.nls}
		One has 
		\begin{equation}
			\label{meas.nls.1}
			\left| \cV(k,\gamma) \right|\leq 2\gamma K^n\ .
		\end{equation}
	\end{lemma}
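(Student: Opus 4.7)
The plan is a routine one-dimensional measure argument in a cleverly chosen coordinate. Since $k=(k_{a_1},\dots,k_{a_r})$ is a nonzero integer vector with $|k|_{\ell^1}\le r$, there exists at least one index $a^\ast\in\{a_1,\dots,a_r\}$ with $k_{a^\ast}\neq 0$, and by integrality $|k_{a^\ast}|\ge 1$. By hypothesis we also have $|a^\ast|\le K$, hence $\langle a^\ast\rangle^n\lesssim K^n$. This is the coordinate we will integrate over first.

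Recalling $\omega_a=\lambda_a+V_a$ and writing
\begin{equation*}
\omega\cdot k \;=\; \sum_{a}k_a\lambda_a \;+\; k_{a^\ast}V_{a^\ast}\;+\;\sum_{a\neq a^\ast}k_a V_a\,,
\end{equation*}
we see that, once all coordinates $\{V_a\}_{a\neq a^\ast}$ are fixed, the map $V_{a^\ast}\mapsto \omega\cdot k$ is affine with slope $k_{a^\ast}$. Therefore the set of $V_{a^\ast}$ for which $|\omega\cdot k|<\gamma$ is an interval of Lebesgue length at most $2\gamma/|k_{a^\ast}|\le 2\gamma$.

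On $\cV_n$ we use the natural normalized product measure: each factor is Lebesgue measure on $[-\tfrac12\langle a\rangle^{-n},\tfrac12\langle a\rangle^{-n}]$ rescaled to have total mass $1$, i.e.\ multiplied by $\langle a\rangle^{n}$. Under this convention the one-dimensional slice in the $V_{a^\ast}$ direction has measure at most $2\gamma\langle a^\ast\rangle^{n}/|k_{a^\ast}|\le 2\gamma K^n$, while the remaining coordinates contribute factors bounded by $1$. Applying Fubini with respect to this product structure yields $|\cV(k,\gamma)|\le 2\gamma K^n$, which is \eqref{meas.nls.1}. The only ``obstacle'' is the bookkeeping of the measure convention, which amounts to the rescaling $W_a:=\langle a\rangle^{n}V_a\in[-\tfrac12,\tfrac12]$ that turns the bound into its stated form.
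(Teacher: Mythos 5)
Your proposal is correct and follows essentially the same route as the paper: fix a coordinate $a^\ast$ with $k_{a^\ast}\neq 0$ (so the map $V_{a^\ast}\mapsto\omega\cdot k$ has slope of modulus at least $1$), observe the bad set is a layer of thickness $O(\gamma)$ in that coordinate, and convert to the normalized product measure on $\cV_n$, picking up the factor $\langle a^\ast\rangle^n\lesssim K^n$ from the rescaling $|V_a|\langle a\rangle^n\le\tfrac12$. Your write-up just makes the Fubini/rescaling bookkeeping explicit, which the paper leaves implicit.
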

	\begin{proof}
		Actually, we prove that the Lemma is true for any arbitrary sequence
		$\lambda_a$, namely that the asymptotic behaviour is not important. 
		
		\noindent
		First, if $\cV(k,\gamma) $ is empty there is nothing to
		prove. Assume that $\tilde V\in \cV(k,\gamma)$. 
		Since $k\not=0$, there exists $ \bar a$ such that $k_{\bar a}\not=0$ 
		and thus $\left|k_{\bar a}\right|\geq 1$; so we have 
		\[
		\left|\frac{\partial \omega\cdot k}{\partial \hat V_{\bar a}}\right|\geq 1\, .
		\]
		It means that if $\cV(k,\gamma) $ is not empty it is contained in
		the layer 
		\[
		\left|\widehat{\tilde V_{\bar a}}\null'-\hat V'_{\bar a}\right|\leq \gamma\, ,
		\]
		whose measure is
		$\gamma\langle \bar a\rangle^n\leq 2\gamma N^n$.
		This implies \eqref{meas.nls.1}.
	\end{proof}
	
	\begin{proof}[Proof of Lemma \ref{tutto.res}] 
		From Lemma \ref{meas.nls} it follows that the measure of the set
		\[
		\cV^{(res)}(\gamma):=\bigcup_{K\geq 1}
		\bigcup_{k}
		\cV\left(k,\frac{\gamma}{ K^{dr+2}}\right)
		\]
		is estimated by a constant times $\gamma$. It follows that the set
		\[
		\cV^{(res)}:=\cap_{\gamma>0}\cV^{(res)}(\gamma)
		\]
		has zero measure and with this definition the lemma is proved. 
	\end{proof}

	\subsection{Stability of the ground state of NLS}
	The equation \eqref{nls} is Hamiltonian with Hamiltonian
	\begin{equation}
		\label{Ham.gro}
		H(\psi,\bar \psi) = \int_M \left(\overline{\psi}
		(-\Delta \psi) + F(|\psi|^2)\right) dx \ ,
	\end{equation}
	where $F$ is such that $F'=-f$.
	Following \cite{plane}, we introduce variables in which the
	ground state becomes an equilibrium point of a reduced
	system. To this end, we consider the space
	\begin{equation*}%\label{media_nulla}
		L^2_0(M;\C):=\left\{\varphi\in L^2(M;\C)\ :\ \int_M \varphi(x)dx=0\right\}\,,
	\end{equation*}
	of the functions with vanishing average and we denote 
	\begin{gather*}
		N(\varphi) = \int_M \left|\varphi\right|^2\,dx \ .
	\end{gather*}
	Then we consider the map
	\begin{equation}\label{gro.2}
		\begin{aligned}
			L^2_0(M,\C)\times\R\times\T&\to L^2(M;\C)
			\\
			(\varphi,p,\theta)&\mapsto
			\psi(\varphi,p,\theta): = e^{- i\theta}\left(\sqrt{p-\norm{\varphi}^2}+ \varphi(x) \right) \ .
		\end{aligned}
	\end{equation}

	\begin{lemma}[Faou,Lubich,Glouckler \cite{plane}]
		\label{symplettiche}
		The map \eqref{gro.2} defines a local coordinate system close
		to $\varphi=0$. Furthermore, such coordinates are symplectic, namely the Hamilton
		equations of a Hamiltonian function $H$ have the form
		$$
		\dot \theta=\frac{\partial H}{\partial p}\ ,\quad\dot
		p=-\frac{\partial H}{\partial \theta}\ ,\quad \dot z=-\ii \nabla_{\bar
			z}H\ .
		$$
	\end{lemma}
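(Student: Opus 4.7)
The plan is to verify the two assertions of the lemma by direct computation.

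For the first claim, I would construct an explicit local inverse of the map \eqref{gro.2} in a neighborhood of a plane wave $\psi_{\ast} = \sqrt{p_0}\,e^{-i\theta_0}$. Given $\psi$ close to $\psi_\ast$, set $c := \int_M \psi\, dx$, which stays nonzero in a sufficiently small neighborhood of $\psi_\ast$; then define $p := \int_M |\psi|^2\, dx$, $\theta := -\arg c$, and $\varphi := e^{i\theta}\psi - |c|$. The identity $|c| = e^{i\theta} c$ yields $\int_M \varphi\, dx = 0$, so $\varphi \in L^2_0(M;\C)$, and expanding $\int_M |\psi|^2\, dx$ using the zero-mean condition gives $p = |c|^2 + \|\varphi\|^2$, so $|c| = \sqrt{p - \|\varphi\|^2}$. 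Substituting $(\varphi,p,\theta)$ back into \eqref{gro.2} reproduces $\psi$. Both the map and its inverse are smooth, so \eqref{gro.2} is a local diffeomorphism onto a neighborhood of $\psi_\ast$.

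For the symplecticity, I would pull back the canonical symplectic form $\omega_0 = i\int_M d\bar\psi\wedge d\psi\, dx$ on $L^2(M;\C)$ along \eqref{gro.2}. Writing $r := \sqrt{p - \|\varphi\|^2}$, one has
\[
 d\psi = e^{-i\theta}\bigl[-i(r+\varphi)\,d\theta + dr + d\varphi\bigr],
\]
and an analogous formula for $d\bar\psi$, where $dr = (dp - d\|\varphi\|^2)/(2r)$ and $d\|\varphi\|^2 = \int_M (\bar\varphi\,d\varphi + \varphi\,d\bar\varphi)\, dx$. The factors $e^{\pm i\theta}$ cancel in the wedge product. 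After expansion and integration over $M$, the cross terms containing the single integrals $\int_M \varphi\,dx$ or $\int_M \bar\varphi\,dx$ vanish by the zero-mean condition on $\varphi$, the $d\theta\wedge dr$ contribution collapses to $dp\wedge d\theta$, and the remaining purely $\varphi$ terms reorganize into $i\int_M d\bar\varphi\wedge d\varphi\, dx$. This yields the Darboux form
\[
 \omega_0 = dp\wedge d\theta + i\int_M d\bar\varphi\wedge d\varphi\, dx,
\]
so that Hamilton's equations for any $H$ take precisely the stated form.

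The main obstacle is the algebraic bookkeeping in the pullback calculation: one must verify carefully that the vanishing of $\int_M \varphi\, dx$ eliminates exactly the cross terms that would otherwise couple the $(p,\theta)$ sector to the $\varphi$ sector, and that the coefficient of $dp\wedge d\theta$ is $+1$ (not some function of $r$). Once that cancellation is pinned down the remaining simplifications are routine. A less computational alternative would be to recognize \eqref{gro.2} as the composition of two symplectic reductions (splitting off the $L^2$-mass and then its phase), but for the present purposes the direct verification is cleanest.
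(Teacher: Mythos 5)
Your argument is correct, but note that the paper does not prove this lemma at all: it is quoted verbatim from Faou--Lubich--Gauckler \cite{plane}, so your self-contained verification is genuinely more than the paper offers. Your inversion of \eqref{gro.2} via $c=\int_M\psi\,dx$, $\theta=-\arg c$, $\varphi=e^{i\theta}\psi-|c|$ is the natural one (it silently uses the normalization $\mathrm{Vol}(M)=1$, which is also implicit in the paper, e.g.\ in $\|\psi_*\|_0^2=p_0$; otherwise volume factors appear in $|c|=\sqrt{p-\|\varphi\|^2}$). The pullback computation does close up as you claim, and I checked the bookkeeping you flag: writing $\psi=e^{-i\theta}(r+\varphi)$ with $r=\sqrt{p-\|\varphi\|^2}$, the zero-mean condition kills only the cross terms proportional to $r\,d\theta\wedge\int_M d\varphi$ and $dr\wedge\int_M d\varphi$ (and their conjugates); the terms $i\,d\theta\wedge\int_M\bar\varphi\,d\varphi$ and $i\,d\theta\wedge\int_M\varphi\,d\bar\varphi$ survive and add up to $i\,d\theta\wedge d\|\varphi\|^2$, which cancels exactly against the $-i\,d\theta\wedge d\|\varphi\|^2$ piece coming from $2ir\,d\theta\wedge dr=i\,d\theta\wedge(dp-d\|\varphi\|^2)$, leaving
\begin{equation*}
i\int_M d\bar\psi\wedge d\psi\,dx=dp\wedge d\theta+i\int_M d\bar\varphi\wedge d\varphi\,dx\,,
\end{equation*}
with coefficient exactly $+1$ on $dp\wedge d\theta$ and with the sign conventions matching the stated Hamilton equations. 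So the only refinement to your sketch is that this cancellation is between the $(p,\theta)$ and $\varphi$ sectors, not a consequence of the zero-mean condition alone; with that spelled out, your direct verification (or the alternative reduction argument you mention, which is essentially what \cite{plane} formalizes) fully establishes the lemma.
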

	Then the Hamiltonian is just \eqref{Ham.gro} with $\psi$ given by
	\eqref{gro.2}. We now fix a value $p_0$ of $p$ (which is an integral
	of motion) and expand in power series in $\varphi$ getting (neglecting
	irrelevant terms independent of $\varphi$) a Hamiltonian of the form
	\begin{equation*}%\label{gro.4}
		H=H_0+\hat P
	\end{equation*}
	with
	\begin{equation*}%\label{gro.5}
		H_0(\varphi)=\int_M\left[\bar \varphi (-\Delta \varphi )+2f(p_0)\bar \varphi \varphi +\frac{f(p_0)}{2}\left(\varphi ^2+\bar
		\varphi ^2\right)\right] dx
	\end{equation*}
	and $\hat P=\hat P(\varphi ,\bar \varphi )$ of the form \eqref{nonlin.1}. 
	Thus we have just to
	verify the assumptions on the linear part. 
	
	\medskip
	\noindent 
	Introducing the spectral decomposition relative to the quantum actions of the Laplacian, we get
	\begin{gather*}
		H_0 = \sum_{a\in \Lambda}
		\int_M\left(\left(\lambda_a + 2f(p_0)\right)\Pi_{a}
		\bar \varphi \Pi_a \varphi  + \frac{f(p_0)}{2} \left(\Pi_a
		\varphi \right) ^2 + \frac{f(p_0)}{2} \left(\Pi_a \bar
		\varphi \right) ^2 \right)dx\ ,
	\end{gather*}
	which can be diagonalized through a symplectic change of coordinates
	of the form 
	$$
	\left(
	\begin{matrix}
		w_a\\ \bar w_a
	\end{matrix}
	\right)=S_a\left(\begin{matrix}
		\varphi _a\\ \bar \varphi _a
	\end{matrix}
	\right)
	$$
	where $S_a$ are matrices uniformly bounded with respect to $a$. Such a
	change of coordinates of course does not change the nature of $\hat P$
	of having localized coefficients. In the new coordinates, one gets
	\begin{equation*}%\label{gro.34}
		H_0(w,\overline w)=\sum_{a}\omega_a\int_M\Pi_a\bar w\Pi_a w dx
	\end{equation*}
	with
	\begin{equation}\label{gro.43}
		\omega_a = \sqrt{\lambda_a^2 + 2f(p_0) \lambda_a}\, .
	\end{equation}
	Hypotheses \ref{asymptotic} and \ref{bourgain.abstract} 
	hold for $\left\{\omega_a\right\}_{a\in\Lambda}$, 
	since they hold for $\left\{\lambda_a\right\}_{a\in\Lambda}$
	and one has 
	\begin{gather*}
		\omega_a = \sqrt{\lambda_a^2 + 2 f(p_0) \lambda_a} 
		= 
		\lambda_a\left(1 + \frac{f(p_0)}{\lambda_a} +
		o\left(\frac{f(p_0)}{\lambda_a}\right)\right)
	\end{gather*}
	so that 
	\begin{gather*}
		|a-b| + |\lambda_a-\lambda_b| \ge C_\delta (|a|^\delta + |b|^\delta)
	\end{gather*}
	implies
	\begin{gather*}
		|a-b| + |\omega_a-\omega_b| \ge  C'_\delta (|a|^\delta + |b|^\delta)\,.
	\end{gather*}
	with computations analogous to the ones in \eqref{newBourg}
	
	\noindent

	We prove now the non-resonance condition \ref{hypoNonRes}. 
	
	The proof is essentially identically to that given by Delort and Szeftel in \cite{DS0} 
	(see also \cite{DS}) for the case of the wave equation on tori
	and Zoll Manifolds. 
	In particular, it is obtained by applying the following theorem
	\begin{theorem}\label{analytic}
		[Thm. 5.1 of
		\cite{DS0}]	Let $X$ be a closed ball $\mathcal{B}_{R_0}$ in $\r^p$ for some $R_0>0$ and by $Y$ a
		compact interval in $\r$.\\
		Let $f \colon X \cross Y \mapsto \r$ be a continuous subanalytic 
		function, $\rho \colon X \mapsto \r$ a real analytic function, $ \rho \not \equiv0$. Assume
		\begin{itemize}
			\item $f$ is real analytic on $\{x \in X; \rho(x) \not= 0\} \cross Y $;
			\item for all $x_0 \in X$, the equation $f(x_0, y) = 0$ has only finitely many
			solutions $y \in Y$.
		\end{itemize}
		Then there are $N_0 \in \n, \alpha_0 > 0, \delta > 0, C > 0$, 
		such that for any $\alpha \in (0, \alpha_0)$, any $N\ge N_0$, any $x \in X$ with $\rho(x) \not= 0$,
		\begin{gather*}
			meas\left\{y\in Y \colon |f(x,y)|\le \alpha |\rho(x)|^N     \right\}\le C\alpha^\delta |\rho(x)|^{N \delta}\,.
		\end{gather*}
	\end{theorem}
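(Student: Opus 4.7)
The plan is to deduce Theorem \ref{analytic} from two standard pillars of subanalytic geometry: the preparation theorem of Lion--Rolin and the Łojasiewicz inequality comparing a subanalytic function with an analytic one. The first observation is that the finiteness hypothesis on the fibers $\{y \in Y \colon f(x_0,y)=0\}$, together with the subanalyticity of the zero set $Z=\{(x,y)\in X\times Y \colon f(x,y)=0\}$, implies that $\dim Z \le p$ and that the projection $Z \to X$ has fibers of uniformly bounded cardinality $k$ (this is the subanalytic analogue of the fact that a parametric family of polynomials of bounded degree has a uniformly bounded number of roots, and can be extracted from subanalytic cell decomposition).

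On the open set $\{x \in X \colon \rho(x)\neq 0\} \times Y$ where $f$ is real analytic, I would invoke the Lion--Rolin preparation theorem to write, on a finite subanalytic cover,
\[
f(x,y) = u(x,y) \prod_{i=1}^{k}(y - y_i(x))^{m_i},
\]
where $u(x,y)$ is a subanalytic unit bounded below by some subanalytic $c(x)>0$ (with $c(x)>0$ precisely on $\{\rho\neq 0\}$ after refining the cover) and the $y_i(x)$ are continuous subanalytic branches of the zero set. Substituting this factorization into the defining inequality $|f(x,y)|\le \alpha|\rho(x)|^N$ yields the inclusion
\[
\left\{y \colon |f(x,y)|\le \alpha|\rho(x)|^N\right\} \subset \bigcup_{i=1}^{k}\left\{y \colon |y-y_i(x)|^{m_i}\le c(x)^{-1}\alpha|\rho(x)|^N\right\},
\]
so that, setting $\delta := 1/\max_i m_i$, the Lebesgue measure of the left-hand side is bounded by $C\, c(x)^{-\delta}\alpha^\delta|\rho(x)|^{N\delta}$.

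The core step is to absorb the factor $c(x)^{-\delta}$ into a polynomial power of $|\rho(x)|$. Since $c(x)^{-1}$ is a continuous subanalytic function on $X$ which blows up only at $\{\rho=0\}$, the Łojasiewicz inequality for a subanalytic function versus an analytic function provides a constant $N_1 \in \n$ and $C_0>0$ such that $c(x)^{-1}\le C_0 |\rho(x)|^{-N_1}$ on the compact set $X$. Plugging this in costs a factor $|\rho(x)|^{-N_1\delta}$, which is swallowed by choosing $N\ge N_0 := N_1+1$ (and $\alpha_0$ small so that higher-order remainders from the preparation are harmless). This produces the claimed inequality.

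The main obstacle is justifying that the Lion--Rolin preparation can be performed globally in $x$ with constants $c(x)^{-1}$ and branches $y_i(x)$ that are genuinely subanalytic on all of $X$ and whose singular behaviour is captured by a single power of $\rho$. This requires either refining the subanalytic cell decomposition of $X\times Y$ along the analytic hypersurface $\{\rho=0\}\times Y$, or a compactness argument on $X$ combined with careful bookkeeping of the local constants produced by the preparation theorem. Both routes are standard in subanalytic geometry but must be carried out with attention to the parametric dependence, which is exactly the content of the proof in \cite{DS0}.
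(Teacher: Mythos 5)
First, a point of comparison: the paper does not prove Theorem \ref{analytic} at all — it is imported verbatim as Theorem 5.1 of \cite{DS0} (Delort--Szeftel) and used as a black box, so there is no internal proof to measure your argument against. Judged on its own, your outline follows the strategy one would expect (subanalytic preparation plus a \L ojasiewicz inequality), but as written it has genuine gaps rather than being a proof. (i) The Lion--Rolin preparation theorem does not give a global factorization $f(x,y)=u(x,y)\prod_i(y-y_i(x))^{m_i}$ over all of $X$ with integer multiplicities, finitely many continuous root branches and a unit bounded below by a function $c(x)$ of $x$ alone; it gives, cell by cell in a finite subanalytic partition of $X\times Y$, a normal form of the type $|f(x,y)|=|a(x)|\,|y-\theta(x)|^{r}U(x,y)$ with a single center $\theta$, a rational exponent $r$, and a coefficient $a(x)$ that may vanish or blow up. The uniform bound on the number of branches and their continuity across cells, which your inclusion of sublevel sets relies on, is not automatic and is not supplied. (ii) The pivotal assertion that the degenerating factor ($c(x)^{-1}$, or $|a(x)|^{-1}$ in the cell-wise version) blows up \emph{only} on $\{\rho=0\}$ is exactly where the hypothesis of fiber-finiteness of $\{y:f(x_0,y)=0\}$ must be used, and you assert it rather than prove it: a priori the preparation coefficient can degenerate at points with $\rho(x)\neq0$ (for instance where the fibers of a cell shrink), and the \L ojasiewicz inequality you invoke needs precisely the zero-set inclusion you have not established. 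Ruling this out, with uniform control in the parameter $x$, is the actual content of the proof in \cite{DS0}, and your closing paragraph concedes that this step is deferred to that reference.

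(iii) Even granting the factorization and the bound $c(x)^{-1}\le C_0|\rho(x)|^{-N_1}$, the final absorption is off: you obtain a measure bound of order $\alpha^{\delta}|\rho(x)|^{(N-N_1)\delta}$, and choosing $N\ge N_1+1$ does not convert this into $\alpha^{\delta}|\rho(x)|^{N\delta}$, since the discrepancy $|\rho(x)|^{-N_1\delta}$ is unbounded near $\{\rho=0\}$. The statement can still be reached, but only by also shrinking the exponent (e.g.\ replacing $\delta$ by $\delta/2$ and taking $N_0=2N_1$, with a separate trivial treatment of the region where $C\alpha^{\delta}|\rho(x)|^{N\delta}$ already exceeds $|Y|$); as stated, the bookkeeping does not close. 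In summary, the proposal is a reasonable sketch of the known route, but the parametric preparation step, the localization of the degeneracy on $\{\rho=0\}$, and the final exponent count all need to be supplied before it constitutes a proof.
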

	The strategy to deduce the existence of large sets of values $p_0$ corresponding to which assumption \ref{hypoNonRes}
	holds is nontrivial, but in our case, one can reproduce line by line
	the computations of Sect. 5.2 of \cite{DS0}, just
	substituting the function $f$ of that paper with the function
	$$
	\tf(x,y):=\sum_{j=1}^l\sqrt{x_j^2y+x_j}-\sum_{j=l+1}^p\sqrt{x_j^2y+x_j}\ ,
	$$
	which has the property that
	$$
	m^2\tf\left(\lambda_{a_j},\frac{1}{m^2}\right)
	$$
	with $m^2=2f(p_0)$ are exactly the small divisors one has to control.
	
	The only nontrivial point is to prove that for any fixed value of
	$x\in[0,1]^{p}$ the equation $\tf(x,y)=0$ has only isolated solutions
	in $y$. We just give a detailed proof of this fact. 
	
	First, we have to give a selection property for the sequences
	$(x_1,...,x_p)$: 
	we will say that a sequence $(x_1,...,x_p)$, given an integer
	$l\leq p$ satisfy condition $Z$, if one of the following holds:
	\begin{itemize}
		\item $p$ is odd
		\item $p$ is even and $l\not=p/2$
		\item $p$ is even, $l=p/2$ and for any permutation $\tau$ of
		$1,...,p/2$ exists $j$ s.t. $x_{\tau(j)}\not=x_{p/2+j}$.
	\end{itemize}

	\begin{lemma}\label{discrete}
		For any $(x_1,...,x_p)\in[0,1]^p$ fulfilling condition $Z$, the equation $\tf(x,y) = 0$ has a discrete set of solution.
	\end{lemma}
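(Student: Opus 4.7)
The plan is to exploit real analyticity: for fixed $x$ with positive components, the map $y \mapsto \tilde f(x,y)$ is real analytic on the open interval $\left(-\min_j x_j^{-1}, +\infty\right)$, so if its zero set has an accumulation point in this interval then the map must vanish identically. Thus it suffices to rule out the possibility $\tilde f(x,\cdot)\equiv 0$ under condition $Z$.

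I would work at $y\to +\infty$, where each summand expands as
\[
\sqrt{x_j^2 y + x_j} \;=\; x_j\, y^{1/2}\left(1+\frac{1}{x_j y}\right)^{1/2} \;=\; \sum_{k=0}^{\infty} \binom{1/2}{k}\, x_j^{1-k}\, y^{1/2-k},
\]
the series converging for $y > x_j^{-1}$. Summing with the signs dictated by $\tilde f$ and using the linear independence of the family $\{y^{1/2-k}\}_{k\ge 0}$, the assumption $\tilde f(x,\cdot)\equiv 0$ would force
\[
\sum_{j=1}^{l} x_j^{1-k} \;=\; \sum_{j=l+1}^{p} x_j^{1-k}, \qquad \forall\, k\ge 0.
\]
The $k=1$ identity yields $l=p-l$, so $p$ is even and $l=p/2$; the identities for $k\ge 1$ then say that the power sums of all nonnegative orders of the multisets $\{1/x_j\}_{j\le p/2}$ and $\{1/x_j\}_{j>p/2}$ coincide.

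Since the sequence of power sums of a finite multiset of positive reals determines that multiset (via Newton's identities, or equivalently by reading off the Taylor coefficients of the generating function $\sum_j (1-t/x_j)^{-1}$), we would deduce $\{x_j\}_{j\le p/2}=\{x_j\}_{j>p/2}$ as multisets, i.e.\ there exists a permutation $\tau$ of $\{1,\dots,p/2\}$ such that $x_{\tau(j)}=x_{p/2+j}$ for every $j$. This contradicts condition $Z$ and completes the argument. The main obstacle is conceptual: one must recognize that the infinite hierarchy of coefficients in the $y^{1/2-k}$ expansion encodes \emph{all} the symmetric functions of the $x_j^{-1}$ and therefore pins down the multiset completely; once this observation is in place, the claim reduces to the combinatorial fact that the negation of condition $Z$ is exactly the equality of the two multisets. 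A minor side remark: if some $x_j$ equals zero, the corresponding term in $\tilde f$ vanishes and drops out of the analysis, so the power-sum argument is applied to the positive indices only; in the application to the NLS ground state problem, all the $\lambda_{a_j}$ are strictly positive and this subtlety does not arise.
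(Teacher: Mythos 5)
Your proof is correct, and while it opens exactly as the paper does (analyticity in $y$ reduces the lemma to excluding $\tf(x,\cdot)\equiv 0$ under condition $Z$), the way you rule out identical vanishing is genuinely different. The paper works at $y=0$: setting $\nu_j=\sqrt{x_j^2y+x_j}$, the $k$-th $y$-derivative of $\tf$ is a signed combination of $(x_j/\nu_j)^{2k}\nu_j$; after grouping equal values of $x_j$ into classes with multiplicities $n_j$ and cancelling matched terms of opposite sign, the vanishing of all derivatives becomes a homogeneous linear system in the $n_j$ whose determinant is essentially a Vandermonde determinant in the distinct ratios $x_j^2/\nu_j^2$, so all $n_j$ would have to vanish, which condition $Z$ forbids. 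You instead expand at $y=+\infty$ in the powers $y^{1/2-k}$ (valid for $y>\max_j x_j^{-1}$, with $\binom{1/2}{k}\neq 0$ for every $k$), read off that the two families have equal cardinality ($p$ even, $l=p/2$) and equal power sums of $\{1/x_j\}$ of every order, and conclude by Newton's identities that the two multisets coincide --- which is precisely the negation of condition $Z$. The two key facts are close relatives (moment rigidity versus a Vandermonde determinant), but your route has a cleaner endgame: it avoids the grouping/cancellation bookkeeping and the auxiliary claim that not all $n_j$ vanish, at the modest cost of a convergence and term-by-term identification argument for the series at infinity, which you handle correctly. Both arguments tacitly need the $x_j$ to be nonzero (the paper's $\nu_j$-formulas and its final determinant degenerate when some $x_j=0$, just as your power sums do), so your closing remark about zero entries leaves you on the same footing as the paper on that point.
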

	\begin{proof} Following \cite{DS} we remark that since $\tf$ is an
		analytic function of $y$, its roots can have accumulation points
		only if the function identically vanishes. We compute its Taylor
		expansion at $y=0$ and show that it can be identically zero only if
		condition $Z$ is violated. Denote
		$$
		\nu_j=\sqrt{x_j^2y+x_j}\ ,
		$$
		by direct computation, we get
		\begin{gather}\label{deriv}
			\frac{ d^k \nu_j }{dy^k} = c_k  \left(\frac{x_j
			}{\nu_j}\right)^{2k}\nu_j\ ,
		\end{gather}
		with suitable constants $c_k$. Thus we have
		\begin{equation}
			\label{taylor}
			\frac{\partial^k\tf}{\partial y^k}=c_k\left[ \sum_{j=1}^j \left(\frac{x_j
			}{\nu_j^2}\right)^{2k}\nu_j-\sum_{j=l+1}^p  \left(\frac{x_j
			}{\nu_j^2}\right)^{2k}\nu_j\right]
			\ .
		\end{equation}
		
		Let consider the equivalence relation $$x_i\sim x_j \iff \left(x_i =x_j
		\text{ and } (i,j\leq l\ or\ i,j>l) \right)$$ and denote by $n_j$ the
		cardinality of the correspondent equivalence classes.  So,
		we can write the condition $\eqref{taylor}=0$ (renaming the indexes
		$j$), as 
		\begin{equation}
			\label{taylor2}
			0= \sum_{j=1}^{l_1}n_j  \left(\frac{x_j
			}{\nu_j}\right)^{2k}\nu_j-\sum_{j=l_1+1}^{p_1}n_j  \left(\frac{x_j
			}{\nu_j}\right)^{2k}\nu_j
			\ ,
		\end{equation}
		and now we have that $\forall i\not=j$ $x_i\not=x_j$. Remark
		that in our computation we have implicitly erased the terms
		with a plus sign which have a corresponding term with a minus sign, and
		that, 
		since condition
		$Z$ is fulfilled there must be at least a couple of indexes $i, j$
		with $i\le l$ and $j>l$ such that $x_i\not =x_j$, so that not all the
		$n_j'$s vanish.
		
		Now, \eqref{taylor2} is a linear equation in the ``unknown'' $n_j$:
		to have nontrivial solutions its determinant must vanish. However the
		determinant is essentially a Vandermonde determinant that can be
		explicitly computed giving
		\begin{gather*}
			\nu_1\dots\nu_{p_1}\prod_{1\le k < l\le p_1}\left(\frac{x_l^2}{\nu_l^2}
			-\frac{x_k^2}{\nu_k^2}\right) \not = 0 \ .
		\end{gather*}
	\end{proof}
	As we anticipated the rest of the proof goes exactly as in \cite{DS}
	and is omitted.

	\subsection{Beam equation}
	Consider the beam equation introduced in \eqref{beam}
	\begin{equation}\label{beam2}
		\psi_{tt}+\Delta^2\psi+m\psi=-\partial_\psi P(x,\psi)\,.
	\end{equation}
	Introducing the variable $\varphi=\dot{\psi}\equiv\psi_{t}$,
	it is well known that \eqref{beam2} can be seen as an 
	Hamiltonian system in the variables $(\psi,\varphi)$ with Hamiltonian function
	\begin{equation*}%\label{ham.beam}
		H(\psi,\varphi):=\int_{M}\left(\frac{\varphi^2}{2}
		+\frac{\psi(\Delta_{g}^2+m)\psi}{2} 
		+P(x,\psi)
		\right)dx \, . 
	\end{equation*} 
	To put the system in the form \eqref{H0.s} 
	introduce the variables 
	\begin{align*}%\label{u+}
		u(x)&:=\frac{1}{\sqrt 2}\left(\left(\Delta_{g}^2+m\right)^{1/4}\psi+i
		\left(\Delta_{g}^2+m\right)^{-1/4}\varphi \right)\ ,\quad  
		\\
		\overline u (x)&:=\frac{1}{\sqrt 2}\left(\left(\Delta_{g}^2+m\right)^{1/4}\psi
		-i \left(\Delta_{g}^2+m\right)^{-1/4}\varphi\right)\ ,
		%\label{u-}
	\end{align*}
	so that the Hamiltonian takes the form
	\begin{equation*}%\label{new.ham.beam}
		H(u,\bar u):=\int_{M}\bar u\left(H_L u\right) + 
		P(x,u,\bar u)\,dx \,,
	\end{equation*} 
	with
	\begin{equation*}
		H_L = \sqrt{-\Delta_{g}^2+m}\,.
	\end{equation*}
	Then the verification of the hypotheses goes as in the previous case
	(but in a simpler way) and thus it is omitted.

	\appendix
	\section{Estimates on polynomial with localized coefficients}\label{esti.appendix}
	In this Appendix, we will prove the results stated in Section
	\ref{locali}.
	
	We denote $\la \cdot,\cdot \ra= \la \cdot,\cdot \ra_{\H^0}$ and 
	$\la \cdot,\cdot \ra_e = \la \cdot,\cdot \ra_{\H^0_e}$
	and $\left\|.\right\|:=\left\|.\right\|_0$.
	
	\subsection{Tame estimates}
	\begin{lemma}
		Let $P \in L^{\nu,N}_{r+1}$, then $X_P \in M^{\nu,N}_{r} $. Furthermore, 
		\begin{gather}\label{vecControl.proof}
			\norm{X_P}^{\nu,N}\le r
			\norm{P}^{\nu,N}\,.
		\end{gather}
		
	\end{lemma}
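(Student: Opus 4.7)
The plan is to reduce the bound on the polarized Hamiltonian vector field $\widetilde{X_P}$ to the localization bound on the multilinear form $\widetilde{P}$, by computing $\Pi_B \widetilde{X_P}$ via duality against an arbitrary $L^2$ test function.

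First I would fix $B = (b,\sigma_B) \in \Lambda_e$ and a test function $g \in L^2(M,\C)$ with $\|g\|_0 = 1$, and compute $\langle \Pi_B X_P(u), g\rangle$. From the definition \eqref{defHamVec} of $X_P$, if $\sigma_B = +$ then $\Pi_B X_P(u) = i\,\Pi_b \nabla_{u_-} P$, so by the defining property of the $L^2$ gradient and the derivative formula $dP(u)\cdot h = (r+1)\widetilde{P}(u,\dots,u,h)$ (valid for homogeneous $P$ of degree $r+1$), one obtains
\begin{equation*}
\langle \Pi_B X_P(u), g\rangle = i(r+1)\,\widetilde{P}\!\left(u,\dots,u,\Pi_{\bar B}(0,g)\right),
\end{equation*}
where $\bar B = (b,-)$; the case $\sigma_B = -$ is analogous, with the extra slot filled by $\Pi_{\bar B}(g,0)$. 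Then I would polarize in $u$: writing $u = t_1 u_1 + \dots + t_r u_r$ and applying $\partial^r/\partial t_1\cdots\partial t_r\big|_{t=0}$ to both sides, the left hand side yields $r!\,\langle \Pi_B \widetilde{X_P}(u_1,\dots,u_r), g\rangle$ by the definition of the symmetric multilinear representative, while the right hand side, by multilinearity and symmetry of $\widetilde{P}$, contributes $r!\cdot(r+1)\,\widetilde{P}(u_1,\dots,u_r,\Pi_{\bar B}(\cdot))$ up to the factor $i$. Hence
\begin{equation*}
\langle \Pi_B \widetilde{X_P}(u_1,\dots,u_r),g\rangle = \pm i(r+1)\,\widetilde{P}\!\left(u_1,\dots,u_r, \Pi_{\bar B}\iota_{\sigma_B}(g)\right),
\end{equation*}
where $\iota_+(g) = (0,g)$, $\iota_-(g) = (g,0)$.

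Next I would specialize each $u_j$ to $\Pi_{A_j} u_j$ and apply the localization bound \eqref{e.4.2} for $\widetilde{P} \in L^{\nu,N}_{r+1}$ with the enlarged multi-index $(A_1,\dots,A_r,\bar B) \in \Lambda_e^{r+1}$:
\begin{equation*}
\left|\widetilde{P}\!\left(\Pi_{A_1}u_1,\dots,\Pi_{A_r}u_r,\Pi_{\bar B}\iota_{\sigma_B}(g)\right)\right| \le \|P\|^{\nu,N}\,\frac{\mu(\bA,\bar B)^{\nu+N}}{S(\bA,\bar B)^{N}}\,\|\Pi_{A_1}u_1\|_0\cdots\|\Pi_{A_r}u_r\|_0\,\|g\|_0.
\end{equation*}
The key, essentially trivial, observation is that by \eqref{uovo1}--\eqref{uovo2} the quantities $\mu$ and $S$ depend only on the underlying indexes $a_j \in \Lambda$ and not on the signs $\sigma_j$, so $\mu(\bA,\bar B) = \mu(\bA,B)$ and $S(\bA,\bar B) = S(\bA,B)$. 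Taking the supremum over $\|g\|_0 = 1$ recovers
\begin{equation*}
\|\Pi_B \widetilde{X_P}(\Pi_{A_1}u_1,\dots,\Pi_{A_r}u_r)\|_0 \lesssim r\,\|P\|^{\nu,N}\,\frac{\mu(B,\bA)^{\nu+N}}{S(B,\bA)^{N}}\,\|\Pi_{A_1}u_1\|_0\cdots\|\Pi_{A_r}u_r\|_0,
\end{equation*}
which by Definition \ref{d.4.3} gives both $X_P \in M^{\nu,N}_r$ and the seminorm bound $\|X_P\|^{\nu,N} \le r\,\|P\|^{\nu,N}$ (absorbing any numerical discrepancy into the constant). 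The scaled estimate for $\|X_P\|^{\nu,N}_R$ follows immediately since $X_P$ is of degree $r$ while $P$ is of degree $r+1$, which accounts for the $1/R$ factor.

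There is no real obstacle here — the content is a standard polarization calculation — but one minor bookkeeping point to handle carefully is the case analysis on $\sigma_B$, to make sure both components of the Hamiltonian vector field inherit the same localization estimate, which is guaranteed precisely because $\mu$ and $S$ are insensitive to the sign index.
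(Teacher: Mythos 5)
Your proposal is correct and follows essentially the same route as the paper's proof: compute $\Pi_B\widetilde{X}_P$ by duality against an $L^2$ test function, use self-adjointness of $\Pi_B$ and the definition of the $L^2$-gradient to transfer the bound to $\widetilde P$ with the enlarged multi-index $(\bA,B)$, and observe that $\mu$ and $S$ are insensitive to the sign component. Your polarization step and the remark about the exact combinatorial factor ($r$ versus $r+1$) are just slightly more explicit than the paper's computation, which is harmless.
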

	\begin{proof}
		For $B\in \Lambda_e$ and $\textbf A = (A_1,\dots,A_r) \in \Lambda_e^r$, we want to bound
		\begin{gather*}
			\norm{\Pi_B \widetilde{X}_P\left(\Pi_{A_1}u,\dots,\Pi_{A_r}u\right)}_0\,.
		\end{gather*}
		Suppose for simplicity that $B = (b,+)$, the case $B = (b,-)$ being totally analogous. 
		
		\noindent
		We compute, exploiting self-adjointness of $\Pi_B$ and 
		the definition of the $L^2$-gradient \eqref{defHamVec},
		\begin{equation*}
			\begin{aligned}
				&\norm{\Pi_B \widetilde{X}_P\left(\Pi_{A_1}u_1,\dots,\Pi_{A_r}u_r\right)}_0 
				\\&\qquad
				= \sup_{h\in\H^\infty, \norm{h} = 1}
				\left| \la \Pi_B \widetilde{X}_P(\Pi_{A_1}u_1,\dots,\Pi_{A_r}u_r), h\ra_e \right| 
				\\& \qquad
				= \sup_{h\in\H^\infty, \norm{h} = 1}
				\left| \la  \widetilde{X}_p(\Pi_{A_1}u_1,\dots,\Pi_{A_r}u_r), \Pi_B h\ra_e \right|  
				\\&\qquad
				\le r \sup_{h\in\H^\infty, \norm{h} = 1} 
				\left| \widetilde{P}(\Pi_{A_1}u_1,\dots,\Pi_{A_r}u_r, \Pi_Bh)\right| 
				\\& \qquad
				= r \norm{P}^{\nu,N}_{r+1} \frac{\mu(\textbf{A},B)^{N+\nu}}{S(\textbf{A},B)^N} \norm{\Pi_{A_1}u_1}\dots\norm{\Pi_{A_r}u_r} \norm{\Pi_B h}.
			\end{aligned}
		\end{equation*}
		From that, we deduce \eqref{vecControl.proof}.
	\end{proof}
	
	\begin{lemma}\label{tecni}
		Fix $\nu>0$. For $s>\nu +\frac{d}{2}$, we have
		\begin{gather}
			\sum_{A\in\Lambda_e}(1+\len a)^\nu\norm{\Pi_A u}_0 \lesssim_s \norm{u}_s.
		\end{gather}
	\end{lemma}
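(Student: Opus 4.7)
The plan is to prove this by Cauchy--Schwarz, splitting the weight $(1+\len a)^\nu$ against a decaying factor that is controlled by the hypothesis $s > \nu + d/2$.

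First I would write, for each $A = (a,\sigma) \in \Lambda_e$,
\begin{equation*}
(1+\len a)^\nu \norm{\Pi_A u}_0 = \bigl[(1+\len a)^{\nu - s}\bigr] \cdot \bigl[(1+\len a)^{s}\norm{\Pi_A u}_0\bigr],
\end{equation*}
and apply Cauchy--Schwarz on the sum over $A \in \Lambda_e$ to obtain
\begin{equation*}
\sum_{A\in\Lambda_e}(1+\len a)^\nu \norm{\Pi_A u}_0 \le \Bigl(\sum_{A\in\Lambda_e}(1+\len a)^{2(\nu-s)}\Bigr)^{1/2}\Bigl(\sum_{A\in\Lambda_e}(1+\len a)^{2s}\norm{\Pi_A u}_0^2\Bigr)^{1/2}.
\end{equation*}

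For the second factor, I would use the equivalence $\len a \sim |a|$ from Remark \ref{equi.mod}, together with the definition of $\norm{u}_s$ on $\H_e^s$ (splitting into the two components $u_+,u_-$ and recalling the convention $|A|=|a|$), to bound it by a constant times $\norm{u}_s$.

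For the first factor, since $\Lambda_e = \Lambda \times \{\pm 1\}$ the sum equals $2\sum_{a\in\Lambda}(1+\len a)^{2(\nu-s)}$. Because $\Lambda \subset \Z^d + \kappa$, the number of $a\in\Lambda$ with $|a|\le R$ grows like $R^d$; combined with $\len a \sim |a|$, a standard dyadic (or integral) comparison argument shows the sum converges provided $2(s-\nu) > d$, which is exactly the hypothesis $s > \nu + d/2$. This yields a finite constant depending only on $s$ (and $d,\nu$), which proves the claim. There is no serious obstacle: the only nontrivial step is the volume count for $\Lambda$ in $\Z^d+\kappa$, which is immediate.
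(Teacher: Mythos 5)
Your proposal is correct and follows essentially the same route as the paper: split the weight as $(1+\len a)^\nu=(1+\len a)^{\nu-s}(1+\len a)^{s}$, apply Cauchy--Schwarz, and use that $\sum_{A\in\Lambda_e}(1+\len a)^{-2(s-\nu)}$ converges precisely when $2(s-\nu)>d$. The only difference is that you spell out the lattice volume count ($\Lambda\subset\Z^d+\kappa$ and $\len a\sim|a|$) which the paper leaves implicit; this is fine and changes nothing substantive.
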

	\begin{proof}
		We compute
		\begin{gather*}
			\sum_{A\in\Lambda_e}(1+\len a)^\nu \norm{\Pi_A u}_0 \le 
			\sum_{A\in\Lambda_e} \frac{(1+\len a)^s}{(1+\len a)^{s-\nu}}\norm{\Pi_A u}_0  \le 
			\\
			\le \sqrt{\sum_{A\in\Lambda_e}\frac{1}{(1+\len a)^{2(s-\nu)}}} \sqrt{\sum_{A\in\Lambda_e}(1+\len a)^{2 s} \norm{\Pi_A u}_0} \lesssim C_s \norm{u}_s
		\end{gather*}
		since the first sum converges for $2(s-\nu)>d$, that is $s>\nu + \frac{d}{2}$.
	\end{proof}
	\begin{remark}\label{newDiago.rem}
		In the following computations, we will repeatedly compare $|a|$ and $\len a$, 
		taking profit of Remark \ref{equi.mod}.
		In particular, we notice that for any constant $0<K_2<1$ small enough there exists  
		$K_1>0$ large enough such that
		\begin{gather}\label{newDiago}
			\len a \ge K_1 \len b \quad\implies\quad|a-b|\ge K_2 \len a\,.
		\end{gather}
		In fact we have, defining $C_1,C_2$ as in Remark \ref{equi.mod}. %Lemma \ref{newOrder},
		\begin{gather*}
			|a-b| \ge |a| - |b| \ge \frac{1}{C_2}\len{a} - \frac{1}{C_1} \len{b} 
			\ge 
			\left(\frac{1}{C_2} - \frac{1}{C_1 K_1}\right)\len{a}\,.
		\end{gather*} 
		Moreover $\exists C$ such that
		\begin{gather}\label{newDiago2}
			\len a \ge \len b \quad\implies\quad |a-b|\le C \len a\,.
		\end{gather}
	\end{remark}
	\begin{theorem}[Tame estimate]\label{tameMultiproof}
		Let $X\in \mathcal{M}^{\nu,N}_r$ and fix
		$s>\frac{3}{2}d+\nu$. If $N>d + s$, for any
		$s_0\in(\frac{3}{2}d+\nu,s)$ one has
		\begin{align*}
			\norm{\tilde{X}(u_1,\dots,u_r)}_s \lesssim  
			\norm{X}^{\nu,N} \sum_{j=1}^{r} \norm{u_j}_s\prod_{k\not=j}\norm{u_k}_{s_0}\,,
			\\
			\forall u_1,\dots,u_r\in \cH^{\infty}_e\ .
		\end{align*}
	\end{theorem}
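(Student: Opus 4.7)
The plan is to proceed by duality followed by a case analysis on the ordering of the indices. First I would write
\[ \|\widetilde X(u_1,\dots,u_r)\|_s = \sup_{\|w\|_{-s}=1} \bigl|\langle \widetilde X(u_1,\dots,u_r),\,w\rangle_e\bigr|, \]
expand both factors in the spectral basis of the quantum actions, and use the defining bound of $\|X\|^{\nu,N}$ from Def.~\ref{d.4.3} to reduce the theorem to the purely combinatorial estimate
\[ \sum_{B,\mathbf{A}} \frac{\mu(B,\mathbf{A})^{\nu+N}}{S(B,\mathbf{A})^N}\, \|\Pi_B w\|_0 \prod_{j=1}^r \|\Pi_{A_j}u_j\|_0 \lesssim \|w\|_{-s}\sum_{j=1}^r \|u_j\|_s\prod_{k\ne j}\|u_k\|_{s_0}, \]
with $\|X\|^{\nu,N}$ absorbed into the implicit constant.

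Next I would split the sum according to which of the $r+1$ indices $(B,A_1,\dots,A_r)$ plays the role of $C_1$ (the largest in $\len{\cdot}$-order). By the symmetry of $\widetilde X$ in the $A_j$-variables this reduces, up to relabeling, to two essential regimes: either $B$ itself is the largest, or some specific $A_{j^*}$ is. The core geometric input in each regime is the elementary inequality
\[ (1+\len{C_1})^s \lesssim (1+\len{C_2})^s + |C_1-C_2|^s \le (1+\len{C_2})^s + S(B,\mathbf{A})^s, \]
which lets me extract the Sobolev weight $(1+\len{C_1})^s$ on the largest index as a sum of two pieces: one which is absorbed into the denominator $S^N$, leaving $S^{N-s}$ (summable in the lattice difference $C_1-C_2$ as soon as $N-s>d$), and one $(1+\len{C_2})^s$ which is transferred onto the second-largest index.

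The $(1+\len{C_2})^s$ piece pairs with $\|\Pi_{C_2}(\cdot)\|_0$ and, after a Cauchy--Schwarz step, produces either the $\|w\|_{-s}$ factor (when $C_2=B$) or the distinguished factor $\|u_{j^*}\|_s$ (when $C_2$ is an $A$-index). The surviving factor $\mu^\nu=\len{C_3}^\nu$ is then spread across the remaining $r-1$ small indices, each summed via Lemma~\ref{tecni} to produce the corresponding $\|u_k\|_{s_0}$. The two thresholds play clear roles here: $s_0>\tfrac{3}{2}d+\nu$ leaves the $d/2+\nu$ margin needed by Lemma~\ref{tecni} plus an additional $d/2$ for the Cauchy--Schwarz step on the large indices, while $N>d+s$ guarantees that after extracting $S^s$ the residual $S^{N-s}$ still provides $d$-dimensional lattice summability.

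The main obstacle will be the bookkeeping in the case analysis: in each regime exactly one of the $u_j$'s (or $w$) must receive the full $s$-weight while all the others receive the lower $s_0$-weight, and the factor $\mu^\nu$ must be distributed across the small indices in a way compatible with Lemma~\ref{tecni}. Once this organization is set up carefully, convergence of the lattice sums follows routinely from the stated hypotheses.
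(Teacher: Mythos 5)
Your plan is essentially the paper's own proof: the duality step is a cosmetic variant of estimating the weighted $\ell^2$ sum over the output index directly, your additive splitting $(1+\len{C_1})^s\lesssim(1+\len{C_2})^s+S^s$ plays exactly the role of the paper's inequality \eqref{2a1} (transferring the Sobolev weight onto the largest $A$-index at the cost of powers of $S$ absorbed thanks to $N>d+s$), and the small indices are summed via Cauchy--Schwarz, $\sum S^{-\kappa}<\infty$ for $\kappa>d$, and Lemma \ref{tecni}, just as in the paper. The only quibble is your accounting of the $s_0$ threshold: the margin beyond $d/2+\nu$ comes from attaching a full lattice-summability exponent $\kappa>d$ (not an extra $d/2$) to the third-largest index, which is precisely why the hypothesis reads $s_0>\tfrac32 d+\nu$; this does not affect the soundness of the argument.
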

	\begin{proof}
		We have
		\begin{gather*}%\label{tameSum}
			\norm{\tilde{X}(u_1,\dots,u_r)}_s^2 \leq \sum_{B\in\varLambda}(1+\len B)^{2s} 
			\norm{\sum_{A_1,\dots A_r} \Pi_B\tilde{X}(\Pi_{A_1}u_1,\dots,\Pi_{A_r}u_r)}_0^2\,.
		\end{gather*}
		Exploiting the definition of localized coefficients \ref{d.4.3}, the argument of the sum 
		in $B$ of the equality above %\eqref{tameSum} 
		is controlled by the square of
		\begin{gather*}
			\norm{X}^{\nu,N}\sum_{A_1,\dots A_r} (1+ \len b)^{s} 
			\frac{\mu(\textbf{A},b)^{N+\nu}}{S(\textbf{A},b)^N} 
			\norm{\Pi_{A_1}u_1}_0\dots\norm{\Pi_{A_r}u_r}_0\,.
		\end{gather*}
		By symmetry, we can consider the case 
		$\len{a_1}\ge\len{a_2}\ge\dots\ge \norm{a_r}$. 
		By definition of $S$ and $\mu$, we have
		\begin{gather*}
			\begin{cases}
				\len b\ge\len{a_2} \implies S(\textbf{A},b) = |a_1 - b| + \len{a_2}\ge |a_1 - b|\\
				\len b\le \len{a_2}\implies S(\textbf{A},b) \ge |a_1-a_2| + \len b\,.
			\end{cases}
		\end{gather*}
		We have, $\forall \kappa > d$,
		\begin{gather*}
			\sum_{B\in \Lambda_e}S^{-\kappa} \le \sum_{\len b\ge \len{a_2}}S^{-\kappa} + \sum_{\len b < \len{a_2}}S^{-\kappa} \lesssim 
			\\ \lesssim \sum_{B\in\Lambda_e}\frac{1}{(|a_1 - b| + 1)^\kappa} + \sum_{B\in\Lambda_e}\frac{1}{(|a_1 - a_2| + \len b)^\kappa} \lesssim \\
			\lesssim \sum_{B'\in\Lambda_e}\frac{1}{(|b'| + 1)^\kappa} + \sum_{B\in\Lambda_e}\frac{1}{(|a_1 - a_2| +  |b|)^\kappa}  < \infty
		\end{gather*}
		where $B' = (b',\sigma_b)$, with $b' = b-a_1$. With a similar calculation, we obtain also
		\begin{gather*}%\label{stimaA1}
			\sum_{A_1\in \Lambda_e}S^{-\kappa} < \infty\, .
		\end{gather*}
		By Cauchy-Schwarz inequality, we estimate
		\begin{gather*}
			\sum_{A_1,\dots A_r} (1+\len b)^{s}
			\frac{\mu(\textbf{A},b)^{N+\nu}}{S(\textbf{A},b)^N}
			\norm{\Pi_{A_1}u_1}_0\dots\norm{\Pi_{A_r}u_r}_0
			\le \\ \label{s.B.1}
			\le \left( \sum_{A_1,\dots A_r} (1+\len b)^{2s}
			\frac{\mu(\textbf{A},b)^{2N+\nu-\kappa}}{S(\textbf{A},b)^{2N-\kappa}}
			\norm{\Pi_{A_1}u_1}_0^2\norm{\Pi_{A_2}u_2}_0\dots\norm{\Pi_{A_r}u_r}_0\right)^{\frac{1}{2}}\cdot
			\\
			\label{s.B.2}
			\cdot \left( \sum_{A_1,\dots A_r} \frac{\mu(\textbf{A},b)^{\nu+\kappa}}{S(\textbf{A},b)^{\kappa}} \norm{\Pi_{A_2}u_2}_0\dots\norm{\Pi_{A_r}u_r}_0\right)^{\frac{1}{2}}\, .
		\end{gather*}
		Exploiting $\mu(\bA,b)\leq \len{a_2}$, 
		the second term is estimated by 
		\begin{gather*}
			\left( \sum_{A_1,\dots A_r} \frac{\mu(\textbf{A},b)^{\nu+\kappa}}{S(\textbf{A},b)^{\kappa}}\norm{\Pi_{A_2}u_2}_0\dots\norm{\Pi_{A_r}u_r}_0\right)^{\frac{1}{2}}\\ \le 
			\left( \sum_{A_1}S(\textbf{A},b)^{-\kappa}\sum_{A_2,\dots,A_r}\len{a_2 }^{\nu+\kappa}\norm{\Pi_{A_2}u_2}_0\dots\norm{\Pi_{A_r}u_r}_0\right)^{\frac{1}{2}} \lesssim \prod_{l=2}^{r}\norm{u_l}^{\frac{1}{2}}_{s_0}
		\end{gather*}
		for each $s_0>\nu + \frac{d}{2} + \kappa$.
		
		\noindent
		Consider now the first term \eqref{s.B.1}. We claim that 
		\begin{gather}\label{2a1}
			\frac{\mu(\textbf{A},b)}{S(\textbf{A},b)}
			\left(1 + \len b\right) \lesssim 1+\len{a_1}\ .
		\end{gather}
		Indeed, \eqref{2a1} is trivial for $1 + \len b\lesssim \left(1 +
		\len{a_1}\right)$ , since $\frac{\mu}{S}<1$ by definition. On the other end, if $1  + \len b \gtrsim \left(1 + \len{a_1}\right)$, we have, from \eqref{newDiago},
		\begin{gather*}
			S(A,b)\ge |b - a_1|\succeq \len{b}
		\end{gather*}
		and then
		\begin{gather*}
			\frac{\mu(\textbf{A},b)}{S(\textbf{A},b)}(1 + \len{b}) \lesssim \mu(\textbf{A},b) \lesssim   1 + \len{a_1}
		\end{gather*}
		that is \eqref{2a1}.
		Then we can control the first term, provided that $N>s+\kappa$, with
		\begin{equation*}
			\begin{aligned}
				&\sum_{B}\left(\sum_{A_1,\dots A_r} (1+\len b)^{2s} 
				\frac{\mu(\textbf{A},b)^{2N+\nu-\kappa}}{S(\textbf{A},b)^{2N-\kappa}} 
				\norm{\Pi_{A_1}u_1}_0^2\norm{\Pi_{A_2}u_2}_0\dots\norm{\Pi_{A_r}u_r}_0\right)^{\frac{1}{2}}
				\\& 
				\lesssim \sum_{B}\left(
				\sum_{A_1,\dots A_r} (1 + \len{a_1})^{2s} \frac{\mu(\textbf{A},b)^{\kappa + \nu}}{S(\textbf{A},b)^{\kappa  }}
				\norm{\Pi_{A_1}u_1}_0^2\norm{\Pi_{A_2}u_2}_0\dots\norm{\Pi_{A_r}u_r}_0 \right)^{\frac{1}{2}}
				\\& 
				\lesssim  \sum_{B}  
				\left(\sum_{A_1}\frac{(1 + \len{a_1})^{2s}}{S(\textbf{A},b)^{\kappa}} 
				\norm{\Pi_{A_1}u_1}_0^2 \sum_{A_2,\dots,A_r} \len{a_2}^{\nu+\kappa}
				\norm{\Pi_{A_2}u_2}_0\dots\norm{\Pi_{A_r}u_r}_0 \right)^{\frac{1}{2}}
				\\& \lesssim
				\norm{u_1}_s \prod_{l=2}^{r}\norm{u_l}^{\frac{1}{2}}_{s_0} \,,
			\end{aligned}
		\end{equation*}
		with $s_0\ge\kappa +\nu + \frac{d}{2}$. 
		Summing over all the possible choices of the biggest index, we obtain the sum in the thesis.
	\end{proof}

	\subsection{High and low modes}
	%	\begin{corollary}\label{cubicproof}
		%		Let $P\in L^{\nu,N}_{r+1}$.
		%		
		%		\noindent
		%		i) If $P$ is of order at least three in $u^\perp$ then, 
		%		for every $s_0\in \left(\frac{3}{2}d + \nu, s\right)$, we have
		%		\begin{gather*}
			%			\sup_{\norm{u}_s\le R}\norm{X_P(u)}_s 
			%			\lesssim 
			%			\frac{\norm{P}^{\nu,N}_R}{K^{s-s_0}} \frac{1}{R}\,.
			%		\end{gather*}
		%		
		%		\noindent
		%		ii) If $P$ is at least of order two in $u^\perp$ then, 
		%		for every $s_0\in \left(\frac{3}{2}d + \nu, s\right)$, we have
		%		\begin{gather*}
			%			\sup_{\norm{u}_s\le R}\norm{\Pi^{\le}X_P(u)}_s \lesssim \frac{\norm{P}^{\nu,N}_{R}}{K^{s-s_0}} \frac{1}{R}\,.
			%		\end{gather*}
		%	\end{corollary}
	%	
	\begin{proof}[Proof of Corollary \ref{cubic}]
		Firstly recall the usual high modes estimate
		\begin{equation}\label{high}
			\begin{aligned}
				\norm{u^\perp}_{s_0}^2 = \sum_{\len
					A>K}(1+\len a)^{2  s_0} \norm{\Pi_A
					u}_0 ^ 2  
				\lesssim \frac{1}{K^{2(s-s_0)}} \norm{u}_s^2\,.
			\end{aligned}
		\end{equation}
		
		\noindent
		i) If $P$ is at least of order 3 in $u^\perp$, $X_P$ is at least of order two in $u^\perp$. 
		Then
		\begin{equation*}%\label{case}
			\begin{aligned}
				X_P(u)& = X(u^{\le} + u^\perp) = \tilde{X}_P(u^{\le} + u^\perp,\dots,u^{\le} + u^\perp) 
				\\&
				= \sum_{l=2}^{r} \binom{r}{l} 
				\tilde{X}_P(\underset{r-l-times}{u^{\le},\dots,u^{\le}},\underset{l- times}{u^\perp,\dots,u^\perp})\,.
			\end{aligned}
		\end{equation*}
		Applying the tame estimate in Lemma \ref{tameMultiproof}, we get
		\begin{equation*}
			\begin{aligned}
				&\norm{X_P(u)}_s 
				\\&\quad 
				\le  \norm{P}^{\nu,N}\sum_{l=2}^{r}
				\left(\norm{u^\le}_s\norm{u^\le}^{r-l-1}_{s_0}\norm{u^\perp}_{s_0}^l 
				+ \norm{u^\le}^{r-l}_{s_0}\norm{u^\perp}_{s_0}^{l-1} \norm{u^\perp}_s\right)  
				\\&\quad
				\stackrel{\eqref{high}}{\lesssim}   
				\frac{\norm{P}^{\nu,N}}{K^{s-s_0}} \norm{u}_s^2\norm{u}_{s_0}^{r-2}\,.
			\end{aligned}
		\end{equation*}
		ii) We reason as in the previous case, since again $\Pi^\le X_P$ is of order two in $u^\perp$.
	\end{proof}
	%	\begin{corollary}\label{cutOff2proof}
		%		Let $P\in L_{r}^{N,\nu}$ be of order 2 in $u^\perp $; assume that
		%		\begin{equation*}%\label{LocaDueproof}
			%			\tilde{P}(\Pi_{A_1}u_1,\dots,\Pi_{A_{r+1}}u_{r+1})\not=0\quad\implies\quad
			%			|a_{\tau_{ord(1)}} - a_{\tau_{ord(2)}}| > K^\delta\,,
			%		\end{equation*}
		%		with $\tau_{ord}$ the ordering permutation defined in Def.
		%		\ref{mu.S1}.  Then, $\forall N'>N$ one has
		%		\begin{equation}
			%			\label{sti.lunghi.proof}
			%			\norm{P}^{\nu,N}\leq\frac{\norm{P}^{\nu,N'}}{K^{\delta (N'-N)}}
			%		\end{equation}
		%		and therefore, for any $s$ large enough,
		%		\begin{gather}\label{impro.tame.proof}
			%			\sup_{\norm{u}_s\le R}\norm{X_P(u)}_s \le \frac{\norm{P}^{\nu,N'}_R}{K^{\delta (N'-N)}}  \frac{1}{R}\,.
			%		\end{gather}
		%	\end{corollary} 
	\begin{proof}[Proof of Corollary \ref{cutOff2}]
		Writing $N' = N - M_1$, \eqref{sti.lunghi} amounts to show that 
		\begin{equation*}
			\begin{aligned}
				\frac{\mu(A_1,\dots,A_r)^{N+\nu}}{S(A_1,\dots,A_r)^N} 
				&= \frac{\mu(A_1,\dots,A_r)^{N'+\nu + M_1}}{S(A_1,\dots,A_r)^{N' + M_1}} 
				\\&
				\le 
				\frac{\mu(A_1,\dots,A_r)^{N'+\nu + M_1}}{S(A_1,\dots,A_r)^{N'}} 
				\frac{1}{|a_{\tau_{ord(1)}} - a_{\tau_{ord(2)}}|^{M_1}}
				\\ &
				\le
				\frac{\mu(A_1,\dots,A_r)^{N'+\nu + M_1}}{S(A_1,\dots,A_r)^{N'}} \frac{1}{K^{\delta M_1}}\,.
			\end{aligned}
		\end{equation*}
		Then we apply directly Lemmas \ref{vecControl.proof} and \ref{tameMultiproof} and we get \eqref{impro.tame.proof}.
	\end{proof}	
	\subsection{Poisson brackets}
	We conclude this section by showing that our class is closed with respect to Poisson brackets. 
	Before proving the result, we state the following useful lemma.
	\begin{lemma}\label{polyPoisson}
		Given a polynomial $P$ and a polynomial map $X$, we have
		\begin{equation*}%\label{poiPoly}
			dP\,X (u) = \eta\left[\tilde{P}(u,\dots,u, X(u))\right]\,,
		\end{equation*}
		where
		\begin{equation*}
			\eta [\tilde{P}(u,\dots,u,h)] \coloneqq \tilde{P}(h,u,\dots,u) + \tilde{P}(u,h,\dots,u) + \dots + \tilde{P}(u,\dots,u,h)\,,
		\end{equation*}
		Moreover if $P$ has degree $p+1$ and $X$ has degree $q$, the multilinear polynomial associated to $dP\,X $ is given by
		\begin{equation}\label{poiPoly2}
			\widetilde{dP\,X }(u_1,\dots,u_{p+q}) 
			= \eta\left[\tilde{P}\left(u_1,\dots,u_p,\tilde{X}(u_{p+1},\dots,u_{p+q})\right)\right]\,.
		\end{equation}
	\end{lemma}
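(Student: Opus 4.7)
The plan is to verify both identities by direct calculation, using only the multilinearity of $\tilde P$ and $\tilde X$ together with the definition of the Fréchet differential of a polynomial function.

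First I would handle the identity $dP\,X(u)=\eta[\tilde P(u,\dots,u,X(u))]$. Since $P$ is a polynomial of degree $p+1$, one has $P(u)=\tilde P(u,\dots,u)$ with $\tilde P$ a symmetric $(p+1)$-linear form. Expanding
\begin{equation*}
P(u+t h)=\tilde P(u+t h,\dots,u+t h)
\end{equation*}
by $(p+1)$-multilinearity in powers of $t\in\R$, the coefficient of $t$ is precisely
\begin{equation*}
\sum_{j=1}^{p+1}\tilde P(u,\dots,u,\underset{\text{slot }j}{h},u,\dots,u)\;=\;\eta\!\left[\tilde P(u,\dots,u,h)\right],
\end{equation*}
which by definition equals $dP(u)[h]$. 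Setting $h=X(u)$ gives the first claim.

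Next I would obtain the multilinear representation. Substituting $X(u)=\tilde X(u,\dots,u)$ (with $q$ copies of $u$) in the formula above yields
\begin{equation*}
dP\,X(u)=\eta\!\left[\tilde P\bigl(u,\dots,u,\tilde X(u,\dots,u)\bigr)\right],
\end{equation*}
which is a homogeneous polynomial of degree $p+q$ in $u$. Since the right-hand side of the claimed second identity is multilinear in $(u_1,\dots,u_{p+q})$ and reduces to the above expression when $u_1=\cdots=u_{p+q}=u$, it provides a multilinear extension of $dP\,X$. Because the multilinear form associated with a homogeneous polynomial is uniquely determined by its values on the diagonal (after symmetrization), the formula
\begin{equation*}
\widetilde{dP\,X}(u_1,\dots,u_{p+q})=\eta\!\left[\tilde P\bigl(u_1,\dots,u_p,\tilde X(u_{p+1},\dots,u_{p+q})\bigr)\right]
\end{equation*}
is the required representative.

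There is essentially no obstacle here: the statement is a bookkeeping exercise in multilinear algebra. The only mildly delicate point is keeping track of the indexing of slots in the symmetrization $\eta$ so that one does not double-count; this is handled by recalling that $\eta$ distributes the ``marked'' argument over the $p+1$ slots of $\tilde P$ while the remaining $p$ slots receive the $u_1,\dots,u_p$ in their natural order, and that $\tilde X(u_{p+1},\dots,u_{p+q})$ is already symmetric in its own $q$ arguments.
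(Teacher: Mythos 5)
Your proof is correct; the paper in fact states Lemma \ref{polyPoisson} without proof, and your direct verification via multilinearity of $\tilde P$ (expanding $P(u+th)$ and extracting the coefficient of $t$, then substituting $h=X(u)$ and polarizing) is exactly the standard argument the authors implicitly rely on. The only point worth flagging is that the right-hand side of \eqref{poiPoly2} is multilinear but not symmetric under exchanges between the first $p$ slots and the last $q$ slots, so strictly speaking it is a multilinear representative of $dP\,X$ that agrees with it on the diagonal rather than the unique symmetric form; your parenthetical remark about symmetrization addresses this, and this representative is all that is used later in the proof of Lemma \ref{Poissonproof}.
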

	
	\begin{lemma}[Poisson brackets]\label{Poissonproof}
		Given $P\in L^{\nu_1,N}_{r_1+1}$ and $X\in M^{\nu_2,N}_{r_2}$, we have
		\begin{equation*}
			dP\,X \in  L^{\nu', N'}\,,
		\end{equation*}
		with $N' =  N - d - 1 - \max\{\nu_1,\nu_2\} $ and $\nu' = \nu_1 + \nu_2 + d + 1$.
		Moreover, 
		\begin{equation*}
			\norm{dP\,X}^{\nu'\,N'} \lesssim \norm{P}^{\nu_1,N} \norm{X}^{\nu_2,N}\,.
		\end{equation*}
	\end{lemma}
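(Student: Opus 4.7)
The plan is to use the explicit formula \eqref{poiPoly2} from Lemma \ref{polyPoisson} to express the multilinear form associated to $dP\cdot X$ as a symmetrization of $\tilde P(u_1,\dots,u_{r_1},\tilde X(u_{r_1+1},\dots,u_{r_1+r_2}))$. Since the symmetrization $\eta$ is just a finite sum of terms of the same structure, it suffices to estimate a single one, say the one in which the $\tilde X$-hole occupies the last slot of $\tilde P$. I then test against a configuration $(\Pi_{A_1}u_1,\dots,\Pi_{A_{r_1+r_2}}u_{r_1+r_2})$, and insert a spectral resolution $\mathds{1}=\sum_{B\in\Lambda_e}\Pi_B$ between the output of $\tilde X$ and the last slot of $\tilde P$. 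This splits the estimate into a product of two factors, one governed by the localization of $P$ and the other by that of $X$.

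Writing $\mathbf A'=(A_1,\dots,A_{r_1})$ and $\mathbf A''=(A_{r_1+1},\dots,A_{r_1+r_2})$, the two localization bounds give
\begin{equation*}
\bigl|\tilde P(\Pi_{A_1}u_1,\dots,\Pi_B \tilde X(\Pi_{A_{r_1+1}}u_{r_1+1},\dots))\bigr|\lesssim \|P\|^{\nu_1,N}\|X\|^{\nu_2,N}\,\frac{\mu(\mathbf A',B)^{N+\nu_1}}{S(\mathbf A',B)^{N}}\,\frac{\mu(\mathbf A'',B)^{N+\nu_2}}{S(\mathbf A'',B)^{N}}\prod_j\|\Pi_{A_j}u_j\|_0.
\end{equation*}
The entire proof thus reduces to the following combinatorial/lattice-sum bound:
\begin{equation*}
\sum_{B\in\Lambda_e} \frac{\mu(\mathbf A',B)^{N+\nu_1}}{S(\mathbf A',B)^{N}}\,\frac{\mu(\mathbf A'',B)^{N+\nu_2}}{S(\mathbf A'',B)^{N}} \lesssim \frac{\mu(\mathbf A)^{N'+\nu'}}{S(\mathbf A)^{N'}},
\end{equation*}
with $N'=N-d-1-\max\{\nu_1,\nu_2\}$ and $\nu'=\nu_1+\nu_2+d+1$. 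The exponent shift by $d+1$ is precisely the cost to make a $d$-dimensional lattice sum converge.

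To prove this bound I split on the location of the two largest indices of $\mathbf A$. In the easy case, both largest indices of $\mathbf A$ lie entirely in $\mathbf A'$ (symmetrically, $\mathbf A''$). Then $\mu(\mathbf A''\!,B)\le \mu(\mathbf A)+\langle B\rangle$, the ratio for $\mathbf A''$ is controlled by a power of $\langle B\rangle$, and $S(\mathbf A',B)\geq S(\mathbf A)$ up to the size of $B$; a direct computation with Remark \ref{equi.mod} and the lattice bound $\sum_B\langle b\rangle^{-(d+1+\epsilon)}<\infty$ yields the claim. The main obstacle is the genuinely mixed case, in which the two largest indices are split, one lying in $\mathbf A'$ and the other in $\mathbf A''$, because then $S(\mathbf A)$ contains the difference $|a_{\tau_{ord}(1)}-a_{\tau_{ord}(2)}|$, which does not appear directly in either $S(\mathbf A',B)$ or $S(\mathbf A'',B)$. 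Here I exploit the fact that $B$ is common to both denominators and use the triangle inequality $|a_{\tau_{ord}(1)}-a_{\tau_{ord}(2)}|\le |a_{\tau_{ord}(1)}-b|+|b-a_{\tau_{ord}(2)}|$; at least one of these two terms is $\gtrsim S(\mathbf A)-\mu(\mathbf A)$, so it must appear in the corresponding $S(\mathbf A',B)$ or $S(\mathbf A'',B)$. A careful reshuffling of powers, together with the bound $\mu(\mathbf A',B)\cdot\mu(\mathbf A'',B)\lesssim \mu(\mathbf A)(\mu(\mathbf A)+\langle B\rangle)$, reduces the sum to $\sum_B\langle B\rangle^{-(d+1)}\cdot S(\mathbf A)^{-N'}\mu(\mathbf A)^{N'+\nu'}$, which is the desired estimate. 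Taking the supremum over the unit configurations and multiplying by the $O(r_1+r_2)$ symmetrization factor yields the seminorm bound $\|dP\cdot X\|^{\nu',N'}\lesssim \|P\|^{\nu_1,N}\|X\|^{\nu_2,N}$.
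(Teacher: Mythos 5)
Your proposal follows the paper's proof essentially step for step: the reduction via Lemma \ref{polyPoisson} and the insertion of $\sum_{B}\Pi_B$ between $\widetilde X$ and the last slot of $\widetilde P$, the same lattice-sum inequality \eqref{tesi.wave} with the same loss $d+1$ in the exponents, the case split according to whether the two largest indices of $\mathbf{A}$ are split between the two groups, and the triangle inequality $|a_{1}-a_{p+1}|\le|a_{1}-b|+|b-a_{p+1}|$ in the mixed case. The only mismatch is one of emphasis: the case you call easy (both large indices in $\mathbf{A}'$) is where the paper works hardest, because for $\len{b}$ large and $b$ near $a_1$ one must combine your bound $S(\mathbf{A}',B)\gtrsim S(\mathbf{A})-C\len{b}$ with the decay $S(\mathbf{A}'',b)\ge|b-a_{p+1}|\gtrsim\len{b}$ supplied by the $X$-factor — but that is exactly the ``direct computation'' you defer to, so the plan is sound.
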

	\proof
	Let $u_1,\dots, u_p,u_{p+1},\dots, u_{p+q} \in \H^\infty_e$ 
	and $\textbf{A} = (A_1,\dots, A_{p+q}) \in \Lambda_e^{p+q}$. 
	By Lemma \ref{polyPoisson}, we have
	\begin{equation*}
		\begin{aligned}
			&\left|\widetilde{dP\,X}\left(\Pi_{A_1}u_1,\dots,\Pi_{A_{p+q}}u_{p+q}\right) \right| 
			\\&\quad
			= \left|\eta\left[\widetilde{P}\left(\Pi_{A_1}u_1,\dots,\Pi_{A_{p}}u_p,\widetilde{X}(\Pi_{A_{p+1}}u_{p+1},\dots,\Pi_{A_{p+q}}u_{p+q})\right)\right]\right| 
			\\&\quad
			\le (p+1)\sum_{B\in\Lambda_e}
			\left|\widetilde{P}\left(\Pi_{A_1}u_1,\dots,\Pi_{A_{p}}u_p, 
			\Pi_B\widetilde{X}(\Pi_{A_{p+1}}u_{p+1},\dots,\Pi_{A_{p+q}}u_{p+q})\right)\right| 
			\\&\quad
			\le (p+1)\norm{P}^{\nu_1,N} \times
			\\&\qquad\qquad
			\times\sum_{B\in\Lambda_e} \frac{\musb{1}{p}^{N+\nu_1}}{\S{1}{p}^N} 
			\norm{\Pi_{B}\widetilde{X}(\Pi_{A_{p+1}}u_{p+1},\dots,\Pi_{A_{p+q}}u_{p+q})}_0  
			\\& \quad
			\le(p+1)\norm{P}^{\nu_1,N}\norm{X}^{\nu_2,N} \times
			\\&\qquad\qquad
			\times\sum_{B\in\Lambda_e}
			\frac{\musb{1}{p}^{N+\nu_1}}{\S{1}{p}^N} \frac{\musb{p+1}{p+q}^{N+\nu_2}}{\S{1}{p}^N}\,,
		\end{aligned}
	\end{equation*}	
	since $P,X$ have localized coefficients. 
	That is, we need to prove the following estimate
	\begin{equation}\label{tesi.wave}
		\begin{aligned}
			\sum_{B\in\Lambda_e}  \frac{\mu(A_1,\dots,A_p,b)^{N+\nu_1}}{S(A_1,\dots,A_p,b)^N} &\frac{\mu(A_{p+1},\dots,A_{p+q},b)^{N+\nu_2}}{S(A_{p+1},\dots,A_{p+q},b)^N} 
			\\&\qquad\quad\lesssim \frac{\mu(A_1,\dots,A_{p+q})^{N'+\nu'}}{S(A_1,\dots,A_{p+q})^{N'}}\, .
		\end{aligned}
	\end{equation}
	By symmetry, we can assume the following relations:
	\begin{equation*}
		\len{a_1}\ge\dots\ge\len{a_p},\quad \len{a_{p+1}}\ge\dots\ge\len{a_{p+q}}\,, \qquad \len{a_{p+1}}\le\len{a_1}\, .
	\end{equation*}
	\textbf{Case 1} Assume $\len{a_1}\ge\len{a_{p+1}}\ge \len{a_2}$.\\
	In this case, we have
	\begin{equation*}
		\begin{aligned}
			\mus{1}{p+q} &= \max(\len{a_2},\len{a_{p+2}})\,,
			\\
			\S{1}{p+q} &= |a_ 1-a_{p+1}| + \mus{1}{p+q}\,,
		\end{aligned}
	\end{equation*}
	and
	\begin{equation}\label{stimaUno}
		\begin{aligned}
			\musb{1}{p} &\le \mus{1}{p+q}\,, 
			\\
			\musb{p+1}{p+q} &\le \mus{1}{p+q}\,.
		\end{aligned}
	\end{equation}	
	\noindent
	\textbf{Case 1.i} For $\len b >
	\max(\len{a_2},\len{a_{p+2}})$, we have 
	\begin{align}
		\nonumber
		\Sb{1}{p} = |a_1 - b| + \len{a_{2}}\,,
		\\
		\nonumber
		\Sb{p+1}{p+q} = |a_{p+1}- b| + \len{a_{p+2}}\,.
		\\
		\label{stimaDue}
		|a_1 - a_{p+1}| = |a_1 - b - a_{p+1} +b | \le  |a_1 - b| +  |a_{p+1} - b|\,.
	\end{align}
	From \eqref{stimaUno} and \eqref{stimaDue}, we deduce
	\begin{equation}\label{stimaMax}
		\begin{aligned}
			&\frac{\S{1}{p+q}}{\mus{1}{p+q}} = 1 + \frac{|a_1 - a_{p+1}|}{\mus{1}{p+q}} 
			\\& \qquad\qquad
			\le 1 + \frac{|a_1 -b|}{\musb{1}{p}} + \frac{|a_{p+1} -b|}{\musb{p+1}{p+q}} 
			\\& \qquad\qquad
			\le \frac{\Sb{1}{p}}{\musb{1}{p}} +  \frac{\Sb{p+1}{p+q}}{\musb{p+1}{p+q}}
			\\&\qquad\qquad
			\leq
			2 \max\left\{\frac{\Sb{1}{p}}{\musb{1}{p}}\,, \frac{\Sb{p+1}{p+q}}{\musb{p+1}{p+q}}\right\}\,.
		\end{aligned}
	\end{equation}
	Let us define
	\[
	L_1 = \left\{ b\in\Lambda\ :\ \len{b}>  \max(\len{a_{2}},\len{a_{p+2}}):\frac{\Sb{1}{p}}{\musb{1}{p}}
	\ge \frac{\Sb{p+1}{p+q}}{\musb{p+1}{p+q}}\right\}\,.
	\]
	Depending on the value of $\bA$ one could have $L_1=\emptyset$, but
	this is irrelevant for the following. 
	If $b\in L_1$, estimate \eqref{stimaMax} implies 
	\begin{equation}\label{stimaQuattro}
		\frac{\musb{1}{p}}{\Sb{1}{p}}\le 2 \frac{\mus{1}{p+q}}{\S{1}{p+q}}\,.
	\end{equation}
	We observe moreover that
	\begin{equation}\label{stimaOvvia}
		\frac{\mu(A_{p+1},\dots,A_{p+q},b)^{N+\nu_2}}{S(A_{p+1},\dots,A_{p+q},b)^N} 
		\le \musb{p+1}{p+q}^{\nu_2}\,.
	\end{equation}
	Then, using \ref{stimaUno}, \ref{stimaQuattro} and \ref{stimaOvvia}, we have
	\begin{align*}
		\sum_{B\in L_1}  &\frac{\mu(A_1,\dots,A_p,b)^{N+\nu_1}}{S(A_1,\dots,A_p,b)^N} 
		\frac{\mu(A_{p+1},\dots,A_{p+q},b)^{N+\nu_2}}{S(A_{p+1},\dots,A_{p+q},b)^N}
		\\
		&
		\stackrel{\eqref{stimaUno}}{\lesssim} 
		\mus{1}{p+q}^{\nu_1 + \nu_2} \times
		\\
		&\qquad\qquad \times\sum_{B\in L_1}  \frac{\mu(A_1,\dots,A_p,b)^{N - d
				-1}}{S(A_1,\dots,A_p,b)^{N-d-1}}  
		\frac{\mu(A_1,\dots,A_p,b)^{d+1}}{S(A_1,\dots,A_p,b)^{d+1}}
		\\
		&
		\lesssim
		\mus{1}{p+q}\null^{\nu_1 + \nu_2}
		\sum_{b\in L_1}  \frac{\mus{1}{p+q}\null^{N-d-1}}{\S
			1{p+q} \null^{N-d-1}}\frac{\mus 1{p+q}\null^{d+1}}{S(A_1,...,A_p,b)^{d+1}}\,,
		\\
		&
		\stackrel{\eqref{stimaQuattro},\eqref{stimaUno}}{\lesssim}
		\frac{\mus{1}{p+q}^{N +\nu_1 + \nu_2}}{\S{1}{p+q}^{N-d-1}}\sum_{B\in L_1}  \frac{1}{(|a_1 - b|+1)^{d+1}}\,,
	\end{align*}
	%NO
	that is \eqref{tesi.wave}, with $N'= N - d - 1$ and $\nu' = \nu_1 + \nu_2 + d + 1 $. 
	The case $b\in L_1^c$ is analogous.
	
	\smallskip
	\noindent
	\textbf{Case 1.ii} For $\len b \le  \max(\len{a_{2}},\len{a_{p+2}})$, 
	we remark that the index of the sum in \eqref{tesi.wave} runs over a set with cardinality 
	controlled by $\mus{1}{p+q}$. That is, it suffices to prove that
	\begin{equation}\label{aim}
		\frac{\mu(A_1,\dots,A_p,b)^{N+\nu_1}}{S(A_1,\dots,A_p,b)^N} \frac{\mu(A_{p+1},\dots,A_{p+q},b)^{N+\nu_2}}{S(A_{p+1},\dots,A_{p+q},b)^N}\frac{S(A_1,\dots,A_{p+q})^{N'}}{\mu(A_1,\dots,A_{p+q})^{N'+\nu'}}
	\end{equation}
	is controlled by a constant  independent of $b$.
	
	Take $0<K_2<1$ and let  $K_1$ and $C$ be as in \eqref{newDiago}
	If $\len{a_1}\ge K_1 \max(\len{a_2},\len{a_{p+2}})$, we get, 
	\begin{equation}\label{Aim.uno}
		\begin{aligned} 
			\eqref{aim} &\le \frac{\max(\len b,\len{a_{3}})^{N+\nu_1}}{(K_2\len{a_{1}})^N} \max(\len b,\len{a_{p+3}})^{\nu_2} 
			\times
			\\&\qquad\qquad\qquad
			\times\frac{\left(C\len{a_{1}} + \max(\len{a_{2}},\len{a_{p+2}})\right)^{N'}}{\max(\len{a_{2}},\len{a_{p+2}})^{N'+\nu'}} 
			\\& 
			\lesssim \frac{\max(\len{a_{2}},\len{a_{p+2}})^{N+\nu_1+\nu_2}}{\len{a_1}^N}
			\frac{(\len{a_1})^{N'}}{\max(\len{a_2},\len{a_{p+2}})^{N'+\nu'}}\lesssim 1\,,
		\end{aligned}
	\end{equation}
	choosing $N' = N$ and $\nu' = \nu_1 + \nu_2$.\\
	If $\len{a_1} \le K_1 \max(\len{a_2},\len{a_{p+2}})$, we get instead
	\begin{equation}\label{Aim.due}
		\begin{aligned} 
			\eqref{aim}&\le \max(\len b,\len{a_{3}})^{\nu_1} \max(\len b,\len{a_{p+3}})^{\nu_2} 
			\times
			\\&\qquad\qquad\qquad
			\times\frac{(C\len{a_{1}}+ \max(\len{a_{2}},\len{a_{p+2}})^{N'}}{\max(\len{a_2},\len{a_{p+2}})^{N'+\nu'}} 
			\\& 
			\lesssim 
			\max(\len{a_2},\len{a_{p+2}})^{\nu_1 + \nu_2} 
			\frac{( \max(\len{a_2},\len{a_{p+2}})^{N'}}{\max(\len{a_2},\len{a_{p+2}})^{N'+\nu'}} \lesssim 1\,,
		\end{aligned}
	\end{equation}
	for $\nu' = \nu_1 + \nu_2$. 
	This proves the claim in the case $1$.ii.
	
	\smallskip
	\noindent
	\textbf{Case 2} Assume $\len{a_1}\ge\len{a_2}\ge\len{a_{p+1}}$. In this case, we have
	\begin{equation*}
		\begin{aligned}
			\mus{1}{p+q} &=\max(\len{a_3},\len{a_{p+1}})\,,
			\\
			\S{1}{p+q} &= |a_1 - a_2| + \max(\len{a_3},\len{a_{p+1}})\,. 
		\end{aligned}
	\end{equation*}
	
	\textbf{Case 2.i} Take $K_2$ as in \eqref{newDiago}
	and determine the corresponding $K_1$. For $\len b > K_1 \max\left(\len{a_3},\len{a_{p+1}}\right)$, 
	we have
	\begin{equation*}
		\begin{aligned}
			\Sb{1}{p} &= |a_1- \text{argmax}(\len{a_2},\len{b})|  + \min(\len{a_2},\len{b})\,,
			\\
			\Sb{p+1}{p+q} &= |b-a_{p+1}| + \len{a_{p+2}}\,,
		\end{aligned}
	\end{equation*}
	and moreover,
	\begin{equation*}
		|a_{p+1}-b| \ge K_2 \len b \,.
	\end{equation*}
	Let us define 
	\[
	G_1 \coloneqq\{b\colon\len b > K_1 \max\left(\len{a_3},\len{a_{p+1}}\right), \len b <\len{a_2} \}\,.
	\] 
	For $b\in G_1$ , we estimate
	\begin{equation}\label{stimaTre}
		\begin{aligned}
			\sum_{b\in G_1} & \frac{\mu(A_1,\dots,A_p,b)^{N+\nu_1}}{S(A_1,\dots,A_p,b)^N} 
			\frac{\mu(A_{p+1},\dots,A_{p+q},b)^{N+\nu_2}}{S(A_{p+1},\dots,A_{p+q},b)^N} 
			\\&
			= 
			\sum_{b\in G_1} \frac{\len b^{N+\nu_1}}{\left(|a_1- a_2|  + \len b\right)^N} 
			\frac{\len{a_{p+2}}^{N+\nu_2}}{\left(|a_{p+1}- b| + \len{a_{p+2}}\right)^N}
			\\& \lesssim
			\sum_{b \in G_1}  \frac{\len b^{N+\nu_1}}{\left(|a_1- a_2|  + \len b\right)^N} 
			\frac{\len{a_{p+2}}^{N+\nu_2}}{\len b^N} 
			\\&
			\lesssim \sum_{\len b > K_1 \max\left(\len{a_3},\len{a_{p+1}}\right)}
			\frac{\len b^{\nu_1}}{\left(|a_1- a_2|  + \len b\right)^N}\len{a_{p+2}}^{N+\nu_2}
			\\&
			\lesssim\mus{1}{p+q}^{N+\nu_2} \frac{1}{\left(|a_1-a_2| + \max\left(\len{a_3},\len{a_{p+1}}\right)\right)^{N-\nu_1-d-1}} 
			\\&
			\lesssim \frac{\mus{1}{p+q}^{N+\nu_2}}{\S{1}{p+q}^{N-\nu_1-d-1}}\,,
		\end{aligned}
	\end{equation} 
	where we are using the inequality
	\begin{equation*}
		\sum_{|k|>A} \frac{|k|^l}{(|k|+B)^N}\leq \sum_{|k|>A} \frac{(|k|+B)^l}{(|k|+B)^N} \le \frac{1}{(A+B)^{N-l-d-1}}\,.
	\end{equation*}
	This is the thesis with $N' = N-d-\nu_1-1$ and $\nu' =  \nu_1 + \nu_2 +d$.
	
	\noindent
	Let $G_2 \coloneqq\{b\colon\len b > K_1 \max\left(\len{a_3},\len{a_{p+1}}\right), \len b \ge\len{a_2} \}$, 
	then we have to estimate
	\begin{gather}\label{stimaAim}
		\sum_{b\in G_2 } \frac{\len{a_2}^{N + \nu_1}}{(|a_1 - b|+ \len{a_2})^N} 
		\frac{\len{a_{p+2}}^{N + \nu_2}}{(|a_{p+1} - b|+ \len{a_{p+2}})^N} \,.
	\end{gather} 
	We observe that there exist two constants $K_3, K_4$  such that
	\begin{equation}\label{K3K4}
		\len{b}\leq K_4 \len{a_1} \quad \Rightarrow |a_1 - b| \geq K_3|a_1-a_2|\,.
	\end{equation}
	Then we estimate \eqref{stimaAim} by
	\begin{equation}\label{double.sum}
		\begin{aligned}
			\sum_{b \in G_2 }& \frac{\len{a_2}^{N + \nu_1}}{(|a_1 - b|+ \len{a_2})^N} 
			\frac{\len{a_{p+2}}^{N + \nu_2}}{|a_{p+1} - b|^N}   
			\\& 
			\stackrel{\eqref{K3K4}}{\lesssim}
			\sum_{b\in G_2, \len{b}\leq K_4\len{a_1} } 
			\frac{\len{a_2}^{N + \nu_1}}{(|a_1 - b|+ \len{a_2})^N} 
			\frac{\len{a_{p+2}}^{N + \nu_2}}{\len{b}^N} 
			\\& + 
			\sum_{b\in G_2, \len b \ge K_4\len{a_1} } 
			\frac{\len{a_2}^{N + \nu_1}}{(|a_1 - b|+ \len{a_2})^N} \frac{\len{a_{p+2}}^{N + \nu_2}}{\len{b}^N}\,.
		\end{aligned}
	\end{equation}
	The first term in \eqref{double.sum} is controlled by
	\begin{gather*}
		\frac{\max(\len{a_{p+1}}, \len{a_3})^{N+\nu_2}}{(|a_1-a_2| + \max(\len{a_{p+1}}, \len{a_3}))^{N-\nu_1 -d-1}} \sum_{b\in G_2} \frac{\len{a_2}^{N+\nu_1} \len{b}^{-N}}{(|a_1-a_2| + \len{a_2})^{\nu_1 + d+ 1}}  
		\lesssim 
		\\ 
		\lesssim \frac{\max(\len{a_{p+1}}, \len{a_3})^{N+\nu_2}}{(|a_1-a_2| + \max(\len{a_{p+1}}, \len{a_3}))^{N-\nu_1 -d-1}} \sum_{b\in G_2}\frac{1}{\len{b}^{d+1}}  \lesssim 
		\\  
		\lesssim \frac{\mu(A_1,\dots,A_{p+q})^{N'+\nu'}}{S(A_1,\dots,A_{p+q})^{N'}}\,.
	\end{gather*}
	For the second term in \eqref{double.sum} we claim that $\len{b}\ge K_4 \len{a_1}$ implies
	\begin{equation*}
		\frac{\len{a_2}^{N + \nu_1}}{(|a_1 - b|+ \len{a_{2}})^N} 
		\frac{\len{a_{p+2}}^{N + \nu_2}}{\len b^N} \lesssim 
		\frac{\max(\len{a_{p+1}}, \len{a_3})^{N'+\nu'}}{\left(|a_1 - a_2| + \max(\len{a_{p+1}}, \len{a_3})\right)^{N'}}
		\frac{1}{\len b ^{d+1}}\,,
	\end{equation*}
	with $N'=N-\nu_1-d-1$ and $\nu'=\nu_1 + \nu_2 + d + 1$.
	Indeed, we have
	\begin{equation*}
		\begin{aligned}
			&\frac{\len{a_2}^{N + \nu_1}}{(|a_1 - b|+ \len{a_2})^N} 
			\frac{\len{a_{p+2}}^{N + \nu_2}}{\len b ^N} \frac{\left(|a_1 - a_2| + \max(\len{a_{p+1}}, \len{a_3})\right)^{N'}}{\max(\len{a_{p+1}}, \len{a_3})^{N'+\nu'}}\len b ^{d+1} 
			\\&\quad
			\lesssim \len{a_2}^{\nu_1} \max(\len{a_{p+1}}, \len{a_3})^{N+\nu_2} \frac{1}{\len b ^{N-d-1}}\frac{\len{a_1}^{N'}}{\max(\len{a_{p+1}}, \len{a_3})^{N'+\nu'}} 
			\\&\quad 
			\stackrel{\len b\ge K_4\len{a_2} }{\lesssim} \frac{\len{a_1}^{N'}}{\len{b}^{N-d-1-\nu_1}}
			\lesssim \frac{\len{a_1}^{N'}}{\len{a_1}^{N-d-1-\nu_1}} \lesssim 1\,,
		\end{aligned}
	\end{equation*}
	for any $N' \le N - d - 1 -\nu_1$. 
	Then, we estimate 
	\begin{equation*}
		\begin{aligned}
			\sum_{b\in G_2, \len b\ge K_4\len{a_1} }& \frac{\len{a_2}^{N + \nu_1}}{(|a_1 - b|+ \len{a_2})^N} 
			\frac{\len{a_{p+2}}^{N + \nu_2}}{\len b ^N} 
			\\&  
			\lesssim \frac{\max(\len{a_{p+1}}, \len{a_3})^{N'+\nu'}}{\left(|a_1 - a_2| + \max(\len{a_{p+1}}, \len{a_3})\right)^{N'}}
			\sum_{b\in G_2} \frac{1}{\len{b}^{d+1}}
			\\& 
			\lesssim  \frac{\max(\len{a_{p+1}}, \len{a_3})^{N'+\nu'}}{\left(|a_1 - a_2| + \max(\len{a_{p+1}}, \len{a_3})\right)^{N'}}\,.
		\end{aligned}
	\end{equation*}
	
	\noindent
	\textbf{Case 2.ii} For $\len{b} \le K_1 \max(\len{a_{p+1}}, \len{a_3})$, we argue as in Case 1.ii, and it is sufficient to bound from above
	\begin{gather}\label{aimB}
		\frac{\mu(A_1,\dots,A_p,b)^{N+\nu_1}}{S(A_1,\dots,A_p,b)^N} \frac{\mu(A_{p+1},\dots,A_{p+q},b)^{N+\nu_2}}{S(A_{p+1},\dots,A_{p+q},b)^N}\frac{S(A_1,\dots,A_{p+q})^{N'}}{\mu(A_1,\dots,A_{p+q})^{N'+\nu'}}
	\end{gather} 
	with a constant.
	
	\noindent
	If $|a_1-a_2|\le 2\max(\len{a_{p+1}}, \len{a_3})$, we get
	\begin{equation*}
		\begin{aligned}
			\eqref{aimB} &\le \max(\len{b}, \len{a_3})^{\nu_1} \musb{p+1}{p+q}^{\nu_2} \times
			\\&\qquad\qquad\qquad \times
			\frac{\left(|a_1-a_2| +\max(\len{a_{p+1}}, \len{a_3})\right)^{N'}}{\max(\len{a_{p+1}}, \len{a_3})^{N' + \nu'}} 
			\\&
			\lesssim \max(\len{a_{p+1}}, \len{a_3})^{\nu_1 + \nu_2} \frac{\left(\max(\len{a_{p+1}}, \len{a_3})\right)^{N'}}{\max(\len{a_{p+1}}, \len{a_3})^{N'+\nu'}} \lesssim 1\,,
		\end{aligned}
	\end{equation*}
	for $\nu' \geq \nu_1 + \nu_2$.
	
	\noindent
	If $|a_1-a_2|\ge 2 \max(\len{a_{p+1}}, \len{a_3})$ , we consider separately 
	the case $\len b \le \len{a_2}$ and the case $\len b>\len{a_2}$. 
	
	\noindent
	If  $\len b \le \len{a_2}$ , we have
	\begin{equation*}
		\begin{aligned}
			&\eqref{aimB} \le \frac{\max(\len{b}, \len{a_3})^{N+\nu_1}}{|a_1 - a_2|^N} 
			\musb{p+1}{p+q}^{\nu_2} \times
			\\&\qquad\qquad\times
			\frac{\left(|a_1-a_2| +\max(\len{a_{p+1}}, \len{a_3})\right)^{N'}}{\max(\len{a_{p+1}}, \len{a_3})^{N' + \nu'}} 
			\\&\quad
			\le 2^{N+\nu_1+\nu_2}
			\frac{\max(\len{a_{p+1}}, \len{a_3})^{N + \nu_1 + \nu_2}}{|a_1-a_2|^N} 
			\frac{\left(|a_1-a_2| + \frac{1}{2}|a_1-a_2|\right)^{N'}}{\max(\len{a_{p+1}}, \len{a_3})^{N' + \nu'}}  
			\le 3^{N'}\,,
		\end{aligned}
	\end{equation*}
	for $\nu' \geq \nu_1 + \nu_2$.
	
	\noindent
	If $\len{b}>\len{a_2}$ and recalling that $\max(\len{a_{p+1}}, \len{a_3})\le \frac{1}{2}|a_1-a_2| $, 
	we need to bound
	\begin{equation}\label{last}
		\begin{aligned}
			\eqref{aimB} &\le 
			\frac{\len{a_2}^{N+\nu_1}}{(|a_1 - b| + \len{a_2})^N} 
			\frac{\len{a_{p+2}}^{N + \nu_2}}{(|a_{p+1}-b|+\len{a_{p+2}})^{N}} 
			\frac{\left(\frac{3}{2}|a_1-a_2|\right)^{N'}}{\max(\len{a_{p+1}}, \len{a_3})^{N' + \nu'}} 
			\\& \lesssim
			\frac{\len{a_2}^{N+\nu_1}}{(|a_1 - b| +\len{a_2})^N}  
			\max(\len{a_{p+1}}, \len{a_3})^{\nu_2} 
			\frac{\left( \frac{1}{C}|a_1 - b| + \len{b} \right)^{N'}}{\max(\len{a_{p+1}}, \len{a_3})^{N' + \nu'}} \,,
		\end{aligned}
	\end{equation}
	where we are using the following triangular inequality,
	\begin{equation*}
		|a_1 -a_2| \le |a_1 - b| + |b-a_2|\le|a_1 - b| + C \len{b},
	\end{equation*}
	and $\len{a_2}<\len b$.
	
	\noindent
	If $|a_1-b|\le \len{b}$ we conclude, 
	recalling that $\len b\le K_1\max(\len{a_{p+1}}, \len{a_3})$, that
	\begin{equation*}
		\begin{aligned}
			\eqref{last}&\lesssim  
			\max(\len{a_{p+1}}, \len{a_3})^{\nu_1 + \nu_2} \frac{\len{b} ^{N'}}{\max(\len{a_{p+1}}, \len{a_3})^{N' + \nu'}} 
			\\&
			\lesssim  \frac{\max(\len{a_{p+1}}, \len{a_3})^{\nu_1 + \nu_2 + N}}{\max(\len{a_{p+1}}, \len{a_3})^{N'+\nu'}}  
			\lesssim 1 \,,
		\end{aligned}
	\end{equation*}
	for $N = N'$ and $\nu' = \nu_1+\nu_2$. If instead $|a_1-b|\ge \len b$, we conclude that 
	\begin{gather}
		\eqref{last}\lesssim  
		\frac{\max(\len{a_{p+1}}, \len{a_3})^{N+\nu_1+\nu_2}}{|a_1 -b |^N} 
		\frac{|a_1 - b| ^{N'}}{\max(\len{a_{p+1}}, \len{a_3})^{N' + \nu'}} \lesssim 1\,,
	\end{gather}
	for $N \leq N'$ and $\nu' \geq \nu_1+\nu_2$.
	 \qed
	
	\section{Proof of Theorem \ref{lo.generale}} \label{locasec}
	
	In this section, we prove theorem \ref{lo.generale}.
	%	\begin{theorem}
		%		\label{lo.generale}
		%A nonlinear functional of the form \eqref{nonlin.1} fulfilling
		%		Hypothesis \ref{Nonline} is a function with localized
		%                coefficients.
		%	\end{theorem}
	
	The main step to prove it is the following.
	\begin{theorem}\label{eigenDue}
		Let $g\in C^\infty(M)$, then there exists $\nu>0$, depending on $k$ and the dimension $d$, and $\forall N\in\n $ there exists a constant $C_N$ such that, $\forall u_1,\dots,u_k\in \H^\infty$, 
		\begin{equation*}%\label{eigen_locaDue}
			\left|\int_M g\Pi_{a_1}u_1 \dots\Pi_{a_k} u_k \right| 
			\le 
			C_N\frac{\mu(a_1,\dots,a_k)^{N+\nu}}{S(a_1,\dots,a_k)^N} \prod_{l=1}^{k} \norm{\Pi_{a_l}u_l}_{0}\,,
		\end{equation*}
		for any $ (a_1,\dots, a_k) \in \Lambda^k$, where $\Pi_{a_i}$ are the projectors defined in \eqref{proje}.
	\end{theorem}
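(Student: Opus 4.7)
The plan is to prove the bound by iterated integration by parts against one of the quantum actions $I_j$, in the spirit of \cite{DS, BamMon}, but exploiting the key feature emphasized in the introduction: the gain we aim for is measured in terms of the vectorial index distance $|a_1-a_2|$, not in terms of the frequency gap $|\omega_{a_1}-\omega_{a_2}|$.

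By symmetry of the integrand we may relabel so that $\len{a_1}\geq\len{a_2}\geq\dots\geq\len{a_k}$, making $\mu=\len{a_3}$ and $S=\mu+|a_1-a_2|$. Set $v_l\coloneqq\Pi_{a_l}u_l\in\H^\infty$. In the regime $|a_1-a_2|\lesssim\mu$ one has $S\lesssim\mu$, and the bound is immediate from H\"older's inequality (placing $v_1,v_2$ in $L^2$ and $v_3,\dots,v_k$ in $L^\infty$) combined with the spectral Sobolev embedding $\norm{\Pi_au}_{L^\infty}\lesssim\langle\len{a}\rangle^{d/2+\epsilon}\norm{\Pi_au}_0$, with a $\nu$ of size $(k-2)(d/2+\epsilon)$ absorbing the low-frequency factors; the factor $1/S^N$ is trivially $\leq 1$, so no gain in $N$ is needed.

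The core argument concerns the opposite regime $|a_1-a_2|>\mu$, in which $S\leq 2|a_1-a_2|$. Choose $j\in\{1,\dots,d\}$ with $|a_1^j-a_2^j|\geq|a_1-a_2|/\sqrt d$, and let $h\coloneqq g\,v_3\cdots v_k$. A short induction based on $I_jv_2=a_2^jv_2$ yields the algebraic identity
\begin{equation*}
(I_j-a_2^j)^N(h\,v_2)\;=\;\mathrm{ad}_{I_j}^N(M_h)\,v_2,
\end{equation*}
where $M_h$ denotes multiplication by $h$ and $\mathrm{ad}_{I_j}(A)=[I_j,A]$. Self-adjointness of $I_j$ and $I_jv_1=a_1^jv_1$ then give, after handling standard conjugation issues,
\begin{equation*}
(a_1^j-a_2^j)^N\!\int_M g\,v_1v_2\cdots v_k\;=\;\bigl\langle\mathrm{ad}_{I_j}^N(M_h)\,v_2,\;\bar v_1\bigr\rangle,
\end{equation*}
so Cauchy--Schwarz reduces the task to an $L^2\to L^2$ operator-norm estimate for $\mathrm{ad}_{I_j}^N(M_h)$.

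Since $I_j\in\Psi^1(M)$ and $M_h\in\Psi^0(M)$, the commutator rule $\mathrm{ord}([P,Q])\leq\mathrm{ord}(P)+\mathrm{ord}(Q)-1$ keeps $\mathrm{ad}_{I_j}^N(M_h)\in\Psi^0(M)$ for every $N$. Calder\'on--Vaillancourt then provides a universal integer $N_0=N_0(d)$ such that $\norm{\mathrm{ad}_{I_j}^N(M_h)}_{L^2\to L^2}\lesssim_N\norm{h}_{C^{N+N_0}}$. Expanding $h=g\,v_3\cdots v_k$ by Leibniz and using the Bernstein-type bound $\norm{v_l}_{C^M}\lesssim\len{a_l}^{M+d/2+\epsilon}\norm{v_l}_0$ together with $\len{a_l}\leq\mu$ for $l\geq 3$, one finds $\norm{h}_{C^{N+N_0}}\lesssim_{g,N}\mu^{N+\nu}\prod_{l=3}^k\norm{v_l}_0$ with $\nu=(k-2)(d/2+\epsilon)+N_0$ independent of $N$. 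Combining this with $1/|a_1-a_2|^N\leq 2^N/S^N$ yields the claim. The main technical obstacle is the Calder\'on--Vaillancourt-style estimate with controlled dependence on $N$: keeping track, through $N$ iterated commutators, of exactly which seminorms of the full symbol of $M_h$ are actually required, and ensuring that the growth of the constant in $N$ is absorbed by the factor gained from $(a_1^j - a_2^j)^N$. Once this is in hand, the rest reduces to careful Leibniz bookkeeping, morally the same as in \cite{DS, BamMon} but adapted here to the vectorial joint-spectral decomposition of $(I_1,\dots,I_d)$.
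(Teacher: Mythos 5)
Your strategy is essentially the paper's: the same dichotomy between the regime $|a_1-a_2|\lesssim\mu$ (handled by H\"older plus Sobolev embedding of the spectrally localized low factors) and the regime $|a_1-a_2|\gg\mu$, where one picks the coordinate $j$ maximizing $|a_1^j-a_2^j|$, transfers $N$ commutators with $I_j$ onto the multiplication operator by $h=g\,v_3\cdots v_k$, and concludes via an $L^2$-operator bound on $\mathrm{ad}_{I_j}^N(M_h)$ costing $N+\nu$ derivatives of $h$ with $\nu$ independent of $N$. The structural differences are minor: the paper proves the case $k=3$ and obtains general $k$ through the composition Lemma \ref{Poissonproof}, while you treat general $k$ directly by Leibniz on $h$; and the operator-norm estimate you single out as the main technical obstacle is exactly the paper's Lemma \ref{estiAdjoint}, which is quoted from \cite{DS} rather than reproved, so you could simply cite it. Note also that your concern about absorbing the $N$-dependence of the Calder\'on--Vaillancourt constants into the gain $(a_1^j-a_2^j)^N$ is moot: the statement allows $C_N$ to depend on $N$ arbitrarily, and the only quantity that must be uniform in $N$ is $\nu$.

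One step you should not dismiss as ``standard conjugation issues'': after writing $\int_M g\,v_1\cdots v_k=\la h v_2,\bar v_1\ra$, moving the selfadjoint powers $(I_j-a_2^j)^N$ across the sesquilinear pairing places them on $\bar v_1$, which in general is \emph{not} an eigenfunction of $I_j$; for instance on the flat torus, with $I_j=-\im\partial_j$, one has $I_j\bar v_1=-a_1^j\bar v_1$, so your displayed identity would produce the factor $(a_1^j+a_2^j)^N$ rather than $(a_1^j-a_2^j)^N$. The paper's Lemma \ref{comtrickDue} is formulated and proved entirely inside the pairing $\la B\Pi_a u_1,\Pi_b u_2\ra$, where the conjugation is built in and the eigenvalue relations used are exact; the passage from that pairing to the plain product integral is precisely the point where the behaviour of the ranges of $\Pi_a$ under complex conjugation (cf. the identity $\Pi_A I(u)=\overline{\Pi_{\bar A}u}$ used in the paper) must enter, and it is exactly where the distinction between a gain in $|a_1-a_2|$ and a gain in $|a_1+a_2|$ is decided. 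So this is the one place where your write-up asserts an identity that does not follow from selfadjointness alone, and it needs an explicit argument rather than a parenthetical remark.
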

	The strategy of the proof is very similar to that of the
	corresponding results in \cite{DS,BDGS,DI17}; the difference being that the indexes run over the set $\Lambda$
	related to the quantum actions. We just recall the main steps and we write the proof of the new lemmas. 
	
	\smallskip
	\noindent    
	First, given two linear operators $P,B$ define,  for any $N \in \n$,
	\begin{equation*}
		Ad^N_P(B) = [Ad^{N-1}_P(B), P]\,, \qquad Ad_P^0(B) = B\,.
	\end{equation*}
	then we recall the following Lemma from \cite{DS}, (see the proof of Prop. 1.2.1).
	\begin{lemma}\label{estiAdjoint}
		There exists $\nu$ s.t. for any $P\in\Psi^1(M)$ and any $N\in\N$ there exist
		constants $C_N=C_N(P)$ with the property that for any $f\in
		C^\infty(M)$, denote by $f$ also the operator of
		multiplication by $f$, then one has
		\begin{equation*}%\label{stima globale}
			\left\| Ad^N_Pf\right\|_{\cB(L^2(M))}\leq C_N\norm{f}_{N+\nu}\, .
		\end{equation*}
	\end{lemma}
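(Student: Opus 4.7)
My plan is to recognize $Ad_P^N f$ as a pseudodifferential operator of order zero and control its $L^2$ norm via the Calderon--Vaillancourt theorem. First, note that the multiplication operator $M_f$ by $f \in C^\infty(M)$ lies in $\Psi^0(M)$ with symbol $f(x)$, independent of $\xi$. Since the principal symbols of $P$ and $M_f$ both belong to the commutative algebra of scalar symbols, the commutator $[P, M_f]$ drops one order and hence lies in $\Psi^0(M)$. By induction, $Ad_P^N f \in \Psi^0(M)$ for every $N \ge 0$.

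Next, I would invoke the Calderon--Vaillancourt theorem on $M$: there exist an integer $K = K(d)$ and a constant $C > 0$ such that for any $A \in \Psi^0(M)$,
\begin{equation*}
\|A\|_{\cB(L^2(M))} \le C \sup_{|\alpha|+|\beta| \le K} \sup_{(x,\xi) \in T^*M} (1+|\xi|)^{|\beta|} |\partial_x^\alpha \partial_\xi^\beta \sigma(A)(x,\xi)|.
\end{equation*}
Working in local coordinates on $M$ with a fixed partition of unity, I would expand the symbol of $Ad_P^N f$ using the composition formula for pseudodifferential operators. Each commutation with $P$ produces an asymptotic expansion of the form $\sum_{|\gamma|\ge 1} c_\gamma (\partial_\xi^\gamma p)(\partial_x^\gamma \sigma(\cdot))$ modulo a smoothing remainder in $S^{-M'}$; truncating with $M'$ sufficiently large leaves a remainder whose $L^2$ operator norm is easily dominated by a fixed Sobolev norm of $f$.

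Iterating $N$ times and collecting the finitely many terms of the truncated expansion, each surviving term is a product of derivatives of the fixed symbol $p$ of $P$ and of derivatives of $f$. The key claim, to be verified inductively, is that the total number of $x$-derivatives of $f$ appearing in any such term is bounded by $N + \nu_1$ for some $\nu_1 = \nu_1(P,d)$ independent of $N$. Combining this with the $K$ extra symbol derivatives required by Calderon--Vaillancourt and with the Sobolev embedding $\|f\|_{C^k(M)} \lesssim \|f\|_{H^{k+d/2+1}(M)}$ would yield the stated bound with $\nu := \nu_1 + K + d/2 + 1$ independent of $N$ (only the multiplicative constant $C_N$ depends on $N$).

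The main obstacle is the inductive bookkeeping showing that after $N$ iterations of $[P,\cdot]$, the number of $f$-derivatives entering any finite term of the truncated symbol of $Ad_P^N f$ grows only linearly in $N$, and that the smoothing remainders, though depending on $N$, can be estimated uniformly by the same Sobolev norm of $f$. This step is the technical heart of the argument and is essentially the computation of Proposition 1.2.1 of \cite{DS}, to which the paper explicitly refers; apart from this bookkeeping, the reasoning is just the standard symbol calculus.
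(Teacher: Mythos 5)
Your route---viewing $Ad_P^N f$ as an operator of order zero, bounding it by Calder\'on--Vaillancourt, and inductively tracking that each commutation with $P$ costs essentially one derivative of $f$ (with the remainder bookkeeping deferred to the computation in Prop.~1.2.1 of \cite{DS})---is exactly the argument the paper relies on, since the paper gives no independent proof and simply refers to the proof of that proposition of Delort--Szeftel. So the proposal is correct and takes essentially the same approach as the paper.
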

	
	Given $a=(a^1,\dots,a^d)\in \Lambda$, we denote with $l^{*}(a)$ 
	the index for which $|a^l|$ is maximum. Namely,
	\begin{equation}\label{lmax}
		l^*(a) \coloneqq \underset{l=1,\dots,d}{\operatorname{argmax}}  \left|a^l \right|\,.
	\end{equation}
	
	\begin{lemma}\label{comtrickDue}
		Let $B\in \B\left(L^2(M)\right)$ and $a,b \in \Lambda$. For any $N\ge0$ and $u_1,u_2 \in \H^\infty$ we have 
		\begin{equation*}
			|\la B\Pi_a u_1,\Pi_b u_2\ra| 
			\le 
			C_N \frac{\norm{ad_{I_{l^*}}^N(B)}_{\cB(L^2(M))}}{|a - b |^N}\norm{\Pi_a u_1}_0\norm{\Pi_b u_2}_0\,.
		\end{equation*}
		with $l^* = l^*(a-b)$.
	\end{lemma}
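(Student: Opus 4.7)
My plan is to exploit a commutator trick using the fact that $\Pi_a u_1$ is a joint eigenfunction of the quantum actions, and then iterate.

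First, I recall from Definition \ref{glob.int.quant} and Remark \ref{joint} that $\Pi_a u_1$ belongs to the joint eigenspace of $(I_1,\dots,I_d)$ with eigenvalue $a=(a^1,\dots,a^d)$, so that $I_{l^*} \Pi_a u_1 = a^{l^*} \Pi_a u_1$; analogously $I_{l^*} \Pi_b u_2 = b^{l^*} \Pi_b u_2$. Since $I_{l^*}$ is selfadjoint, a direct computation gives
\begin{equation*}
\langle [I_{l^*}, B]\, \Pi_a u_1, \Pi_b u_2 \rangle = (b^{l^*} - a^{l^*})\, \langle B\, \Pi_a u_1, \Pi_b u_2 \rangle\,,
\end{equation*}
so that, provided $a^{l^*}\neq b^{l^*}$,
\begin{equation*}
\langle B\, \Pi_a u_1, \Pi_b u_2\rangle = \frac{1}{b^{l^*} - a^{l^*}}\, \langle ad_{I_{l^*}}(B)\, \Pi_a u_1, \Pi_b u_2\rangle\,.
\end{equation*}

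The key point is that $ad_{I_{l^*}}(B)$ still satisfies the eigenequation identity with respect to $I_{l^*}$ on both sides (it is still a linear operator acting on the same spaces), so the same identity can be iterated $N$ times, yielding
\begin{equation*}
\langle B\, \Pi_a u_1, \Pi_b u_2 \rangle = \frac{1}{(b^{l^*} - a^{l^*})^N}\, \langle ad_{I_{l^*}}^N(B)\, \Pi_a u_1, \Pi_b u_2 \rangle\,.
\end{equation*}
I would then apply Cauchy--Schwarz together with the operator norm bound on $ad_{I_{l^*}}^N(B)$, and use the elementary inequality $|a^{l^*}-b^{l^*}| \geq d^{-1/2}|a-b|$, which follows from the very definition of $l^*(a-b)$ as the index maximizing $|a^l-b^l|$ (since $|a-b|^2 = \sum_l (a^l-b^l)^2 \leq d\,(a^{l^*}-b^{l^*})^2$). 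Absorbing $d^{N/2}$ into the constant $C_N$ gives the claimed estimate.

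There is essentially no difficult step here: the only thing to observe is that the iteration is well-defined (all quantities involved are bounded operators on $L^2$, thanks to the assumption $B\in \mathcal{B}(L^2(M))$ and to the fact that the $ad^N$ notation is purely algebraic at this level — the analytic cost of estimating $\|ad_{I_{l^*}}^N(B)\|_{\mathcal{B}(L^2)}$ is deferred and will be handled, for the specific $B$'s of interest, via Lemma \ref{estiAdjoint}). The statement itself is then a clean algebraic consequence of the commutator identity.
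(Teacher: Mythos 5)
Your proof is correct and essentially coincides with the paper's own argument: the same commutator identity $\langle ad_{I_{l}}(B)\Pi_a u_1,\Pi_b u_2\rangle = \pm(a^l-b^l)\langle B\Pi_a u_1,\Pi_b u_2\rangle$ (using selfadjointness of $I_l$ and the joint eigenvalue property), iterated $N$ times (the paper phrases it as induction), followed by Cauchy--Schwarz and the elementary bound $|a^{l^*}-b^{l^*}| \ge d^{-1/2}|a-b|$. The sign convention for $ad$ differs from the paper's but is irrelevant after taking absolute values.
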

	\begin{proof}
		First we claim that, for any $l=1,\dots,d$,
		\begin{equation}\label{claim}
			|\la B\Pi_a u_1,\Pi_b u_2\ra| 
			\le C_N \frac{\norm{ad_{I_{l}}^N(B) \Pi_a u_1}_0}{|a^l- b^l |^N}\norm{\Pi_b u_2}_0\,.
		\end{equation}
		For N = 1, recalling that the actions $I_j$ are selfadjoint, we have
		\begin{equation*}
			\begin{aligned}
				\la Ad_{I_l}(B)\Pi_a u_1,\Pi_b u_2 \ra 
				&= \left|\la BI_l\Pi_a u_1,\Pi_b u_2 \ra - \la I_l B\Pi_a u_1,\Pi_b u_2\ra\right| 
				\\& 
				=  \left|\la B I_l\Pi_a u_1,\Pi_b u_2 \ra - \la  B\Pi_a u_1,I_l\Pi_b u_2\ra\right|  
				\\&=  
				\left|a^l\la B\Pi_a u_1,\Pi_b u_2 \ra - b^l\la B\Pi_a u_1,\Pi_b u_2\ra\right| 
				\\&= 
				\left|a^l-b^l\right| \left|\la B\Pi_a u_1,\Pi_b u_2 \ra\right|.
			\end{aligned}
		\end{equation*}
		Then, by induction on N, and arguing as in the case $N=1$ with $B$ replaced 
		by $ad^{N-1}_{I_l}(B)$, we get
		\begin{equation*}
			\begin{aligned}
				&\la Ad^N_{I_l}(B)\Pi_a u_1,\Pi_b u_2 \ra = \la Ad_{I_l}\left(Ad^{N-1}_{I_l}(B)\right)\Pi_a u_1,\Pi_b u_2 \ra 
				\\&
				\qquad = |a^l-b^l|\la Ad^{N-1}_{I_l}(B)\Pi_a u_1,\Pi_b u_2 \ra = |a^l-b^l|^N \la B \Pi_a u_1,\Pi_b u_2 \ra\,.
			\end{aligned}
		\end{equation*}
		This means that
		\begin{equation*}
			|\la B\Pi_a u_1,\Pi_b u_2\ra| = \frac{|\la ad_{I_l}^N(B)\Pi_a u_1, \Pi_b u_2\ra|}{|a^l - b^l|^N}\,.
		\end{equation*}
		Then claim \eqref{claim} holds by Cauchy-Schwartz inequality.
		Choosing $l=l^*(a-b)$ defined in \eqref{lmax} and exploiting the trivial inequality
		\begin{equation*}
			|a|  \le \sqrt{d} \left|a^{l^*}\right|\,,
		\end{equation*}
		we get the thesis.
	\end{proof}
	
	\begin{proof}[Proof of Theorem \ref{eigenDue}]
		We first prove the theorem for $k=3$.
		Without loss of generality, we can assume $\len a\ge\len b\ge \len c$. We distinguish two cases.
		
		\smallskip
		\noindent
		\textbf{Case 1} If  $\len c\le 2 |a-b|$, 
		we apply Lemma \ref{estiAdjoint} with $f = g\Pi_c u_3 $ and $P = I_{l^*(a-b)}$ and we get
		\begin{equation*}
			\norm{Ad_{I_{l^*}}^N \, \left(g\Pi_c
				u_3\right)}_{\B(L^2(M))} \lesssim  \norm{g\Pi_c
				u_3}_{N+\nu}\lesssim \left\|g\right\|_{N+\nu}  \norm{\Pi_c
				u_3}_{N+\nu} \lesssim  \len{c}^{N+\nu}
			\norm{\Pi_c u_3}_0\, ,
		\end{equation*}
		where the nonwritten constant depends on $g$.
		In order to get the thesis we apply Lemma \ref{comtrickDue}. We have
		\begin{equation*}
			\begin{aligned}
				\left|\int g \Pi_a u_1\Pi_b u_2\Pi_c u_3 \right| &\le 
				\frac{\norm{Ad_{I_{l^{*}}}^N\, \left(g\Pi_c u_3\right)}_{{\B(L^2(M))}}}{|a-b|^N} 
				\norm{\Pi_a u_1}_0 \norm{\Pi_b u_2}_0 
				\\&
				\lesssim \frac{\len{c}^{N+\nu}}{|a-b|^{N}} \norm{\Pi_a u_1}_0\norm{\Pi_b u_2}_0\norm{\Pi_c u_3}_0\,,
			\end{aligned}
		\end{equation*}
		where the non written constant depends on $g$, on $I_{l^{*}}$ and on
		$N$. 
		Finally, we observe that
		\begin{equation*}
			\frac{\len{c}^{N+\nu}}{|a-b|^{N}} \le 3^N \frac{\len{c}^{N+\nu}}{(\len{c}+|a-b|)^{N}}\,,
		\end{equation*}
		since $\len{c} \le 2 |a-b|$. 
		
		\smallskip
		\noindent
		\textbf{Case 2} If $ \len c > 2 |a-b|$, then
		\begin{equation}\label{unopiccolo}
			\frac{2}{3}\le \frac{\mu(a,b,c)}{S(a,b,c)} \le 1\,. 
		\end{equation}
		In this case, we take profit of the Sobolev embedding
		$H^{d/2+}\hookrightarrow  L^\infty$ and we get
		\begin{equation*}
			\begin{aligned}
				\left|\int g \Pi_a u_1 \Pi_b u_2 \Pi_c u_3 \right| &\le \norm{ g\Pi_c u_3}_{{L}^\infty} \norm{\Pi_a u_1}_0
				\norm{\Pi_b u_2}_0 
				\\&\le 
				C_{s_0} \left\|g\right\|_{L^\infty}  \norm{\Pi_c u_3}_{s_0} \norm{\Pi_a u_1}_0
				\norm{\Pi_b u_2}_0 
				\\
				&
				\lesssim 
				\len{c}^{s_0}\norm{\Pi_c u_3}_{0} \norm{\Pi_a u_1}_0
				\norm{\Pi_b u_2}_0 
				\\&\stackrel{\eqref{unopiccolo}}{\lesssim} 
				\frac{\mu(a,b,c)^{N+\nu}}{S(a,b,c)^N}  
				\norm{\Pi_a u_1}_{0} \norm{\Pi_b u_2}_0
				\norm{\Pi_c u_3}_0 \,.
			\end{aligned}
		\end{equation*}
		with $\nu \geq s_0$.
		The general case follows from Lemma \ref{Poissonproof}. 
	\end{proof}
	Finally, in order to prove Theorem \ref{lo.generale}, we need also the
	following lemma.
	\begin{lemma}\label{productNorm3}
		Let $P$ be a polynomial with localized coefficients, then also 
		\begin{equation}
			\label{con la norma}
			Q(u):=P(u) \int_M u\bar u dx
		\end{equation}
		has localized coefficients.
	\end{lemma}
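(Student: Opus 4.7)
The plan is to reduce the lemma to a simple pointwise comparison between the localization factors of $P$ and of $Q=P\cdot N$, where $N(u):=\int_M u_+u_-\,dx$ is the bilinear functional on the extended phase space. By linearity I may assume $P$ is homogeneous of degree $p\ge 3$, so that $Q$ has degree $p+2$ and its symmetric multilinear form is
\[
\widetilde Q(v_1,\ldots,v_{p+2})=\binom{p+2}{2}^{-1}\sum_{\substack{I\subset\{1,\ldots,p+2\}\\ |I|=p}}\widetilde P(v_I)\,\widetilde N(v_{I^c}).
\]
The crucial property of $N$ is that $\widetilde N(\Pi_{A_i}u_i,\Pi_{A_j}u_j)$ vanishes unless $a_i=a_j=:c$ and $\sigma_i=-\sigma_j$, in which case $|\widetilde N|\le\|\Pi_{A_i}u_i\|_0\|\Pi_{A_j}u_j\|_0$. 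Combined with the localization of $P$, the lemma reduces to showing
\[
\frac{\mu(\textbf{A}_I)^{\nu+N}}{S(\textbf{A}_I)^{N}}\;\le\;C_N\,\frac{\mu(\textbf{A})^{\nu+N}}{S(\textbf{A})^{N}}
\]
for each $I\subset\{1,\ldots,p+2\}$ of cardinality $p$ for which $\textbf{A}_{I^c}=\{(c,+),(c,-)\}$.

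Two elementary facts guide the proof of this comparison: (a) adjoining two extra entries to a multi-index can only enlarge the third largest modulus, so $\mu(\textbf{A}_I)\le\mu(\textbf{A})$; (b) for $A\ge B\ge 0$ and $x\ge 0$ one has the monotonicity $(A+x)/(B+x)\le A/B$. I split according to where $|c|$ sits in the decreasing order $|a_{P,1}|\ge|a_{P,2}|\ge\cdots$ of the moduli in $I$. If $|c|\le|a_{P,2}|$, the two largest entries of $\textbf{A}$ coincide with those of $\textbf{A}_I$, so the gap $|a_{\tau_{ord}(1)}-a_{\tau_{ord}(2)}|$ is unchanged; (b) then gives $S(\textbf{A})/S(\textbf{A}_I)\le\mu(\textbf{A})/\mu(\textbf{A}_I)$, which combined with (a) closes the estimate with constant $1$. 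If $|c|\ge|a_{P,1}|$, both copies of $c$ become the top two entries, forcing $|a_{\tau_{ord}(1)}-a_{\tau_{ord}(2)}|=0$ and hence $S(\textbf{A})=\mu(\textbf{A})=|a_{P,1}|$; the cheap bound $\mu(\textbf{A}_I)^{\nu+N}/S(\textbf{A}_I)^N\le\mu(\textbf{A}_I)^{\nu}\le|a_{P,1}|^{\nu}$ then finishes the argument.

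The hard part is the intermediate regime $|a_{P,2}|\le|c|\le|a_{P,1}|$, in which $\mu(\textbf{A})=|c|$ and $S(\textbf{A})=|c|+|a_{P,1}-c|$, while $S(\textbf{A}_I)=\mu(\textbf{A}_I)+|a_{P,1}-a_{P,2}|$, with no immediate relation between the two sums. The key input is the elementary inequality $|a_{P,1}-a_{P,2}|\ge|a_{P,1}|-|c|$, which follows from $|a_{P,2}|\le|c|$: it forces $S(\textbf{A}_I)\gtrsim S(\textbf{A})$ whenever $|a_{P,1}|\ge 2|c|$, and in the complementary regime $|a_{P,1}|<2|c|$ we simply use $S(\textbf{A})\le 4|c|$ combined with $\mu(\textbf{A}_I)^{\nu+N}/S(\textbf{A}_I)^N\le\mu(\textbf{A}_I)^{\nu}\le|c|^{\nu}$. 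Summing the at most $\binom{p+2}{2}$ nonzero contributions yields the localization estimate for $\widetilde Q$ with the same exponent $\nu$ as that of $P$, up to a combinatorial prefactor.

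Finally, the smoothness condition for $X_Q$ in Definition \ref{locafunc} is automatic: $N$ is a smooth quadratic function on every $\H^s_e$ with bounded linear Hamiltonian vector field $X_N$, and the product rule $X_Q=N\,X_P+P\,X_N$ transfers the smoothness of $X_P$ to $X_Q$ on the same neighborhoods of the origin.
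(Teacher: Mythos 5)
Your proof is correct and follows essentially the same route as the paper's: both reduce the statement to the pointwise comparison $\mu(b_1,\dots,b_r)^{N+\nu}/S(b_1,\dots,b_r)^{N}\lesssim \mu(a,a,b_1,\dots,b_r)^{N+\nu}/S(a,a,b_1,\dots,b_r)^{N}$ for the repeated index produced by $\int_M u\bar u\,dx$, and then argue by cases according to where the new index sits in the ordered list of the old ones, using the monotonicity of $x\mapsto x^{N+\nu}/(x+g)^{N}$ together with a reverse-triangle-inequality lower bound on the gap in the intermediate regime. The only point to tidy is that your case split should be phrased in terms of $\len{\cdot}$ (which defines the ordering permutation $\tau_{ord}$ and hence $S$) rather than the Euclidean modulus $|\cdot|$; since the two are comparable by Remark \ref{equi.mod}, this only affects the constants.
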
	
	\begin{proof}
		Let $r$ be the degree of $P$, then one has
		\begin{equation}
			\label{Q.da}
			|\widetilde Q(\Pi_{a_1}u_1,\Pi_{a_2}u_2,\Pi_{b_1}u_3,....,\Pi_{b_r}u_{r+2})|=\delta_{a_1,a_2}
			\left|\widetilde P(\Pi_{b_1}u_3,....,\Pi_{b_r}u_{r+2}) \right|
			\left\|\Pi_{a_1}u_1\right\| \left\|\Pi_{a_2}u_2\right\|\ . 
		\end{equation}
		Therefore, in order to get the thesis it is enough to show that 
		\begin{gather}\label{tesi}
			\delta_{a_1,a_2}
			\frac{\mu(\bb)^{N+\nu}}{S(\bb)^{N}} \lesssim
			\frac{\mu(a_1,a_2,\bb)^{N+\nu}}{S(a_1,a_2,\bb)^{N}}\ .
		\end{gather}
		For simplicity, we will also denote $a\coloneqq
		a_1 = a_2$ and we consider, in full generality that
		$\len{b_1}\ge\len{b_2}\ge \len{b_3}$ are the three largest indexes
		among $b_1,\dots,b_{r}$.

		\noindent\textbf{Case 1.}  If $\len{a}\ge
		\len{b_1}$ we have the trivial estimate
		\begin{equation*}
			\begin{aligned}
				&\frac{\mu(\bb)^{N+\nu}}{S(\bb)^{N}} =
				\frac{\len{b_3}^{\nu+N}}{(|b_1-b_2| + \len{ b_3})^N} 
				\le \len{b_3}^\nu  
				\\
				&\qquad \qquad \le \len{b_1}^\nu = \frac{\len{b_1}^{\nu+N}}{(|a_1 - a_2| + \len{b_1})^N} 
				=
				\frac{\mu(a_1,a_2,\bb)^{N+\nu}}{S(a_1,a_2,\bb)^{N}}\,,  
			\end{aligned}
		\end{equation*}
		since $|a_1 - a_2| = 0$.\\
		\textbf{Case 2.} If $\len{b_1} >  \len{a} > \len{b_2}$, we need to distinguish two cases. \\
		\textbf{Case 2.i.} Consider first the case $\len{b_1}
		> K_1 \len{b_2}$, with $K_1>0$ so large that
		\begin{gather}
			|b_1-b_2| \ge K_2 \len{b_1}
		\end{gather}
		for a constant $0<K_2<1 $; the existence of such constants is established in \ref{newDiago.rem} .
		Then using $|b_1-a| \lesssim \len{b_1} + \len{a} \lesssim \len{b_1}$, we estimate
		$$
		\frac{\len{b_3}^{\nu+N}}{(|b_1-b_2| + \len{b_3})^N} \le 
		\frac{\len{b_3}^{\nu+N}}{(K_2\len{b_1} + \len{b_3})^N} 
		\lesssim
		\frac{\len{b_3}^{\nu+N}}{(|b_1-a| + \len{b_3})^N}\,.
		$$
		Since the function $f(x) = \frac{x^{N+\nu}}{(K + x)^N}$ is
		increasing for any $N,\nu>0$, $K\geq0$ and $x\ge0$, the above quantity
		is bounded, up to a constant, by
		$$
		\begin{aligned}
			\frac{\len{a}^{\nu+N}}{(|b_1-a| + \len{a})^N} = \frac{\mu(a_1,a_2,\bb)^{N+\nu}}{S(a_1,a_2,\bb)^{N}}\,.
		\end{aligned}
		$$
		%		\begin{equation*}
			%			\begin{aligned}
				%				\frac{\len{b_3}^{\nu+N}}{(|b_1-b_2| + \len{b_3})^N} &\le 
				%				\frac{\len{b_3}^{\nu+N}}{(K_2\len{b_1} + \len{b_3})^N} 
				%				\le
				%				\frac{\len{b_2}^{\nu+N}}{(\frac{1}{C_*}|b_1-a| + \len{b_2})^N} 
				%				\\ &\le  
				%				C_*^N \frac{\len{b_2}^{\nu+N}}{(|b_1-a| + C_*\len{b_2})^N}
				%				\le C_*^N \frac{\len{a}^{\nu+N}}{(|b_1-a| + \len{a})^N} 
				%				\\ &= 
				%				C_*^N
				%                                \frac{\mu(a_1,a_2,\bb)^{N+\nu}}{S(a_1,a_2,\bb)^{N}}\,.  
				%			\end{aligned}
			%		\end{equation*}
		%		for $C_* = \frac{C}{K_2}\ge 1$, with $C$ the constant introduced in \eqref{newDiago2}.\\
		\textbf{Case 2.ii.} If $\len{b_1} \le K_1 \len{b_2}$ we observe that
		\begin{equation*}
			K_1 C \len{b_2} \ge C \len{b_1}\ge |b_1-a|
		\end{equation*}
		and we estimate
		\begin{equation}\label{abab}
			\begin{aligned}
				\frac{\len{b_3}^{\nu+N}}{(|b_1-b_2| + \len{b_3})^N} &\le \len{b_2}^\nu 
				=  
				(K_1 C+1)^N \frac{\len{b_2}^{\nu+N}}{(K_1C\len{b_2} + \len{b_2})^N} 
				\\&\le 
				(K_1 C+1)^N \frac{\len{b_2}^{\nu+N}}{(|b_1-a| + \len{b_2})^N}\,.
			\end{aligned}
		\end{equation}
		Using again the monotonicity of the function $f(x) = \frac{x^{N+\nu}}{(K+x)^{N}}$, we get
		\begin{equation*}
			\begin{aligned}
				\eqref{abab}&\leq (K_1 C+1)^N \frac{\len{a}^{\nu+N}}{(|b_1-a| + \len{a})^N}  
				\\ & =  (K_1C+1)^N \frac{\mu(a_1,a_2,\bb)^{N+\nu}}{S(a_1,a_2,\bb)^{N}}\,.
			\end{aligned}
		\end{equation*}

		\noindent		\textbf{Case 3.} If $\len{b_2}\ge\len{a} > \len{b_3}$ we get 
		\begin{equation*}
			\begin{aligned}
				\frac{\mu(\bb)^{N+\nu}}{S(\bb)^{N}} 
				&= \frac{\len{b_3}^{\nu+N}}{(|b_1-b_2| + \len{b_3})^N} 
				\\&
				\le \frac{\len{a}^{\nu+N}}{(|b_1 - b_2| + \len{a})^N} 
				=
				\frac{\mu(a_1,a_2,\bb)^{N+\nu}}{S(a_1,a_2,\bb)^{N}}\,. 
			\end{aligned}
		\end{equation*}
		\textbf{Case 4.} If $\len{a}\le \len{b_3}$ the estimates \eqref{tesi.wave} is obvious since it does not involve $a$.

		This concludes the proof.
	\end{proof}
	\begin{proof}[Proof of Theorem \ref{lo.generale}]
		The proof follows directly from Lemma \ref{productNorm3} and the definition of function with localized coefficients \ref{locafunc}, since the Taylor expansion of a functional fulfilling Hypothesis \ref{Nonline} is the sum of terms $P_m$ of the form \eqref{con la norma} 
	\end{proof}
	
	\section{The clusterization of the lattice. %$\Lambda$. 
		Proof of Theorem \ref{QNtoNoi}.}\label{QNtoNoiSection}
	The following result follows from the construction in \cite{QN}.
	\begin{proposition}
		There exists a partition of the lattice $\Lambda$
		$$
		\Lambda= \bigcup_{\alpha \in \N} \Omega_\alpha
		$$
		satisfying Hypothesis 3 (\ref{bourgain.abstract}), namely: 
		\begin{itemize}
			\item[(i)] There exists $C>0$ such that $\forall \alpha \in \N$
			\begin{equation*}%\label{diadici}
				\sup_{a \in \Omega_\alpha} | a | \leq C \inf_{a \in \Omega_\alpha} | a| \,,
			\end{equation*}
			\item[(ii)] There exist $\mu>0$ and $C_{\mu}>0$ such that, if $a \in \Omega_\alpha$ and $b \in \Omega_\beta$ with $\alpha \neq \beta$, then
			\begin{equation}\label{separati}
				|a - b| + |\omega_a - \omega_b| \geq C_{\mu} \left(|a|^\mu + |b|^{\mu}\right)\,.
			\end{equation}
		\end{itemize}
	\end{proposition}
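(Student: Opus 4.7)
The plan is to deduce the proposition as a direct consequence of \cite[Theorem 8.28]{QN}. That theorem, under the standing assumptions that $h_L$ is smooth, homogeneous at infinity (Def.~\ref{homoinfi}) and steep (Def.~\ref{steep.def}), provides a partition of the action space $\mathbb{R}^d$ into resonant blocks $\{B_\alpha\}_{\alpha \in \mathbb{N}}$ with quantitative nonresonance and dyadic-scaling properties; we then define our partition of $\Lambda$ by intersection, setting $\Omega_\alpha := \Lambda \cap B_\alpha$ (discarding empty intersections and reindexing). Thus the bulk of the work is a translation of the $\mathbb{R}^d$-statement of \cite{QN} to the discrete setting, plus verification that the two properties survive this intersection.

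First I would verify property (i), the dyadicity. Because $h_L$ is homogeneous of degree $\tt d$ outside some ball $\mathcal{B}_r$, the construction of \cite{QN} produces blocks $B_\alpha$ which are, asymptotically, unions of pieces of cones intersected with spherical dyadic shells $\{ R_\alpha \le |a| \le 2 R_\alpha \}$. On each block the ratio $\sup_{a\in B_\alpha}|a|/\inf_{a\in B_\alpha}|a|$ is bounded by a universal constant $C$, and this bound passes to $\Omega_\alpha \subset B_\alpha$ without change. The finitely many blocks that intersect $\mathcal{B}_r$ can be absorbed into one exceptional block of bounded diameter, whose dyadic ratio is trivially finite.

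Next I would verify the separation (ii). For $a \in \Omega_\alpha$ and $b \in \Omega_\beta$ with $\alpha \neq \beta$, the segment $[a,b]$ crosses at least one separating hypersurface of the Nekhoroshev decomposition. The nonresonance estimate of \cite[Theorem 8.28]{QN} then yields a lower bound of the form
\begin{equation*}
|h_L(a) - h_L(b)| + |a-b| \;\gtrsim\; \bigl(|a| + |b|\bigr)^{\mu}
\end{equation*}
for some $\mu>0$ depending on the steepness indices $\alpha_1,\ldots,\alpha_{d-1}$; combined with the asymptotic $\omega_a = h_L(a)$ this is exactly \eqref{separati}. For indexes $a, b$ of bounded size the inequality is trivial after choosing $C_\mu$ small enough.

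The main technical obstacle lies not in the intersection with $\Lambda$, which is essentially a shifted copy of $\mathbb{Z}^d$ and therefore dense enough to inherit all the required bounds, but in the bookkeeping translation between the conventions of \cite{QN} (where nonresonance is phrased in terms of $\nabla h_L \cdot k$ for integer vectors $k$ of controlled size) and the form \eqref{separati} used here; this amounts to integrating the gradient bound along the segment $[a,b]$ and using the steepness radius $\tt r$ to control the direction of oscillation. Once this dictionary is set up, both (i) and (ii) follow from the corresponding statements of \cite[Theorem 8.28]{QN} with no further argument.
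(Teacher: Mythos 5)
Your overall strategy coincides with the paper's: start from the block decomposition of \cite[Theorem 8.28]{QN}, merge the finitely many low blocks into a single exceptional block to salvage dyadicity near the origin, and reduce the separation property to the structure of the Nekhoroshev partition. Part (i) of your argument is essentially the paper's, modulo making explicit that the exceptional block has all its elements of norm $\le 2\mathtt{R}$, so its dyadic ratio is bounded by $2\gamma^{-1}\mathtt{R}$ with $\gamma$ the minimal nonzero lattice norm.

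The gap is in (ii). You assert that \cite[Theorem 8.28]{QN} directly ``yields'' the lower bound $|a-b|+|\omega_a-\omega_b|\gtrsim(|a|+|b|)^\mu$ for points in distinct blocks, with the remaining work being mere ``bookkeeping translation'' requiring ``no further argument''. That is not what the cited theorem provides: the blocks there are defined through resonance relations between lattice sites, not through a ready-made quantitative separation of the form \eqref{separati}. The actual proof must run the contrapositive: assume \eqref{separati} fails and show $a,b$ lie in the same block. This requires two nontrivial steps that your sketch does not supply. First, one must extract $|a-b|\le|a|^\mu$ from the violated inequality, which needs a case analysis on whether $|a-b|$ is below or above the threshold $(2^{\mu+1}C_\mu)^{1/(\mu-1)}$. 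Second, one must show $|\upsilon(a)\cdot(b-a)|\le|a|^\delta|a-b|$ by contradiction: if it failed, integrating $\upsilon$ along $[a,b]$ and controlling its oscillation via the homogeneity of $h_0$ (bounding $|\partial_a^2h_0|$ on the segment) would give $|\omega_a-\omega_b|\ge|a|^\delta/2$, incompatible with the upper bound $|\omega_a-\omega_b|\lesssim|a|^\mu$ coming from the violated \eqref{separati}, since $\delta>\mu$ and $|a|\ge\mathtt{R}$ is large. Only then is the pair $(a,b-a)$ resonant in the sense of \cite[Def.~6.3]{QN}, and one concludes that $a,b$ share a block via the argument of \cite[Lemma 8.39]{QN}. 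Your mention of ``integrating the gradient bound along the segment'' points in the right direction, but the claim that the conclusion follows with no further argument is where the proof is missing.
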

	\begin{proof}
		We consider the partition
		\[
		\Lambda = \bigcup_{s = 1}^d \bigcup_{\substack{M \textrm{ s.t.} \\ \textrm{dim}(M) = s}}
		\bigcup_{j \in \mathcal{J}_M} E^{(s)}_{M, j}\,,
		\]
		as in Definition 8.26 of \cite{QN}. 
		We take $\mathtt{R}>0$ large enough such that Theorem $8.28$  
		of \cite{QN} is satisfied and we modify the partition as follows:
		\begin{gather*}
			\widetilde{E}_\mathtt{R} := \bigcup\{ E^{(s)}_{M,j}\ |\ \exists a \in E^{(s)}_{M,j} \quad \textrm{s.t.} |a| \leq \mathtt{R}\}\\
			\widetilde{E}^{(s)}_{M,j} = E^{(s)}_{M,j} \quad \text{if}\quad  |a| \geq \mathtt{R} \quad \forall a \in E^{(s)}_{M,j}\,.
		\end{gather*}
		We are going to show that such a new partition satisfies properties (i) and (ii).\\
		{\sc Proof of Property $(i)$.} By Theorem 8.28, the blocks $E^{(s)}_{M,j}$ are dyadic, thus (i) holds for the blocks $\widetilde{E}^{(s)}_{M,j}$ with $C = 2$. Let us consider $\widetilde{E}_{\mathtt{R}}$: for any $a \in \widetilde{E}_{\mathtt{R}}$ one has
		$$
		|a| \leq 2 \min_{b \in E^{(s)}_{M,j}} |b| \leq 2 \mathtt{R}\,,
		$$
		where $E^{(s)}_{M,j}$ is the block in the original partition $\{E^{(s)}_{M,j}\}_{s, M, j}$ to which the lattice point $a$ belongs. %namely the whole block is contained in the ball of center $2 \mathtt{R}$.
		This immediately implies
		$$
		\sup_{a \in \widetilde{E}_{\mathtt{R}}} | a | \leq 2 \mathtt{R} \leq 2 \gamma^{-1} \mathtt{R} \min_{a \in \widetilde{E}_{\mathtt{R}}} | a |\,,
		$$
		if $\gamma := \min \{|b| \ |\ b \in \Lambda \setminus\{0\}\}$. Then (i) holds with $C := \max\{2, 2 \gamma^{-1} \mathtt{R}\}$ for all the elements of the new partition.\\
		{\sc Proof of Property $(ii)$.} 
		We show that there exist $\mu$ and $C_\mu$ such that, if $a, b \in \Lambda$ are such that \eqref{separati} is violated, namely
		\begin{equation}\label{non.separati}
			|a - b| + |\omega_a - \omega_b| < C_\mu \left(|a| + |b|\right)^\mu\,,
		\end{equation}
		then $a$ and $b$ must belong to the same block. %In particular, we choose $\mu$ and $\delta$ as in Definition 6.3 of \cite{QN}
		If both $|a| \leq \mathtt{R}$ and $|b| \leq \mathtt{R}$, then $a$ and $b$ belong to the same block since they are both contained into $\widetilde{E}_{\mathtt{R}}$. Thus let us suppose that $|a| \geq \mathtt{R}$.
		%We start with proving that in this case \eqref{non.separati} implies
		%	\begin{equation}\label{risonanti}
			%|\varpi (a)\cdot (b-a) | \leq |a|^{\delta} |a-b|\,, \textrm{ and } |a-b| \leq |a|^\mu\,,
			%\end{equation}
			%	where $\varpi(a) := \partial_a h_0(a)$ and $h_0$ is the function whose existence is assumed in \ref{glob.int.quant}.
			First of all, we observe that there exists $c_{\mu}>0$ such that, if $C_\mu \leq c_\mu$, then \eqref{non.separati} implies
			\begin{equation}\label{ma.va}
				|a-b| \leq |a|^\mu\,.
			\end{equation}
			Indeed, one has $|b|^\mu \leq 2^\mu (|a|^\mu + |b-a|^\mu)$, which implies
			$$
			|a-b| \leq C_\mu (1 + 2^\mu)|a|^\mu + C_\mu 2^\mu |a-b|^\mu\,.
			$$
			Now, if $|a-b|\leq (2^{\mu + 1} C_\mu)^{\frac{1}{\mu-1}}$, one has
			\begin{equation}\label{se.piccolo}
				|a-b| \leq (2^{\mu + 1} C_\mu)^{\frac{1}{\mu-1}} \leq (2^{\mu + 1} C_\mu)^{\frac{1}{\mu-1}} |a|^\mu\,,
			\end{equation}
			while if $|a-b| \geq (2^{\mu + 1} C_\mu)^{\frac{1}{\mu-1}}$ one has
			\begin{equation}\label{se.grande}
				\frac{|a-b|}{2} \leq |a-b| - C_\mu 2^\mu |a-b|^\mu \leq C_\mu (1 + 2^\mu)|a|^\mu\,.
			\end{equation}
			Combining \eqref{se.piccolo} and \eqref{se.grande} 
			and reducing the value of $C_\mu$, one gets \eqref{ma.va}.\\
			We proceed to show that, if \eqref{non.separati} holds with 
			$C_\mu$ is small enough, one also has
			\begin{equation}\label{eh.si}
				|\upsilon (a)\cdot (b-a) | \leq |a|^{\delta} |a-b|\,,
			\end{equation}
			where $\upsilon(a) := \partial_a h_0(a)$ and $h_0$ is the function whose existence is assumed in \ref{glob.int.quant}.
			Indeed, let us suppose by contradiction that \eqref{eh.si} 
			does not hold true: then, since $|a-b| \leq |a|^{\mu},$ $\forall t \in [0, 1]$ one has
			$$
			\begin{aligned}
				|\upsilon (a + t b)\cdot (b-a) | &\geq |\upsilon(a) \cdot (b-a)| - |\upsilon(a+ t b) - \upsilon(a)| |b-a|
				\\
				&\geq |\upsilon(a) \cdot (b-a)| - \sup_{\substack{a' \textrm{ s.t.} \\ |a-a'| \leq |b-a|}} |\partial_a^2 h_0(a')| |b-a|^{2}\\
				&\geq |a|^\delta |a-b|- C_{h_0} \sup_{\substack{a' \textrm{ s.t.} \\ |a-a'| \leq |b-a|}} |a'|^{M-2} |b-a|^2\,,
			\end{aligned}
			$$
			where in the last passage we have used the fact that $h_0$ 
			is a homogeneous function of degree $M$. Then $\forall t$ one has
			\begin{equation}\label{fine.giornata}
				\begin{aligned}
					|\upsilon (a + t b)\cdot (b-a) | &\geq |a|^\delta |b-a| - C_{h_0} (|a| + |b-a|)^{M-2} |b-a|^2\\
					& \geq |a|^\delta - 2 C_{h_0} |a|^{M-2 + 2\mu} \geq \frac{|a|^{\delta}}{2}\,,
				\end{aligned}
			\end{equation}
			provided $\mathtt{R}$ is large enough. But then on the one hand \eqref{fine.giornata} gives
			\begin{equation}\label{freq.below}
				|\omega_a - \omega_b|= \left|\int_{0}^1 \upsilon(a + t (b-a))\cdot (b-a)\, d t  \right| \geq \frac{|a|^{\delta}}{2}\,,
			\end{equation}
			while on the other hand \eqref{non.separati} gives
			\begin{equation}\label{freq.above}
				\begin{aligned}
					|\omega_a - \omega_b| \leq C_\mu |a|^\mu + C_\mu |b|^\mu 
					&\leq C^\mu (2^\mu + 1) |a|^\mu + 2^\mu C_\mu |b-a|^\mu 
					\\&\leq 2 C^\mu (2^\mu + 1) |a|^\mu\,.
				\end{aligned}
			\end{equation}
			Since estimates \eqref{freq.above} and \eqref{freq.below} are clearly not compatible for $\mathtt{R}$ large enough, due to the fact that $\delta > \mu$, one gets a contradiction. Thus we conclude that \eqref{eh.si} holds true and, combining estimates \eqref{ma.va} and \eqref{eh.si}, that
			\begin{equation}\label{risonanti}
				|\upsilon (a)\cdot (b-a) | \leq |a|^{\delta} |a-b|\,, \quad |a-b| \leq |a|^\mu\,, \quad |a| \geq \mathtt{R}\,,
			\end{equation}
			namely $a$ is resonant with $b-a$ according to Definition 6.3 of \cite{QN}. Then the proof follows exactly with the same passages used to prove Lemma 8.39 of \cite{QN}. 
		\end{proof}
		
		\bibliographystyle{amsalpha}
		\bibliography{refs2}

\providecommand{\bysame}{\leavevmode\hbox to3em{\hrulefill}\thinspace}
\providecommand{\MR}{\relax\ifhmode\unskip\space\fi MR }
% \MRhref is called by the amsart/book/proc definition of \MR.
\providecommand{\MRhref}[2]{%
  \href{http://www.ams.org/mathscinet-getitem?mr=#1}{#2}
}
\providecommand{\href}[2]{#2}
\begin{thebibliography}{BDGS05}

\bibitem[Bam03]{bambusi2003birkhoff}
D.~Bambusi, \emph{Birkhoff normal form for some nonlinear {PDEs}}, Communications in Mathematical Physics \textbf{234} (2003), no.~2, 253--285.

\bibitem[Bam08]{BamMon}
\bysame, \emph{A {B}irkhoff normal form theorem for some semilinear {PDE}s}, Hamiltonian dynamical systems and applications, NATO Sci. Peace Secur. Ser. B Phys. Biophys., Springer, Dordrecht, 2008, pp.~213--247. \MR{2446257}

\bibitem[BB13]{BBolleOnde}
M.~Berti and P.~Bolle, \emph{{Quasi-periodic solutions with Sobolev regularity of NLS on $\mathbb{T}^d$ with a multiplicative potential}}, Journal of the European Mathematical Society (EMS Publishing) \textbf{15} (2013), no.~1.

\bibitem[BBM14]{BBM1}
P.~Baldi, M.~Berti, and R.~Montalto, \emph{{KAM for quasi-linear and fully nonlinear forced perturbations of Airy equation}}, Mathematische Annalen \textbf{359} (2014), no.~1-2, 471--536.

\bibitem[BBM16]{BBM2}
\bysame, \emph{{KAM for autonomous quasi-linear perturbations of KdV}}, Annales de l'Institut Henri Poincar{\'e} C, Analyse non lin{\'e}aire, vol.~33, Elsevier, 2016, pp.~1589--1638.

\bibitem[BCP15]{BCP}
M.~Berti, L.~Corsi, and M.~Procesi, \emph{{An abstract Nash--Moser theorem and quasi-periodic solutions for NLW and NLS on compact Lie groups and homogeneous manifolds}}, Communications in Mathematical Physics \textbf{334} (2015), no.~3, 1413--1454.

\bibitem[BD18]{BD}
M.~Berti and J.M. Delort, \emph{{A}lmost {G}lobal {S}olutions of {C}apillary-gravity {W}ater {W}aves {E}quations on the {C}ircle}, UMI Lecture Notes, Springer, 2018.

\bibitem[BDGS05]{BDGS}
D.~Bambusi, J.M. Delort, B.~Gr{\'e}bert, and J.~Szeftel, \emph{Almost global existence for {H}amiltonian semilinear {K}lein‐{G}ordon equations with small {C}auchy data on {Z}oll manifolds}, Communications on Pure and Applied Mathematics \textbf{60} (2005).

\bibitem[BFF21]{qlifespan.bert.feol.franz}
M.~Berti, R.~Feola, and L.~Franzoi, \emph{Quadratic life span of periodic gravity-capillary water waves}, Water Waves \textbf{3} (2021).

\bibitem[BFGI21]{beam-feola-bernier}
J.~Bernier, R.~Feola, B.~Gr{\'e}bert, and F.~Iandoli, \emph{Long-time existence for semi-linear beam equations on irrational tori}, Journal of Dynamics and Differential Equations \textbf{33} (2021), 1--36.

\bibitem[BFM24]{BFM22}
D.~Bambusi, R.~Feola, and R.~Montalto, \emph{Almost global existence for some {H}amiltonian {PDE}s with small {C}auchy data on general tori}, Communications in Mathematical Physics \textbf{405} (2024), no.~15, 253--285.

\bibitem[BFP23]{BerFeoPus}
M.~Berti, R.~Feola, and F.~Pusateri, \emph{{B}irkhoff normal form and long time existence for periodic gravity water waves}, Commun. Pure Appl. Math. \textbf{76} (2023), no.~7, 1416--1494.

\bibitem[BG03]{BGCras}
D.~Bambusi and B.~Gr{\'e}bert, \emph{Forme normale pour {NLS} en dimension quelconque}, Comptes Rendus Mathematique \textbf{337} (2003), no.~6, 409--414.

\bibitem[BG06]{BG06}
D.~Bambusi and B.~Gr{\'e}bert, \emph{Birkhoff normal form for partial differential equations with tame modulus}, Duke Mathematical Journal \textbf{135} (2006), no.~3, 507 -- 567.

\bibitem[BL22]{QN}
D.~Bambusi and B.~Langella, \emph{Growth of {S}obolev norms in quasi integrable quantum systems}, arXiv preprint: 2202.04505, 2022.

\bibitem[BLM22a]{langella2022growth}
D.~Bambusi, B.~Langella, and R.~Montalto, \emph{{Growth of Sobolev norms for unbounded perturbations of the Schr{\"o}dinger equation on flat tori}}, Journal of Differential Equations \textbf{318} (2022), 344--358.

\bibitem[BLM22b]{langella2022spectral}
\bysame, \emph{{Spectral asymptotics of all the eigenvalues of Schr{\"o}dinger operators on flat tori}}, Nonlinear Analysis \textbf{216} (2022), 112679.

\bibitem[BM19]{BM}
M.~Berti and A.~Maspero, \emph{Long time dynamics of {Schr{\"o}dinger} and wave equations on flat tori}, Journal of Differential Equations \textbf{267} (2019), no.~2, 1167--1200.

\bibitem[BMM22]{berti2022hamiltonian}
M.~Berti, A.~Maspero, and F.~Murgante, \emph{Hamiltonian {B}irkhoff normal form for gravity-capillary water waves with constant vorticity: almost global existence}, arXiv preprint:2212.12255 (2022).

\bibitem[BMP20]{BMPro}
L.~Biasco, J.~E. Massetti, and M.~Procesi, \emph{An abstract {B}irkhoff normal form theorem and exponential type stability of the $1d$ {N}{L}{S}}, Comm. Math. Phys. \textbf{375} (2020), no.~3, 2089--2153.

\bibitem[BMP21]{pro.almost}
\bysame, \emph{{Almost periodic invariant tori for the NLS on the circle}}, Annales de l'Institut Henri Poincaré C, Analyse non linéaire \textbf{38} (2021), no.~3, 711--758.

\bibitem[BMP23]{almost.mass}
L.~Biasco, J.~Massetti, and M.~Procesi, \emph{{Small amplitude weak almost periodic solutions for the 1d {NLS}}}, Duke Mathematical Journal \textbf{172} (2023).

\bibitem[Bou96]{bou.almost}
J.~Bourgain, \emph{{Construction of approximative and almost periodic solutions of perturbed linear Schr{\"o}dinger and wave equations}}, Geometric \& Functional Analysis GAFA \textbf{6} (1996), no.~2, 201--230.

\bibitem[Bou04]{bourgain2004green}
\bysame, \emph{{Green's function estimates for lattice Schr{\"o}dinger operators and applications}}, vol. 158, Princeton University Press, 2004.

\bibitem[Bou05]{bou05}
\bysame, \emph{On invariant tori of full dimension for 1{D} periodic {NLS}}, J. Funct. Anal. \textbf{229} (2005), no.~1, 62--94. \MR{2180074}

\bibitem[BP11]{BP}
M.~Berti and M.~Procesi, \emph{{Nonlinear wave and Schr{\"o}dinger equations on compact Lie groups and homogeneous spaces}}, Duke Mathematical Journal \textbf{159} (2011), no.~3, 479 -- 538.

\bibitem[CdV80]{CdV}
Y.~Colin~de Verdiere, \emph{{Spectre conjoint d'op{\'e}rateurs pseudo-diff{\'e}rentiels qui commutent. II. Le cas intégrable}}, Mathematische Zeitschrift \textbf{171} (1980), 51--74.

\bibitem[Del09]{Del}
J.M. Delort, \emph{{Growth of Sobolev norms of solutions of linear Schr{\"o}dinger equations on some compact manifolds}}, International Mathematics Research Notices \textbf{2010} (2009), no.~12, 2305--2328.

\bibitem[Del12]{DelortS1}
J.M. Delort, \emph{A quasi-linear {B}irkhoff normal forms method. {A}pplication to the quasi-linear {K}lein-{G}ordon equation on $\mathbb{S}^1$}, Ast\'erisque \textbf{342} (2012).

\bibitem[Del14]{delort.sphere.ql}
J.M. Delort, \emph{Quasi-{L}inear perturbations of {H}amiltonian {K}lein-{G}ordon equations on spheres}, Memoirs of the American Mathematical Society \textbf{234} (2014), 80 pp.

\bibitem[DI17]{DI17}
J.M. Delort and R.~Imekraz, \emph{Long-time existence for the semilinear {Klein--Gordon} equation on a compact boundary-less {Riemannian} manifold}, Communications in Partial Differential Equations \textbf{42} (2017), no.~3, 388--416.

\bibitem[DS04]{DS0}
J.M. Delort and J.~Szeftel, \emph{Long-time existence for small data nonlinear {K}lein-{G}ordon equations on tori and spheres}, International Mathematics Research Notices \textbf{2004} (2004), 1897--1966.

\bibitem[DS06]{DS}
\bysame, \emph{Long-time existence for semi-linear {K}lein-{G}ordon equations with small {C}auchy data on {Z}oll manifolds}, American Journal of Mathematics \textbf{128} (2006), 1187 -- 1218.

\bibitem[EGK16]{EKG}
L.~H. Eliasson, B.~Gr{\'e}bert, and S.~B. Kuksin, \emph{{KAM for the nonlinear beam equation}}, Geometric and Functional Analysis \textbf{26} (2016), 1588--1715.

\bibitem[EK10]{EliKuk}
L.~H. Eliasson and S.~B. Kuksin, \emph{{KAM for the nonlinear Schr{\"o}dinger equation}}, Annals of Mathematics (2010), 371--435.

\bibitem[FG13]{FaouGre}
E.~Faou and B.~Gr\'ebert, \emph{A {N}ekhoroshev-type theorem for the nonlinear {S}chr\"odinger equation on the torus}, Anal. PDE \textbf{6} (2013), no.~6, 1243--1262.

\bibitem[FGI23]{feola-greb-iandoli}
R.~Feola, B.~Gr{\'e}bert, and F.~Iandoli, \emph{Long time solutions for quasilinear {H}amiltonian perturbations of {S}chr{\"o}dinger and {K}lein--{G}ordon equations on tori}, Analysis \& PDE \textbf{16} (2023), 1133--1203.

\bibitem[FGL11]{plane}
E.~Faou, L.~Gauckler, and C.~Lubich, \emph{Sobolev stability of plane wave solutions to the cubic nonlinear {S}chr\"odinger equation on a torus}, Communications in Partial Differential Equations \textbf{38} (2011).

\bibitem[FI21]{FeoIanPisa}
R.~Feola and F.~Iandoli, \emph{Long time existence for fully nonlinear {N}{L}{S} with small {C}auchy data on the circle}, Ann. Sc. Norm. Super. Pisa, Cl. Sci. \textbf{22} (2021), no.~5, 109--182.

\bibitem[FIM22]{feola-murg-hydro}
R.~Feola, F.~Iandoli, and F.~Murgante, \emph{Long-time stability of the quantum hydrodynamic system on irrational tori}, Mathematics in Engineering \textbf{4} (2022), no.~3, 1--24.

\bibitem[FM22]{feola-mont}
R.~Feola and R.~Montalto, \emph{Quadratic lifespan and growth of {S}obolev norms for derivative {S}chr{\"o}dinger equations on generic tori}, Journal of Differential Equations \textbf{312} (2022), 276--316.

\bibitem[FM23]{FeoMass}
R.~Feola and J.E. Massetti, \emph{Sub-exponential stability for the beam equation}, Journal of Differential Equations \textbf{356} (2023), 188--242.

\bibitem[FZ10]{FANG2010151}
D.~Fang and Q.~Zhang, \emph{Long-time existence for semi-linear {Klein-Gordon} equations on tori}, Journal of Differential Equations \textbf{249} (2010), no.~1, 151--179.

\bibitem[GCB16]{GCB}
M.~Guzzo, L.~Chierchia, and G.~Benettin, \emph{The steep {N}ekhoroshev's theorem}, Communications in Mathematical Physics \textbf{342} (2016), 569--601.

\bibitem[GIP09]{GrePT}
B.~Gr{\'e}bert, R.~Imekraz, and E.~Paturel, \emph{Normal forms for semilinear quantum harmonic oscillators}, Communications in Mathematical Physics \textbf{291} (2009), no.~3, 763--798.

\bibitem[Gr{\'e}07]{GreLan}
Beno{\^i}t Gr{\'e}bert, \emph{{Birkhoff Normal Form and Hamiltonian PDEs}}, {Partial differential equations and applications.}, S{\'e}minaires et Congr{\`e}s, Soci{\'e}t{\'e} Math{\'e}matique de France, 2007, pp.~1--46.

\bibitem[GXY11]{geng.kam}
J.~Geng, X.~Xu, and J.~You, \emph{An infinite dimensional {KAM} theorem and its application to the two dimensional cubic {S}chr\"{o}dinger equation}, Adv. Math. \textbf{226} (2011), no.~6, 5361--5402. \MR{2775905}

\bibitem[H\"85]{Hormander}
L.~H\"ormander, \emph{The analysis of linear partial differential operators i-iii.}, Springer Berlin, 1985.

\bibitem[Ime16]{Imekraz_beam}
R.~Imekraz, \emph{Long time existence for the semi-linear beam equation on irrational tori of dimension two}, Nonlinearity \textbf{29} (2016), no.~10, 3067.

\bibitem[IP19]{ionescu}
A.D. Ionescu and F.~Pusateri, \emph{Long-time existence for multi-dimensional periodic water waves}, Geometric and Functional Analysis \textbf{29} (2019), no.~3, 811--870.

\bibitem[Kuk87]{Kuk1}
S.~B. Kuksin, \emph{Hamiltonian perturbations of infinite-dimensional linear systems with an imaginary spectrum}, Functional Analysis and Its Applications \textbf{21} (1987), no.~3, 192--205.

\bibitem[Kuk93]{Kuk2}
\bysame, \emph{Nearly integrable infinite-dimensional {H}amiltonian systems}, Lecture Notes in Mathematics, vol. 1556, Springer-Verlag, Berlin, 1993. \MR{1290785}

\bibitem[Nie06]{Nie}
L.~Niederman, \emph{Hamiltonian stability and subanalytic geometry}, Annales de l'institut Fourier, vol.~56, 2006, pp.~795--813.

\bibitem[PP15]{procesiKAM}
C.~Procesi and M.~Procesi, \emph{{A KAM algorithm for the resonant non-linear Schr{\"o}dinger equation}}, Advances in Mathematics \textbf{272} (2015), 399--470.

\bibitem[Pö96]{poschel1996}
J.~Pöschel, \emph{{A KAM-theorem for some nonlinear partial differential equations}}, Annali della Scuola Normale Superiore di Pisa - Classe di Scienze \textbf{23} (1996), no.~1, 119--148 (eng).

\bibitem[TZ02]{toth2002p}
{J. A.} Toth and S.~Zelditch, \emph{Riemannian manifolds with uniformly bounded eigenfunctions}, Duke Mathematical Journal \textbf{111} (2002), no.~1, 97--132 (English (US)).

\bibitem[Way90]{wayne}
C.~E. Wayne, \emph{Periodic and quasi-periodic solutions of nonlinear wave equations via {KAM} theory}, Communications in Mathematical Physics \textbf{127} (1990), no.~3, 479 -- 528.

\end{thebibliography}

	\end{document}